\newcommand{\RR}{\mathbb{R}}
\newcommand{\CC}{\mathbb{C}}
\newcommand{\NN}{\mathbb{N}}
\newcommand{\Rr}{\mathrm{Re}}
\newcommand{\Ii}{\mathrm{Im}}
\theoremstyle{plain}
\newtheorem{Th}{Theorem}[section]
\newtheorem{Prop}[Th]{Proposition}
\newtheorem{Lem}[Th]{Lemma}
\newtheorem{Cor}[Th]{Corollary}
\newtheorem{Rq}[Th]{Remark}
\numberwithin{equation}{section}
\newenvironment{Thp}[1]
  {%
   \addtocounter{Th}{-1}%
   \begin{Th}}
  {\end{Th}}
\newcounter{Hyp}
\newcommand{\Hyp}[1]{\stepcounter{Hyp}\smallskip\textbf{(H\theHyp)}\quad \begin{minipage}[t]{0.9\textwidth} #1 \end{minipage}\smallskip}
\begin{document}

\title[On smoothness and uniqueness of NLS multi-solitons]{On smoothness and uniqueness of multi-solitons of the non-linear Schrödinger equations}

\author{Raphaël Côte}
\address[Raphaël Côte]{Institut de Recherche Mathématique Avancée UMR 7501, Université de Strasbourg, Strasbourg, France}
\email{cote@math.unistra.fr}

\author{Xavier Friederich}
\thanks{Corresponding author: Xavier Friederich}
\address[Xavier Friederich]{Institut de Recherche Mathématique Avancée UMR 7501, Université de Strasbourg, Strasbourg, France}
\email{friederich@math.unistra.fr}

\maketitle

\let\thefootnote\relax\footnotetext{2010 \textit{Mathematics Subject Classification:} 35B40 (primary), 35B65, 35Q55.}
\let\thefootnote\relax\footnotetext{ \textit{Key words:} Non-linear Schrödinger equations; multi-solitons; smoothness; uniqueness}

\begin{abstract}
 In this paper, we study some properties of multi-solitons for the non-linear Schrödinger equations in $\RR^d$ with general non-linearities. Multi-solitons have already been constructed in $H^1(\RR^d)$ in \cite{cote,schMM,merle}. We show here that multi-solitons are smooth, depending on the regularity of the non-linearity. We obtain also a result of uniqueness in some class, either when the ground states are all stable, or in the mass-critical case. 
\end{abstract}

\section{Introduction}

\subsection{Generalities on the non-linear Schrödinger equations} 

We consider non-linear Schrö\-din\-ger equations in $\RR^d$ which admit traveling solitary waves (solitons). More precisely, we focus on 
\begin{gather}\label{NLS}\tag{NLS}
\partial_t u=i\big(\Delta u+f(|u|^2)u \big),
\end{gather} 
where $u: I \times \RR^d \to \CC$, $I \subset \RR$ is a time interval, and $f:[0,+\infty) \to \RR$ is an $H^1$-subcritical non-linearity. 

For $d\leq 3$ and for particular functions $f$, equation \eqref{NLS} arises in the mathematical description of many physical phenomena; it is used mainly to model non-linear wave dynamics. For instance, it is fundamental in the description of the dynamic of particles moving in electromagnetic fields \cite{malomed} and quantum systems like Bose-Einstein condensates \cite{pitaevskii}. With particular non-linearities obtained by linear combinations of quadratic, cubic, and quintic terms it appears also when one tries to describe the propagation of laser beams in some mediums \cite{brabec} or of more general ultrashort optical pulses (see for example \cite{kkde} for the study of some solutions of these equations), with applications in medical imaging, material processing and optical communications (we refer to \cite{felice} for further details concerning the applications of \eqref{NLS} to fiber optics for example).
\begin{align} \label{def:nonlinearity}
f(r) = r^{\frac{p-1}{2}}, \quad 1< p < 1+\frac{4}{(d-2)_+},\quad r\ge 0.
\end{align}
(If $d =1$ or $2$, the condition is $p >1$ and if $d\ge 3$, the condition is $ 1< p < 1+\frac{4}{d-2}$). We will give results on general non-linearities in paragraph \ref{paragraph:gen}. \\

Ginibre and Velo \cite{ginibre} proved that \eqref{NLS} is locally well-posed in $H^1(\RR^d)$: for all $u_0\in H^1(\RR^d)$, there exist $T>0$ and a unique maximal solution $u\in\mathscr{C}([0,T),H^1(\RR^d))$ of \eqref{NLS} such that $u(0)=u_0$. For any such $H^1$ solution, the following quantities are conserved for all $t \in [0,T)$:
\begin{itemize}
\item the $L^2$ mass $\displaystyle \int_{\RR^d}|u(t,x)|^2\;dx$.
\item the energy $\displaystyle \int_{\RR^d} \left( \frac{1}{2} |\nabla u(t,x)|^2 - \frac{1}{p+1} |u(t,x)|^{p+1} \right) \;dx$.
\item the momentum $\displaystyle \Ii\int_{\RR^d}\nabla u(t,x)\overline{u}(t,x)\;dx$. 
\end{itemize}
Furthermore, for all $s \in \NN\setminus\{ 0, 1 \}$, if $$g:z\mapsto zf(|z|^2)=z|z|^{p-1}$$ is $\mathscr{C}^s$ on $\CC$ as an $\RR$-differentiable function (that is if $p> s$ or $p$ is an odd integer), and in case where $s<\frac{d}{2}$, if in addition $p<1+\frac{4}{d-2s}$ , then \eqref{NLS} is locally well-posed in $H^s(\RR^d)$ according to Kato \cite[Theorem 4.1]{kato}. \\

Also \eqref{NLS} is invariant under properties of space-time translation, phase, and galilean invariances: if $t^0 \in \RR$, $v\in\RR^d$, $x^0\in\RR^d$, $\gamma\in\RR$, and $u$ is a solution to \eqref{NLS}, then
\begin{equation} \label{def:inv}
(t,x) \mapsto u(t-t^0, x-x^0-v t) e^{i\big(\frac{1}{2}v\cdot x- \frac{|v|^2}{4} t+\gamma\big)}
\end{equation}
is also a solution to \eqref{NLS}. What is more, \eqref{NLS} with a pure power non-linearity \eqref{def:nonlinearity} is scaling invariant: if $\lambda >0$ and  $u$ is a solution to \eqref{NLS}, then
\begin{equation} \label{def:scaling}
(t,x) \mapsto \frac{1}{\lambda^{1/(p-1)}} u \left( \frac{t}{\lambda}, \frac{x}{\lambda^{1/2}} \right)
\end{equation}
is still a solution to \eqref{NLS}. 
\bigskip

Let us introduce now some particular solutions of \eqref{NLS} which are essential in the theory and on which our paper is based. Given $\omega >0$, Berestycki and Lions \cite{beres} proved the existence of a (non-vanishing) positive radial solution $Q_\omega\in H^1(\RR^d)$ to the following elliptic problem
\begin{equation}\label{elliptic}
\Delta Q_\omega+f(Q_\omega^2)Q_\omega=\omega Q_\omega, \quad Q_\omega >0
\end{equation}
(By scaling, it suffices to prove the existence for $\omega=1$). A solution to \eqref{elliptic} is called a \emph{ground state} (and if one relaxes the sign condition, we speak of \emph{bound state}). Using a Pohozaev identity \cite{pohozaev}, one can show that \eqref{elliptic} has no solution in $H^1(\RR^d)$ for $p \ge 1+ \frac{4}{(d-2)_+}$. Moreover, for $s \in \NN^*$ and if $g$ is $\mathscr C^s$ on $[0,+\infty)$, then $Q_\omega$ is $\mathscr C^{s+2}$ on $\RR^d$ and one has exponential decay (see \cite[proof of Lemma 1]{beres}): 
there exists $C_s>0$ such that for each multi-index $\delta\in\NN^d$ with $|\delta|\le s+2$, 
\begin{equation} \label{exp_decay_Q}
\forall\;x\in\RR^d,\qquad \left|\partial^\delta Q_\omega(x)\right|\le C_s e^{-\frac{\sqrt{\omega}}{2} |x|}.
\end{equation}
Then the function
\begin{equation}\label{soliton}
(t,x)\mapsto Q_\omega(x) e^{i \omega t}
\end{equation}
is a solution to \eqref{NLS}. Using the invariances \eqref{def:inv} of the equation, one obtains a whole family of solutions of \eqref{NLS} known as \emph{solitons}.

\bigskip

Dynamical properties of solitons have been extensively studied. One important result is related to their orbital stability: solitons are orbitally stable if $ p < 1 + \frac{4}{d}$ and unstable if $ p \ge 1 + \frac{4}{d}$. Recall that the case $p = 1 + \frac{4}{d}$ corresponds to the $L^2$-critical exponent: in this particular case, the $L^2$ norm of a solution is invariant by scaling \eqref{def:scaling}.

\bigskip

In this article, we are interested in qualitative properties of multi-solitons, that is solutions of \eqref{NLS} which behave as a sum of decoupled solitary waves for large times. \\

Let us begin with the definition of some further notations. Fix $K\in\NN\setminus\{0,1\}$ and for all $k=1,\dots,K$, let 
\[ \omega_k>0,  \quad \gamma_k\in\RR,  \quad x_k^0\in\RR^d,  \quad \text{and } v_k\in\RR^d \text{ such that for all } k\ne k',\quad v_k\neq v_{k'}. \]
For all $k=1,\dots,K$, we consider
\[ R_k(t,x) =  Q_{\omega_k}(x-x_k^0-v_kt)e^{i\big(\frac{1}{2}v_k\cdot x+\big(\omega_k-\frac{|v_k|^2}{4}\big)t+\gamma_k\big)}, \]
which is a soliton of \eqref{NLS} moving on the line $x=x_k^0+v_kt$. We denote also $$R:=\sum_{k=1}^KR_k.$$
In general, $R$ is obviously not a solution to \eqref{NLS} because of the non-linearity. A \emph{multi-soliton} is a solution $u$ of \eqref{NLS} defined on $[T_0,+\infty)$ for some $T_0\in\RR$ and such that 
\begin{equation}\label{def_multisol} 
\lim_{t\to+\infty}\|u(t)-R(t)\|_{H^1}=0. 
\end{equation}

Multi-solitons were explicitly constructed in the integrable case, that is with $f(x)=x$ and $d=1$, using the inverse scattering method (see Zakharov and Shabat \cite{zakharov}).

The first construction in a non-integrable context is due to Merle \cite{merle}, in the critical case $p=1+\frac{4}{d}$. 
Later, following closely the ideas of Martel in \cite{martel} for the construction of multi-solitons for the Korteweg-de Vries equations, Martel and Merle \cite{schMM} constructed multi-solitons of \eqref{NLS}, in the $L^2$-subcritical case $1<p<1+\frac{4}{d}$. This result was extended to $L^2$-supercritical exponent by Côte, Martel and Merle \cite{cote}. Let us recall the results.

\begin{Th}[Merle \cite{merle}, Martel and Merle \cite{schMM}, Côte, Martel and Merle \cite{cote}]\label{reg1}
There exist $\theta>0$ (depending on $v_k$, $\omega_k$ for $1\le k \le K$), $T_0\ge 0$, and a solution $u\in\mathscr{C}([T_0,+\infty),H^1(\RR^d))$ of \eqref{NLS} such that 
\begin{equation} \label{eq:exp_conv}
\forall\; t\ge T_0,\qquad \|u(t)-R(t)\|_{H^1}\le e^{-2\theta t}.
\end{equation}
\end{Th}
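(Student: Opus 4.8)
The plan is to follow the now-classical compactness scheme of Martel and Merle: rather than construct the multi-soliton directly at time $T_0$, I would build it as a limit of solutions whose data are prescribed at a sequence of late times. Fix an increasing sequence $T_n\to+\infty$ and, for each $n$, let $u_n$ be the solution of \eqref{NLS} with final data $u_n(T_n)=R(T_n)$, obtained by solving the equation backward in time. The entire difficulty is to show that each $u_n$ exists on a \emph{fixed} interval $[T_0,T_n]$ and obeys the uniform bound $\|u_n(t)-R(t)\|_{H^1}\le e^{-2\theta t}$ there, with $\theta$ and $T_0$ independent of $n$. Granting such uniform estimates, I extract a subsequence for which $u_n(T_0)$ converges weakly in $H^1$; by continuous dependence on the data, the solutions then converge, at each fixed $t\ge T_0$, weakly in $H^1$ and strongly in $L^2_{\mathrm{loc}}$, to a solution $u$ of \eqref{NLS} on $[T_0,+\infty)$. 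Since $\|u(t)-R(t)\|_{H^1}\le\liminf_n\|u_n(t)-R(t)\|_{H^1}$ by weak lower semicontinuity, the bound \eqref{eq:exp_conv} survives the passage to the limit.

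The heart of the matter is thus the uniform exponential estimate, which I would establish by a bootstrap argument: assuming on a subinterval $[t^*,T_n]$ the a priori bound $\|u_n(t)-R(t)\|_{H^1}\le 2e^{-2\theta t}$, the goal is to upgrade the factor $2$ to $1$, which then closes the estimate down to $T_0$. To this end I first \emph{modulate}: using the symmetries \eqref{def:inv}, I introduce time-dependent geometric parameters near each soliton (translations $x_k(t)$ and phases $\gamma_k(t)$, and in the scaling-invariant case also scaling parameters) so that the error $\varepsilon(t):=u_n(t)-\tilde R(t)$, with $\tilde R$ the modulated sum, satisfies orthogonality conditions neutralizing the kernel of the linearized operator around each $Q_{\omega_k}$. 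The modulation parameters are then controlled by the implicit function theorem.

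The main analytic tool is a localized Lyapunov functional. Because the velocities $v_k$ are pairwise distinct, the solitons separate linearly in space, so I choose cutoff functions $\phi_k$ transported along the trajectories $x=x_k^0+v_kt$ forming a partition of unity, and set
\begin{equation*}
\mathcal{F}(t)=\sum_{k=1}^K\Big(E_{\phi_k}(u_n)+\omega_k M_{\phi_k}(u_n)-\tfrac12 v_k\cdot P_{\phi_k}(u_n)\Big),
\end{equation*}
where $E_{\phi_k},M_{\phi_k},P_{\phi_k}$ denote the energy, mass and momentum densities localized by $\phi_k$, the combination being the one for which each $R_k$ is a critical point. Expanding $\mathcal F$ around $\tilde R$ yields, to leading order, a sum of the quadratic forms of the linearized operators, and two estimates must be combined: a \emph{coercivity} estimate, stating that modulo the finitely many directions removed by the orthogonality conditions this quadratic form bounds $\|\varepsilon(t)\|_{H^1}^2$ from below; and an \emph{almost-monotonicity} estimate $\frac{d}{dt}\mathcal F(t)\le Ce^{-4\theta t}$, whose right-hand side comes from the time derivatives of the $\phi_k$, which are supported where the solitons are exponentially small by \eqref{exp_decay_Q}, since the separation between trajectories grows like $\big(\min_{k\ne k'}|v_k-v_{k'}|\big)t$. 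Integrating this from $t$ to $T_n$, and using that $\mathcal F(T_n)$ is small because $u_n(T_n)=R(T_n)$, feeds back through coercivity to improve the bound on $\|\varepsilon(t)\|_{H^1}$ and hence on $\|u_n(t)-R(t)\|_{H^1}$.

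I expect the coercivity in the $L^2$-supercritical regime $p\ge 1+\frac4d$ treated in \cite{cote} to be the main obstacle: there the linearized operator around each soliton possesses a genuinely negative direction, so the quadratic form is \emph{not} positive even after modulation and $\mathcal F$ alone cannot control the error. I would resolve this exactly as in \cite{cote}, by isolating the finitely many unstable directions as additional scalar unknowns and choosing the initial values of the modulation parameters through a topological (Brouwer-type shooting) argument so as to project out the exponentially growing component, the orthogonal complement remaining coercive so that the bootstrap closes on it. In the stable subcritical case $p<1+\frac4d$ this difficulty is absent and the coercivity of $\mathcal F$ follows directly from the spectral properties of the ground states, making the argument considerably simpler.
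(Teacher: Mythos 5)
Your overall scheme is exactly the one behind this theorem in the works the paper cites (the paper itself does not reprove Theorem \ref{reg1}: it imports the uniform estimates as Proposition \ref{ptdep} and passes to the limit in Section \ref{preuveex}): backward integration from final data $R(S_n)$, corrected in the unstable range by components along the eigenfunctions $Y_k^\pm$ chosen through a Brouwer-type shooting argument as in \cite{cote}, then uniform $H^1$ exponential estimates via modulation, a localized energy--mass--momentum functional, coercivity of the linearized quadratic forms, and a bootstrap. One slip: the combination for which $R_k$ is a critical point is $E+\big(\omega_k+\tfrac{|v_k|^2}{4}\big)M-v_k\cdot P$, not $E+\omega_k M-\tfrac12 v_k\cdot P$; since you define your functional by that very criticality property, this is harmless, but the displayed coefficients are wrong.

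The one step that would fail as written is the passage to the limit. Weak convergence of $u_n(T_0)$ in $H^1$ together with ``continuous dependence on the data'' does not yield convergence of the solutions: continuous dependence in the $H^1$ (or $H^s$) Cauchy theory concerns strongly convergent data, and no soft argument makes the \eqref{NLS} flow continuous from weak-$H^1$ to weak-$H^1$. What is actually needed --- and what both \cite{schMM} and the paper's own Step 3 single out as the second key ingredient besides the uniform estimates --- is an $L^2$-compactness (tightness) property: for every $\epsilon>0$ there is a compact set outside of which $\int |u_n(T_0)|^2\,dx\le\epsilon$ uniformly in $n$. This is obtained by using the uniform estimate at a large fixed time $t_\epsilon$ (where $u_n(t_\epsilon)$ is $\epsilon$-close in $L^2$ to the exponentially localized profile $R(t_\epsilon)$) and transporting that mass bound back to $T_0$ with a localized mass estimate of the type $\big|\frac{d}{dt}\int|u_n|^2\chi_\rho\,dx\big|\le C\rho^{-1}$ over the fixed interval $[T_0,t_\epsilon]$. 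Tightness plus the uniform $H^1$ bound upgrades weak convergence to strong $L^2$ convergence, hence to strong $H^s$ convergence for every $s<1$ by interpolation; the subcritical Cauchy theory in $H^s$ (with genuine continuous dependence) then gives $u_n(t)\to u(t)$ for each $t\ge T_0$, and \eqref{eq:exp_conv} passes to the limit by weak lower semicontinuity of the $H^1$ norm, as you say. (Alternatively one may invoke weak continuity of the subcritical \eqref{NLS} flow, which holds via local compactness and unconditional uniqueness, but that is a separate theorem, not an instance of continuous dependence.)
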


Let us also mention the works by Le Coz and Tsai \cite{LCT14} and Le Coz, Li and Tsai \cite{LCLT15} where \emph{infinite} trains of solitons are constructed, in the context of \eqref{NLS}. The construction of multi-solitons  in $H^1$ was done for many other non-linear dispersive models (besides the generalized Korteweg-de Vries equations) such as the non-linear Klein-Gordon equation \cite{CM}, the Hartree equation \cite{KMR}, the water-waves system \cite{ming}, and in both stable and unstable contexts, which means assuming that all $Q_{\omega_k}$ are stable or not. 

Even though solutions of (\ref{NLS}) behaving as a sum of  decoupled \emph{general bound states} (that is, solutions to \eqref{elliptic} which  change sign) have been  studied in the last years (see for example \cite{cc} on (\ref{NLS}) or \cite{CMa} on non-linear Klein-Gordon equation), in the present paper we concentrate only on multi-solitons based on ground states. Our goal here is to study uniqueness and smoothness issues.

\bigskip

To our knowledge, the only work where multi-solitons are shown to be more regular than $H^1$ is for the generalized Korteweg-de Vries equation (which is one-dimensional), with monomial non-linearity, by Martel \cite{martel}, where the exponential convergence \eqref{eq:exp_conv} is shown to hold in $H^s(\RR)$ for all $s \in \NN$ (with a constant $C_s$ depending on $s$ in front of the exponential term and a convergence rate $\theta$ independent of $s$): see Proposition 5 and its proof for the $L^2$-subcritical and critical cases; the $L^2$-supercritical case can be treated likewise, as it is mentioned in \cite[Remark 1]{cote}).

A natural question is thus to understand for \eqref{NLS} whether the multi-soliton $u$ in Theorem \ref{reg1} is smoother than $H^1$: for example, does it belong to $\mathscr{C}([T_0,+\infty),H^s(\RR^d))$ for $s>1$ and does it hold $\| u(t) - R(t) \|_{H^s} \to 0$ as $t \to +\infty$?

Another natural question is the uniqueness or the classification of multi-solitons. Again, to our knowledge, the only complete study of the question was done for the generalized Korteweg-de Vries equations: multi-solitons were proved to be unique in the $L^2$-subcritical and critical cases by Martel \cite{martel}, and were classified in the $L^2$-supercritical case by Combet \cite{combetgkdv} (there is a $K$-parameter family of $K$-solitons, each instability direction yielding a free parameter). Actually, smoothness of the multi-solitons constructed in Theorem \ref{reg1} is an important ingredient in the proof of uniqueness (or classification) in dimension $d\ge 2$.

\subsection{Main results} 

Our first result concerns the construction of a multi-soliton in $H^s(\RR^d)$, where the regularity index $s >1$ depends on the regularity of the function $g$. We prove in particular that the convergence occurs with an exponential rate in $H^s(\RR^d)$. The result is stated here for pure power non-linearities, and we will discuss general non-linearities in the next paragraph.

\begin{Th}[Smoothness of multi-solitons] \label{reg}
Assume that $p \ge 3$. Let $\theta>0$ be defined as in Theorem \ref{reg1} and $s_0 = \lfloor p-1 \rfloor \ge 2$, or $s_0 = +\infty$ if $p$ is an odd integer.
There exist $T_1>0$ and $u\in\mathscr{C}([T_1,+\infty),H^{s_0}(\RR^d))$ a solution of \eqref{NLS} with pure power nonlinearity \eqref{def:nonlinearity} such that for all non-negative integer $s\le s_0$, there exists $C_s \ge 1$ such that
\begin{equation}\label{resreg1}
\forall\; t\ge T_1,\quad \|u(t)-R(t)\|_{H^s} \le C_se^{-\frac{2\theta}{s+1}t}.
\end{equation}
Moreover, if $p$ is an odd integer, then for all integer $s\ge 0$, 
\begin{equation}\label{resreg2}
\forall\; t\ge T_1,\quad \|u(t)-R(t)\|_{H^s} \le \sqrt{C_s}e^{-\theta t}.
\end{equation}
\end{Th}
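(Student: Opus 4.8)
The plan is to set $\eta := u - R$ and work with the equation it satisfies. Since each $R_k$ solves \eqref{NLS} while $R=\sum_k R_k$ does not, writing $g(z)=z|z|^{p-1}$ one finds
\[
i\partial_t\eta + \Delta\eta = -\big(g(R+\eta)-g(R)\big) - \Psi, \qquad \Psi := g(R)-\sum_{k=1}^K g(R_k),
\]
where $\Psi$ is the interaction error coming from the non-additivity of the nonlinearity. First I would record that $\Psi$ is exponentially small in every Sobolev norm: since the solitons travel on the lines $x=x_k^0+v_kt$ with pairwise distinct velocities, their supports separate linearly in $t$, and combining this with the pointwise decay \eqref{exp_decay_Q} of the $Q_{\omega_k}$ and their derivatives gives $\|\Psi(t)\|_{H^s}\le C_s e^{-ct}$ for some $c>0$ larger than $2\theta$ (this is exactly the smallness exploited in the construction behind Theorem \ref{reg1}). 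I would also note at the outset that the $H^1$ bound \eqref{eq:exp_conv} yields the base case $\|\eta(t)\|_{L^2}\le e^{-2\theta t}$.

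The heart of the matter is a regularity bootstrap carried out by induction on $s$ via higher-order energy estimates. Assuming the decay of $\|\eta\|_{H^{s-1}}$ is known, I would differentiate $\|\eta(t)\|_{\dot H^s}^2$ in time. The leading linear contribution $\Rr\langle i\Delta\,\partial^\delta\eta,\partial^\delta\eta\rangle$ vanishes by skew-adjointness of $i\Delta$, and the interaction term contributes at most $\|\Psi\|_{H^s}\|\eta\|_{H^s}$, which is harmless. The nonlinear term $g(R+\eta)-g(R)$ I would expand by the Leibniz rule and Kato--Ponce / Gagliardo--Nirenberg estimates, distributing as many derivatives as possible onto the smooth, exponentially localized profile $R$; all terms in which $\eta$ carries at most $s-1$ derivatives are then controlled by the inductive hypothesis, the interaction being the only external source.

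I expect the genuine obstacle to be the single top-order term in which all $s$ derivatives fall on $\eta$, namely $g'(R)\,\partial^\delta\eta$ tested against $\partial^\delta\eta$. Because $g$ depends on $\bar u$ through $|u|^{p-1}$, its (real-)differential splits into a $\CC$-linear part, whose contribution is skew and cancels, and an antilinear part proportional to $|R|^{p-3}R^2\,\overline{\partial^\delta\eta}$, which leaves a term of size $O(\|\eta\|_{\dot H^s}^2)$ with indefinite sign and an $O(1)$ (neither small nor decaying) coefficient. A naive Gronwall argument therefore only produces exponential growth; closing the estimate requires exploiting the structure of the linearization around the solitons that underlies the construction of Theorem \ref{reg1} (for instance via a suitably corrected energy adapted to that linearization), so that this term is absorbed and the estimate reduces to the already-controlled lower-order norms and the interaction source. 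This is the step I would expect to be the most delicate, all the more so as no stability of the $Q_{\omega_k}$ is assumed here.

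Integrating the resulting differential inequality backward from $+\infty$, where $\eta$ vanishes, then propagates exponential decay from level $s-1$ to level $s$, but with a loss: absorbing the top-order term forces an interpolation of $\|\eta\|_{\dot H^s}$ against the slowly decaying lower norms, so that the reciprocal rates add, $\tfrac{1}{\mu_s}=\tfrac{1}{\mu_{s-1}}+\tfrac{1}{2\theta}$ with $\mu_0=2\theta$, giving exactly $\mu_s=\tfrac{2\theta}{s+1}$ as in \eqref{resreg1}. The induction runs only up to $s_0=\lfloor p-1\rfloor$ because $g\in\mathscr{C}^{s_0}$ limits how many derivatives may be placed on the nonlinearity. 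When $p$ is an odd integer $g$ is a polynomial in $(u,\bar u)$, there is no regularity ceiling, uniform boundedness propagates to every $H^N$, and interpolating $H^s$ between $L^2$ (rate $2\theta$) and $H^{2s}$ (bounded) with exponent $\tfrac12$ yields the uniform rate $\theta$ and the $\sqrt{C_s}$ prefactor of \eqref{resreg2}. Finally, the membership $u\in\mathscr{C}([T_1,+\infty),H^{s_0})$ follows from these uniform bounds together with persistence of regularity for \eqref{NLS} in $H^{s_0}$ (Kato's local well-posedness, cited in the introduction).
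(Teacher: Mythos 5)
Your strategy has the right skeleton (induction on $s$, higher-order energies, cancellation of top-order terms, interpolation for the rates), but it is set up on the wrong object, and this creates two gaps that the paper's proof is specifically engineered to avoid. You take $u$ to be the multi-soliton of Theorem \ref{reg1} and differentiate $\|\eta(t)\|_{\dot H^s}^2$ with $\eta=u-R$; but that $u$ is only known to lie in $H^1$, so for $s\ge 2$ the quantity $\|\eta(t)\|_{\dot H^s}$ is not known to be finite and the energy computation cannot even be initiated (there is no smoothing effect for \eqref{NLS} that would upgrade $H^1$ regularity to $H^s$). Relatedly, you close your differential inequality by ``integrating backward from $+\infty$, where $\eta$ vanishes'': but $\eta$ vanishes at $+\infty$ only in $H^1$ by \eqref{eq:exp_conv}; its smallness in $H^s$ at large times is precisely what the theorem asserts, so this boundary condition is circular. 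The paper circumvents both problems by never touching the Theorem \ref{reg1} solution: it runs the induction on the approximating sequence $u_n$ of \cite{cote,schMM}, whose data $u_n(S_n)=R(S_n)+\phi_n$ at the \emph{finite} final time $S_n$ is in $H^{s_0}$ and exponentially small there by construction \eqref{init}; Kato's theory \cite{kato} makes each $u_n$ a genuine $H^{s_0}$ solution on $[T_0,S_n]$, a bootstrap argument (Proposition \ref{boot}, with integration from $t$ up to $S_n$ replacing your boundary condition at infinity) gives uniform-in-$n$ exponential $H^s$ bounds, and a compactness argument at time $T_1$ produces a smooth multi-soliton in the limit. Note also that the theorem only claims \emph{existence} of a smooth multi-soliton; identifying it with the Theorem \ref{reg1} solution is the content of the separate uniqueness result, which needs extra hypotheses.

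The second gap is that the step you yourself flag as ``the most delicate'' --- absorbing the top-order contribution coming from the antilinear part of $Dg$ --- is exactly the core of the paper's argument, and you leave it as a black box (``a suitably corrected energy adapted to the linearization''); moreover your guess about what is needed points in a misleading direction, since the paper uses neither the linearized operators around the solitons nor any stability/coercivity input at this stage. The resolution is a purely algebraic correction: one replaces $\sum_{|\alpha|=s}\binom{s}{\alpha}\|\partial^\alpha u_n\|_{L^2}^2$ by the functional \eqref{funct},
\[
G_{n,s}(t)=\int_{\RR^d}\Big\{\sum_{|\alpha|=s}\dbinom{s}{\alpha}|\partial^\alpha u_n|^2-\sum_{|\beta|=s-1}\dbinom{s-1}{\beta}\Rr\big(u_n^2(\partial^\beta\overline{u_n})^2\big)f'(|u_n|^2)\Big\}\,dx,
\]
whose time derivative, thanks to the identity $\Ii\,\partial_xg=\Rr\,\partial_yg$ and the multinomial identity \eqref{prop_multinom}, contains \emph{no} quadratic term in $\partial^\alpha u_n$ with $|\alpha|=s$ at all; this works in the stable and unstable cases alike, which is why no stability assumption is needed. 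Without exhibiting such a correction (or an equivalent device), your Gronwall-type estimate cannot be closed, so the proposal as written is incomplete at its decisive step; the rate bookkeeping $\mu_s=\frac{2\theta}{s+1}$ and the interpolation giving \eqref{resreg2}, while consistent with the paper's conclusions, remain moot until that step is supplied.
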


\begin{Rq}
Theorem \ref{reg} completes Theorem \ref{reg1} by showing the existence of smooth multi-solitons. Notice that its applications are limited to dimensions $d\leq 3$, since we consider the pure power case and due to the $H^1$-subcritical assumption $p<1+\frac{4}{(d-2)_+}$ which is required for the existence of solitons. 

In particular, in dimension $d=1$ and $d=2$, multi-solitons belong to $H^\infty(\RR^d)$ when $p$ is an odd integer, and in dimension $d=3$, multi-solitons are $H^{\infty}(\RR^3)$ when $p=3$ (which corresponds to the most physically relevant case).

The exponential decay rate $\frac{2\theta}{s+1}$ does depend on $s$ (vanishing when $s$ is large), which could be a problem for some applications. Observe however that if one is willing to consider only regularity indices $s \le \frac{s_0}{2}$ (say), then a straightforward interpolation argument between the $H^1$ and $H^{s_0}$ bounds gives the convergence with uniform exponential decay rate $\theta$:
\[ \forall\; s \le \frac{s_0}{2},\: \forall\; t\ge T_1,\quad \|u(t)-R(t)\|_{H^s} \le C_s' e^{-\theta t}. \]
\end{Rq}

Our second goal is to obtain some kind of uniqueness result for \eqref{NLS}. The simplest (and most satisfying) statement one could think of would be uniqueness in the class of solutions $u$ defined for large enough times and convergent to the profile $R$:
\[ \| u(t)-R(t)\|_{H^1} \to 0 \quad \text{as} \quad t \to +\infty. \] 
Such a uniqueness result, unconditional to any decay rate, was obtained for multi-solitons of  the generalized Korteweg-de Vries equation in \cite{martel}.
This seems currently out of reach for \eqref{NLS}, but one can expect uniqueness in some smaller class, where it is assumed that $\| u(t) - R(t) \|_{H^1}$ tends to $0$ with sufficiently fast decay rate: for example, in the class of exponential convergence, as it was done in \cite{LCT14,LCLT15} for fast spreading \eqref{NLS} multi-solitons (see also \cite{Fer21} for the case of gaussons of the logarithmic \eqref{NLS}).

In the following result, we prove uniqueness in the class of multi-solitons $u$ such that $\| u(t) - R(t) \|_{H^1}$ decreases faster than a high power of $\frac{1}{t}$ for large values of $t$. This in particular breaks the barrier of the exponential class, in which multi-solitons naturally lie. More precisely, we state the following:

\begin{Th}[Conditional uniqueness]\label{uni}
Let $d\le 2$ and $3 \le p \le 1 + \frac{4}{d}$. There exists $N \in \NN$ large such that there is a unique $u \in\mathscr{C}([T_1,+\infty),H^{1}(\RR^d))$ solution to \eqref{NLS} such that
\begin{equation}\label{classexpo}
\|u(t)-R(t)\|_{H^1}=\mathrm{O}\left(\frac{1}{t^{N}}\right), \qquad \text{as } t\rightarrow +\infty.
\end{equation}
In particular, the multi-solitons of Theorems \ref{reg1} and \ref{reg} coincide (and one can take $T_0=T_1$).
\end{Th}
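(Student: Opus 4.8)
The plan is to argue by an energy method combined with a backward Gronwall argument, in the spirit of the uniqueness proofs of Martel \cite{martel} and Le Coz--Tsai \cite{LCT14}. Let $u_1,u_2\in\mathscr{C}([T_1,+\infty),H^1(\RR^d))$ be two solutions satisfying \eqref{classexpo}, write $\eta_i=u_i-R$ and set $w=u_1-u_2$, so that $\|w(t)\|_{H^1}=\mathrm{O}(t^{-N})$ and $w$ solves the difference equation
\[ \partial_t w=i\big(\Delta w+g(u_1)-g(u_2)\big),\qquad g(z)=z|z|^{p-1}. \]
The first useful observation is that, since both solutions converge to the \emph{same} profile $R$, the interaction source term $g(R)-\sum_k g(R_k)$ present in each individual equation for $\eta_i$ cancels in the difference: $w$ obeys a genuinely homogeneous equation, which is what lets the estimates close. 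Writing $g(u_1)-g(u_2)=Dg(R)\,w+\mathcal{N}$, the hypothesis $p\ge 3$ ensures that $g$ is of class $\mathscr{C}^2$, so the remainder is quadratic, $\|\mathcal{N}\|\le C(\|\eta_1\|+\|\eta_2\|+\|w\|)\|w\|\le C\,t^{-N}\|w\|_{H^1}$. Up to exponentially small cross terms $\mathrm{O}(e^{-\alpha t})$ coming from $R=\sum_k R_k$, the operator $Dg(R)$ decouples into the linearized operators $L_k$ around each $R_k$, obtained as the Hessian of the action $E+\omega_k M-v_k\cdot P$ at $Q_{\omega_k}$ and transported by the phase and Galilean symmetries.

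Next I would build a localized action functional. Using cutoff functions $\phi_k(t,\cdot)$ supported near the $k$-th trajectory $x=x_k^0+v_kt$ --- which are well separated since $v_k\neq v_{k'}$ --- I would set
\[ \mathcal{F}(t)=\sum_{k=1}^K\int_{\RR^d}\phi_k\Big(\tfrac12|\nabla w|^2+\big(\omega_k-\tfrac{|v_k|^2}{4}\big)|w|^2-v_k\cdot\Ii\big(\overline{w}\,\nabla w\big)-\mathcal{Q}_k(w)\Big)\,dx, \]
where $\mathcal{Q}_k$ is the quadratic part of the nonlinearity; this is a quadratic form in $w$ modelled on $\sum_k\langle L_k w,w\rangle$. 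The two properties to establish are coercivity, $\mathcal{F}(t)\ge c\|w(t)\|_{H^1}^2-C\sum_{k,j}|\langle w,Z_{k,j}\rangle|^2$, where the $Z_{k,j}$ span $\ker L_k$ (phase, translation, Galilean, and, in the critical case, scaling generators); and almost-monotonicity, $|\mathcal{F}'(t)|\le C(e^{-\alpha t}+t^{-N})\|w(t)\|_{H^1}^2$, the first factor collecting the motion of the cutoffs and the soliton interactions, the second coming from $\mathcal{N}$.

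To control the kernel projections $\langle w,Z_{k,j}\rangle$, I would introduce modulation parameters for each soliton, impose the corresponding orthogonality conditions, derive the modulation ODEs, and use the conservation of total mass, momentum and energy --- which coincide for $u_1$ and $u_2$, both equal to those of $R$ --- to pin down the remaining directions; this yields $\sum_{k,j}|\langle w,Z_{k,j}\rangle|\le C\,t^{-N}\|w\|_{H^1}$ up to integrable-in-time errors. Combining coercivity with the monotonicity bound and integrating from $t$ to $+\infty$, where $\mathcal{F}(+\infty)=0$ since $w\to 0$, produces a backward integral inequality of the schematic form $\mathcal{F}(t)\le C\int_t^{+\infty}s^{-N}\mathcal{F}(s)\,ds$ after absorbing the exponential and kernel contributions. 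For $N$ large the weight $\int_t^{+\infty}s^{-N}\,ds$ is strictly less than $1$ for every $t\ge T_1$, so a backward Gronwall argument forces $\mathcal{F}\equiv 0$, hence $w\equiv 0$, on $[T_1,+\infty)$; backward uniqueness for the Cauchy problem then gives $u_1=u_2$ on the whole interval.

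The main obstacle will be the critical case $p=1+\frac{4}{d}$ (occurring for $d=2,p=3$ and $d=1,p=5$), where the soliton is unstable and $L_k$ carries a degenerate, non-positive direction associated with the scaling generator, so that naive coercivity fails. Controlling this single direction requires a sharper argument --- exploiting the extra generalized-kernel relation and the conservation of mass, together with the fact that the associated instability is only algebraic rather than exponential --- and it is precisely this degeneracy, combined with the polynomial losses incurred in the modulation estimates, that forces the threshold $N$ to be taken large.
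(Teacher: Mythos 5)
Your overall architecture (localized linearized-energy functional with cutoffs along each soliton ray, coercivity modulo kernel directions, modulation, backward integration from $t=+\infty$, absorption for $N$ large) is the same as the paper's, but the proposal has two genuine gaps. The first is structural: you difference two arbitrary $H^1$ solutions $u_1,u_2$ of the class, whereas the paper differences an arbitrary solution $u$ against the \emph{smooth} multi-soliton $\varphi$ constructed beforehand (Theorems \ref{reg} and \ref{th_reg_f}), which satisfies $\|\varphi(t)-R(t)\|_{H^{s_0}}\le Ce^{-\gamma t}$ and in particular is uniformly bounded in $L^\infty$. This is not cosmetic: for $d=2$ the nonlinear estimates do not close with only $H^1$ regularity on both functions. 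The analogue of Lemma \ref{ineqs} uses the uniform boundedness of $\varphi$; the Taylor-remainder bound in Proposition \ref{func} needs $\sup\|D^3\tilde F\|$ along the segment $[\varphi,\tilde z+\varphi]$, finite because $\varphi\in L^\infty$; and estimate \eqref{A1'''} involves $\Delta(\varphi-R)$ (hence $H^2$ regularity) and $\nabla\varphi\in L^4,L^6$. If $\varphi$ is replaced by a second rough solution $u_2\in\mathscr{C}([T_1,\infty),H^1(\RR^2))$, terms such as $\int|u_2|^{p-1}|\nabla w|^2\,dx$ or $\int\Delta(u_2-R)\overline{w}(\cdots)\,dx$ can neither be defined nor estimated. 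This is precisely why the smoothness theorem is proved first and is an ingredient of uniqueness in $d\ge 2$, as the paper stresses in its outline; your argument as written only closes once one of the two solutions is taken to be $\varphi$, i.e., after reproducing the paper's reduction.

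The second gap is your almost-monotonicity claim $|\mathcal{F}'(t)|\le C(e^{-\alpha t}+t^{-N})\|w\|_{H^1}^2$, with the exponential factor "collecting the motion of the cutoffs." Since \eqref{NLS} has no usable backward monotonicity, the cutoffs must have transition regions of width of order $t$ (the paper's $\phi_k$ are functions of $(x_1-\xi_k-\sigma_k t)/t$), so $\partial_t\phi_k,\,\partial_{x_1}\phi_k=\mathrm{O}(1/t)$ by Lemma \ref{lem:phij}, and the cutoff contribution to the derivative of the functional is $\mathrm{O}\big(\tfrac1t\|\tilde z\|_{H^1}^2\big)$ — this is exactly Remark \ref{rq:mono}, and it is the reason uniqueness is conditional on the polynomial class \eqref{classexpo} at all. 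With the correct $1/t$ error, your schematic inequality becomes $\mathcal{F}(t)\le C\int_t^{+\infty}s^{-1}\|w(s)\|_{H^1}^2\,ds$ plus better terms, and the naive absorption fails since $\int_t^{+\infty}s^{-1}ds$ diverges; one must instead inject the a priori decay $\|w(s)\|_{H^1}\le A s^{-N}$ into the integral to gain the factor $1/N$, and verify (as in Remark \ref{rq:indep_z}) that the constant $C$ is independent of the solution so that $N$ can be fixed once and for all. The conclusion of your Gronwall step is reachable, but only after this restructuring; as stated, the step that "for $N$ large the weight is strictly less than $1$" rests on an error term that does not occur.
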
 

\begin{Rq}
The crucial point in Theorem \ref{uni} is obviously the uniqueness part. For pure power non-linearities, Theorem \ref{uni} provides conditional uniqueness in the sense of \eqref{classexpo}, in the $L^2$-subcritical and critical cases with $p \ge 3$ in dimension $1$, and in the $L^2$-critical case $p=3$ in dimension $2$. 

The requirement that the non-linearity be $L^2$-subcritical or $L^2$-critical is to be expected as no uniqueness holds in the $L^2$-supercritical case; see Côte and Le Coz \cite{cc} for example. 
\end{Rq}

\subsection{General non-linearities}\label{paragraph:gen}

In order to consider general non-linearities, one must make a number of assumptions which we discuss in this paragraph. 

Well-posedness in $H^1(\RR^d)$ is classically done under the hypothesis that $g:\CC\to \CC$ is $\mathscr{C}^1$ and satisfies

\Hyp{
$g(0)=0$ and there exists $p \in \left(1, 1+\frac{4}{(d-2)_+} \right)$ such that  $\frac{\partial g}{\partial x}(z), \frac{\partial g}{\partial y}(z)= \mathrm{O}(|z|^{p-1})$ as $|z| \to +\infty$.
}
In order that the Cauchy problem for \eqref{NLS} be well-posed in $H^s$ for $s\in\NN \setminus \{0,1\}$, Kato \cite[Theorem 4.1]{kato} requires furthermore that $g$ is $\mathscr{C}^s$, and if $s\leq \frac{d}{2}$, one also needs:

\Hyp{
if $g$ is a polynomial in $z$ and $\overline{z}$, its degree is $\deg g< 1+\frac{4}{d-2s}$ \\
if $g$ is not a polynomial, there exists $p \in \left[\lceil s\rceil, 1+\frac{4}{d-2s} \right[$ such that  $\frac{\partial^s g}{\partial_x^k\partial_y^{s-k}}(z) = \mathrm{O}(|z|^{p-s})$ as $|z| \to +\infty$, for all $k=0,\dots,s$, with $\lceil s\rceil$ the smallest integer greater or equal to $s$.
}
\\

The existence of solitons with frequency $\omega >0$ is not as immediate as in the pure power case. Under the assumption that
\begin{equation}\label{pointxi}
\exists\;\xi_\omega\in\RR^*_+,\qquad F(\xi_\omega)>\omega\xi_\omega \quad \text{where} \quad F(r):=\int_0^rf(\rho)\;d\rho
\end{equation} for $r \ge 0$, Berestycki and Lions \cite{beres} showed the existence of a positive radial ground state $Q_\omega\in H^1(\RR^d)$ to \eqref{elliptic}.
Note that if there exist $\tilde{\lambda}>0$, $p'>0$ and $r_0>0$ such that 
\[ \forall\; r\geq r_0,\quad f(r)\geq \tilde{\lambda} r^{p'}, \]
then (\ref{pointxi}) holds for all $\omega>0$.
If $d=1$, a necessary and sufficient condition for the existence of a positive solution \eqref{elliptic} is that $\omega$ is such that 
\begin{equation} \label{exists_GS}
r_0:=\inf\Big\{r>0\;\Big|\:\displaystyle F(r)=\omega r\Big\}
\end{equation}
 exists and $f(r_0)>\omega$ (see \cite{beres}). \\
 Let us denote by $\mathcal{O}$ a subset of $(0,+\infty)$ such that 
 \begin{equation} \text{for all } \omega \in \mathcal O, \text{ there exists a solution } Q_\omega \text{ to } \eqref{elliptic}. \end{equation}

Recall that when it exists, a positive solution of (\ref{elliptic}) is radial (see \cite[section 3]{beres} and also Gidas, Ni and Nirenberg \cite[Theorem 1']{GNN} for non-linearities $f$ such that $r\mapsto rf(r^2)$ is increasing). We underline that it is not unique in general. Indeed, Dàvila, Pino and Guerra \cite{davila} showed the existence of at least three positive $H^1$ solutions of 
\[ \Delta u+u^p+\lambda u^2=u \]
for some $\lambda >0$ and $p \in (1,5)$ in dimension $d=3$. See \cite{davila} for other counterexamples in dimension 3. \\
On the other side, Kwong \cite{kwong} showed uniqueness of a positive radial ground state in the pure power case, and one can extend this to more general non-linearities; we refer to Mc Leod and Serrin \cite{leod},  Serrin and Tang \cite{serrin} and  Jang \cite{jang} for full details. One of the most important statements may be found in Serrin and Tang \cite{serrin}: a sufficient condition for uniqueness when $d\geq 3$ is the existence of $\alpha>0$ such that
\[ \begin{dcases}
\forall\;x\in(0,\alpha],\quad f(x)\leq 1, \quad \text{ and }\quad \forall\;x\in(\alpha,+\infty),\quad f(x)>1\\
x\mapsto \frac{xf'(x)}{f(x)-1}\quad\text{is not increasing on } (\alpha,+\infty).
\end{dcases} \]
In \cite{jang}, a slightly more general condition (inspired by \cite{serrin}) yields uniqueness for \eqref{elliptic} in any dimension $d\ge 2$.

Let us point out that conditions for existence and uniquenes of a ground state have been discussed for specific non-linearities in the litterature. For example, Berestycki and Lions condition concerning existence and Serrin and Tang condition concerning uniqueness of a ground state apply to the (important) cubic-quintic non-linearity (corresponding to $g(z)=z|z|^2-z|z|^4$ or $f(r)=r-r^2$). Killip, Oh, Pocovnicu and Visan studied more precisely the properties of ground states associated with this nonlinearity and showed in particular that existence and uniqueness of a positive radially symmetric solution to
$$\Delta Q_{\omega}+Q_{\omega}^3-Q_{\omega}^5=\omega Q_{\omega} $$
hold if and only if $\omega\in\mathcal{O}:=\left(0,\frac{3}{16}\right)$; see \cite[Lemma 2.1 and Theorem 2.2]{killip}. 

\bigskip

Pursuing with general non-linearities, we will also need a number of assumptions on the linearized operators around solitons. Fix  $\omega \in \mathcal O$, and let
\[ \begin{array}{cccl}
\mathscr{L}_\omega : &H^1(\RR^d,\CC)& \to &H^1(\RR^d,\CC)\\
&v = v_1 + iv_2 & \mapsto &- \Delta v + \omega v - (f(Q_\omega^2) v+ 2 Q_\omega^2 f'(Q^2_\omega) v_1) 
\end{array} \]
so that the linearized equation of \eqref{NLS} around $e^{i \omega t}(Q+v)$ is $\partial_t v = i\mathscr{L}_\omega v$. We also define
the linearized energy around $Q_\omega$, for any $w = w_1+iw_2 \in H^1(\RR^d,\CC)$
\begin{align*}
H(w) :& =  \int_{\RR^d} \left( |\nabla w|^2 + \omega |w|^2 - \left( f(Q_\omega^2) |w|^2 + 2 Q_\omega^2 f'(Q^2_\omega) w_1^2 \right) \right)\;dx \\
& = \Rr\int_{\RR^d} \mathscr{L}_\omega w \overline{w}\; dx =  \int_{\RR^d} L_{+,\omega} w_1  w_1\;dx +  \int_{\RR^d} L_{-,\omega} w_2  w_2\; dx,
\end{align*}
where
\begin{align*}
L_{+,\omega}w_1 & :=-\Delta w_1+\omega w_1-\big(f(Q_\omega^2)+2Q_\omega^2f'(Q_\omega^2)\big)w_1\\
L_{-,\omega}w_2 & :=-\Delta w_2+\omega w_2-f(Q_\omega^2)w_2.
\end{align*}

We do two (mutually incompatible) coercivity assumptions, depending on whether $Q_\omega$ is stable or not. They write as follows:

\Hyp{ (Stable case) There exists $\mu_+ >0$ such that for all $w = w_1 + iw_2 \in  H^1(\RR^d,\CC)$
\begin{equation} \label{coer_stable} 
H(w) \ge \mu_+ \| w \|_{H^1}^2 - \frac{1}{\mu_+} \left( \left( \int_{\RR^d} w_1 Q_\omega\;dx \right)^2 + \sum_{i=1}^d \left( \int_{\RR^d} w_1 \partial_{x_i} Q_\omega\;dx \right)^2 + \left( \int_{\RR^d} w_2 Q_\omega\;dx \right)^2 \right).
\end{equation}
}

\Hyp{ (Unstable case) There exists an eigenfunction $Y_\omega  = Y_1 + i Y_2 \in  H^1(\RR^d,\CC)$ of $i\mathscr L_\omega$ (with eigenvalue $e_0 >0$) and $\mu_+ >0$ such that for all $w = w_1 + iw_2 \in  H^1(\RR^d,\CC)$,
\begin{multline} \label{coer_unstable} 
H(w) \ge \mu_+ \| w \|_{H^1}^2 - \frac{1}{\mu_+} \left( \int_{\RR^d} w_1 Y_2 dx \right)^2 \\
-  \frac{1}{\mu_+}\left( \sum_{i=1}^d \left( \int_{\RR^d} w_1 \partial_{x_i} Q_\omega\;dx \right)^2 + \left( \int_{\RR^d} w_2 Y_1\;dx \right)^2 + \left( \int_{\RR^d} w_2 Q_\omega\;dx \right)^2 \right).
\end{multline}
}
Assumptions (H3) and (H4) are intimately related to the stability of $Q_\omega$. Regarding the stable case, we have the following result by Grillakis, Shatah and Strauss \cite[p. 341-345]{gss2} (see also the work by Weinstein \cite{weinsteinmod,weinstein} and by Maris \cite[Lemma 2.4]{maris}).

\begin{Prop}
Assume that $\mathcal O$ is open and that the map $\omega \mapsto Q_\omega$ is of class $\mathscr C^1$. Let $\omega_0\in\mathcal{O}$. Under the non-degeneracy assumption that
\begin{equation}\label{nondeg}
\mathrm{Ker}(L_{+,\omega_0})= \mathrm{Span}\left\{\frac{\partial Q_{\omega_0}}{\partial x_i},\;i=1,\dots,d\right\}, 
\end{equation}
we have the following dichotomy:
\begin{itemize}
\item If $\displaystyle\frac{d}{d\omega}_{|\omega=\omega_0}\int_{\RR^d}Q_{\omega}(x)^2\;dx>0$, then \eqref{coer_stable} holds, and as a consequence, $Q_{\omega_0}$ is orbitally stable in $H^1(\RR^d)$.
\item If $\displaystyle\frac{d}{d\omega}_{|\omega=\omega_0}\int_{\RR^d}Q_{\omega}(x)^2\;dx<0$, then $Q_{\omega_0}$ is orbitally unstable in $H^1(\RR^d)$.
\end{itemize}
\end{Prop}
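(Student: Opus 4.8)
The plan is to reduce the dichotomy to a spectral analysis of the self-adjoint operators $L_{+,\omega_0}$ and $L_{-,\omega_0}$, combined with a Lyapunov-functional argument of Grillakis–Shatah–Strauss type, the decisive quantity being the Vakhitov–Kolokolov slope $\frac{d}{d\omega}_{|\omega=\omega_0}\int_{\RR^d}Q_\omega^2\,dx$. First I would record the spectral structure. Differentiating \eqref{elliptic} in $x_i$ gives $L_{+,\omega_0}\partial_{x_i}Q_{\omega_0}=0$, while \eqref{elliptic} itself reads $L_{-,\omega_0}Q_{\omega_0}=0$. Since $Q_{\omega_0}>0$, it is the ground state of $L_{-,\omega_0}$, so $L_{-,\omega_0}\ge 0$ with $\mathrm{Ker}(L_{-,\omega_0})=\mathrm{Span}\{Q_{\omega_0}\}$ and a spectral gap above $0$; this yields coercivity of $\int L_{-,\omega_0}w_2\,w_2$ on the orthogonal complement of $Q_{\omega_0}$, accounting for the term $\left(\int w_2 Q_{\omega_0}\right)^2$ in \eqref{coer_stable}. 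Because the $\partial_{x_i}Q_{\omega_0}$ change sign, $L_{+,\omega_0}$ has exactly one negative eigenvalue, and the non-degeneracy hypothesis \eqref{nondeg} pins down $\mathrm{Ker}(L_{+,\omega_0})=\mathrm{Span}\{\partial_{x_i}Q_{\omega_0}\}$, the rest of the spectrum being positive.

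Next I would differentiate \eqref{elliptic} in $\omega$ — here the $\mathscr C^1$ dependence $\omega\mapsto Q_\omega$ on the open set $\mathcal O$ is used — to obtain the key identity $L_{+,\omega_0}\,\partial_\omega Q_{\omega_0}=-Q_{\omega_0}$. As $Q_{\omega_0}$ is radial and the kernel directions $\partial_{x_i}Q_{\omega_0}$ are odd, $Q_{\omega_0}$ is orthogonal to $\mathrm{Ker}(L_{+,\omega_0})$, so $Q_{\omega_0}$ lies in the range of $L_{+,\omega_0}$ and one computes $\langle L_{+,\omega_0}^{-1}Q_{\omega_0},Q_{\omega_0}\rangle=-\langle\partial_\omega Q_{\omega_0},Q_{\omega_0}\rangle=-\tfrac12\frac{d}{d\omega}_{|\omega=\omega_0}\int_{\RR^d}Q_\omega^2\,dx$.

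The crux of the argument — and the step I expect to be the main obstacle — is the coercivity criterion for $L_{+,\omega_0}$. Using that this operator has a single negative eigenvalue, a one-dimensional kernel, and a spectral gap, a variational argument (minimizing $\langle L_{+,\omega_0}w,w\rangle$ under $\|w\|_{L^2}=1$ with $w\perp Q_{\omega_0}$, and analyzing the Lagrange-multiplier equation $L_{+,\omega_0}w=\lambda w+\nu Q_{\omega_0}$) shows that $L_{+,\omega_0}$ is positive definite on the orthogonal complement of $Q_{\omega_0}$ precisely when $\langle L_{+,\omega_0}^{-1}Q_{\omega_0},Q_{\omega_0}\rangle<0$, that is when $\frac{d}{d\omega}_{|\omega=\omega_0}\int_{\RR^d}Q_\omega^2\,dx>0$. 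From this $L^2$-level positivity one upgrades to $H^1$-coercivity of $\int L_{+,\omega_0}w_1\,w_1$ on the complement of $\mathrm{Span}\{Q_{\omega_0},\partial_{x_i}Q_{\omega_0}\}$ (the $-\Delta$ term controlling the gradient), and combining the $w_1$ and $w_2$ estimates with the explicit $L^2$-projections onto the constrained directions yields exactly \eqref{coer_stable}.

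Finally, I would close the two alternatives via the action $d(\omega):=E(Q_\omega)+\omega M(Q_\omega)$, where $M(u)=\tfrac12\int_{\RR^d}|u|^2\,dx$ and $E$ is the energy; the envelope theorem gives $d'(\omega)=M(Q_\omega)$, hence $d''(\omega)=\tfrac12\frac{d}{d\omega}\int_{\RR^d}Q_\omega^2\,dx$. When this is positive, the coercivity \eqref{coer_stable} makes $H$ a Lyapunov functional, coercive modulo the symmetry directions, and a standard modulation argument relying on conservation of mass, energy, and momentum produces orbital stability. When $d''(\omega_0)<0$, the persistent negative direction of the constrained form $\langle L_{+,\omega_0}\cdot,\cdot\rangle$ feeds the Grillakis–Shatah–Strauss convexity/escape argument, producing orbital instability.
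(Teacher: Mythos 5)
Your outline follows the same route as the sources on which the paper itself relies: the paper does not prove this proposition at all, but attributes it to Grillakis--Shatah--Strauss, Weinstein and Maris, and your sketch (spectral structure of $L_{\pm,\omega_0}$, the identity $L_{+,\omega_0}\partial_\omega Q_{\omega_0}=-Q_{\omega_0}$, the Vakhitov--Kolokolov sign criterion via $\langle L_{+,\omega_0}^{-1}Q_{\omega_0},Q_{\omega_0}\rangle=-\tfrac12\frac{d}{d\omega}\int Q_\omega^2\,dx$, and the Lyapunov/convexity argument on $d(\omega)$) is precisely the argument of those references. The identities you record are correct, as is the reduction of \eqref{coer_stable} to positivity of $L_{+,\omega_0}$ on the orthogonal complement of $\mathrm{Span}\{Q_{\omega_0},\partial_{x_1}Q_{\omega_0},\dots,\partial_{x_d}Q_{\omega_0}\}$ together with positivity of $L_{-,\omega_0}$ on $\{Q_{\omega_0}\}^\perp$.

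There is, however, one genuine gap. You assert that ``because the $\partial_{x_i}Q_{\omega_0}$ change sign, $L_{+,\omega_0}$ has exactly one negative eigenvalue.'' The sign-change argument only yields \emph{at least} one negative eigenvalue: the bottom eigenvalue of a Schr\"odinger operator is simple with a positive eigenfunction, so $0$, whose eigenspace contains the sign-changing functions $\partial_{x_i}Q_{\omega_0}$, cannot be the bottom of the spectrum; nothing here excludes a second negative eigenvalue. The exact count (Morse index one) is indispensable for your next step: the Lagrange-multiplier minimization that converts $\langle L_{+,\omega_0}^{-1}Q_{\omega_0},Q_{\omega_0}\rangle<0$ into nonnegativity of $L_{+,\omega_0}$ on $\{Q_{\omega_0}\}^\perp$ fails if there are two independent negative directions, since orthogonality to the single function $Q_{\omega_0}$ cannot remove both. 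Establishing that $L_{+,\omega_0}$ has exactly one negative eigenvalue is where the cited references do real work: it follows from the variational (least-energy, constrained-minimization) characterization of Berestycki--Lions ground states, which bounds the Morse index by one; in the pure power case Weinstein instead extracts it from ODE/oscillation analysis of the explicit ground state. You should either supply this variational argument or import the Morse-index property explicitly as a known attribute of ground states; as written, your chain of implications does not close. (Your instability half is only an invocation of Grillakis--Shatah--Strauss rather than a proof, but the paper treats it at the same citation level, so that is not a defect relative to the paper.)
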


We also refer to Cazenave and Lions \cite[Theorem II.2 and Remark II.3]{caze} for another approach to $H^1$ orbital stability of the solitons based on $Q_{\omega_0}$.

For the pure power case, $\frac{d}{d\omega}\int_{\RR^d}Q_{\omega}(x)^2\;dx = \left(\frac{2}{p-1} - \frac{d}{2}\right) \omega^{\frac{-p+3}{p-1} - \frac{d}{2}}\int_{\RR^d}Q_1(x)^2\;dx$ so that it is positive when $1<p<1+\frac{4}{d}$, that is in the $L^2$-subcritical case (and in particular (H3) holds in that case) and it is negative when $1+\frac{4}{d}<p<\frac{d+2}{d-2}$, that is in the $L^2$-supercritical case.

\bigskip

Regarding the unstable case, following the ideas of Duyckaerts and Merle \cite{dm}, Duyckaerts and Roudenko \cite{dr}, and Côte, Martel and Merle \cite{cote}, the coercivity result below holds.

\begin{Prop}[(3.6) in \cite{cote}]
Let $\omega\in\mathcal{O}$ such that $i\mathscr L_\omega$ admits a non zero eigenfunction $Y_\omega \in H^1(\RR^d)$. Then \eqref{coer_unstable} holds.
\end{Prop}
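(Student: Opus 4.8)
The plan is to exploit the fact that the linearized energy decouples on real and imaginary parts,
\[ H(w) = \int_{\RR^d} L_{+,\omega} w_1\, w_1\;dx + \int_{\RR^d} L_{-,\omega} w_2\, w_2\;dx, \]
and to prove the sharper statement that $H$ is coercive on the finite-codimension subspace
\[ Z := \{\, w = w_1 + iw_2 : w_1 \perp Y_2,\ w_1 \perp \partial_{x_i}Q_\omega\ (1 \le i \le d),\ w_2 \perp Y_1,\ w_2 \perp Q_\omega \,\}, \]
where $\perp$ and $\langle\cdot,\cdot\rangle$ refer to the real $L^2$ scalar product appearing in \eqref{coer_unstable}. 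Once coercivity $H(w) \ge \mu\|w\|_{H^1}^2$ on $Z$ is established, the inequality \eqref{coer_unstable} follows for every $w \in H^1$ by a standard finite-dimensional penalization: decomposing $w_1$ and $w_2$ along the (fixed, $H^1$) directions $Y_2,\partial_{x_i}Q_\omega$ and $Y_1,Q_\omega$ and their orthogonal complements, the coefficients are controlled by the scalar products on the right-hand side of \eqref{coer_unstable} through an invertible Gram matrix, and one absorbs the cross terms by Young's inequality after slightly decreasing $\mu$. First I would record the two eigenrelations contained in $i\mathscr{L}_\omega Y_\omega = e_0 Y_\omega$, namely $L_{+,\omega}Y_1 = e_0 Y_2$ and $L_{-,\omega}Y_2 = -e_0 Y_1$, which drive the whole argument.

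For the imaginary part, $L_{-,\omega} = -\Delta + \omega - f(Q_\omega^2)$ is non-negative with $L_{-,\omega}Q_\omega = 0$, and $Q_\omega>0$ is its ground state, so $0$ is a simple eigenvalue at the bottom of the spectrum while $\sigma_{\mathrm{ess}}(L_{-,\omega}) = [\omega,+\infty)$. This spectral gap yields $L^2$-coercivity modulo $Q_\omega$; since the potential term is bounded (by the exponential decay \eqref{exp_decay_Q} of $Q_\omega$), a convex-combination argument between the gap estimate and $\int L_{-,\omega}w_2\,w_2 = \|\nabla w_2\|_{L^2}^2 + \omega\|w_2\|_{L^2}^2 - \int f(Q_\omega^2)w_2^2$ upgrades it to $\int L_{-,\omega}w_2\,w_2 \ge c\|w_2\|_{H^1}^2$ whenever $w_2 \perp Q_\omega$. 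The condition $w_2 \perp Y_1$ is not needed here: it only makes the right-hand side of \eqref{coer_unstable} more negative and is kept for symmetry with the application.

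The crux is the real part. Here $L_{+,\omega}$ has essential spectrum $[\omega,+\infty)$, kernel exactly $\mathrm{Span}\{\partial_{x_i}Q_\omega\}$ under the non-degeneracy \eqref{nondeg}, and — this being the unstable regime — exactly one negative eigenvalue; these are the classical spectral facts for ground states of \cite{weinstein,gss2}. The point is that orthogonality to $Y_2$ controls this single negative direction. Pairing $L_{+,\omega}Y_1 = e_0 Y_2$ with $\partial_{x_i}Q_\omega \in \mathrm{Ker}(L_{+,\omega})$ gives $Y_2 \perp \partial_{x_i}Q_\omega$, so $Y_2$ lies in $V := \{\partial_{x_i}Q_\omega\}^\perp$, on which $L_{+,\omega}$ is invertible with one negative eigenvalue. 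Since $L_{+,\omega}Y_1 = e_0 Y_2$ and $Y_2 \perp \mathrm{Ker}(L_{+,\omega})$, the kernel part of $Y_1$ drops out and $\langle L_{+,\omega}^{-1}Y_2, Y_2\rangle = e_0^{-1}\langle Y_1, Y_2\rangle$. Now $\int L_{-,\omega}Y_2\,Y_2 = -e_0\langle Y_1, Y_2\rangle \ge 0$ forces $\langle Y_1, Y_2\rangle \le 0$, and $\langle Y_1, Y_2\rangle = 0$ is impossible: it would give $Y_2 \in \mathrm{Ker}(L_{-,\omega}) = \mathrm{Span}\{Q_\omega\}$, and then the eigenrelations would collapse $Y_\omega$ to $0$. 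Hence $\langle L_{+,\omega}^{-1}Y_2, Y_2\rangle < 0$, which is exactly the sign condition in the classical coercivity lemma for a self-adjoint operator with a single negative direction (Grillakis–Shatah–Strauss, Weinstein): it yields $\int L_{+,\omega}w_1\,w_1 \ge c\|w_1\|_{L^2}^2$ for $w_1 \in V$ with $w_1 \perp Y_2$, upgraded to the $H^1$ norm as before.

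The main obstacle is precisely this last step: obtaining coercivity of $L_{+,\omega}$ along its negative direction through orthogonality to $Y_2$ rather than to the abstract negative eigenfunction. It rests entirely on the sign $\langle Y_1, Y_2\rangle < 0$ together with the knowledge that $L_{+,\omega}$ has \emph{exactly one} negative eigenvalue and kernel $\mathrm{Span}\{\partial_{x_i}Q_\omega\}$ — were there several negative eigenvalues, one orthogonality condition could not neutralize them all. Granting these spectral facts, summing the two coercivity estimates gives $H(w) \ge \mu\|w\|_{H^1}^2$ on $Z$, and the penalization argument of the first step concludes the proof of \eqref{coer_unstable}.
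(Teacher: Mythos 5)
The paper does not actually prove this Proposition: it is quoted as (3.6) of \cite{cote}, with the ideas attributed to Duyckaerts--Merle \cite{dm} and Duyckaerts--Roudenko \cite{dr}. So your proposal can only be measured against that cited argument, and it is a correct reconstruction of it, resting on the same two pillars: the splitting $H(w)=\langle L_{+,\omega}w_1,w_1\rangle+\langle L_{-,\omega}w_2,w_2\rangle$ and the sign fact $\langle Y_1,Y_2\rangle<0$, which you obtain exactly as the literature does ($\langle L_{-,\omega}Y_2,Y_2\rangle=-e_0\langle Y_1,Y_2\rangle\ge 0$, equality forcing $Y_2\in\mathrm{Span}\{Q_\omega\}$ and then $Y_\omega=0$). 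Where you genuinely diverge is in how this sign is exploited: \cite{dr,cote} run a signature count --- $H$ has Morse index one, vanishes on $Y^{\pm}=Y_1\pm iY_2$, and $B(Y^+,Y^-)=2e_0\langle Y_1,Y_2\rangle\neq 0$ gives signature $(1,1)$ on $\mathrm{Span}\{Y^+,Y^-\}$, hence non-negativity on its $B$-orthogonal complement --- whereas you invoke the classical criterion ``$\langle L_{+,\omega}^{-1}Y_2,Y_2\rangle<0$ plus a single negative eigenvalue implies positivity on $\{Y_2\}^{\perp}$''. The two routes are of the same depth; yours has the advantage of treating $w_1$ and $w_2$ entirely separately (and makes transparent that orthogonality to $Y_1$ is not needed for the $L_{-,\omega}$ part), while the signature argument avoids manipulating $L_{+,\omega}^{-1}$.

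One point should be made explicit rather than absorbed into ``classical spectral facts''. Your proof (like the cited one) consumes two inputs on $L_{+,\omega}$: $\mathrm{Ker}(L_{+,\omega})=\mathrm{Span}\{\partial_{x_i}Q_\omega\}$ and the existence of exactly one negative eigenvalue. Neither is among the hypotheses of the Proposition, and neither is automatic for the general nonlinearities the statement formally allows: non-degeneracy can fail (compare the non-uniqueness examples of \cite{davila}), and if $L_{+,\omega}$ had two negative directions, the single orthogonality condition $w_1\perp Y_2$ could not neutralize them, so \eqref{coer_unstable} itself would fail. These facts are classical for the pure power nonlinearity (Kwong \cite{kwong}, Weinstein \cite{weinstein,weinsteinmod}), which is the setting of \cite{cote} and the only setting in which the present paper verifies (H4); your proof is therefore complete exactly in the regime where the Proposition is applied, but the spectral hypotheses should be stated as standing assumptions. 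A last phrasing slip: Morse index one of $L_{+,\omega}$ is not a feature of ``the unstable regime'' --- it holds for ground states in the stable case as well; what instability supplies is the existence of the eigenfunction $Y_\omega$, not the index.
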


An important step is therefore the construction of an eigenfunction $Y_\omega$: this can be done in the $L^2$-supercritical pure power case  ($p > 1+ \frac{4}{d}$)  for all $\omega>0$, and so (H4) holds in that case.

\bigskip

We are now in a position to state our results for general non-linearities. For smoothness, it reads as follows.

\begin{Thp}{reg}\label{th_reg_f}
Let $s_0>\frac{d}{2}$. Assume that $g$  satisfies (H1) and belongs to $W^{s_0+1,\infty}_{loc}(\CC)$. Assume moreover that for all $k=1, \dots, K$, $\omega_k$ belongs to $\mathcal{O}$ and $Q_{\omega_k}$ satisfies either (H3) or (H4). \\
Then the conclusions of Theorem \ref{reg} hold.
\end{Thp}

And below is about uniqueness.

\begin{Thp}{uni}\label{th_uni_f}
Let $d\le 3$ and $\tilde{f}:z\mapsto f(|z|^2)$ be of class $\mathscr C^2$ on $\CC$ (as an $\RR$-differentiable function), such that its second differential satisfies
\begin{equation}\label{prop_croissance}
\left\|D^2_{z}\tilde{f}\right\|=\mathrm{O}\left(|z|^{\frac{4}{d}-2}\right), \qquad\text{as } |z|\rightarrow +\infty.
\end{equation}
If $f$ is not the pure power non-linearity, assume that for all $k=1, \dots, K$, $\omega_k \in \mathcal O$ and $Q_{\omega_k}$ satisfies (H3), and in the case where $d\ge 2$, assume moreover that $g$ belongs to $W^{s_0+1,\infty}_{loc}(\CC)$, where $s_0:=\lfloor \frac{d}{2}\rfloor+1$. \\
Then the conclusion of Theorem \ref{uni} holds.
\end{Thp} 

\begin{Rq}
Theorems \ref{uni} and \ref{th_uni_f} are restricted to dimensions $d \le 6$. For $d\ge 7$, a similar uniqueness result can be proved (using the same method as that we develop in section \ref{uniq}), provided a smaller class of multi-solitons $u$ is considered, and for which a bound on $\| u(t) -R(t) \|_{L^\infty}$ is furthermore assumed. This is the purpose of the next proposition.
\end{Rq}

\begin{Prop}\label{uni:4d}
Let $d\ge 4$, $s_0:=\big\lfloor\frac{d}{2}\big\rfloor+1$, and $\tilde{f}:z\mapsto f(|z|^2)$ be of class $\mathscr C^2$ on $\CC$ (as an $\RR$-differentiable function), such that its second differential satisfies
\begin{equation}\label{prop_croissance}
\left\|D^2_{z}\tilde{f}\right\|=\mathrm{O}\left(|z|^{\frac{4}{d}-2}\right), \qquad\text{as } |z|\rightarrow +\infty.
\end{equation}
Assume that $g$ belongs to $W^{s_0+1,\infty}_{loc}(\CC)$. Assume moreover that for all $k=1, \dots, K$, $\omega_k \in \mathcal O$ and $Q_{\omega_k}$ satisfies (H3).\\
Then for any $\alpha >0$, there exists $N\in\NN^*$ such that there exists a unique $u\in\mathscr{C}([T_1,+\infty),H^{1}(\RR^d)\cap L^\infty(\RR^d))$ solution to \eqref{NLS} such that \[
\|u(t)-R(t)\|_{H^1}=\mathrm{O}\left(\frac{1}{t^N}\right)\quad\text{ and }\quad
\displaystyle\int_t^{+\infty}\|u(s)-R(s)\|_{L^\infty}\;ds=\mathrm{O}\left(\frac{1}{t^\alpha}\right),
\qquad\text{as } t\rightarrow+\infty.\]
\end{Prop}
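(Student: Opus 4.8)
The plan is to separate existence from uniqueness. For existence, I would invoke Theorem \ref{th_reg_f}: under the present hypotheses it produces a multi-soliton $u$ with $\|u(t)-R(t)\|_{H^{s_0}}\le Ce^{-ct}$ for some $c>0$. Since $s_0=\lfloor d/2\rfloor+1>d/2$, the Sobolev embedding $H^{s_0}(\RR^d)\hookrightarrow L^\infty(\RR^d)$ gives $\|u(t)-R(t)\|_{L^\infty}\le Ce^{-ct}$; as exponential decay beats every polynomial rate, $\|u-R\|_{H^1}=\mathrm O(1/t^N)$ and $\int_t^{+\infty}\|u-R\|_{L^\infty}\,ds=\mathrm O(1/t^\alpha)$ hold for all $N,\alpha$, so $u$ lies in the required class. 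The substantive part is uniqueness, which I would obtain through a localized Lyapunov functional in the spirit of Martel \cite{martel} and Côte, Martel and Merle \cite{cote}.

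Taking two solutions $u_1,u_2$ in the class and setting $v:=u_1-u_2$, one has $\|v(t)\|_{H^1}=\mathrm O(1/t^N)$, $\int_t^{+\infty}\|v(s)\|_{L^\infty}\,ds=\mathrm O(1/t^\alpha)$, and $v$ solves $i\partial_t v+\Delta v+(g(u_1)-g(u_2))=0$. I would build a functional $\mathcal F(t)=\sum_{k=1}^K\mathcal F_k(t)$, where each $\mathcal F_k$ localizes, via a cutoff $\phi_k$ following the trajectory $x_k^0+v_kt$, the Galilean-boosted linearized energy $H$ of $Q_{\omega_k}$ applied to $v$. Such cutoffs exist because the velocities $v_k$ are pairwise distinct, so the solitons separate linearly and $\nabla\phi_k$ is supported where $R$ is exponentially small by \eqref{exp_decay_Q}. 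To make $\mathcal F$ coercive I would introduce a phase and a translation per soliton and impose orthogonality conditions annihilating the scalar products $\int v_1 Q_{\omega_k}$, $\int v_1\partial_{x_i}Q_{\omega_k}$, $\int v_2 Q_{\omega_k}$ of \eqref{coer_stable}; assumption (H3) then gives $\mathcal F(t)\ge\mu\|v(t)\|_{H^1}^2$ up to small modulation corrections. Since we are in the stable case, no exponentially growing direction and hence no extra free parameter occurs, which is why the statement is confined to (H3).

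Differentiating $\mathcal F$ and using the difference equation, the main quadratic terms cancel because $\mathcal F$ is assembled from localized conserved quantities whose densities are stationary along each trajectory. What survives are cutoff-derivative terms, of size $\mathrm O(e^{-ct}\|v\|_{H^1}^2)$ by the separation and \eqref{exp_decay_Q}, and the nonlinear remainder from $g(u_1)-g(u_2)$ minus its linearization. A Taylor expansion together with the growth bound $\|D^2_z\tilde f\|=\mathrm O(|z|^{4/d-2})$ bounds this remainder by $\int(|u_1|+|u_2|)^{4/d-2}|v|^3$, which---since $H^1$ does not embed in $L^\infty$ for $d\ge4$---I would estimate by $C\|v\|_{L^\infty}\|v\|_{H^1}^2$; this is exactly the step requiring the extra $L^\infty$ hypothesis and explaining the critical exponent $4/d-2$. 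One thus obtains $|\tfrac{d}{dt}\mathcal F|\le\eta(t)\mathcal F(t)$ with $\eta(t)=C(\|v(t)\|_{L^\infty}+e^{-ct})$ and $\int_t^{+\infty}\eta<+\infty$. As $\mathcal F(t)\to0$, integrating backward makes $t\mapsto\mathcal F(t)\exp(\int_{t_0}^t\eta)$ nondecreasing with limit $0$, hence non-positive; coercivity then forces $v\equiv0$ for large $t$, and uniqueness for the Cauchy problem propagates this to all of $[T_1,+\infty)$.

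The main obstacle will be the modulation bootstrap: one must show the modulation parameters are well defined for all large $t$ with derivatives of size $\mathrm O(\|v\|_{H^1}^2)$, so that their error contributions are time-integrable once $N$ is chosen large enough relative to $\alpha$ and to the separation rate of the solitons. Because \eqref{coer_stable} controls $v$ only modulo these finitely many directions, all estimates must be closed simultaneously and kept consistent with the decay $\mathrm O(1/t^N)$, which is the delicate technical heart of the argument.
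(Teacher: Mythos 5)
Your existence step, and your identification of where the extra $L^\infty$ hypothesis enters (as a substitute for the Sobolev embeddings that fail for $d\ge 4$ when closing the nonlinear remainders), are both correct and consistent with the paper. The uniqueness argument, however, has genuine gaps.

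First, you compare two arbitrary solutions $u_1,u_2$ of the class, whereas the paper's proof --- the only argument it claims extends to Proposition \ref{uni:4d} --- compares an arbitrary solution with the \emph{smooth} multi-soliton $\varphi$ constructed in Theorem \ref{th_reg_f}. This asymmetry is essential, not cosmetic. Differentiating the localized functional produces terms carrying second derivatives of one of the two solutions (the analogue of the term $A_1$, which contains $\Delta\varphi$), and the cubic terms are closed through bounds such as $\int_{\RR^d}|\nabla\psi||\nabla \tilde{z}||\tilde{z}|\,dx \le \|\tilde{z}\|_{L^{2d/(d-2)}}\|\nabla \tilde{z}\|_{L^2}\|\nabla\psi\|_{L^d}\le C\|\tilde{z}\|_{H^1}^2\|\psi\|_{H^{\lfloor d/2\rfloor+1}}$, which require $\psi=\varphi$ (or $\varphi-R$) to lie in $H^{s_0}$ with exponential decay to $R$. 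For a generic element of your class, $\Delta u_2\notin L^2$ and $u_2-R$ decays only polynomially in $H^1$, so these steps cannot even be written down, let alone justified by regularization; the paper states explicitly that taking the difference with the already constructed, regular multi-soliton is crucial. Relatedly, your modulation plan is overdetermined: \eqref{coer_stable} involves $d+2$ directions, while one phase and one translation per soliton give only $d+1$ parameters; the remaining direction $\Rr\int_{\RR^d}\tilde{z}\overline{R_k}\,dx$ cannot be annihilated and must instead be controlled dynamically, as in Lemma \ref{derre}.

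Second --- and this is the decisive error --- the cutoff-derivative terms are not $\mathrm{O}\big(e^{-ct}\|v\|_{H^1}^2\big)$. The cutoffs are functions of $(x_1-\xi_k-\sigma_k t)/t$, so their derivatives are only $\mathrm{O}(1/t)$ (Lemma \ref{lem:phij}, estimate \eqref{prel3}), and they multiply $|\nabla v|^2$ and $|v|^2$ on moving slabs between the solitons, where only $R$ --- not $v$ --- is exponentially small. These terms are therefore $\mathrm{O}\big(t^{-1}\|v\|_{H^1}^2\big)$: this is exactly the Main term of Proposition \ref{estenergy}, and Remark \ref{rq:mono} emphasizes that NLS offers no monotonicity to improve it (unlike gKdV). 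Consequently your Gronwall coefficient $\eta$ is not integrable, and the ``integrate backward to zero'' conclusion collapses. The actual proof must run a polynomial bootstrap: one derives an integral inequality of the type \eqref{inegtildez} with a universal constant $\tilde{C}$ (independent of the solution, cf. Remark \ref{rq:indep_z}) and concludes only because the weight $t^N$, with $N>4\tilde{C}+1$, beats $\int_t^{+\infty}\frac{1}{s}\|\tilde{z}(s)\|_{H^1}\,ds$. That, and not the modulation errors, is the real reason $N$ must be large and the uniqueness is only conditional; if your exponential bound on the cutoff terms were correct, one would obtain uniqueness without any decay rate, which the paper says is currently out of reach.
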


\subsection{Outline of the paper and the proofs}\label{outline}

\subsubsection{The main content}

We will prove Theorems \ref{th_reg_f} and \ref{th_uni_f} which generalize Theorems \ref{reg} and \ref{uni} respectively when applied to pure power non-linearities.

Section 2 is devoted to the proof of our regularity result, that is Theorem \ref{th_reg_f}. We start from a well-chosen sequence  $(u_n)$ of solutions satisfying uniform $H^1$ estimates and which were constructed in  \cite{cote} and \cite{schMM} (we emphasize that we do not work with the already built multi-soliton in $H^1(\RR^d)$ given in Theorem \ref{reg1}). Taking some inspiration from  Martel \cite[section 3]{martel} in the context of the generalized Korteweg-de Vries equations, we prove $H^s$ uniform estimates for $(u_n)$ via an induction on the index of regularity. We can combine both stable and unstable cases since we start from the same uniform exponential $H^1$ estimates obtained in \cite{cote} and \cite{schMM}. From these $H^s$ estimates we deduce (by a usual compactness argument) the existence of a multi-soliton satisfying the conclusions of Theorem \ref{th_reg_f}. 

The induction argument relies on the study of a functional related to $\|u_n\|_{H^s}^2$, suitably modified so as to cancel ill-behaved terms; this functional takes the same form in all dimensions (see (\ref{funct}) in subsection \ref{parag_uniform_Hs0}). In \cite{martel}, for gKdV, from $s=3$, all quantities of the type $\|u_n(t)-R(t)\|_{H^s}$ introduced are shown to decrease exponentially in large time with the same rate. Our proof is more technical, insofar as the algebra is not as favorable. In the context of \eqref{NLS}, the terms involving real and imaginary parts can not be treated in the same way at once, and in dimension $d \ge 2$, derivative can fall on terms in many various ways. As nonlinearities are not necessarily smooth (as it is the case in \cite{martel}), we need to count carefully the number of times one can perform an integration by parts. This explains why in the case of  \eqref{NLS}, the rate of the exponential decay on $\|u_n(t)-R(t)\|_{H^s}$ is halved when passing from $s$ to $s+1$ (see \eqref{def:theta_s} and Proposition \ref{core}). We then obtain the decay rate of \eqref{resreg1} by a simple interpolation argument.

\bigskip

Regarding the regularity assumption on the nonlinearity, recall that the $H^1$ estimate in Theorem \ref{reg1} holds when $g: z \mapsto zf(|z|^2)$ is of class $\mathscr{C}^1$. As fas as $H^{s_0}$ regularity is concerned, Kato's well-posedness result \cite{kato} in $H^{s_0}(\RR^d)$ assumes $g$ of class $\mathscr{C}^{s_0}$. In Theorem \ref{th_reg_f}, we require a bit more regularity for $g$ to prove an $H^{s_0}$ estimate for $s_0\ge 2$ for the multi-soliton.  From a technical point of view, many estimates rely indeed on the local boundedness of the derivatives (in the sense of distributions) of the functions $\frac{\partial^{s} g}{\partial x^{s'}\partial y^{s-s'}}$, where $s=0,\dots,s_0$ and $s'=0,\dots,s$. The preceding property is typically used at two levels. First, we need the local Lipschitz condition which is satisfied by functions in $W^{1,\infty}_{loc}$: this is for example the case for \eqref{est_loc_lipschitz} in subsection \ref{parag_uniform_Hs0}. In order to obtain the desired $H^{s_0}$ estimate, we need also to integrate by parts a particular term (at least one time) which contains derivatives with respect to the space variable of maximal order $s_0$ of both $u_n-R$ and $g$ in order that $u_n-R$ appears with a derivative of order $s_0-1$, thus can be controlled (see in particular (\ref{est_ipp}) in subsection \ref{parag_uniform_Hs0}). For this, one shall ensure that the distributional derivative of $x_i\mapsto \frac{\partial^{s_0} g}{\partial x^{s}\partial y^{s_0-s}}(R_k(x))$ belong to some Lebesgue space $L^q$; this is in fact the case if the derivative of $x_i\mapsto \frac{\partial^{s_0} g}{\partial x^{s}\partial y^{s_0-s}}$ is bounded on a certain disk centered at the origin. Therefore, we assume that $g$ is an element of $W^{s_0+1,\infty}_{loc}(\CC)$. Notice that this condition is met when $f$ is the pure power non-linearity \eqref{def:nonlinearity} with $s_0=\lfloor p-1\rfloor$ (and also in the particular case when $p$ is an even integer).\\

Besides, we emphasize that assumption $s_0>\frac{d}{2}$ in Theorems \ref{reg} and \ref{th_reg_f} (which is automatically satisfied for $d\in\{1,2,3\}$) seems to be needed to obtain the desired estimates, judging from \eqref{condition_s0}. In order to relax this, one should  work out an argument involving Strichartz type estimates. But to be effective, the dispersive estimates are to be done on the linearized equation around a sum of solitons, that is a sum of potentials which are decoupled and smooth, but large and not decaying in time. Such estimates would actually be very useful for other purposes, for example the stability of multi-solitons. To our knowledge, they are however not (yet) available.

\bigskip

Section 3 is devoted to the proof of the uniqueness result, which combines some ideas of \cite{martel} and of \cite{tsai}.We will consider a solution satisfying (\ref{classexpo}) and \eqref{prop_croissance} and show that it is in fact the multi-soliton constructed in the proof of Theorem \ref{th_reg_f}: we therefore study the difference of these two solutions and show that it is 0. One main tool for this is a Weinstein type functional, which is coercive provided we assume some adequate orthogonality properties. Depending on the stable or $L^2$-critical case considered, these orthogonality conditions differ. The coercivity result available in the latter case (where $f:r\mapsto r^{\frac{2}{d}}$) is the object of Proposition \ref{coerccritic}. The fact that we do the difference with an already constructed multi-soliton which is sufficiently regular is crucial, at least up to dimension 2. In fact, what we truly need is the $H^2(\RR^d)$ decay for $d\ge 2$, and also at several times, that the constructed multi-soliton takes values in $L^\infty(\RR^d)$.

Note also that, finding like us his inspiration in \cite{martel}, Combet \cite{combet}, in the one-dimensional pure power unstable case, has already obtained estimates similar to those we develop in section \ref{uniq} for general $f$ in the stable case. Last, the lack of (backward in time) monotonicity properties of \eqref{NLS} explains somehow the difficulty to obtain unconditional uniqueness, that is to prove uniqueness in the whole class of multi-solitons in the sense of \eqref{def_multisol} (without decay rate); see Remark \ref{rq:mono} for more details.

\subsubsection{Some notations and writing practices used through the text}

Solutions of \eqref{NLS} or functions constructed with such solutions take values in $\CC$. As usual, the modulus of a complex number will be denoted by $|.|$. \\
Our computations are generally done in all dimensions $d$. To that extent, 
\begin{itemize}
\item for any vector $u\in\RR^d$, we denote by $\Rr(u)$ (respectively $\Ii(u)$) the vector of $\RR^d$ which components are the real parts (respectively the imaginary parts) of the components of $u$.
\item $\cdot$ denotes the euclidean scalar product in $\RR^d$ and $|.|$ denotes also the euclidean norm from which it derives.
\item we use the usual notation for multi-indices.
\end{itemize}

As usual, it is also convenient to denote by $C$ some positive constant which can change from one line to the next but which is always independent of the index of any sequence considered. 

The main functional spaces we will work with are the Sobolev spaces $H^s(\RR^d)$ for $s\in\NN^*$ endowed with the usual norms defined by:
\[\forall\;w\in H^s(\RR^d),\quad \|w\|_{H^s}:=\left(\sum_{|\alpha|\le s}\|\partial^\alpha w\|_{L^2}^2\right)^{\frac{1}{2}}.\]
We consider also $H^\infty(\RR^d):=\displaystyle \bigcap_{s\in\NN^*} H^s(\RR^d)$ and the Sobolev spaces $W^{s,\infty}_{loc}(\CC)$ (identified with $W^{s,\infty}_{loc}(\RR^2)$) for $s\in\NN^*$.

Furthermore, many computations are presented formally for ease of reading but can be justified by standard regularization arguments which often involve the local well-posedness of \eqref{NLS} in $H^s$ with continuous dependence on compact sets of time (see \cite[Theorem 1.6]{dai}).

\subsubsection{Acknowledgment}

We would like to thank Rémi Carles for his remarks which improved the quality of the manuscript.

\section{Existence of smooth multi-solitons of \eqref{NLS}}\label{preuveex}

In this section, let us concentrate on the proof of Theorem \ref{th_reg_f}. Let $s_0>\frac{d}{2}$ be an integer and assume that $g:z\mapsto zf(|z|^2)$ is in $W^{s_0+1,\infty}_{loc}(\CC)$ and satisfies (H1).

\subsection{\underline{Step 1}: Uniform $H^1$-estimate for a sequence of solutions}

In order to prove Theorem \ref{reg}, we start from the following proposition, which applies to both stable and unstable cases, and which has already been established in preceding papers. This proposition gives rise to some control in the $H^1$ norm on a constructed sequence of solutions of \eqref{NLS} which turns out to be relevant to achieve our goal.

\begin{Prop}[Martel and Merle \cite{schMM}, Côte, Martel and Merle \cite{cote}]\label{ptdep}
There exist an increasing sequence $(S_n)_{n\in\NN}$ of times such that $S_n  \to +\infty$, a sequence $(\phi_n)\in \big(H^{s_0}(\RR^d)\big)^{\NN}$, and constants $\theta>0$, $C_1>0$, $T_0>0$ with $S_0>T_0$ such that for all $n\in\NN$:
\begin{itemize}
\item $\|\phi_n\|_{H^{s_0}}\le C_1e^{-2\theta S_n}$
\item the maximal solution $u_n$ of \eqref{NLS} such that \[u_n(S_n)=R(S_n)+\phi_n\] belongs to $\mathscr{C}([T_0,S_n],H^{s_0}(\RR^d))$ and satisfies 
$$
\forall\:t\in [T_0,S_n],\qquad\|u_n(t)-R(t)\|_{H^1}\le C_1e^{-2\theta t}.
$$
\end{itemize}
\end{Prop}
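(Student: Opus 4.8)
The plan is to construct the sequence by solving \eqref{NLS} \emph{backward} in time from a sequence of final times and to close a bootstrap estimate that is uniform in $n$. Fix any increasing sequence $(S_n)$ with $S_n\to+\infty$. For each $n$ I would prescribe a final datum of the form $u_n(S_n)=R(S_n)+\phi_n$ and solve backward. In the stable case (H3) one may simply take $\phi_n=0$; in the unstable case (H4) the correction $\phi_n$ must be chosen in the finite-dimensional space spanned by the unstable directions $Y_{\omega_k}$ attached to the positive eigenvalues $e_0$ of the operators $i\mathscr L_{\omega_k}$, so as to cancel the exponentially growing modes of the backward linearized flow. Since $g\in W^{s_0+1,\infty}_{loc}(\CC)$ and $s_0>\frac d2$, Kato's local well-posedness in $H^{s_0}$ \cite{kato} guarantees that $u_n$ exists and lies in $\mathscr C$ with values in $H^{s_0}$ on a maximal interval ending at $S_n$.

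The heart of the matter is the uniform $H^1$ estimate. Setting $\varepsilon_n:=u_n-R$, I would first perform a modulation (geometric decomposition), adjusting time-dependent translation, phase, and—in the unstable case—the stable/unstable parameters, so that $\varepsilon_n(t)$ satisfies the orthogonality conditions under which the linearized energy $H$ is coercive, namely those featuring in the right-hand sides of \eqref{coer_stable} or \eqref{coer_unstable}. Next I would build a localized Lyapunov functional $\mathcal F_n(t)$ collecting, around each soliton, the pieces of mass, momentum and energy cut off by smooth weights transported at the respective velocities $v_k$. Differentiating in time, the fact that the velocities are pairwise distinct ($v_k\ne v_{k'}$) together with the exponential decay \eqref{exp_decay_Q} of each $Q_{\omega_k}$ makes the interaction and localization error terms exponentially small and integrable in time, so that, schematically, $|\frac{d}{dt}\mathcal F_n|\lesssim e^{-4\theta t}+e^{-ct}\|\varepsilon_n\|_{H^1}^2$, provided $\theta$ is chosen small enough relative to the geometric interaction rate $c$ (which is how $\theta$ is fixed in Theorem \ref{reg1}). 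Assuming a priori $\|\varepsilon_n(t)\|_{H^1}\le C e^{-2\theta t}$ on $[t^\ast,S_n]$ and combining the coercivity bound $\|\varepsilon_n(t)\|_{H^1}^2\lesssim \mathcal F_n(t)+(\text{orthogonality corrections})$ with the final-time estimate $\mathcal F_n(S_n)\lesssim\|\phi_n\|_{H^1}^2\lesssim e^{-4\theta S_n}$, I would integrate the resulting differential inequality backward from $S_n$ to strictly improve the constant $C$, thereby closing the bootstrap with a fixed $C_1$ and a fixed $T_0$, both independent of $n$.

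In the unstable case the projection of $\varepsilon_n$ onto the unstable directions is controlled by a topological shooting argument: the map sending the finite-dimensional parameter $\phi_n$ (of size $\sim e^{-2\theta S_n}$) to the unstable components of $\varepsilon_n$ at an intermediate time is continuous, and a Brouwer/no-retraction argument produces a choice of $\phi_n$ for which these components vanish, after which the coercivity estimate applies; this is the mechanism of \cite{cote}. Finally, for the $H^{s_0}$ conclusion, the uniform $H^1$ bound keeps $\|u_n(t)\|_{H^1}$ bounded on $[T_0,S_n]$, and since the nonlinearity is $H^1$-subcritical, the blow-up alternative for the $H^{s_0}$ flow (persistence of regularity, \cite{kato}) forbids any loss of $H^{s_0}$ regularity there, so $u_n\in\mathscr C([T_0,S_n],H^{s_0}(\RR^d))$. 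I expect the main obstacle to be closing the bootstrap \emph{uniformly in $n$}: the interplay between modulation, coercivity modulo the symmetry and unstable modes, and the monotonicity/interaction estimates, together with the topological selection of $\phi_n$ that simultaneously annihilates the unstable modes while respecting the size constraint $\|\phi_n\|_{H^{s_0}}\le C_1 e^{-2\theta S_n}$.
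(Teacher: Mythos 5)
The first thing to note is that the paper itself contains no proof of Proposition \ref{ptdep}: as the names in its header indicate, the statement is imported as a black box from Martel and Merle \cite{schMM} (stable case) and C\^ote, Martel and Merle \cite{cote} (unstable case). The only material the paper adds is the remark fixing the admissible form of $\phi_n$, namely $\phi_n=0$, respectively $\phi_n=i\sum_{k,\pm}b_{n,k}^{\pm}Y_k^{\pm}(S_n)$ with $|b_n|\le e^{-2\theta S_n}$, which, combined with the time-independence of $\|Y_k^{\pm}(t)\|_{H^{s_0}}$, gives the first bullet point. Your sketch is therefore not comparable to anything proved in this paper; it is, however, a faithful reconstruction of the scheme of the cited works: backward resolution from $t=S_n$, bootstrap on $\|u_n-R\|_{H^1}$, localized action functional built from mass, momentum and energy cut off around each soliton, coercivity of the linearized energy, Brouwer-type selection of the final data in the unstable case, and persistence of regularity for the $H^{s_0}$ conclusion (this last argument is exactly right: the uniform $H^1$ bound together with $H^1$-subcriticality precludes any $H^{s_0}$ blow-up on $[T_0,S_n]$).

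One point in your sketch is genuinely imprecise and would fail as written: modulation cannot enforce all of the orthogonality conditions appearing in \eqref{coer_stable} or \eqref{coer_unstable}. The translation and phase parameters generate only the directions $\partial_{x_i}Q_{\omega_k}$ and $iQ_{\omega_k}$; no symmetry parameter generates the direction $(w_1,Q_{\omega_k})$ of \eqref{coer_stable}, nor the eigendirections $Y$ of \eqref{coer_unstable}. In \cite{schMM} the scalar product $\Rr\int \varepsilon_n\overline{R_k}\;dx$ is instead controlled dynamically, by almost-conservation of the mass localized around each soliton, and in \cite{cote} the components along $Y_k^{\pm}$ are controlled by an ODE analysis in time whose exponentially growing modes are precisely what the topological argument kills. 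Your phrase ``orthogonality corrections'' silently absorbs the former, but localized mass conservation is a distinct and indispensable ingredient that your plan never names; with it added, the bootstrap closes as you describe.
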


\begin{Rq}
Note that the sequence $(\phi_n)$ can be chosen in the following form.
\begin{itemize}
\item For the stable case, we take $\phi_n=0$ for all $n$ (see \cite{schMM}).
\item For the unstable case, we take $\phi_n=i\displaystyle\sum_{k\in\{1,\dots,K\},\pm}b_{n,k}^{\pm}Y_{k}^{\pm}(S_n)$ for all $n$ with $Y_k^{\pm}$ defined by
\begin{equation}\label{eigen}
Y_k^{\pm}: (t,x)\mapsto Y_{\omega_k}^{\pm}\big(x-v_kt-x_k^0\big)e^{i\big(\frac{1}{2}v_k\cdot x+\big(\omega_k-\frac{|v_k|^2}{4}\big)t+\gamma_k\big)}
\end{equation}
and with $b_n=\big(b_{n,k}^{\pm}\big)_{k\in\{1,\dots,K\},\pm}\in\RR^{2K}$ well chosen (see \cite{cote} for full details) so that
\begin{equation}\label{as2}
\forall\; n\in\NN,\quad|b_n|\le e^{-2\theta S_n}.
\end{equation}
\end{itemize}
\end{Rq}

\noindent Some particular estimates will be useful throughout the proof. Firstly we retain \begin{equation}\label{init}
\|u_n(S_n)-R(S_n)\|_{H^{s_0}}\le C_1e^{-2\theta S_n}\\
\end{equation}
(since (\ref{as2}) holds and the quantities $\|Y_{k}^{\pm}(t)\|_{H^s}$ are independent of $t$). We emphasize also that
\begin{equation}\label{est}
\forall\; n\in\NN,\:\forall\;t\in [T_0,S_n],\quad
\|u_n(t)-R(t)\|_{H^1}\le C_1e^{-2\theta t}.
\end{equation}
In addition, the exponential decay property (\ref{exp_decay_Q}) of the ground states $Q_{\omega_k}$ and their derivatives lead to the following assertion, which is also crucial to establish many estimates:
\begin{equation}\label{decay_prop}
\forall\;t\in [T_0,+\infty),\:\forall\; k\neq k',\:\forall\; |\alpha_1|,|\alpha_2|\in \{0,\dots,s_0+2\},\quad\displaystyle\int_{\RR^d}\big|\partial^{\alpha_1}R_k.\partial^{\alpha_2}R_{k'}\big|(t)\;dx\le Ce^{-2\theta t}.
\end{equation}

\subsection[Step 2: Proof of uniform $H^{s}$-estimates for $u_n-R$, $s=1,\dots,s_0$]{\underline{Step 2}: Proof of uniform $H^{s}$-estimates for $u_n-R$, $s=1,\dots,s_0$}\label{parag_uniform_Hs0}

From now on, let $$v_n:=u_n-R.$$ 

\subsubsection{\underline{Step 2.1}: Performance of preliminary uniform $H^{s}$-estimates}

Define $\theta_0:=2\theta$, $\theta_1:=\theta$, and for all $s\ge 2$, 
\begin{equation} \label{def:theta_s}
\theta_s:=\min\left\{\frac{\theta_{s-1}}{2},\frac{2\theta}{d+1}\right\},
\end{equation}
so that $\theta_s=\frac{\theta}{2^{s-2}(d+1)}$ for all $s\ge 2$.
We prove the following statement, which is the core of our main existence result.
\begin{Prop}\label{core} There exists $T_1\ge T_0$ such that for all $s\in\{1,\dots,s_0\}$, there exists $C_s\ge 0$ such that for all $n\in\NN$, if $S_n\ge T_1$ then
\begin{equation}\label{core_bis}
\forall\; t\in [T_1,S_n],\qquad\|v_n(t)\|_{H^s}\le C_se^{-\theta_{s}t}.
\end{equation}
\end{Prop}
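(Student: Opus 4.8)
The plan is to argue by induction on $s\in\{1,\dots,s_0\}$, the base cases $s\in\{0,1\}$ being contained in Proposition \ref{ptdep}: by \eqref{est} we have $\|v_n(t)\|_{H^1}\le C_1e^{-2\theta t}$, which already yields \eqref{core_bis} at the rates $\theta_0=2\theta$ and $\theta_1=\theta$ since both are $\le 2\theta$ (and $\|v_n\|_{L^2}\le\|v_n\|_{H^1}$). So I fix $s\ge 2$, assume \eqref{core_bis} for $s-1$ and all lower indices, and establish it for $s$.

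First I would write the equation for $v_n=u_n-R$. Since each $R_k$ solves \eqref{NLS} but $R=\sum_kR_k$ does not,
\[ \partial_t v_n = i\Delta v_n + i\big(g(R+v_n)-g(R)\big) + i\Big(g(R)-\sum_{k=1}^K g(R_k)\Big), \]
where $g(z)=zf(|z|^2)$. The last, purely interaction, term is supported essentially where two distinct solitons overlap, hence is exponentially small in every $H^s$ norm at rate $e^{-2\theta t}$ by \eqref{exp_decay_Q}--\eqref{decay_prop}. The middle term is the delicate one: its part linear in $v_n$ reproduces $L_{+,\omega}$ and $L_{-,\omega}$ after separating real and imaginary parts, while the rest is genuinely nonlinear and at least quadratic in $v_n$.

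The heart of the argument is a functional $\mathcal F_s(t)$ comparable to $\|v_n(t)\|_{H^s}^2$ (essentially its top-order part $\sum_{|\alpha|=s}\|\partial^\alpha v_n\|_{L^2}^2$, the lower-order norms being controlled by induction), modified by correction terms built from $R$ and from derivatives of $v_n$ of order $\le s$. I would differentiate $\mathcal F_s$ in time and substitute the equation. The Laplacian contributes nothing, since $\Rr\int i\Delta w\,\overline w\,dx=0$, and the purpose of the correction terms is exactly to cancel the top-order contributions of the nonlinearity that do \emph{not} decay in time, namely those of the form $\int(\partial R\text{-factor})\,|\partial^\alpha v_n|^2$: their coefficient is bounded but does not tend to $0$, and left uncancelled they would generate a factor $e^{C(S_n-t)}$ that ruins the backward integration. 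After this cancellation, every surviving term of $\tfrac{d}{dt}\mathcal F_s$ contains at least one factor that is either an interaction term (rate $e^{-2\theta t}$) or a derivative of $v_n$ of order $\le s-1$, controlled by the induction hypothesis. Reducing the order of the derivative falling on $v_n$ to $s-1$ forces at least one integration by parts, moving a derivative off $v_n$ and onto the composition $x\mapsto\partial^s g(R_k(x))$; this is precisely where the hypothesis $g\in W^{s_0+1,\infty}_{loc}(\CC)$ is used (so that the relevant distributional derivatives lie in a suitable $L^q$), and where the real and imaginary parts must be kept separate and the many ways a derivative can distribute in dimension $d\ge 2$ must be tracked.

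The outcome of this bookkeeping is a differential inequality whose driving source decays, as a function of time, at the \emph{previous} rate $\theta_{s-1}$. Integrating it backward from $S_n$ and using the very small final datum \eqref{init}, $\mathcal F_s(S_n)\le C_1^2 e^{-4\theta S_n}$, together with $\|v_n\|_{H^{s-1}}\le C_{s-1}e^{-\theta_{s-1}t}$, yields $\mathcal F_s(t)\le C_s e^{-\theta_{s-1}t}$ on $[T_1,S_n]$; since $\mathcal F_s$ controls $\|v_n\|_{H^s}^2$, taking square roots gives exactly the claimed rate $\theta_s=\theta_{s-1}/2$, while the competing constraint $\theta_s\le 2\theta/(d+1)$ in \eqref{def:theta_s} comes from dominating the interaction source and the $d$-dimensional combinatorial terms against the $H^1$ rate $2\theta$. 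The threshold $T_1$ (independent of $n$) is raised at most once per step, which is harmless as $s_0$ is fixed. I expect the main obstacle to be precisely the control of the top-order nonlinear term: choosing the correction terms so that the non-decaying top-order contributions cancel, and carrying out the single admissible integration by parts under the limited smoothness of $g$, all while separating real and imaginary parts.
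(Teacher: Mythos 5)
Your overall strategy is recognizably in the same family as the paper's: induction on $s$, a modified $H^s$ energy whose correction terms are designed to cancel the non-decaying quadratic-in-top-order contributions of the nonlinearity, backward integration from the exponentially small data at $t=S_n$ given by \eqref{init}, and the halving of the rate at each step. One genuine difference of route: the paper's functional $G_{n,s}$ in \eqref{funct} is built from the \emph{full solution} $u_n$, not from $v_n$; its correction term $-\sum_{|\beta|=s-1}\binom{s-1}{\beta}\Rr\big(u_n^2(\partial^\beta\overline{u_n})^2\big)f'(|u_n|^2)$ makes the dangerous terms cancel identically via the multinomial identity \eqref{prop_multinom}, and the smallness of $G_{n,s}'$ is then obtained not by showing that ``every surviving term decays'' outright, but by comparing each surviving term with the same expression evaluated on the solitons $R_k$, for which the functional is exactly conserved (\eqref{funct_Rk}); the differences are controlled by $\|v_n\|_{H^{s-1}}$ and by the soliton interactions \eqref{decay_prop}. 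Your choice of a functional in $v_n$ is workable in principle, but then the cancellation of the top-order terms (whose true coefficient involves $u_n$, not $R$) holds only up to errors that are higher order in $v_n$, so the bookkeeping you describe is heavier than you suggest.

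The genuine gap is the complete absence of the a priori bound/bootstrap structure, which in the paper is not a technicality but the mechanism that closes the nonlinear estimates. Your key claim --- ``every surviving term of $\frac{d}{dt}\mathcal F_s$ contains at least one factor \ldots controlled by the induction hypothesis'' --- is insufficient, because the \emph{other} factors in those multilinear terms (the $L^\infty$ norm of $u_n$ or $v_n$, and Lebesgue norms $\|\partial^{\tilde\alpha}v_n\|_{L^{m}}$ of derivatives of intermediate order up to $s$) are not controlled by the induction hypothesis at level $s-1$: at the first step $s=2$ in dimension $d=3$ the hypothesis gives only an $H^1$ bound, and $H^1(\RR^3)\not\hookrightarrow L^\infty(\RR^3)$. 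The paper resolves this by introducing $S_n^*$ and the a priori bound \eqref{estimate_a_priori} with the constant $A_{s_0}$, proving the decay estimates on $[S_n^*,S_n]$ with constants \emph{independent of} $A_{s_0}$ (this is where \eqref{u_bounded}, the H\"older exponent scheme \eqref{condition_s0}, and the Gagliardo--Nirenberg step \eqref{est_I1} enter), and then showing by a continuity/contradiction argument \eqref{arg_contradiction} that $S_n^*\le T_1$ with $T_1$ independent of $n$. Without this, the differential inequality you integrate backward has constants depending on $\sup_{[t,S_n]}\|v_n\|_{H^{s_0}}$, i.e.\ on the very quantity being estimated, and the argument does not close. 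Relatedly, your attribution of the cap $\theta_s\le 2\theta/(d+1)$ in \eqref{def:theta_s} to ``interaction and combinatorial terms'' is off: in the paper it arises precisely from interpolating between the $L^2$ decay at rate $2\theta$ and the a priori $H^{s_0'}$ bound (see \eqref{est_I1}), so even the stated rate implicitly presupposes the bootstrap you omitted.
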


To prove Proposition \ref{core}, we resort to a "bootstrap" argument. Recall that for all $s\in\NN^*$, there exists $\mu_s\ge 0$ such that \begin{equation}
\forall\;t\in\RR,\qquad \|Y_{k}^\pm(t)\|_{H^s}\le \mu_s.
\end{equation}
For all $n$, set
\[S_{n}^*:=\inf\big\{t\ge T_0\;|\:\forall\;\tau\in[t,S_n],\:\|v_n(\tau)\|_{H^{s_0}}\le A_{s_0}\big\},\] for some constant $A_{s_0}>2K\mu_{s_0}$. Note that $S_{n}^*$ indeed exists since $v_n(S_n)=i\displaystyle\sum_{k=1}^Kb_{n,k}^\pm Y_{k}^\pm(S_n)$. Hence for all $n\in\NN$, we have 
\[\forall\;t\in (S_n^*,S_n],\qquad\|v_n(t)\|_{H^{s_0}}\le A_{s_0}.\] Due to the continuity of $v_n:[T_0,S_n]\rightarrow H^{s_0}(\RR^d)$ in $S_n^*$, we also have for all $n\in\NN$:
\begin{equation}\label{estimate_a_priori}
\forall\;t\in[S_n^*,S_n],\quad\|v_n(t)\|_{H^{s_0}}\le A_{s_0}.
\end{equation}

\noindent We will show that $S_n^*$ can be chosen independently of $n$ and improve the preceding estimate by showing first:

\begin{Prop}\label{boot}
For all $n\in\NN$, for all $s\in\{1,\dots,s_0\}$, for all $t\in[S_n^*,S_n]$, 
$$\|v_n(t)\|_{H^{s}}\le Ce^{-\theta_{s}t}.$$
\end{Prop}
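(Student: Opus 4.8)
The plan is to prove Proposition~\ref{boot} by induction on $s\in\{1,\dots,s_0\}$. The base case $s=1$ is exactly the uniform bound \eqref{est}, whose rate $2\theta$ is even better than the claimed $\theta_1=\theta$. For the inductive step I assume $\|v_n(t)\|_{H^{s-1}}\le Ce^{-\theta_{s-1}t}$ on $[S_n^*,S_n]$ and propagate it to order $s$. Everything rests on the equation for $v_n=u_n-R$: since $u_n$ and each $R_k$ solve \eqref{NLS}, writing $g(z)=zf(|z|^2)$ gives
\begin{equation*}
\partial_t v_n = i\Delta v_n + i\big(g(R+v_n)-g(R)\big) + i\Big(g(R)-\sum_{k=1}^K g(R_k)\Big).
\end{equation*}
The last bracket is a pure interaction term; by \eqref{decay_prop} and the local boundedness of the derivatives of $g$ its $H^s$ norm is $\mathrm{O}(e^{-2\theta t})$, hence negligible at every order $s\ge 1$ because $2\theta=\theta_0>\theta_s$.

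For each multi-index $\alpha$ with $|\alpha|=s$ I would run an energy estimate on $\partial^\alpha v_n$. Pairing $\partial^\alpha$ of the equation with $\overline{\partial^\alpha v_n}$ and taking real parts, the Laplacian drops out (it is purely imaginary after one integration by parts) and the interaction term is swallowed by the $\mathrm{O}(e^{-2\theta t})$ bound. The delicate contribution is $\Rr\int i\,\partial^\alpha\big(g(R+v_n)-g(R)\big)\overline{\partial^\alpha v_n}$. Expanding by the Leibniz and Fa\`a~di~Bruno rules, the coefficient of the top-order factor $\partial^\alpha v_n$ is $\partial_z g(R+v_n)=f(|R+v_n|^2)+|R+v_n|^2f'(|R+v_n|^2)$, which is \emph{real}, so this piece contributes $\Rr\int i(\text{real})|\partial^\alpha v_n|^2=0$. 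All terms carrying at most $s-1$ derivatives of $v_n$ are controlled by the induction hypothesis together with the a priori bound \eqref{estimate_a_priori}, using $s_0>\frac d2$ and Gagliardo--Nirenberg to handle the intermediate norms. The single genuinely dangerous term is the antiholomorphic one, with coefficient $\partial_{\bar z}g(R+v_n)=(R+v_n)^2f'(|R+v_n|^2)$, namely $\Rr\int i\,(R+v_n)^2f'(|R+v_n|^2)\,\overline{\partial^\alpha v_n}^2$: it is quadratic in the top order with an $\mathrm{O}(1)$ coefficient and so cannot be closed by a bare Gronwall argument.

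To remove it I would not work with the plain $\dot H^s$ energy but with the modified functional \eqref{funct}, a correction of $\tfrac12\|v_n\|_{\dot H^s}^2$ by lower-order quantities built from the potentials $f(|R|^2)$ and $R^2f'(|R|^2)$; concretely one may take $\tfrac12\sum_{|\alpha|=s-1}H_R(\partial^\alpha v_n)$, with $H_R$ the sum over $k$ of the linearized energies $H$ of \S\ref{paragraph:gen} in the Galilean frame of $R_k$. Because $H_R$ is the conserved quadratic form of the linearized flow, the time derivative of its potential part cancels the antiholomorphic term to leading order; equivalently, one integrates that term by parts once, trading the top derivative on $v_n$ for a derivative on the smooth, localized coefficient $R^2f'(|R|^2)$, which is precisely where the local $W^{s_0+1,\infty}$ bound on $g$ is used to guarantee the required integrability. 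Since the correcting potential part is of order $s-1$, one has $\mathcal F_s=\tfrac12\|v_n\|_{\dot H^s}^2+\mathrm{O}(\|v_n\|_{H^{s-1}}^2)$, so controlling $\mathcal F_s$ controls $\|v_n\|_{\dot H^s}$ via the induction hypothesis.

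After the cancellation I expect a differential inequality of the shape
\[ \Big|\tfrac{d}{dt}\mathcal F_s\Big|\le C\Big( e^{-\theta_{s-1}t}\,\|v_n\|_{H^s}+\|v_n\|_{L^\infty}\,\|v_n\|_{\dot H^s}^2+e^{-2\theta t}\sqrt{\mathcal F_s}\Big), \]
where in the first term the remaining order-$s$ factor is bounded crudely by the a priori constant of \eqref{estimate_a_priori}. Integrating backward from $t=S_n$, where the data is exponentially small by \eqref{init}, yields the bound on $[S_n^*,S_n]$. The rate is then forced by the balance: $\mathcal F_s$ is quadratic and its dominant forcing decays like $e^{-\theta_{s-1}t}$, so one only recovers $\|v_n\|_{\dot H^s}\le Ce^{-\theta_{s-1}t/2}$, which is the origin of the halving $\theta_s\le\theta_{s-1}/2$; the competing cap $\frac{2\theta}{d+1}$ stems from estimating the cubic term $\|v_n\|_{L^\infty}\|v_n\|_{\dot H^s}^2$ by Gagliardo--Nirenberg, whose gain saturates at a $d$-dependent value. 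I expect the genuinely hard part to be this antiholomorphic top-order term: arranging the functional (or the integrations by parts) so that it cancels without raising the order of differentiation and without demanding more than $W^{s_0+1,\infty}_{loc}$ regularity of $g$ is the crux, and it is exactly the real/imaginary asymmetry---absent in the gKdV analysis of \cite{martel}---that forces the decay rate to degrade from each order to the next.
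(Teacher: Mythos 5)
Your plan is correct and is essentially the paper's proof: induction on $s$ starting from \eqref{est}, a modified $H^s$ energy in the spirit of \eqref{funct} whose potential part cancels the quadratic top-order antiholomorphic term (the holomorphic one vanishing because $\partial_z g$ is real), control of the remaining Fa\`a di Bruno terms by H\"older/Sobolev/Gagliardo--Nirenberg against the a priori bound \eqref{estimate_a_priori} (which is the source of the $d$-dependent cap in \eqref{def:theta_s}), one integration by parts on the terms that are \emph{linear} in $\partial^\alpha \overline{v_n}$ with coefficients built from the $R_k$ (this is where $W^{s_0+1,\infty}_{loc}$ enters, cf.\ \eqref{est_ipp}), and integration backward from $S_n$ using \eqref{init}, yielding the same halving-plus-cap bookkeeping for the rates.

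Two details deserve correction, though neither breaks your argument. First, the functional \eqref{funct} is built from $u_n$ itself (with $u_n$-dependent coefficients) and is compared with the conserved soliton functionals \eqref{funct_Rk}; your frozen-coefficient variant, quadratic in $v_n$ with potentials depending on $R$, also works, but it is precisely what produces the cubic residual $\|v_n\|_{L^\infty}\|v_n\|_{\dot H^s}^2$ you mention, which in the paper's formulation appears instead as the coefficient-difference terms $I_{\alpha,1}$. Second, your aside that the cancellation is \emph{equivalently} obtained by integrating the antiholomorphic term by parts is not accurate: integrating $\int c\,(\overline{\partial^\alpha v_n})^2\,dx$ by parts creates a factor $\partial^{\alpha+e_j}v_n$ with $s+1$ derivatives, which is not controlled at the top level $s=s_0$ (where $u_n$ only lies in $H^{s_0}$); the quadratic top-order term is removed \emph{only} by the functional correction, through the multinomial identity \eqref{prop_multinom}, while integration by parts is reserved for the terms linear in $\partial^\alpha v_n$.
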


\begin{proof}
We argue by induction. The existence of $C_1\ge 0$ such that for all $n\in\NN$, 
$$\forall\;t\in[S_n^*,S_n],\qquad \|v_n(t)\|_{H^1}\le C_1e^{-\theta t}$$ is already known. 
Assume that for some $s\in\{2,\dots,s_0\}$, for all $s'\in\{1,\dots,s-1\}$, there exists $C_{s'}\ge 0$ such that for all $n\in\NN$, 
\begin{equation}\label{norms_v}
\forall\;t\in[S_n^*,S_n],\qquad\|v_n(t)\|_{H^{s'}}\le C_{s'}e^{-\theta_{s'}t}.
\end{equation} We aim at showing that the same estimate is valid for $s'=s$. For this purpose, let us consider for all $n\in\NN$ the functional 
\begin{equation}\label{funct}
G_{n,s}:t\mapsto \int_{\RR^d}\left\{\sum_{|\alpha|=s}\dbinom{s}{\alpha}|\partial^\alpha u_n|^2-\sum_{|\beta|=s-1}\dbinom{s-1}{\beta}\Rr\left(u_n^2\left(\partial^\beta\overline{u_n}\right)^2\right)f'(|u_n|^2)\right\}(t)\;dx.
\end{equation}

More precisely we prove, in what follows, how to obtain the following statement, which is essential in the proof of estimate \eqref{norms_v} corresponding to $s'=s$.

\begin{Lem}\label{der_funct} For all $n\in\NN$, and for all $t\in[S_n^*,S_n]$, we have
\begin{equation}\label{diff}
|G_{n,s}(t)-G_{n,s}(S_n)|\le Ce^{-\min\left\{\theta_{s-1},\frac{4\theta}{d+1}\right\}t},
\end{equation} for some constant $C$ independent of $n$, $t$ and $A_{s_0}$.
\end{Lem}

\begin{Rq}\label{rq_choix_funct}
The fundamental reason why it is worth introducing the functional $G_{n,s}$ is that no quadratic term involving $\partial^{\alpha}u_n$ for $|\alpha|=s$ appears in its first derivative and no term $\partial^{\alpha'}u_n$ with $|\alpha'|> s$ appears either. Thus we manage to control $G_{n,s}'(t)$. Nevertheless, we do not claim that the functional $G_{n,s}$ is the only one that can be used to prove \eqref{norms_v}. 
\end{Rq}

\begin{proof}[Proof of Lemma \ref{der_funct}]
We will work on the derivative of $G_{n,s}$ and show in fact that
\begin{equation}\label{der1}
\left|G_{n,s}'(t)\right|\le Ce^{-\min\left\{\theta_{s-1},\frac{4\theta}{d+1}\right\}t}.
\end{equation}

The computations and estimates are established rather in terms of the function $g$ instead of $f$; by this means, they are considerably less burdensome. Besides, in accordance with Remark \ref{rq_choix_funct2} below, the calculations indicate that \eqref{der1} would still be true for more generalized functions $g$ which satisfy $\Ii\;\partial_xg=\Rr\;\partial_yg$. \\

Let us introduce also some further notations. For ease of reading, we will write $u$ instead of $u_n$ or $u_n(t)$ and $G_s$ instead of $G_{n,s}$. In addition, we denote by $u_1$ the real part of $u$ and by $u_2$ its imaginary part.\\ Moreover, for all $j\in\{1,\dots,d\}$, we specify as $e_j$ the $d$-tuple $(0,\dots,1,\dots,0)$ for which all components except the $j$-th one are zero. \\

We divide the proof of Lemma \ref{der_funct} into three steps. \\

\underline{Step 1}: Computation of the derivative of $G_s$. \\

First of all, observe that for all $(x,y)\in\RR^2$
\begin{equation}
\begin{aligned}
\partial_xg(x,y)&=2x(x+iy)f'(x^2+y^2)+f(x^2+y^2)\\
\partial_yg(x,y)&=2y(x+iy)f'(x^2+y^2)+if(x^2+y^2),
\end{aligned}
\end{equation}
so that $\left(\partial_xg+i\partial_yg\right)(z)=2z^2f'(|z|^2)$. In particular, $G_s$ can be rewritten in terms of $g$ as follows: 
$$G_s(t)=\int_{\RR^d}\left\{\sum_{|\alpha|=s}\dbinom{s}{\alpha}|\partial^\alpha u|^2-\frac{1}{2}\sum_{|\beta|=s-1}\dbinom{s-1}{\beta}\Rr\left(\left(\partial_xg(u)+i\partial_yg(u)\right)\left(\partial^\beta\overline{u}\right)^2\right)\right\}(t)\;dx.$$

Let $\alpha=(\alpha_1,\dots,\alpha_d)\in\NN^d$ be such that $|\alpha|=s$. There exists $l(\alpha)\in\{1,\dots,d\}$ such that $\alpha_{l(\alpha)}\ge 1$.
Then, using the fact that $u$ satisfies \eqref{NLS}, the following holds true:
\begin{equation}\label{der_alpha}
\begin{aligned}
\displaystyle\frac{d}{dt}\int_{\RR^d}|\partial^\alpha u|^2\;dx=&-2\;\Ii\displaystyle\int_{\RR^d}\left[\sum_{i=1}^d\partial_{x_i}^{2}\partial^\alpha u+\partial^\alpha\left(g(u)\right)\right]\partial^\alpha\overline{u}\;dx\\
=&-2\;\displaystyle\Ii\int_{\RR^d}\partial^\alpha\left(g(u)\right)\partial^\alpha\overline{u}\;dx\\
=&-2\;\displaystyle\Ii\int_{\RR^d}\left(\partial_xg(u)\Rr\left(\partial^\alpha u\right)+\partial_yg(u)\Ii\left(\partial^\alpha u\right)\right)\partial^\alpha\overline{u}\;dx+I_\alpha\\
=&-2\displaystyle\int_{\RR^d} \Ii\big(u^2(\partial^\alpha\overline{u})^2\big)f'(|u|^2)\;dx+I_\alpha,
\end{aligned}
\end{equation}
where $$I_\alpha=-2\;\Ii\int_{\RR^d}\left(\partial^{\alpha-e_{l(\alpha)}}\left(\partial_xg(u)\right)\partial^{e_{l(\alpha)}}u_1+\partial^{\alpha-e_{l(\alpha)}}\left(\partial_yg(u)\right)\partial^{e_{l(\alpha)}}u_2\right)\partial^\alpha\overline{u}\;dx.$$  Due to Faà di Bruno formula, $I_\alpha$ is also a linear combination of the following terms:
\begin{equation}\label{terme_type}
I_{\alpha,q,r,\tilde\alpha_1,\dots,\tilde\alpha_q}(u):=\int_{\RR^d}\partial^{\tilde\alpha_1}u_{j_1}\dots\partial^{\tilde\alpha_q}u_{j_q}\Ii\left(\frac{\partial^q g}{\partial_x^{r}\partial_y^{q-r}}(u)\partial^\alpha\overline{u}\right)\;dx,
\end{equation}
where $q\in\{2,\dots,s\}$, $r\in\{0,\dots,q\}$, $\sum_{i=1}^q|\tilde\alpha_i|=s$, and for all $i\in\{1,\dots,q\}$, $\tilde\alpha_i\ge 1$ and $j_i\in\{1,2\}$.\\

\noindent Similarly, we have for each multi-index $\beta$ such that $|\beta|=s-1$:
\begin{equation}\label{der_beta}
\begin{aligned}
\MoveEqLeft[4]\displaystyle\frac{d}{dt}\int_{\RR^d}\frac{1}{2}\Rr\left(\left(\partial_xg(u)+i\partial_yg(u)\right)\partial^\beta\overline{u}^2\right)dx\\
=&\;\Ii\displaystyle\int_{\RR^d}\left(\partial_xg(u)+i\partial_yg(u)\right)\left(\sum_{j=1}^d\partial^{\beta+2e_j}\overline{u}+\overline{\partial^\beta(g(u))}\right)\partial^\beta\overline{u}\;dx+J_{1,\beta}\\
=&-\displaystyle\sum_{j=1}^d\Ii\int_{\RR^d}\left(\partial_xg(u)+i\partial_yg(u)\right)\left(\partial^{\beta+e_j}\overline{u}\right)^2\;dx+J_{1,\beta}+J_{2,\beta}+J_{3,\beta}\\
=&-2\displaystyle\sum_{j=1}^d\int_{\RR^d} \Ii\big(u^2(\partial^{\beta+e_j}\overline{u})^2\big)f'(|u|^2)\;dx+J_{1,\beta}+J_{2,\beta}+J_{3,\beta},
\end{aligned}
 \end{equation}
where we denote
\begin{align*}
J_{1,\beta}=&\frac{1}{2}\int_{\RR^d}\Rr\left[\left(\frac{\partial^2g}{\partial x^2}(u)\partial_tu_1+\frac{\partial^2g}{\partial x\partial y}(u)\partial_tu_2+i\frac{\partial^2g}{\partial x\partial y}(u)\partial_tu_1+i\frac{\partial^2g}{\partial y^2}(u)\partial_tu_2\right)\left(\partial^\beta \overline{u}\right)^2\right]\;dx\\
J_{2,\beta}=&\sum_{j=1}^d\Ii\int_{\RR^d}\partial_{x_j}\left(\frac{\partial g}{\partial x}(u)+i\frac{\partial g}{\partial y}(u)\right)\partial^{\beta+e_j}\overline{u}\partial^{\beta}\overline{u}\;dx\\
J_{3,\beta}=&\Ii\int_{\RR^d}\left(\partial_xg(u)+i\partial_yg(u)\right)\overline{\partial^\beta(g(u))}\partial^\beta\overline{u}.
\end{align*}

We observe that 
\begin{equation}\label{prop_multinom}
\sum_{|\alpha|=s}\dbinom{s}{\alpha}\left(\partial^\alpha \overline{u}\right)^2-\sum_{|\beta|=s-1}\dbinom{s-1}{\beta}\sum_{j=1}^d\left(\partial^{\beta+e_j} \overline{u}\right)^2=0.
\end{equation}

\noindent Thus, \begin{equation}\label{der_Fs}
G_{s}'(t)=\sum_{|\alpha|=s}\dbinom{s}{\alpha}I_\alpha-\sum_{|\beta|=s-1}\dbinom{s-1}{\beta}\left(J_{1,\beta}+J_{2,\beta}+J_{3,\beta}\right).
\end{equation}

\begin{Rq}\label{rq_choix_funct2}
Note that, considering \eqref{der_alpha}, \eqref{der_beta}, and \eqref{prop_multinom}, the property that allows us to obtain \eqref{der_Fs} is in fact $\Ii\;\partial_xg=\Rr\;\partial_yg$. Indeed, this assumption suffices to have: for all $\alpha\in\NN^d$ with $|\alpha|=s$,
$$2\;\Ii\left[\left(\partial_xg(u)\Rr\left(\partial^\alpha u\right)+\partial_yg(u)\Ii\left(\partial^\alpha u\right)\right)\partial^{\alpha} \overline{u}\right]=\Ii\left[\left(\partial_xg(u)+i\partial_yg(u)\right)\left(\partial^{\alpha}\overline{u}\right)^2\right].$$
\end{Rq}

\underline{Step 2}: Control of the derivative of $G_s$. \\

Take $\alpha\in\NN^d$ such that $|\alpha|=s$. Let $q$, $r$, and $\tilde\alpha_1,\dots,\tilde\alpha_q$ be as in (\ref{terme_type}), and denote by $I_{\alpha,q,r,\tilde\alpha_1,\dots,\tilde\alpha_q}(R_k)$ the integral defined exactly as in  (\ref{terme_type}) by replacing $u$ by the soliton $R_k$, for all $k=1,\dots,K$.
Then we have
\begin{equation}\label{control_terme_type}
\left|I_{\alpha,q,r,\tilde\alpha_1,\dots,\tilde\alpha_q}(u)-\sum_{k=1}^KI_{\alpha,q,r,\tilde\alpha_1,\dots,\tilde\alpha_q}(R_k)\right|\leq Ce^{-\min\left\{\theta_{s-1},\frac{4\theta}{d+1}\right\}t},
\end{equation} for some constant $C$ independent of $A_{s}$.

In order to prove (\ref{control_terme_type}), one proceeds by decomposition of $I_{\alpha,q,r,\tilde\alpha_1,\dots,\tilde\alpha_q}(u)$ as follows. The basic idea is to make terms in $v$ (which provide the expected exponential term at the right-hand side of (\ref{control_terme_type})) appear. Let us explicit this decomposition:

\begin{align*}
\displaystyle I_{\alpha,q,r,\tilde\alpha_1,\dots,\tilde\alpha_q}(u)=&\displaystyle \int_{\RR^d}\partial^{\tilde\alpha_1}u_{j_1}\dots\partial^{\tilde\alpha_q}u_{j_q}\Ii\left(\left(\frac{\partial^q g}{\partial_x^{r}\partial_y^{q-r}}(u)-\frac{\partial^q g}{\partial_x^{r}\partial_y^{q-r}}(R)\right)\partial^\alpha\overline{u}\right)\;dx&(I_{\alpha,1})\\
&+\int_{\RR^d}\partial^{\tilde\alpha_1}{v}_{j_1}\partial^{\tilde\alpha_2}u_{j_2}\dots\partial^{\tilde\alpha_q}{u}_{j_q}\Ii\left(\frac{\partial^q g}{\partial_x^{r}\partial_y^{q-r}}(R)\partial^\alpha\overline{u}\right)\;dx&(I_{\alpha,2})\\
&+\int_{\RR^d}\partial^{\tilde\alpha_1}{R}_{j_1}\partial^{\tilde\alpha_2}v_{j_2}\partial^{\tilde\alpha_3}u_{j_3}\dots\partial^{\tilde\alpha_q}{u}_{j_q}\Ii\left(\frac{\partial^q g}{\partial_x^{r}\partial_y^{q-r}}(R)\partial^\alpha\overline{u}\right)\;dx&(I_{\alpha,3})\\
&+\dots&(I_{\alpha,\dots})\\
&+\int_{\RR^d}\partial^{\tilde\alpha_1}{R}_{j_1}\partial^{\tilde\alpha_2}R_{j_2}\dots\partial^{\tilde\alpha_{q-1}}R_{j_{q-1}}\partial^{\tilde\alpha_q}{v}_{j_q}\Ii\left(\frac{\partial^q g}{\partial_x^{r}\partial_y^{q-r}}(R)\partial^\alpha\overline{u}\right)\;dx&(I_{\alpha,q+1})\\
&+\int_{\RR^d}\partial^{\tilde\alpha_1}{R}_{j_1}\dots\partial^{\tilde\alpha_q}{R}_{j_q}\Ii\left(\frac{\partial^q g}{\partial_x^{r}\partial_y^{q-r}}(R)\partial^\alpha\overline{v}\right)\;dx&(I_{\alpha,q+2})\\
&+\int_{\RR^d}\partial^{\tilde\alpha_1}{R}_{j_1}\dots\partial^{\tilde\alpha_q}{R}_{j_q}\Ii\left(\frac{\partial^q g}{\partial_x^{r}\partial_y^{q-r}}(R)\partial^\alpha\overline{R}\right)\;dx&(I_{\alpha,q+3})
\end{align*}

Now, we control each preceding term $I_{\alpha,1},\dots, I_{\alpha,q+3}$ occurring in the preceding decomposition by means of the induction assumption and some classical tools in functional analysis, namely Hölder inequality, Sobolev embeddings, and Gagliardo-Nirenberg inequalities. \\

Let us notice that 
\begin{equation}\label{R_bounded}
\sup_{t\in\RR}\|R(t)\|_{L^\infty}<+\infty.
\end{equation}
 Considering that $s_0>\frac{d}{2}$, we deduce then from (\ref{estimate_a_priori}) and the Sobolev embedding $H^{\lfloor\frac{d}{2}\rfloor+1}(\RR^d)\hookrightarrow L^\infty(\RR^d)$ that there exists $A\ge 0$ such that for all $n$,
\begin{equation}\label{u_bounded}
\forall\;t\in[S_n^*,S_n],\qquad\|u_n(t)\|_{L^\infty}\le A.
\end{equation}

Since $\frac{\partial^q g}{\partial_x^{r}\partial_y^{q-r}}$ is $W^{1,\infty}_{loc}$ on $\CC$ (or in other words locally Lipschitz on $\CC$), it results from (\ref{R_bounded}) and (\ref{u_bounded}) that 
\begin{equation}\label{est_loc_lipschitz}
|I_{\alpha,1}|\leq C\int_{\RR^d}|v| |\partial^\alpha u| |\partial^{\tilde\alpha_1}u|\dots |\partial^{\tilde\alpha_q}u|\;dx.
\end{equation}
We now estimate the integral $\int_{\RR^d}|v| |\partial^\alpha u| |\partial^{\tilde\alpha_1}u|\dots |\partial^{\tilde\alpha_q}u|\;dx$ by means of Hölder inequality. For this, we have to be careful concerning the choice of the involved Lebesgue spaces (or in other words the Hölder exponents) considering that $\partial^{\tilde{\alpha}_i}u\in H^{s_0-|\tilde{\alpha}_i|}$.\\ 
We define $\mathcal{I}:=\left\{i\in\left\{1,\dots,q\right\}| \; s_0-|\tilde\alpha_i|<\frac{d}{2}\right\}$, $\mathcal{J}:=\left\{i\in\left\{1,\dots,q\right\}| \; s_0-|\tilde\alpha_i|>\frac{d}{2}\right\}$, and
$$m_i:=\begin{dcases}
\frac{2d}{d-2(s_0-|\tilde\alpha_i|)}&\text{ if } i\in \mathcal{I}\\
\infty&\text{ if } i\in \mathcal{J}.\\
\end{dcases}$$ For $i\in\left\{1,\dots,q\right\}\setminus \left(\mathcal{I}\cup\mathcal{J}\right)$, we take $m_i\in (0,+\infty)$ large enough so that $$\sum_{i=1}^q\frac{1}{m_i}<\frac{1}{2},$$ which is possible since
\begin{equation}\label{condition_s0}
\begin{aligned}
\frac{1}{2}-\left(\sum_{i\in \mathcal{I}}\frac{1}{m_i}+\sum_{i\in \mathcal{J}}\frac{1}{m_i}\right)&=\frac{1}{2}-\sum_{i\in \mathcal{I}}\frac{1}{m_i}=\frac{1}{2}-\sum_{i\in \mathcal{I}}\frac{d-2(s_0-|\tilde\alpha_i|)}{2d}\\
&\ge \frac{1}{2}-q\left(\frac{1}{2}-\frac{s_0}{d}\right)-\frac{s}{d} \geq (1-q)\left(\frac{1}{2}-\frac{s_0}{d}\right) >0.
\end{aligned}
\end{equation} due to our assumption on $s_0$ and the fact that $q>1$.
Then, we observe that for all $i=1,\dots,q$, $\partial^{\tilde\alpha_i}u\in H^{s_0-|\tilde\alpha_i|}(\RR^d)\hookrightarrow L^{m_i}(\RR^d)$  by the classical Sobolev embedding theorem.  Using Hölder inequality, we obtain
\begin{equation}\label{est_I1'}
\begin{aligned}
|I_{\alpha,1}|&\leq C\| \partial^\alpha u\|_{L^2}\prod_{i=2}^q\|\partial^{\tilde\alpha_i}u\|_{L^{m_i}}\|v\|_{L^m} \leq C\|v\|_{L^{m}}.
\end{aligned}
\end{equation}
where $m\geq 2$, and 
$\frac{1}{m}=\frac{1}{2}-\sum_{i=1}^q\frac{1}{m_i}\ge (q-1)\left(\frac{s_0}{d}-\frac{1}{2}\right)>0$
by definition of the $m_i$, $i=1,\dots, q$.
The following Gagliardo-Nirenberg inequality 
\begin{equation}
\|v\|_{L^{m}}\leq C\|v\|_{H^{s_0'}}^\sigma\|v\|_{L^2}^{1-\sigma},
\end{equation}
with $s_0':=\lfloor\frac{d}{2}\rfloor +1\le \frac{d+1}{2}$ and $\sigma:=\frac{d}{s_0'}\left(\frac{1}{2}-\frac{1}{m}\right)$ (which implies $1-\sigma \geq \frac{1}{s_0'}$, since $2s_0'-d\ge 1$)
leads finally to
\begin{equation}\label{est_I1}
\begin{aligned}
|I_{\alpha,1}|&\leq C\|v\|_{L^2}^{1-\sigma} \leq Ce^{-\frac{2\theta}{s_0'}t} \leq Ce^{-\frac{4\theta}{d+1}t}.
\end{aligned}
\end{equation}

\noindent To estimate $I_{\alpha,2},\dots,I_{\alpha,q+1}$, one proceeds as before. For instance, let us explain how to deal with $I_{\alpha,2}$; the same would be done for the other integrals. \\
We choose $m_1'$ such that $H^{s_1-1-|\tilde\alpha_1|}\hookrightarrow L^{m_1'}(\RR^d)$  and $\frac{1}{2}-\frac{1}{m_1'}-\sum_{i=2}^q\frac{1}{m_i}>0$. Then, again due to Hölder inequality, we have:
\begin{equation}\label{est_I2}
\begin{aligned}
|I_{\alpha,2}|&\leq \| \partial^\alpha u\|_{L^2}\| \partial^{\tilde\alpha_1} v\|_{L^{m_1'}}\prod_{i=1}^q\|\partial^{\tilde\alpha_i}u\|_{L^{m_i}}\left\|\frac{\partial^qg}{\partial_x^r\partial_y^{q-r}}(R)\right\|_{L^{\infty}}\\
&\le C\|v\|_{H^{s-1}} \le Ce^{-\theta_{s-1}t}.
\end{aligned}
\end{equation}
Similarly, we check that
\begin{equation}\label{est_I3}
\forall\;i\in\{3,\dots,q+1\},\quad |I_{\alpha,i}|\leq Ce^{-\theta_{s-1}t}.
\end{equation}

\noindent Now, let us deal with $I_{\alpha,q+2}$. By \eqref{decay_prop} and the fact that $\frac{\partial^q g}{\partial_x^{r}\partial_y^{q-r}}\in W^{1,\infty}_{loc}(\CC)$, we have:
$$I_{\alpha,q+2}=\sum_{k=1}^K\int_{\RR^d}\partial^{\tilde\alpha_1}{R}_{k,j_1}\dots\partial^{\tilde\alpha_q}{R}_{k,j_q}\Ii\left(\frac{\partial^q g}{\partial_x^{r}\partial_y^{q-r}}(R_k)\partial^\alpha\overline{v}\right)\;dx+Ce^{-2\theta t}\|v\|_{H^s}.$$
Again by assumption, each partial derivative of $\frac{\partial^qg}{\partial x^r\partial y^{q-r}}(R_k)$ is bounded, thus the integral 
$$\int_{\RR^d}\partial^{\alpha-e_l}\overline{v}\partial^{e_l}\left(\partial^{\tilde{\alpha}_1}R_{k,j_1}\dots\partial^{\tilde{\alpha}_q}R_{k,j_q}\frac{\partial^qg}{\partial_x^r\partial_y^{q-r}}(R_k)\right)\;dx$$ makes sense and one can integrate once by parts to obtain
\begin{equation}\label{est_ipp}
\begin{aligned}
\MoveEqLeft[6] \left|\int_{\RR^d}\partial^{\tilde{\alpha}_1}{R}_{k,j_1}\dots\partial^{\tilde{\alpha}_q}{R}_{k,j_q}\Ii\left(\frac{\partial^q g}{\partial_x^{r}\partial_y^{q-r}}(R_k)\partial^\alpha\overline{v}\right)\;dx\right|\\
&=\left|\Ii\int_{\RR^d}\partial^{\alpha-e_l}\overline{v}\partial^{e_l}\left(\partial^{\tilde{\alpha}_1}R_{k,j_1}\dots\partial^{\tilde{\alpha}_q}R_{k,j_q}\frac{\partial^qg}{\partial_x^r\partial_y^{q-r}}(R_k)\right)\;dx\right|\\
&\leq C\|v(t)\|_{H^{s-1}} \leq Ce^{-\theta_{s-1}t}.
\end{aligned}
\end{equation}
Thus,
\begin{equation}\label{est_Iq+2}
|I_{\alpha,q+2}|\leq Ce^{-\theta_{s-1}t}.
\end{equation}

\noindent Finally, by \eqref{decay_prop} and using once more that $\frac{\partial^qg}{\partial_x^r\partial_y^{q-r}}$ is in $W^{1,\infty}_{loc}(\CC)$,
\begin{equation}\label{est_Iq+3}
\begin{aligned}
\left|I_{\alpha,q+3}-\sum_{k=1}^KI_{\alpha,q,r,\tilde{\alpha}_1,\dots,\tilde{\alpha}_q}(R_k)\right|&\le Ce^{-2\theta t}.
\end{aligned}
\end{equation}
Hence, we conclude from \eqref{est_I1}, \eqref{est_I2}, \eqref{est_I3}, \eqref{est_Iq+2}, and \eqref{est_Iq+3} that \eqref{control_terme_type} holds true. \\ 

The expressions $J_{1,\beta},J_{2,\beta}, J_{3,\beta} $ (given before) consist of terms that can be controlled in a similar manner.  Let us denote by $J_{i,\beta}(R_k)$ the same integral as $J_{i,\beta}$ where $R_k$ replaces $u$  for all $i=1,2,3$ and for all $k=1,\dots,K$.  One can check that 
\begin{equation}\label{term_J1}
\left|J_{1,\beta}-\sum_{k=1}^KJ_{1,\beta}(R_k)\right|\le Ce^{-\min\left\{\theta_{s-1},\frac{4\theta}{d+1}\right\}t};
\end{equation}
\begin{equation}\label{term_J2}
\left|J_{2,\beta}-\sum_{k=1}^KJ_{2,\beta}(R_k)\right|\le Ce^{-\min\left\{\theta_{s-1},\frac{4\theta}{d+1}\right\}t};
\end{equation}
\begin{equation}\label{term_J3}
\left|J_{3,\beta}-\sum_{k=1}^KJ_{3,\beta}(R_k)\right|\le Ce^{-\theta_{s-1}t}.
\end{equation}

\underline{Step 3}: Related functional involving $R_k$.\\

Let $k\in\{1,\dots,K\}$. An immediate induction argument shows that for all multi-index $\alpha\in\NN^d$ such that $|\alpha|=s$, for all multi-index $\alpha'\preceq\alpha$, there exists $z_{\alpha'}\in\CC$ such that 
$$\partial^\alpha R_k(t,x)=\sum_{\alpha'\preceq\alpha}z_{\alpha'}\partial^{\alpha'}Q_{\omega_k}(x-x_k^0-v_kt)e^{i\left(\frac{1}{2}v_k\cdot x+\left(\omega_k-\frac{|v_k|^2}{4}\right)t+\gamma_k\right)}.$$
Therefore 
\begin{align*}
\int_{\RR^d}\left|\partial^\alpha R_k\right|^2(t)\;dx&=\int_{\RR^d}\left|\sum_{\alpha'\preceq \alpha}z_{\alpha'}\partial^{\alpha'}Q_{\omega_k}(x-x^0_k-v_kt)\right|^2\;dx =\int_{\RR^d}\left|\sum_{\alpha'\preceq \alpha}z_{\alpha'}\partial^{\alpha'}Q_{\omega_k}(x)\right|^2\;dx,
\end{align*}
so that 
\begin{equation}\label{prop_der_funct_Rk_1}
\frac{d}{dt}\int_{\RR^d}|\partial^\alpha R_k|^2\;dx=0.
\end{equation}
Furthermore for all multi-index $\beta$ such that $|\beta|=s-1$,
$$R_k^2\left(\partial^\beta\overline{R_k}\right)^2(t,x)=Q_{\omega_k}^2(x-x_k^0-v_kt)\left(\sum_{\beta'\preceq \beta}z_{\beta'}\partial^{\beta'}Q_{\omega_k}(x-x^0_k-v_kt)\right)^2,$$ from which we infer also
\begin{equation}\label{prop_der_funct_Rk_2}
\frac{d}{dt}\int_{\RR^d}\Rr\big(R_k^2\big(\partial^\beta\overline{R_k}\big)\big)^2f'(|R_k|^2)\;dx=0.
\end{equation}
Hence, gathering \eqref{prop_der_funct_Rk_1} and \eqref{prop_der_funct_Rk_2},
\begin{equation}\label{funct_Rk}
0=\frac{d}{dt}\int_{\RR^d}\left\{\sum_{|\alpha|=s}\dbinom{s}{\alpha}|\partial^\alpha R_k|^2-\sum_{|\beta|=s-1}\dbinom{s-1}{\beta}\Rr\left(R_k^2\left(\partial^\beta \overline{R_k}\right)^2\right)f'(|R_k|^2)\right\}\;dx.
\end{equation} 
Considering that this last quantity can be written as exactly the same linear combination of terms as $G_{n,s}'$ (we refer to \eqref{der_Fs}) where we replace just $u$ by $R_k$, we conclude from \eqref{control_terme_type}, \eqref{term_J1}, \eqref{term_J2}, \eqref{term_J3}, and \eqref{funct_Rk} that
\begin{equation}
\forall\; t\in [S_n^*,S_n],\qquad|G_{n,s}'(t)|\le Ce^{-\min\left\{\theta_{s-1},\frac{4\theta}{d+1}\right\}t}.
\end{equation}
Integrating the preceding inequality between $t$ and $S_n$ yields directly Lemma \ref{der_funct}.
\end{proof}

Let us now conclude the proof of Proposition \ref{boot}. \\

We observe that, for all $t\in[S_n^*,S_n]$,
\begin{equation}\label{dec_norm_v}
\begin{aligned}
\MoveEqLeft[3]\displaystyle\sum_{|\alpha|=s}\dbinom{s}{\alpha}\int_{\RR^d}|\partial^\alpha v_n(t)|^2\;dx\\
=&\;\displaystyle\left(G_{n,s}(t)-G_{n,s}(S_n)\right)
+2\sum_{|\alpha|=s}\dbinom{s}{\alpha}\Rr\int_{\RR^d}\partial^{\alpha-e_{l(\alpha)}}v_n(t)\partial^{\alpha+e_{l(\alpha)}}\overline{R}(t)\;dx\\
&+\sum_{|\alpha|=s}\dbinom{s}{\alpha}\left(\int_{\RR^d}|\partial^\alpha R(S_n)|^2\;dx-\int_{\RR^d}|\partial^\alpha R(t)|^2\;dx\right)\\
&+\displaystyle\sum_{|\beta|=s-1}\dbinom{s-1}{\beta}\left(\int_{\RR^d}\Rr\left(u_n^2(\partial^{\beta}\overline{u_n})^2\right)f'(|u_n|^2)(t)-\int_{\RR^d}\Rr\left(R^2(\partial^\beta\overline{R})^2\right)f'(|R|^2)(t)\right)\\
&+\displaystyle\sum_{|\beta|=s-1}\dbinom{s-1}{\beta}\left(\int_{\RR^d}\Rr\left(R^2(\partial^{\beta}\overline{R})^2\right)f'(|R|^2)(t)-\int_{\RR^d}\Rr\left(R^2(\partial^{\beta}\overline{R})^2\right)f'(|R|^2)(S_n)\right)\\
&+\displaystyle\sum_{|\alpha|=s}\dbinom{s}{\alpha}\int_{\RR^d}\left|\partial^\alpha v_n(S_n)\right|^2-2\sum_{|\alpha|=s}\dbinom{s}{\alpha}\Rr\int_\RR \partial^{\alpha-e_{l(\alpha)}}v_n(S_n)\partial^{\alpha+e_{l(\alpha)}}\overline{R}(S_n)\;dx.
\end{aligned}\end{equation} 

Then, by means of \eqref{init}, \eqref{prop_der_funct_Rk_1}, and \eqref{prop_der_funct_Rk_2}, we infer
\begin{multline}\label{dec_norm_v_bis}
\displaystyle\left|\sum_{|\alpha|=s}\dbinom{s}{\alpha}\int_{\RR^d}|\partial^\alpha v_n(t)|^2\;dx\right| \leq \displaystyle\left|G_{n,s}(t)-G_{n,s}(S_n)\right|+C\|v_n\|_{H^{s-1}}+Ce^{-2\theta t}\\
+\displaystyle C\left|\int_{\RR^d}\Rr\left(u_n^2(\partial^{\beta}\overline{u_n})^2\right)f'(|u_n|^2)(t)\;dx-\int_{\RR^d}\Rr\left(R^2(\partial^\beta\overline{R})^2\right)f'(|R|^2)(t)\;dx\right|.
\end{multline} 

Now, from (\ref{decay_prop}), (\ref{diff}), (\ref{funct_Rk}), and from the inequality 
\begin{equation}
\begin{aligned}
\left|\int_{\RR^d}\Rr\big(u_n^2(\partial^{\beta}\overline{u_n})^2\big)f'(|u_n|^2)(t)\;dx-\int_{\RR^d}\Rr\big(R^2(\partial^{\beta}\overline{R})^2\big)f'(|R|^2)(t)\;dx\right|\le C\|v_n(t)\|_{H^{s-1}},
\end{aligned}
\end{equation}
resulting from the local boundedness of the distributional derivative of $z\mapsto \frac{1}{2}\left(\partial_xg+i\partial_y g\right)(z)= z^2f'(|z|^2)$, we deduce the existence of $C_s\ge 0$ such that for all $n\in\NN$,\begin{equation}
\forall\;t\in [S_n^*,S_n],\qquad\|v_n(t)\|_{H^s}^2\le C_s^2e^{-\min\left\{\theta_{s-1},\frac{4\theta}{d+1}\right\}t}.
\end{equation}
This is rewritten as follows:
\begin{equation}
\forall\;t\in [S_n^*,S_n],\qquad\|v_n(t)\|_{H^s}\le C_s e^{-\theta_{s}t},
\end{equation}
which is exactly the expected result. Thus the induction argument implies that for all $n\in\NN$,
\begin{equation}
\forall\;t\in [S_n^*,S_n],\qquad\|v_n(t)\|_{H^{s_0}}\le C_{s_0}e^{-\theta_{s_0}t}.
\end{equation} This puts an end to the proof of Proposition \ref{boot}.
\end{proof}

\bigskip 

Now we explain how to deduce from Proposition \ref{boot} that $S_n^*$ can be chosen independently of $n$, and by this means, we finish the proof of Proposition \ref{core}. We pick up $T_1\ge T_0$ such that $C_{s_0}e^{-\theta_{s_0}T_1}<A_{s_0}$. Let $n\in\NN$ be such that $S_n\ge T_1$, and assume by contradiction that $S_n^*>T_1$. Then by continuity of $v_n$ in $S_n^*$ (and by definition of $S_n^*$ as  infimum), we have $\|v_n(S_n^*)\|_{H^{s_0}}=A_{s_0}$. On the other hand, 
\begin{equation}\label{arg_contradiction}
\begin{aligned}
\|v_n(S_n^*)\|_{H^{s_0}}&\leq C_{s_0}e^{-\theta_{s_0}S_n^*} \leq C_{s_0}e^{-\theta_{s_0}T_1} < A_{s_0},
\end{aligned}
\end{equation}
which yields a contradiction. Thus $S_n^*\le T_1$. Hence, for all $n\in\NN$ such that $S_n\ge T_1$, we have \[\forall\;t\in [T_1,S_n],\qquad\|v_n(t)\|_{H^{s_0}}\le  C_{s_0}e^{-\theta_{s_0}t}.\] 
If necessary we drop the first terms of the sequence $(S_n)$ and re-index it in order to obtain:
\begin{equation}\label{memeT1}
\forall\;n\in\NN,\:\forall\;t\in [T_1,S_n],\qquad\|v_n(t)\|_{H^{s_0}}\le  C_{s_0}e^{-\theta_{s_0}t}.
\end{equation}
Hence, Proposition \ref{core} is established. 

\subsubsection{\underline{Step 2.2}: Independence of $T_1$ with respect to $s$}

Now, we justify that $T_1$ can be chosen independent of $s>\frac{d}{2}$, which is useful to obtain 
\begin{equation}
\forall\; s\in\NN^*,\:\exists\;C_s\ge 0,\:\forall\;t\in[T_1,S_n],\qquad \|v_n(t)\|_{H^s}\le C_se^{-\theta_st},
\end{equation}
in the case where $s_0=\infty$. \\

If $g$ is $\mathscr{C}^\infty$ on $\CC$ as an $\RR$-differentiable function, it is in particular of class $\mathscr{C}^{\lfloor \frac{d}{2}\rfloor +2}$, so that we can apply the previous result: there exists $T_1\ge T_0$ such that for all $n\in\NN$, 
\begin{equation}
\forall\; t\in [T_1,S_n],\qquad\|v_n(t)\|_{H^{\lfloor \frac{d}{2}\rfloor +1}}\le C_{\lfloor \frac{d}{2}\rfloor +1}e^{-\theta_{\lfloor \frac{d}{2}\rfloor +1}t}.
\end{equation}
Let $s\ge \left\lfloor\frac{d}{2} \right\rfloor + 2$ and assume that for all $s'\in\{\lfloor \frac{d}{2}\rfloor +2,\dots,s\}$,
$$\forall\;t\in[T_1,S_n],\qquad \|v_n(t)\|_{H^{s'-1}}\leq C_{s'-1}e^{-\theta_{s'-1}t}.$$
Then define 
$$S_{n,s}^*:=\inf\{t\ge T_1\;|\;\forall\;\tau\in[t,S_n],\:\|v_n(\tau)\|_{H^s}\le A_s\},$$ for some constant $A_s>\max\{2K\mu_s,1\}$ to be determined. We show exactly as before (that is considering the functionals $G_{n,s}$) the existence of $\tilde{C}_s > 0$ independent of $n$, $t$, and $A_s$ such that
\begin{equation}
\forall\;t\in[S_{n,s}^*,S_n],\qquad \|v_n(t)\|_{H^s}^2\leq \tilde{C}_s^2A_s e^{-2\theta_s t},
\end{equation}
or also 
\begin{equation}
\forall\;t\in[S_{n,s}^*,S_n],\qquad \|v_n(t)\|_{H^s}\leq \tilde{C}_sA_s^\frac{1}{2} e^{-\theta_s t}.
\end{equation}
Indeed, the constant $A$ in \eqref{u_bounded} does not depend on $s$ and $A_s\ge 1$ so that we have for example as in \eqref{est_I1'} and then \eqref{est_I1}:
$$|I_{\alpha,1}|\leq CA_se^{-\frac{4\theta}{d+1}t},$$ with $C$ independent of $A_s$. 

\noindent Choosing $A_s>\tilde{C}_s^2e^{-2\theta_s T_1}$ and arguing as in \eqref{arg_contradiction}, we conclude that $S_{n,s}^*=T_1$. Hence, $T_1$ is uniform with respect to $s$. 

\subsubsection{\underline{Step 2.3}: Looking for optimal exponential decay rates in the uniform $H^s$-estimates}

The next result uses and improves that of Proposition \ref{core}.
 
\begin{Prop}\label{prop:core_bis}
For all $s\in\{1,\dots,s_0\}$, there exists $\tilde{A}_s\ge 0$ such that for all $t\in [T_1,S_n]$, \begin{equation}\label{core_bis_mieux}
\|v_n(t)\|_{H^{s}}\le \tilde{A}_se^{-\frac{2\theta}{s+1}t}.
\end{equation}
\end{Prop}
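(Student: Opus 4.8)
The plan is to bootstrap the estimates of Proposition \ref{core} upward, using the already-established $H^{s_0}$ control together with an interpolation argument, in order to replace the rapidly-degrading rate $\theta_s = \frac{\theta}{2^{s-2}(d+1)}$ by the much milder rate $\frac{2\theta}{s+1}$. The key observation is that Proposition \ref{core} already furnishes, for every $s \in \{1,\dots,s_0\}$, a uniform bound $\|v_n(t)\|_{H^s} \le C_s e^{-\theta_s t}$ on $[T_1,S_n]$; in particular we have the two ``anchors'' $\|v_n(t)\|_{H^1} \le C_1 e^{-\theta t}$ (from \eqref{est}, with rate $\theta_1 = \theta$) and a finite bound on $\|v_n(t)\|_{H^{s_0}}$. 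The rate $\frac{2\theta}{s+1}$ is exactly what one reads off from interpolating between an $H^0 = L^2$ decay of rate $2\theta$ and an $H^{s+1}$ bound with no decay, so I expect the mechanism to be a Gagliardo--Nirenberg interpolation of the form $\|v_n\|_{H^s} \le C \|v_n\|_{H^{s+1}}^{\frac{s}{s+1}} \|v_n\|_{L^2}^{\frac{1}{s+1}}$.

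First I would establish the decay at the top level differently, namely I would re-examine the functional-derivative estimate to show that the \emph{$L^2$ norm} of $v_n$ decays with the good rate $2\theta$. Indeed, from \eqref{est} we already have $\|v_n(t)\|_{L^2} \le \|v_n(t)\|_{H^1} \le C_1 e^{-2\theta t}$ in the stable case (where $\phi_n = 0$ gives even better initial data) — more precisely, since the right-hand sides of the $H^1$ bound in Proposition \ref{ptdep} carry the exponent $e^{-2\theta t}$, the $L^2$ part of $v_n$ already decays like $e^{-2\theta t}$. So the genuine input is: $\|v_n(t)\|_{L^2} \le C e^{-2\theta t}$ together with a uniform-in-time (but possibly non-decaying, or slowly decaying) bound on $\|v_n(t)\|_{H^{s+1}}$ coming from \eqref{core_bis} applied at index $s+1 \le s_0$.

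The heart of the argument is then the following induction on $s$ from $1$ to $s_0$. Assuming $\|v_n(t)\|_{H^{s-1}} \le \tilde A_{s-1} e^{-\frac{2\theta}{s} t}$, I would combine the $L^2$ decay $\|v_n\|_{L^2} \le C e^{-2\theta t}$ with the $H^{s+1}$ bound via Gagliardo--Nirenberg to get
\[
\|v_n(t)\|_{H^s} \le C \|v_n(t)\|_{L^2}^{\frac{1}{s+1}} \|v_n(t)\|_{H^{s+1}}^{\frac{s}{s+1}} \le C \left( e^{-2\theta t} \right)^{\frac{1}{s+1}} \cdot C' = \tilde A_s e^{-\frac{2\theta}{s+1} t},
\]
where the $H^{s+1}$ factor is simply bounded by a constant (using Proposition \ref{core} at index $s+1$, valid as long as $s+1 \le s_0$; for $s = s_0$ one interpolates instead between $L^2$ and $H^{s_0}$ with the weaker interpolation exponent, or uses that $v_n$ is already controlled at the top level). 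The exponent $\frac{1}{s+1}$ on the $L^2$ factor is precisely what produces the claimed rate $\frac{2\theta}{s+1}$, and the decay-less $H^{s+1}$ factor is exactly why no further degradation occurs.

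I expect the main obstacle to be handling the top index $s = s_0$, where one cannot interpolate against $H^{s_0+1}$ (no such uniform bound is available). There one must instead interpolate between $L^2$ (rate $2\theta$) and $H^{s_0}$, which yields a rate of only $\frac{2\theta \cdot (s_0 - s)}{s_0}$-type; for $s = s_0$ this is degenerate, so the estimate $\frac{2\theta}{s_0+1}$ at the very top must come from the functional argument of Proposition \ref{core} directly rather than from pure interpolation, or the bound \eqref{core_bis_mieux} at $s = s_0$ must be read as the already-known Proposition \ref{core} estimate packaged with $\theta_{s_0} \le \frac{2\theta}{s_0+1}$. A secondary point requiring care is the uniformity of the constants $\tilde A_s$ in $n$, which is inherited directly from the $n$-independence of the $C_s$ in Proposition \ref{core} and of the $L^2$ bound, so no new difficulty arises there.
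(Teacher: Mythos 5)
For the indices $s\in\{1,\dots,s_0-1\}$ your argument is correct and genuinely different from (and simpler than) the paper's: since \eqref{est} gives $\|v_n(t)\|_{L^2}\le C_1e^{-2\theta t}$ and Proposition \ref{core} gives a bound on $\|v_n(t)\|_{H^{s+1}}$ uniform in $n$ and $t$ whenever $s+1\le s_0$, the interpolation $\|v_n\|_{H^{s}}\le C\|v_n\|_{L^2}^{\frac{1}{s+1}}\|v_n\|_{H^{s+1}}^{\frac{s}{s+1}}$ immediately yields the rate $\frac{2\theta}{s+1}$ with constants independent of $n$ (note that your ``induction on $s$'' is superfluous: the hypothesis on $\|v_n\|_{H^{s-1}}$ is never used in this step). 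The paper proceeds differently: for each fixed $s$ it runs a self-improving bootstrap whose input is the functional estimate obtained in the proof of Proposition \ref{boot}, namely
\begin{equation*}
\|v_n(t)\|_{H^{s}}^2\le C\left(\|v_n(t)\|_{H^{s-1}}+e^{-2\theta t}\right),
\end{equation*}
which is quadratic on the left but only linear on the right, combined with the \emph{downward} interpolation $\|v_n\|_{H^{s-1}}\le\|v_n\|_{L^2}^{\frac{1}{s}}\|v_n\|_{H^{s}}^{\frac{s-1}{s}}$ and the $L^2$ rate $2\theta$. Under the ansatz $\|v_n(t)\|_{H^{s}}\le\tilde{A}_se^{-\mu t}$, matching exponents, $2\mu=\frac{2\theta}{s}+\mu\frac{s-1}{s}$, forces exactly $\mu=\frac{2\theta}{s+1}$, and a continuity argument on $T_n^*$ closes the loop. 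Both mechanisms produce the same rate, but your route trades the bootstrap for one extra level of regularity.

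The genuine gap is the top index $s=s_0$, which the statement includes (and which is needed for \eqref{resreg1} at $s=s_0$). You flag it but do not repair it, and your fallback that \eqref{core_bis_mieux} at $s=s_0$ could be ``read off'' from Proposition \ref{core} because $\theta_{s_0}\le\frac{2\theta}{s_0+1}$ goes the wrong way: for $s_0\ge 2$ one has $\theta_{s_0}=\frac{\theta}{2^{s_0-2}(d+1)}$ and $s_0+1<2^{s_0-1}(d+1)$ for every $d\ge 1$, hence $\theta_{s_0}<\frac{2\theta}{s_0+1}$ strictly, so the decay $e^{-\theta_{s_0}t}$ of Proposition \ref{core} is slower than the claimed $e^{-\frac{2\theta}{s_0+1}t}$ and cannot imply it. Your other fallback (``use the functional argument directly'') is in substance what the paper does, but it is precisely the missing content: the bootstrap described above works at $s=s_0$ because it only involves the levels $s_0$, $s_0-1$ and $0$ --- it never requires a bound in $H^{s_0+1}$. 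To complete your proof you must run that bootstrap at least at the top level; once it is set up, it handles every $s\in\{1,\dots,s_0\}$ at once and your interpolation shortcut becomes optional.
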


\begin{proof}
Let $s'\in\{1,\dots,s\}$. By \eqref{est}, \eqref{core_bis}, and the following interpolation inequality 
$$\|v_n(t)\|_{H^{s'}}\le \|v_n(t)\|_{L^2}^{\gamma}\|v_n(t)\|_{H^{s}}^{1-\gamma},$$ with $\gamma=\frac{s-s'}{s}$, we have for all $t\in [T_1,S_n]$, 
$$\|v_n(t)\|_{H^{s'}}\le Ce^{-2\theta\frac{s-s'}{s}t}.$$

\noindent Now, set
$$T_{n}^*:=\inf\{t\ge T_1\;|\;\forall\;\tau\in[t,S_n],\;\|v_n(\tau)\|_{H^{s}}\le \tilde{A}_se^{-\mu \tau}\},$$ for some $\mu\in(0,2\theta)$ and for some $\tilde{A}_s\ge 1$ to be determined later.

\noindent Let $t$ belong to $[T_n^*,S_n]$. Then by the proof set up before,
$$\|v_n(t)\|_{H^{s}}^2\le C\left(\|v_n(t)\|_{H^{s-1}}+e^{-2\theta t}\right).$$
In addition, we obtain once again by interpolation
\begin{align*}
\|v_n(t)\|_{H^{s-1}}&\le \|v_n(t)\|_{L^2}^{\frac{1}{s}}\|v_n(t)\|_{H^{s}}^{\frac{s-1}{s}} \le C\tilde{A}_{s}^{1-\frac{1}{s}}e^{-\frac{2\theta}{s}t}e^{-\mu\frac{s-1}{s}t}.
\end{align*}
Since $\mu\le 2\theta$, we have $\frac{2\theta+\mu(s-1)}{s}\le 2\theta$, and so there exists $\tilde{C}\ge 0$ (independent of $\tilde{A}_s$) such that 
\begin{equation}\label{ineg_bootstrap}
\begin{aligned}
\|v_n(t)\|_{H^{s}}^2
\le \tilde{C}\tilde{A}_{s}^{1-\frac{1}{s}}e^{-\frac{2\theta}{s}t}e^{-\mu\frac{s-1}{s}t}.\\
\end{aligned}
\end{equation}

Now, choose $$\mu:=\frac{2\theta}{s+1}\quad \text{and}\quad \tilde{A}_s>\tilde{C}^{\frac{s}{s+1}}.$$ 
By a similar argument as that set up to prove Proposition \ref{core}, we see that $T_n^*=T_1$. Indeed, if we had $T_n^*>T_1$, then by the definition of $T_n^*$ and by continuity of $v_n$ in $T_n^*$, we would obtain $$\tilde{A}_s^2e^{-2\mu T_n^*}=\|v_n(T_n^*)\|_{H^{s}}^2\le  \tilde{C}\tilde{A}_s^{1-\frac{1}{s}}e^{-\frac{2\theta}{s}T_n^*}e^{-\mu\frac{s-1}{s}T_n^*},$$
thus, by the choice of $\mu$,
$$
\tilde{A}_s^2\le \tilde{C}\tilde{A}_s^{1-\frac{1}{s}},
$$
which is a contradiction. Consequently, estimate \eqref{core_bis_mieux} does indeed hold.
\end{proof}

\subsection{\underline{Step 3}: Conclusion of the proof of Theorem \ref{th_reg_f}}

We construct now the multi-soliton $u$ using the same arguments as those of Martel \cite[paragraph 2, Step 2]{martel} and Martel and Merle \cite[Paragraph 2]{schMM}. The crucial point is the following lemma, obtained by a compactness argument.

\begin{Lem}
There exist $\varphi\in H^{s_0}(\RR^d)$ and a subsequence $(u_{n_k}(T_1))_k$ of $(u_n(T_1))_n$ such that \[\|u_{n_k}(T_1)-\varphi\|_{H^{s_0}}\underset{k\to+\infty}{\longrightarrow} 0.\]
\end{Lem}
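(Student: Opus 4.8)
The plan is to carry out the standard compactness argument used by Martel \cite{martel} and by Martel and Merle \cite{schMM}. The starting point is a uniform bound. Evaluating \eqref{core_bis} at $t=T_1$ (for those $n$ with $S_n\ge T_1$, which holds for all $n$ large since $S_n\to+\infty$), or alternatively using \eqref{init} together with the fact that $\|Y_k^\pm(S_n)\|_{H^{s_0}}$ is bounded, the remainders satisfy $\|v_n(T_1)\|_{H^{s_0}}=\|u_n(T_1)-R(T_1)\|_{H^{s_0}}\le C_{s_0}e^{-\theta_{s_0}T_1}$ uniformly in $n$. Since $R(T_1)\in H^{s_0}(\RR^d)$ --- indeed $R(T_1)\in H^{s_0+2}(\RR^d)$, thanks to the smoothness and the exponential decay \eqref{exp_decay_Q} of the ground states $Q_{\omega_k}$ --- the sequence $(u_n(T_1))_n$ is bounded in $H^{s_0}(\RR^d)$.

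As $H^{s_0}(\RR^d)$ is a Hilbert space, hence reflexive, I would first extract a subsequence $(u_{n_k}(T_1))_k$ converging weakly in $H^{s_0}$ to some limit $\varphi\in H^{s_0}(\RR^d)$, with $\|\varphi\|_{H^{s_0}}\le\liminf_k\|u_{n_k}(T_1)\|_{H^{s_0}}$ by weak lower semicontinuity. It then remains to upgrade this weak convergence to the strong convergence asserted in the lemma, and this is where the main work lies.

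The difficulty is that the embedding of $H^{s_0}(\RR^d)$ into itself is not compact, for two independent reasons: the domain $\RR^d$ is unbounded, so mass could escape to infinity, and no compactness is gained at the top order of regularity. To rule out escape of mass I would establish tightness of the sequence, i.e. a uniform spatial localization $\sup_k\|u_{n_k}(T_1)\|_{H^{s_0}(\{|x|>\rho\})}\to 0$ as $\rho\to+\infty$: the contribution of $R(T_1)$ is controlled directly by \eqref{exp_decay_Q}, while the contribution of the remainders $v_{n_k}(T_1)$ is handled by a uniform localized $H^{s_0}$ estimate exploiting that the solitons $R_k(T_1)$ are concentrated near $x_k^0+v_kT_1$ (of the same flavour as the decay used in \eqref{decay_prop}). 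On each fixed ball $B_\rho$ the compact Rellich--Kondrachov embedding $H^{s_0}(B_\rho)\hookrightarrow\hookrightarrow H^{s_0-1}(B_\rho)$, a diagonal extraction over $\rho\in\NN^*$, and the tightness just obtained then yield strong convergence $u_{n_k}(T_1)\to\varphi$ in $H^{s'}(\RR^d)$ for every $s'<s_0$. I expect the genuinely delicate point to be reaching the endpoint $s'=s_0$: the local compactness controls only frequencies strictly below the top order, so recovering strong convergence in $H^{s_0}$ itself requires combining the uniform localization with the extra regularity $g\in W^{s_0+1,\infty}_{loc}(\CC)$ in order to prevent any concentration of the highest-order frequencies. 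Observe, however, that for the construction of the multi-soliton $u$ that follows, strong convergence in $H^{s'}(\RR^d)$ for some $s'>\frac d2$ with $s'<s_0$, together with weak convergence and lower semicontinuity of the uniform $H^{s_0}$ bounds \eqref{core_bis_mieux}, would already be sufficient.
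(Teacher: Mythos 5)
Your proposal follows essentially the same route as the paper: the paper's own ``proof'' is only a two-ingredient sketch (the uniform $H^{s_0}$ estimate from Step~2 together with an $L^2$-tightness assertion, the localized compactness argument being borrowed from Martel and Martel--Merle), and your weak-extraction, tightness-plus-Rellich, diagonal-argument scheme is precisely that argument carried out in more detail. The endpoint difficulty you flag is genuine and is glossed over by the paper: a uniform $H^{s_0}$ bound together with strong convergence in $L^2$ (or in $H^{s'}$ for every $s'<s_0$) does \emph{not} by itself yield strong $H^{s_0}$ convergence, as oscillating sequences of the form $n^{-s_0}\psi(x)e^{inx_1}$ show, so the lemma as literally stated requires ruling out concentration at top frequency, which neither the paper's sketch nor your outline actually does. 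Your closing observation is the standard and correct resolution: strong convergence in some $H^{s'}$ with $\frac{d}{2}<s'<s_0$ (where the flow enjoys local well-posedness with continuous dependence), combined with weak $H^{s_0}$ convergence and weak lower semicontinuity of the norm, suffices to define $u$ with $u(T_1)=\varphi$ and to pass to the limit in the estimates of Proposition \ref{prop:core_bis}, which is all the subsequent construction of the multi-soliton uses.
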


\noindent Note that the main ingredients to show this lemma are:
\begin{itemize}
\item the uniform $H^{s_0}$-estimate obtained in Step 2.
\item the following $L^2$-compactness assertion: for all $\epsilon>0$, there exists $\mathcal{K}$  a compact subset of $\RR^d$ such that $$\forall\;n\in\NN,\qquad\int_{\mathcal{K}}|u_n(T_1,x)|^2\;dx\le \epsilon.$$
\end{itemize} 

\noindent Then by local well-posedness of \eqref{NLS} in $H^{s_0}(\RR^d)$ with continuous dependence on compact sets of time \cite[Theorem 1.6]{dai}, the solution $u$ of \eqref{NLS} such that $u(T_1)=\varphi$ is defined in $H^{s_0}(\RR^d)$ and for all $t\ge T_1$, $\left\|u_{n_k}(t)-\varphi(t) \right\|_{H^{s_0}}\to 0$ as $k\to +\infty$. Thus $\varphi$ turns out to be the desired multi-soliton. Besides, the quantities $\|u(t)-R(t)\|_{H^s}$ decrease exponentially; this result is obtained by passing to the limit as $k$ tends to $+\infty$ in the $H^{s}$-uniform estimates given by Proposition \ref{prop:core_bis}, that is for all $s=1,\dots, s_0$, for $k$ large enough:
$$\|u_{n_k}(t)-R(t)\|_{H^{s}}\le C_se^{-\frac{2\theta}{s+1}t}.$$
This yields precisely \eqref{resreg1}. \\

Note that in the case where $g$ is $\mathscr{C}^\infty$ (for example when we consider the pure power non-linearity with $p$ an odd integer), we obtain \eqref{resreg2} as a consequence of \eqref{resreg1}, by interpolating the corresponding $H^s$-estimates, and by the independence of $T_1$ with respect to $s$ proved in Step 2.2.

\section{Conditional uniqueness for multi-solitons of \eqref{NLS}}\label{uniq}

In this section, we prove the uniqueness result stated in Theorem \ref{th_uni_f}, that is for $d\le 3$. 
The strategy developed here would also work to prove Proposition \ref{uni:4d} under the corresponding stronger assumptions. 

Our uniqueness result holds due to the coercivity properties of the linearized operators around ground states, namely assumption (H3) when $f$ is not the $L^2$-critical non-linearity and \eqref{coer_crit} in Proposition \ref{coerccritic} in the $L^2$-critical pure power non-linearity case. The proof follows essentially the same lines in these two cases; the differences are only rooted in the use of the appropriate coercivity result.

We first develop the proof in the stable case, assuming $\tilde{f}:z\mapsto f(|z|^2)$ of class $\mathscr{C}^2$ satisfying 
\eqref{prop_croissance}. This covers in particular the $L^2$-subcritical assumption with $3\leq p<1+\frac{4}{d}$ and $d=1$ in Theorem \ref{uni}. In subsection \ref{subsec_L2crit}, we explain how to modify the calculations in order to perform the proof in the $L^2$-critical case, that is assuming $p=1+\frac{4}{d}$ and $f:r\mapsto r^{\frac{p-1}{2}}$, which will extend the uniqueness result as stated in Theorems \ref{uni} and \ref{th_uni_f}. \\

Let us denote $\varphi$ the multi-soliton of \eqref{NLS} constructed in Theorem \ref{reg1} for $d=1$ and in Theorem \ref{th_reg_f} for $d\ge 2$ (which is possible to consider by hypothesis). Set $\gamma:=\frac{2\theta}{3}$, where $\theta$ is defined in Theorem \ref{reg1} and let $T_1>0$ such that $\varphi$ belongs to $\mathscr{C}([T_1,+\infty),H^1(\RR))$ and
\begin{equation}\label{varphi_1}
\forall\;t\ge T_1,\qquad \|\varphi(t)-R(t)\|_{H^1}\le Ce^{-\gamma t}
\end{equation} for $d=1$, and such that $\varphi$ belongs to $\mathscr{C}([T_1,+\infty),H^{s_0}(\RR^d))$ and
\begin{equation}\label{varphi_3d}
\forall\;t\ge T_1,\qquad \|\varphi(t)-R(t)\|_{H^{s_0}}\le Ce^{-\gamma t}
\end{equation} for $d\ge 2$ (where $s_0=\lfloor\frac{d}{2}\rfloor+1$.)
In particular, due to the Sobolev embedding $H^{\left\lfloor\frac{d}{2}\right\rfloor+1}(\RR^d)\hookrightarrow L^\infty(\RR^d)$ for ($d\ge 1$), we emphasize that, for all $d\ge 1$, $\varphi\in\mathscr{C}([T_1,+\infty[,L^{\infty}(\RR^d))$ and
\begin{equation}\label{varphi_3d'}
\forall\;t\ge T_1,\qquad \|\varphi(t)-R(t)\|_{L^\infty}\le Ce^{-\gamma t}.
\end{equation}

Now, let us take $u$ in the class of multi-solitons satisfying (\ref{classexpo}) and define $z:=u-\varphi$ the difference of the two multi-solitons so that \begin{equation}\label{derz}
\partial_tz=i\left(\Delta z+f\left(|z+\varphi|^2\right)z+\left(f\left(|z+\varphi|^2\right)-f\left(|\varphi|^2\right)\right)\varphi\right),
\end{equation}
and
\begin{equation}\label{est_z_N}
\|z(t)\|_{H^1}=\mathrm{O}\left(\frac{1}{t^N}\right),\qquad\text{as } t\rightarrow +\infty,
\end{equation}
for some integer $N\ge 1$ to be determined later.\\

We will show that $z=0$. The idea is to practice some kind of modulation of the variable $z$ in order to ensure some orthogonality relations, needed to make use of the coercivity properties mentioned before. In other words, we obtain a new function (denoted by $\tilde{z}$) which seems to be adapted to the proof; this is the aim of subsection \ref{subsec_coerc}. Then, the control of the modulation parameters allows us to obtain an estimate of $\|z(t)\|_{H^1}$ in terms of $\|\tilde{z}(t)\|_{H^1}$; this combined with the estimate of the derivative of some kind of Weinstein functional $F_{\tilde{z}}$ (that we introduce in paragraph \ref{estenergysect}) enables us finally to see that $\tilde{z}=0$.

\subsection{Change of function to ensure a coercivity property in the stable case} \label{subsec_coerc}

\subsubsection{Introduction of a new variable}\label{change_var}

We introduce a new function $\tilde{z}$ on $[T,+\infty)\times\RR^d$ for $T$ sufficiently large by
\begin{equation}\label{def_tilde_z}
\forall\;(t,x)\in[T,+\infty)\times\RR^d,\quad\tilde{z}(t,x):=z(t,x)+\sum_{k=1}^K\left(ia_k(t)R_k(t,x)+b_k(t)\cdot\nabla R_k(t,x)\right),
\end{equation}
where $a_k(t)\in\RR$ and $b_k(t)\in\RR^d$ are chosen so that 
\begin{equation}\label{rel}
\forall\;k\in\{1,\dots,K\},\quad\forall\;i\in\{1,\dots,d\},\qquad \begin{dcases}
\Ii\int_{\RR^d}\tilde{z}\overline{R_k}\;dx=0\\
\Rr\int_{\RR^d}\tilde{z}\partial_{x_i}\overline{R_k}\;dx=0.
\end{dcases}
\end{equation}
Existence of $a_k(t)$ and $b_k(t)$ for $t$ large enough follows from:
\begin{Lem}\label{ex}
For $t$ large enough, and for all $k=1,\dots,K$, $a_k(t)$ and $b_k(t)$ are uniquely determined. Moreover, $t\mapsto a_k(t)$ and $t\mapsto b_k(t)$  are differentiable in the sense of distributions and 
\begin{equation}\label{estcoef}
|a_k(t)|,|b_k(t)|\le C\|z(t)\|_{L^2},
\end{equation}
\begin{equation}\label{estdercoef}
|a_k'(t)|,|b_k'(t)|\le C\|z(t)\|_{H^1}.
\end{equation}
\end{Lem}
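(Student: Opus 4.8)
The plan is to read the orthogonality requirements \eqref{rel} as a \emph{linear} system for the unknown vector $X(t)=(a_1(t),\dots,a_K(t),b_1(t),\dots,b_K(t))\in\RR^{K(d+1)}$. Substituting the definition \eqref{def_tilde_z} of $\tilde z$ into \eqref{rel}, the conditions become $M(t)X(t)=V(t)$, where $V(t)$ collects the quantities $-\Ii\int_{\RR^d}z\overline{R_k}\,dx$ and $-\Rr\int_{\RR^d}z\,\partial_{x_i}\overline{R_k}\,dx$, and where $M(t)$ has for entries the pairings of the modulation directions $iR_{k'}$ and $\partial_{x_j}R_{k'}$ against the functionals $w\mapsto\Ii\int w\overline{R_k}$ and $w\mapsto\Rr\int w\,\partial_{x_i}\overline{R_k}$. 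Since $\tilde z$ depends affinely on the parameters, this genuinely is a linear system, and no implicit function theorem is needed: existence and uniqueness of $a_k(t),b_k(t)$ reduce to the invertibility of $M(t)$.

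First I would show that $M(t)=M_0+\mathrm{O}(e^{-2\theta t})$, where $M_0$ is block diagonal with one $(d+1)\times(d+1)$ block per soliton. The off-diagonal blocks, pairing directions attached to $R_{k'}$ against functionals attached to $R_k$ with $k\neq k'$, are exponentially small because the solitons travel at distinct velocities $v_k\neq v_{k'}$; this is exactly the type of bound recorded in \eqref{decay_prop}. After the change of variables $x\mapsto x-x_k^0-v_kt$, the entries of each diagonal block are time-independent integrals of the fixed real profile $Q_{\omega_k}$; a short computation using the phase of $R_k$ and the radial symmetry identity $\int\partial_{x_i}Q_{\omega_k}\,\partial_{x_j}Q_{\omega_k}=\delta_{ij}c_k$ with $c_k=\tfrac1d\|\nabla Q_{\omega_k}\|_{L^2}^2>0$ gives the block
\[
\begin{pmatrix} \|Q_{\omega_k}\|_{L^2}^2 & \tfrac12\|Q_{\omega_k}\|_{L^2}^2\,v_k^{\top} \\ \tfrac12\|Q_{\omega_k}\|_{L^2}^2\,v_k & c_k I_d+\tfrac14\|Q_{\omega_k}\|_{L^2}^2\,v_kv_k^{\top}\end{pmatrix}.
\]
The Schur complement of the scalar top-left entry equals $c_kI_d$, which is positive definite, so each block, hence $M_0$, is invertible. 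Consequently, for $t$ large enough $M(t)$ is invertible with uniformly bounded inverse, which proves existence and uniqueness.

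Estimate \eqref{estcoef} then follows immediately: by Cauchy--Schwarz $|V(t)|\le C\|z(t)\|_{L^2}$ since $R_k$ and $\nabla R_k$ are bounded in $L^2$, whence $X(t)=M(t)^{-1}V(t)$ satisfies $|a_k(t)|,|b_k(t)|\le C\|z(t)\|_{L^2}$. For the distributional differentiability and \eqref{estdercoef}, I would note that $M(t)$ is smooth in $t$ with uniformly bounded derivative (its time dependence sits only in the exponentially small off-diagonal blocks), while $V(t)$ is differentiable in the distributional sense because $z\in\mathscr{C}([T_1,+\infty),H^1)$ solves \eqref{derz}. Differentiating $X=M^{-1}V$ reduces everything to bounding $V'(t)$. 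The only delicate term comes from $\partial_tz=i\Delta z+\cdots$: integrating by parts once transfers a derivative from $\Delta z$ onto the smooth, exponentially decaying $\overline{R_k}$ (or its derivatives), leaving a factor $\|\nabla z\|_{L^2}$, while the nonlinear contributions are controlled by $\|z\|_{H^1}$ using the growth assumption \eqref{prop_croissance} (equivalently, $H^1$-subcriticality) together with the rapid decay of $R_k$. This yields $|V'(t)|\le C\|z(t)\|_{H^1}$ and hence \eqref{estdercoef}.

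The main obstacle I anticipate is twofold: verifying the invertibility of the leading block $M_0$, which hinges on the Schur-complement cancellation of the velocity cross terms, and obtaining the $H^1$ (rather than $H^2$) bound \eqref{estdercoef}, which forces the single integration by parts on the Laplacian term and a careful accounting of the nonlinearity so that no more than one derivative ever falls on $z$.
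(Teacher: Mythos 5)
Your proposal is correct and follows essentially the same route as the paper: both recast \eqref{rel} as the linear system $M(t)x(t)=y(t)$, prove invertibility of $M(t)$ for large $t$ by reducing to the per-soliton blocks up to exponentially small cross-soliton terms (via \eqref{decay_prop}) and cancelling the velocity contributions, then get \eqref{estcoef} from the uniformly bounded inverse and \eqref{estdercoef} by differentiating the system, using one integration by parts on the $\Delta z$ term so that at most one derivative falls on $z$. The only cosmetic difference lies in the linear-algebra step: you compute each diagonal block explicitly using the radial identity $\int_{\RR^d}\partial_{x_i}Q_{\omega_k}\partial_{x_j}Q_{\omega_k}\,dx=\delta_{ij}c_k$ and observe that the Schur complement collapses to $c_kI_d$, whereas the paper performs the row operations $L_{i,k}\mapsto L_{i,k}-\frac{v_{i,k}}{2}L_k$ and deduces positive definiteness of the limiting matrix $D(\infty)$ from the linear independence of the $\partial_{x_i}Q_{\omega_k}$ (Proposition \ref{indep}) — the same cancellation in two guises, yours being more explicit (and relying on radiality), the paper's marginally more robust.
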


\begin{proof}[Proof of Lemma \ref{ex}]
Let us introduce the symmetric block matrix 
\[M(t):=\left[\begin{array}{cccc}
A_{0,0}(t)&B_{1,1}(t)&\cdots&B_{1,d}(t)\\
^tB_{1,1}(t)&A_{1,1}(t)&\cdots&A_{1,d}(t)\\
\vdots&\vdots&\ddots&\vdots\\
^tB_{1,d}(t)&A_{d,1}(t)&\cdots&A_{d,d}(t)
\end{array}\right],\]
where $A_{i,j}(t)$ and $B_{1,j}(t)$ are $K\times K$-matrices with real entries defined by 
\begin{align*}
A_{0,0} & =\left[\Rr\int_{\RR^d}R_k\overline{R_l}\;dx\right]_{(k,l)}, \\
\forall\;(i,j)\in\{1,\dots,d\}^2,\quad A_{i,j} & =\left[\Rr\int_{\RR^d}\partial_{x_i}R_k\partial_{x_j}\overline{R_l}\;dx\right]_{(k,l)},\\
\forall\;j\in\{1,\dots,d\},\quad B_{1,j} & = \left[\Ii\int_{\RR^d}\partial_{x_j}R_k\overline{R_l}\;dx\right]_{(k,l)}.
\end{align*}
Set also \[x(t)=^t\left[a_1,\dots,a_K,b_{1,1},\dots,b_{K,1},\dots,b_{1,d},\dots,b_{K,d}\right]\] and 
\[ y(t)=-^t\left[y_0,y_1,\dots,y_d\right], \]
where 
\[ y_0=\left[\Ii\int_{\RR^d}z\overline{R_1}\;dx,\dots,\Ii\int_{\RR^d}z\overline{R_K}\;dx\right] \]
and for all $i=1,\dots,d$,
\[ y_i=\left[\Rr\int_{\RR^d}z\partial_{x_i}\overline{R_1}\;dx,\dots,\Rr\int_{\RR^d}z\partial_{x_i}\overline{R_K}\;dx\right]. \]
Then relations (\ref{rel}) rewrite clearly \[M(t)x(t)=y(t).\] Consequently, we have to show that $\det M(t)\neq 0$ for $t$ large enough to ensure existence and uniqueness of $a_k(t)$ and $b_k(t)$ for those values of $t$. To do this, observe that 
\begin{align*}
\Rr\int_{\RR^d}R_k(t)\overline{R_l}(t)\;dx & =\begin{dcases}
\int_{\RR^d}Q_{\omega_k}^2\;dx&\text{if } k=l\\
\mathrm{O}\left(e^{-\gamma t}\right)&\text{if } k\neq l,\\
\end{dcases} \\
\Rr\int_{\RR^d}\partial_{x_i}R_k(t)\partial_{x_j}\overline{R_l}(t)\;dx & =\begin{dcases}
\int_{\RR^d}\left\{\partial_{x_i}Q_{\omega_k}\partial_{x_j}Q_{\omega_k}+\frac{v_{i,k}v_{j,k}}{4}Q_{\omega_k}^2\right\}\;dx&\text{if } k=l\\
\mathrm{O}\left(e^{-\gamma t}\right)&\text{if } k\neq l,\\
\end{dcases} \\
\Ii\int_{\RR^d}\partial_{x_i}R_k(t)\overline{R_l}(t)\;dx & =\begin{dcases}
\frac{v_{i,k}}{2}\int_{\RR^d}Q_{\omega_k}^2\;dx&\text{if } k=l\\
\mathrm{O}\left(e^{-\gamma t}\right)&\text{if } k\neq l. \\
\end{dcases}
\end{align*}

Let us now compute $\det(M(t))$. For all $k=1,\dots, K$, let $L_k$ denote the $k$-th line of the block matrix 
$$\left[A_{0,0}(t)\:B_{1,1}(t)\:\cdots\:B_{1,d}(t)\right].$$
For all $i=1,\dots,d$, and for all $k=1,\dots,K$, replacing the $k$-th line $L_{i,k}$ of the block matrix $\left[^tB_{1,i}(t)\:A_{i,1}(t)\:\cdots\:A_{i,d}(t)\right]$ by $L_{i,k}-\frac{v_{i,k}}{2}L_k$, we obtain $\det(M(t))=\det(N(t))$ where 
$$N(t):=\left[\begin{array}{cccc}
A_{0,0}(t)&B_{1,1}(t)&\cdots&B_{1,d}(t)\\
C_{1}(t)&D_{1,1}(t)&\cdots&D_{1,d}(t)\\
\vdots&\vdots&\ddots&\vdots\\
C_d(t)&D_{d,1}(t)&\cdots&D_{d,d}(t)
\end{array}\right]$$
and 
$C_i(t)$ has entries zero on the diagonal and $O(e^{-\gamma t})$ everywhere else and
$D_{i,j}$ has entries $$\int_{\RR^d}\partial_{x_i}Q_{\omega_k}\partial_{x_j}Q_{\omega_k}\;dx$$ on the diagonal and $O(e^{-\gamma t})$ everywhere else.

\noindent Thus \begin{equation}\label{detM}
\det(M(t))=\left(\prod_{k=1}^K\int_{\RR^d}Q_{\omega_k}^2(x)\;dx\right)\det(D(t))+\mathrm{O}(e^{-\gamma t}),\end{equation}
where $D(t)$ is the sub-matrix of $N(t)$ with block matrices $D_{i,j}(t)$.

\noindent We observe that $D(t)$ admits a limit as $t\to +\infty$ which we denote by $D(\infty)$ and which corresponds to the block matrix 
$$\left[\begin{array}{ccc}
D_{1,1}(\infty)&\cdots&D_{1,d}(\infty)\\
\vdots&\ddots&\vdots\\
D_{d,1}(\infty)&\cdots&D_{d,d}(\infty)
\end{array}\right]$$
where $D_{i,j}({\infty})$ is a diagonal matrix with entries $\int_{\RR^d}\partial_{x_i}Q_{\omega_k}\partial_{x_j}Q_{\omega_k}\;dx$.
Due to the continuity of the determinant, $\det(D(t))\to \det D(\infty)$ as $t\to +\infty$. Thus,
\begin{equation}\label{detM_1}
\det M(t)\to \left(\prod_{k=1}^K\int_{\RR^d}Q_{\omega_k}^2(x)\;dx\right)\det(D(\infty)),\qquad\text{as } t\to +\infty.
\end{equation}

\noindent Moreover for all $Y=(y_{i,k})\in\RR^{dK}$ different from $0$, we have
\begin{align*}
^tYD(\infty)Y&=\sum_{i,j=1}^d\sum_{k=1}^K\left(\int_{\RR^d}\partial_{x_i}Q_{\omega_k}\partial_{x_j} Q_{\omega_k}\;dx\right) y_{i,k}y_{j,k}\\
&=\sum_{k=1}^K\int_{\RR^d}\left(\sum_{i=1}^dy_{i,k}\partial_{x_i}Q_{\omega_k}\right)^2\;dx
\end{align*}
which is a positive quantity for large values of $t$ since for all $k$, the $d$ functions $\partial_{x_i}Q_{\omega_k}$, $i=1,\dots,d$ are linearly independent (this can be seen using that $Q_{\omega_k}$ is radial but it is in fact also related to a more general result corresponding to Proposition \ref{indep} in Appendix).
 
\noindent Hence, $\det(D\infty))>0$ and also $\det(M(t))>0$ for large values of $t$ by \eqref{detM_1}. In particular, $M(t)$ is invertible for large values of $t$. Applying Cramer's formula, we obtain an explicit expression of $a_k(t)$ and $b_k(t)$ in terms of $z(t)$, from which we derive the content of Lemma \ref{ex}. Let us justify it. \\
The entries of $M(t)$ are bounded functions of $t$ so that the transpose of the comatrix of $M(t)$ is bounded   too (with respect to $t$). In addition, we have proved the existence of $c>0$ such that for $t$ large, $\det M(t)>c$. Hence, there exists $c_0>0$ such that for all $t$ sufficiently large,
$$|x(t)|\le c_0|y(t)|.$$
This immediately implies \eqref{estcoef}.\\
We moreover observe that the entries of $M(t)$ are $\mathscr{C}^1$ functions of $t$ (by \eqref{exp_decay_Q} and Lebesgue's dominated convergence theorem); in particular $t\mapsto \det M(t)$ is $\mathscr{C}^1$. Then, the differentiability of $a(t)$ and $b(t)$ and estimate \eqref{estdercoef} follow from the differentiability in the sense of distributions (and the expressions of the differentials) of $t\mapsto \Ii \int_{\RR^d}z(t)\overline{R_k}(t)\;dx$ and $t\mapsto \Rr\int_{\RR^d}z(t)\partial_{x_i}(t)\overline{R_k}\;dx$ for $i\in\{1,\dots,d\}$ and $k\in\{1,\dots,K\}$. Let us explain how to show the differentiability of $t\mapsto \int_{\RR^d}z(t,x)\overline{R_k}(t,x)\;dx$. This is essentially due to a density argument and the local well-posedness of \eqref{derz} with continuous dependence on compact sets of time (as for \eqref{NLS}). Let us consider a $\mathscr{C}^1$ function $\phi$ defined for large values of $t$ and with compact support, say included in $[t_0,t_1]$. Since $z(t_0)\in H^1(\RR^d)$, there exists $(z_n(t_0))\in \mathscr{C}^\infty_c(\RR^d)$ converging to $z(t_0)$ in the sense of the $H^1$-norm. The solution $z_n$ of \eqref{derz} with initial data $z_n(t_0)$ in time $t_0$ is defined on $[t_0,t_1]$) for $n$ large, belongs to $\mathscr{C}\left([t_0,t_1],\mathscr{S}(\RR^d)\right)$, and satisfies 
\begin{equation}\label{auxz_n}
\sup_{t\in [t_0,t_1]}\|z_n(t)-z(t)\|_{H^1}\to 0,\qquad\text{as } n\to +\infty.
\end{equation}
Now, by Fubini theorem and the differentiability of $t\mapsto z_n(t,x)R_k(t,x)$ for all $x\in\RR^d$, we obtain
\begin{align*}
\MoveEqLeft[2]
\int_{t_0}^{t_1}\left(\int_{\RR^d}z_n(t,x)R_k(t,x)\;dx\right)\phi '(t)\;dt\\
&=\int_{\RR^d}\left(\int_{t_0}^{t_1}z_n(t,x)R_k(t,x)\phi '(t)\;dt\right)\;dx\\
&=-i\int_{\RR^d}\left(\int_{t_0}^{t_1}\left((\Delta z_n+g(z_n+\phi)-g(\phi))R_k+z_n\partial_tR_k\right)(t,x)\phi(t)\;dt\right)\;dx\\
&=-i\int_{t_0}^{t_1}\phi(t)\left(\int_{\RR^d}\left((\Delta z_n+g(z_n+\phi)-g(\phi))R_k+z_n\partial_tR_k\right)(t,x)\;dx\right)\;dt\\
&=-i\int_{t_0}^{t_1}\phi(t)\left(\int_{\RR^d}\left(\nabla z_n\cdot R_k+(g(z_n+\phi)-g(\phi))R_k+z_n\partial_tR_k\right)(t,x)\;dx\right)\;dt.
\end{align*}
(We recall that $g(z)=zf(|z|^2)$ for $z\in\CC$.) Passing to the limit as $n\to +\infty$ by using \eqref{auxz_n} leads to
\begin{multline}
\int_{t_0}^{t_1}\left(\int_{\RR^d}z(t,x)R_k(t,x)\;dx\right)\phi '(t)\;dt\\
=-i\int_{t_0}^{t_1}\phi(t)\left(\int_{\RR^d}\left(\nabla z\cdot R_k+g(z+\phi)-g(\phi)R_k+z\partial_tR_k\right)(t,x)\;dx\right)\;dt.
\end{multline}
Thus $t\mapsto \int_{\RR^d}z\overline{R_k}(t,x)\;dx$ is differentiable in the sense of distributions; its differential is $$t\mapsto \int_{\RR^d}\left(\nabla z\cdot R_k+(g(z+\phi)-g(\phi))R_k+z\partial_tR_k\right)(t,x)\;dx$$ and is thus bounded by $\|z(t)\|_{H^1}$.\\
This finishes proving the lemma.
\end{proof}

Even if it means taking a larger $T_1$, we can suppose that the preceding lemma holds on $[T_1,+\infty)$.
Then it results also immediately that 
\begin{equation}\label{tild}
\forall\;t\ge T_1,\qquad \|z(t)\|_{H^1}\leq\|\tilde{z}(t)\|_{H^1}+C\sum_{k=1}^K\big(|a_k(t)|+|b_k(t)|\big).
\end{equation}
and 
\begin{equation}\label{tild'}
\forall\;t\ge T_1,\qquad \|\tilde{z}(t)\|_{H^1}\leq C\|z(t)\|_{H^1}.
\end{equation}

\subsubsection{The statement of a coercivity property in terms of the new variable}\label{wein}

In this paragraph, we come to some crucial inequality, on which the proof is essentially based. First of all, let us define some notations, and particularly well-chosen cut-off functions.

By a classical argument given in \cite[Claim 1]{schMM}, we can assume (without loss of generality) that \begin{equation}\label{vit}
v_{1,1}<v_{2,1}<\dots<v_{K,1}.
\end{equation} 
\noindent Now let $A_0\in\Big]0,\frac{1}{2}\min_{k\in\{2,\dots,K\}}\{v_{k,1}-v_{k-1,1}\}\Big[$ and
define \[\begin{array}{cccl}
\psi:&\RR&\rightarrow&\RR\\
&x&\mapsto&\begin{dcases}
1&\text{ if } x<-A_0\\
\left(\int_{-A_0}^{A_0}e^{-\frac{A_0^2}{A_0^2-{y}^2}}\;dy\right)^{-1}\int_x^{A_0}e^{-\frac{A_0^2}{A_0^2-y^2}}\;dy&\text{ if } -A_0\le x\le A_0\\
0&\text{ if } x>A_0,
\end{dcases}
\end{array}\]
which is obviously a smooth bounded non-increasing function. 

\noindent For all $k\in\{1,\dots,K-1\}$, let 
$\sigma_k:=\frac{1}{2}\left(v_{k,1}+v_{k+1,1}\right)$ and $\xi_k:=\frac{1}{2}\left(x^0_{k,1}+x^0_{k+1,1}\right)$. Then define on $\RR\times\RR^d$
\[\begin{array}{cccll}
\psi_0:&(t,x)&\mapsto& 0\\
\psi_k:&(t,x)&\mapsto&\displaystyle\psi\Big(\frac{x_1-\xi_k-\sigma_kt}{t}\Big)&\text{for }k\in\{1,\dots,K-1\}\\ 
\psi_K:&(t,x)&\mapsto& 1
\end{array}\]
and also $K$ functions on $\RR\times \RR^d$ by
\[\forall\;k\in\{1,\dots,K\},\quad \phi_k:=\psi_k-\psi_{k-1}.\]

We can check that, for large values of $t$, $\phi_k(t,\cdot)$ has a smooth profile localized at the "neighborhood" of the $k$-th solitary wave; more precisely we have
\[ \phi_1(t,x)=\begin{dcases}1 &\text{ if  }x_1<\xi_1+(\sigma_1-A_0)t\\
0& \text{ if  } x_1>\xi_{1}+(\sigma_{1}+A_0)t,
\end{dcases}
\]
for all $k=2,\dots,K-1$:
\[ \phi_k(t,x)=\begin{dcases}1 &\text{ if  } \xi_{k-1}+(\sigma_{k-1}+A_0)t<x_1<\xi_k+(\sigma_k-A_0)t\\
0& \text{ if  } x_1<\xi_{k-1}+(\sigma_{k-1}-A_0)t \text{ or } x_1>\xi_{k}+(\sigma_{k}+A_0)t,
\end{dcases}
\]
and
\[ \phi_K(t,x)=\begin{dcases}1 &\text{ if  } \xi_{K-1}+(\sigma_{K-1}+A_0)t<x_1\\
0& \text{ if  } x_1<\xi_{K-1}+(\sigma_{K-1}-A_0)t.
\end{dcases}
\]
Besides, for large values of $t$, the following inequalities hold owing to the decay properties of $R_k$ and the support properties of $\phi_j$ and its derivatives.

\begin{Lem}\label{lem:phij}
Even if it means reducing $\gamma>0$ so that $$\gamma<\min\left\{\frac{\sqrt{\omega_k}}{4}\left(\frac{v_{j,1}-v_{j-1,1}}{2}-A_0\right),\:k=1,\dots,K,\; j=2,\dots,K\right\},$$ we have:
\begin{align}
\label{prel1}
\forall\; j\neq k,\qquad \big(|R_k(t,x)|+|\partial_{x_1}R_k(t,x)|\big)|\phi_j(t,x)| & \le Ce^{-\gamma t}e^{-\frac{\sqrt{\omega_k}}{4}|x-v_kt|} \\
\label{prel2}
\forall\; j,\qquad \big(|R_j(t,x)|+|\partial_{x_1}R_j(t,x)|+|\partial_{t}R_j(t,x)|\big)|\phi_j(t,x)-1| & \le Ce^{-\gamma t}e^{-\frac{\sqrt{\omega_j}}{4}|x-v_jt|} \\\label{prel3}
\forall\; j,\qquad |\partial_{x_1}\phi_j(t,x)|+|\partial^3_{x_1}\phi_j(t,x)|+|\partial_{t}\phi_j(t,x)| & \le \frac{C}{t}. \\
\label{prel4}
\forall\; j,k,\qquad \big(|R_k(t,x)|+|\partial_{x_1}R_k(t,x)|\big)\left|\partial_{x_1}\phi_j(t,x)\right| & \le Ce^{-\gamma t}e^{-\frac{\sqrt{\omega_k}}{4}|x-v_kt|}.
\end{align}
\end{Lem}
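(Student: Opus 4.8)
The plan is to derive all four bounds from two ingredients: the pointwise exponential decay \eqref{exp_decay_Q} of $Q_{\omega_k}$ and its first derivatives, and the explicit translation-and-scaling structure of the cut-offs. First I note that $R_k$, $\partial_{x_1}R_k$ and $\partial_t R_k$ are each a bounded phase times a finite linear combination of $Q_{\omega_k}$ and $\partial_{x_i}Q_{\omega_k}$ evaluated at $x-x_k^0-v_kt$; hence \eqref{exp_decay_Q} (with $|\delta|\le 1$) yields the single pointwise bound
\[ |R_k(t,x)| + |\partial_{x_1}R_k(t,x)| + |\partial_t R_k(t,x)| \le C\, e^{-\frac{\sqrt{\omega_k}}{2}|x-v_kt-x_k^0|}. \]
The guiding principle is that in each of \eqref{prel1}, \eqref{prel2}, \eqref{prel4} the cut-off factor is supported at a distance growing linearly in $t$ from the relevant soliton line $\{x_1=x_{k,1}^0+v_{k,1}t\}$, so that this exponential is in fact exponentially small \emph{in $t$} there.

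To quantify this, I would use that $\psi_k(t,\cdot)$ is non-constant only on the slab $|x_1-\xi_k-\sigma_kt|\le A_0 t$, with $\sigma_k=\frac12(v_{k,1}+v_{k+1,1})$ strictly between $v_{k,1}$ and $v_{k+1,1}$. Since $A_0<\frac12\min_k(v_{k,1}-v_{k-1,1})$, an elementary comparison of the affine functions $x_1=\xi_k+\sigma_kt$ and $x_1=x_{k,1}^0+v_{k,1}t$ shows that each soliton center lies, for $t$ large, strictly inside the plateau where $\phi_k\equiv 1$ and $\phi_j\equiv 0$ for $j\ne k$; consequently on the support of $\phi_j$ (for $j\ne k$), of $\phi_j-1$ (taking the soliton index $k=j$), and of $\partial_{x_1}\phi_j$ (for any $k$), one has $|x_1-v_{k,1}t|\ge c\,t-C$, where $c$ is at least one of the positive numbers $\frac{v_{m,1}-v_{m-1,1}}{2}-A_0$, $2\le m\le K$. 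Given such a separation, I split
\[ e^{-\frac{\sqrt{\omega_k}}{2}|x-v_kt-x_k^0|} = e^{-\frac{\sqrt{\omega_k}}{4}|x-v_kt-x_k^0|}\cdot e^{-\frac{\sqrt{\omega_k}}{4}|x-v_kt-x_k^0|}, \]
bound the first factor by $Ce^{-\gamma t}$ using $|x-v_kt-x_k^0|\ge|x_1-v_{k,1}t|-|x_{k,1}^0|\ge ct-C$ together with the smallness assumption $\gamma<\frac{\sqrt{\omega_k}}{4}c$ imposed in the statement, and bound the second factor by $Ce^{-\frac{\sqrt{\omega_k}}{4}|x-v_kt|}$ via $|x-v_kt-x_k^0|\ge|x-v_kt|-|x_k^0|$. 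Multiplying by the bounded factors $|\phi_j|\le 1$, $|\phi_j-1|\le 1$, or $|\partial_{x_1}\phi_j|\le C/t$ then gives \eqref{prel1}, \eqref{prel2} and \eqref{prel4} at once.

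Estimate \eqref{prel3} is instead a direct computation on the explicit cut-offs. Writing $w_k:=\frac{x_1-\xi_k}{t}-\sigma_k$, I have $\partial_{x_1}\psi_k=\frac1t\psi'(w_k)$ and $\partial_{x_1}^3\psi_k=\frac1{t^3}\psi'''(w_k)$, both $\le C/t$ for $t\ge 1$ since $\psi$ is smooth with bounded derivatives, while $\partial_t\psi_k=-\frac{x_1-\xi_k}{t^2}\psi'(w_k)$, where on the support of $\psi'(w_k)$ one has $|w_k|\le A_0$, hence $|x_1-\xi_k|\le(|\sigma_k|+A_0)t$ and $|\partial_t\psi_k|\le C/t$. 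Since $\phi_j=\psi_j-\psi_{j-1}$ with $\psi_0\equiv0$ and $\psi_K\equiv1$, \eqref{prel3} follows by subtraction.

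The only genuinely delicate point is the bookkeeping in the second paragraph: one must check, separately for $j>k$, $j<k$ and (for \eqref{prel2}) $j=k$, that the relevant piece of $\mathrm{supp}\,\phi_j$ or $\mathrm{supp}\,\partial_{x_1}\phi_j$ sits at linear distance $\ge ct$ from the $k$-th soliton line, and that the resulting slope $c$ is no smaller than $\frac{v_{m,1}-v_{m-1,1}}{2}-A_0$ for the appropriate consecutive gap $m$ --- exactly the quantity appearing in the smallness condition on $\gamma$. This hinges entirely on $\sigma_k$ being the midpoint of consecutive velocities and on the choice of $A_0$, which keeps consecutive transition slabs and soliton cores disjoint for large $t$; once this is in place the three decay estimates are immediate and \eqref{prel3} is elementary.
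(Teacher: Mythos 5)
Your proposal is correct and follows essentially the same route as the paper's own proof: both rest on the exponential decay \eqref{exp_decay_Q}, on splitting $e^{-\frac{\sqrt{\omega_k}}{2}|x-v_kt-x_k^0|}$ into two quarter-rate factors (one converted into $Ce^{-\gamma t}$ via the linear-in-$t$ separation between the cut-off supports and the soliton lines, made possible by the smallness condition on $\gamma$, the other kept as spatial decay $e^{-\frac{\sqrt{\omega_k}}{4}|x-v_kt|}$), and on explicit differentiation of $\psi_k$ for \eqref{prel3}. The only difference is presentational: you organize the verification through a single support-separation principle, while the paper writes out the inequalities case by case ($k<j$, $j<k$, and $k=j$ for the transition slabs), but the mathematical content is identical.
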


\begin{proof}
The proof, postponed in Appendix, is similar to that of Combet \cite[Proof of Lemma 3.9, Appendix A]{combet}. 
\end{proof}

Let us introduce the following Weinstein energy functional which is inspired from Martel, Merle and Tsai \cite{tsai} for dimensions 1 to 3: 
\begin{multline} 
{H}(t):=\sum_{k=1}^K\int_{\RR^d}\left\{|\nabla \tilde{z}|^2-\left(f\left(|R_k|^2\right)|\tilde{z}|^2+2\Rr(\overline{R_k}\tilde{z})^2f'\left(|R_k|^2\right)\right)
\right. \\
\left. +\left(\omega_k+\frac{|v_k|^2}{4}\right)|\tilde{z}|^2-v_k\cdot\Ii\left(\nabla \tilde{z}\overline{\tilde{z}}\right)\right\}\phi_k(t,x)\;dx.
\end{multline}

\noindent One of the main features concerning $H$ is the following coercivity property, which turns out to be a key ingredient in our matter.

\begin{Prop}\label{prop_coerc}
There exists $C>0$ such that 
\begin{equation}\label{coerc}
\forall\; t\ge T_1,\quad C\|\tilde{z}(t)\|_{H^1}^2-\frac{1}{C}\sum_{k=1}^K\left(\Rr\int_{\RR^d}\tilde{z}(t)\overline{R_k}(t)\;dx\right)^2\le H(t).
\end{equation}
\end{Prop}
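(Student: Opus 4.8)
The plan is to exploit the partition of unity $\sum_{k=1}^{K}\phi_k=1$ together with a galilean change of variables that turns each localized piece of $H(t)$ into the single-soliton linearized energy, so that the stable coercivity hypothesis (H3), that is \eqref{coer_stable}, becomes applicable. For each $k$ I would set $w_k(t,y):=e^{-i\Theta_k}\tilde{z}(t,x)$ with $x=y+x_k^0+v_kt$ and $\Theta_k=\frac12 v_k\cdot x+(\omega_k-\frac{|v_k|^2}{4})t+\gamma_k$ the phase of $R_k$, so that $R_k$ becomes $Q_{\omega_k}$. A direct pointwise computation of $\nabla\tilde z=e^{i\Theta_k}(\nabla w_k+\tfrac i2 v_k w_k)$ shows that all $v_k$-dependent contributions cancel: the combination $|\nabla\tilde z|^2-v_k\cdot\Ii(\nabla\tilde z\,\overline{\tilde z})+(\omega_k+\tfrac{|v_k|^2}{4})|\tilde z|^2$ reduces exactly to $|\nabla w_k|^2+\omega_k|w_k|^2$, while $f(|R_k|^2)|\tilde z|^2+2\Rr(\overline{R_k}\tilde z)^2 f'(|R_k|^2)$ becomes $f(Q_{\omega_k}^2)|w_k|^2+2Q_{\omega_k}^2 f'(Q_{\omega_k}^2)(w_k)_1^2$. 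Hence the $k$-th summand of $H(t)$ equals $\int (\text{density of }H_{\omega_k}(w_k))\,\tilde\phi_k\,dy$, where $\tilde\phi_k$ is the transported cut-off and $H_{\omega_k}$ is the linearized energy of the introduction.

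Next I would translate the orthogonality conditions \eqref{rel}. Since $Q_{\omega_k}$ is real, the same change of variables gives $\int w_k\,\overline{Q_{\omega_k}}\,dy=\int \tilde z\,\overline{R_k}\,dx$, so $\Ii\int\tilde z\,\overline{R_k}=0$ becomes $\int(w_k)_2 Q_{\omega_k}=0$; and $\partial_{x_i}\overline{R_k}$ produces both $(\partial_{x_i}Q_{\omega_k})$ and a $-\tfrac i2 v_{k,i}Q_{\omega_k}$ term, so $\Rr\int\tilde z\,\partial_{x_i}\overline{R_k}=0$ becomes $\int(w_k)_1\partial_{x_i}Q_{\omega_k}+\tfrac12 v_{k,i}\int(w_k)_2 Q_{\omega_k}=0$, i.e. $\int(w_k)_1\partial_{x_i}Q_{\omega_k}=0$ after using the first relation. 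These are precisely three of the four directions in \eqref{coer_stable}; the only surviving one is $\int(w_k)_1 Q_{\omega_k}=\Rr\int\tilde z\,\overline{R_k}$, which is exactly the quantity kept on the left of \eqref{coerc}. Thus \eqref{coer_stable} applied to $w_k$ gives $H_{\omega_k}(w_k)\ge \mu_+\|w_k\|_{H^1}^2-\mu_+^{-1}\big(\Rr\int\tilde z\,\overline{R_k}\big)^2$.

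The genuine difficulty is the localization, i.e. that $\tilde\phi_k\not\equiv 1$. I would apply \eqref{coer_stable} not to $w_k$ but to the truncation $\phi_k^{1/2}w_k$ (equivalently, integrate by parts in the gradient term so the cut-off sits inside the $H^1$ norm). The commutator terms then involve $\nabla\phi_k$ and $\Delta\phi_k$, which are $\mathrm{O}(1/t)$ by \eqref{prel3}; the potential factors $f(Q_{\omega_k}^2)$ and $Q_{\omega_k}^2 f'(Q_{\omega_k}^2)$ are exponentially concentrated on the region where $\phi_k\equiv 1$, so replacing $\phi_k$ by $1$ in them, and correcting the orthogonality integrals for the truncation, costs only $Ce^{-\gamma t}\|\tilde z\|_{H^1}^2$ by \eqref{prel1}--\eqref{prel2} and the decay \eqref{exp_decay_Q}. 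Summing over $k$ and using $\sum_k\phi_k=1$ reassembles $\|\tilde z\|_{H^1}^2$: the $L^2$ parts add to $\|\tilde z\|_{L^2}^2$ directly, and because the boosts differ from one $k$ to the next, the gradient/mass parts are reconstructed through the quadratic density $|\nabla\tilde z|^2+\sum_k\phi_k(\omega_k+\tfrac{|v_k|^2}{4})|\tilde z|^2-\big(\sum_k\phi_k v_k\big)\cdot\Ii(\nabla\tilde z\,\overline{\tilde z})$, which is positive definite with a uniform lower bound: a Jensen estimate on the convex combination $\sum_k\phi_k v_k$ yields determinant at least $\min_k\omega_k>0$, so the overlap regions between consecutive solitons do not destroy coercivity.

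Putting these together, for $t\ge T_1$ with $T_1$ large enough to absorb the $\mathrm{O}(1/t)$ and $\mathrm{O}(e^{-\gamma t})$ errors into the coercive constant, I obtain $H(t)\ge C\|\tilde z(t)\|_{H^1}^2-\frac1C\sum_{k=1}^K\big(\Rr\int_{\RR^d}\tilde z(t)\,\overline{R_k}(t)\,dx\big)^2$, which is \eqref{coerc}. I expect the technical heart of the argument to be the bookkeeping of the commutator and orthogonality-correction errors from the truncation $\phi_k^{1/2}w_k$, together with the verification that the quadratic part remains uniformly positive definite across the overlap regions; the change of variables and the translation of the orthogonality conditions are comparatively mechanical.
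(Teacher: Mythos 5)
Your proposal is correct and follows essentially the same route as the paper: the paper's (very short) proof consists precisely of invoking (H3) and the orthogonality relations \eqref{rel}, and then localizing each copy of (H3) around the corresponding soliton via the cut-offs $\phi_k$, deferring the details to the one-dimensional argument of \cite[appendix B]{tsai} --- which is exactly the galilean change of variables, $\phi_k^{1/2}$-truncation, and commutator/orthogonality-correction bookkeeping that you carry out explicitly. In effect, your write-up is an expanded version of the argument the paper cites.
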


\begin{proof}
This result follows from our assumption (H3), from \eqref{rel}, and an immediate adaptation to all dimensions of the proof given for the one-dimensional case in \cite[appendix B]{tsai} which consists in localizing in some sense each version of (H3) for all $k=1,\dots, K$.
\end{proof}

\subsection{Proof of some needed estimates}\label{subsec_estimates}

This subsection, which is probably the most technical one, precises the tools and estimates which will allow us to make use of Proposition \ref{prop_coerc} and actually to conclude the proof of uniqueness in subsection \ref{end}. It consists in giving some controls of $H(t)$, of the scalar products $$\Rr\int_{\RR^d}\tilde{z}(t)\overline{R_k}(t)\;dx,$$ and also of the modulation parameters $a_k(t)$ and $b_k(t)$.

\subsubsection{Control of $H$}\label{estenergysect}

We typically improve the a priori control of $H$ by $\mathrm{O}\left(\|\tilde{z}\|_{H^1}^2\right)$ by differentiation of the functional. Actually, for the sake of simplification, we will compute the derivative of the following related functional $\tilde{H}:[T_1,+\infty)\rightarrow \RR$ defined by $\forall\;t\ge T_1$,
\begin{multline}\label{energyprim}
\tilde{H}(t)=\sum_{k=1}^K\int_{\RR^d}\left\{|\nabla \tilde{z}|^2-\left(F(|\tilde{z}+\varphi|^2)-F(|\varphi|^2)-2\Rr(\tilde{z}\overline{\varphi})f(|\varphi|^2)\right) \right. \\
 \left. +\left(\omega_k+\frac{|v_k|^2}{4}\right)|\tilde{z}|^2-v_k\cdot\Ii\left(\nabla \tilde{z}\overline{\tilde{z}}\right)\right\}\phi_k(t,x)\;dx.
\end{multline} 
(Recall that $F(r)=\int_0^rf(\rho)\;d\rho$ \eqref{pointxi}.)

The next proposition, which compares $H$ and $\tilde{H}$, justifies that it suffices to control $\tilde{H}$ in order to obtain a similar estimate for $H$.

\begin{Prop}\label{func} We have
\begin{equation}\label{comp}
H(t)=\tilde{H}(t)+\mathrm{O}\left(\|\tilde{z}(t)\|_{H^1}^3+e^{-\gamma t}\|\tilde{z}(t)\|_{H^1}^2\right).
\end{equation}
\end{Prop}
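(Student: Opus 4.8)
The plan is to exploit that $H$ and $\tilde H$ share the same quadratic ``kinetic'' part (the terms $|\nabla\tilde z|^2$, $(\omega_k+|v_k|^2/4)|\tilde z|^2$ and $-v_k\cdot\Ii(\nabla\tilde z\,\overline{\tilde z})$), so these cancel in $H-\tilde H$ and only the nonlinear potential terms survive. Introduce the real-valued $G(w):=F(|w|^2)$ on $\CC\cong\RR^2$; since $\nabla G(w)=2\tilde f(w)\,w$ and $\tilde f\in\mathscr C^2$, one gets $G\in\mathscr C^3$, with $\nabla G(w)\cdot h=2f(|w|^2)\Rr(\overline{w}h)$ and $\tfrac12 D^2G(w)[h,h]=f(|w|^2)|h|^2+2f'(|w|^2)\Rr(\overline{w}h)^2$. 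The potential term of $\tilde H$ is exactly $G(\varphi+\tilde z)-G(\varphi)-\nabla G(\varphi)\cdot\tilde z$, while that of $H$ is exactly $\tfrac12 D^2G(R_k)[\tilde z,\tilde z]$. Applying the second–order Taylor formula to $G$ at $\varphi$, one obtains
\[
H(t)-\tilde H(t)=\sum_{k=1}^K\int_{\RR^d}\Bigl(\tfrac12\bigl(D^2G(\varphi)-D^2G(R_k)\bigr)[\tilde z,\tilde z]+\mathcal R\Bigr)\phi_k\,dx,
\]
where $\mathcal R:=\int_0^1(1-\tau)\bigl(D^2G(\varphi+\tau\tilde z)-D^2G(\varphi)\bigr)[\tilde z,\tilde z]\,d\tau$ is the integral remainder. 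It then remains to estimate the two contributions separately.

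For the base-point mismatch term, I would use the localization of $\phi_k$: on $\mathrm{supp}\,\phi_k$ write $\varphi-R_k=(\varphi-R)+\sum_{j\neq k}R_j$, so that by \eqref{varphi_3d'} and the decay \eqref{prel1} both pieces are $\mathrm{O}(e^{-\gamma t})$ in $L^\infty$. Since $\varphi$ and $R_k$ stay uniformly bounded there, $D^3G$ is bounded along the segment joining them, whence $|D^2G(\varphi)-D^2G(R_k)|\le Ce^{-\gamma t}$ on $\mathrm{supp}\,\phi_k$. Integrating $\tfrac12(D^2G(\varphi)-D^2G(R_k))[\tilde z,\tilde z]\phi_k$ and summing over $k$ gives a bound $Ce^{-\gamma t}\|\tilde z\|_{L^2}^2\le Ce^{-\gamma t}\|\tilde z\|_{H^1}^2$.

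For the remainder $\mathcal R$, I would use that $\varphi\in L^\infty$ uniformly in $t$, so $D^2G(\varphi+\tau\tilde z)-D^2G(\varphi)=\mathrm O(|\tilde z|\,\sup\|D^3G\|)$, and that \eqref{prop_croissance} yields $\|D^3G(w)\|\le C(1+|w|)^{\frac4d-1}$ (indeed \eqref{prop_croissance} controls $f'(|w|^2)$ and $f''(|w|^2)|w|^2$ at the same rate). Hence $|\mathcal R|\le C|\tilde z|^3(1+|\tilde z|)^{\frac4d-1}\le C\bigl(|\tilde z|^3+|\tilde z|^{2+\frac4d}\bigr)$, and integrating, the Sobolev embedding $H^1\hookrightarrow L^3$ together with the $L^2$-critical Gagliardo–Nirenberg inequality $\|\tilde z\|_{L^{2+4/d}}^{2+4/d}\le C\|\tilde z\|_{L^2}^{4/d}\|\nabla\tilde z\|_{L^2}^2$ (both valid for $d\le3$) give $\int|\mathcal R|\le C\bigl(\|\tilde z\|_{H^1}^3+\|\tilde z\|_{H^1}^{2+4/d}\bigr)$. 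Since $2+\tfrac4d\ge3$ for $d\le4$ and $\|\tilde z(t)\|_{H^1}$ is small for large $t$, this is $\mathrm O(\|\tilde z\|_{H^1}^3)$, and combining with the previous bound yields \eqref{comp}.

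The main obstacle is keeping the remainder genuinely cubic: the growth exponent in \eqref{prop_croissance} is tuned precisely to the $L^2$-critical scaling, and one must verify that the resulting power $2+\tfrac4d$ of $\|\tilde z\|_{H^1}$ is at least $3$ — which holds exactly when $d\le4$ — so that the smallness of $\tilde z$ upgrades it to the claimed cubic control. This calibration, together with the use of the cut-off $\phi_k$ to ensure $\varphi\approx R_k$ (hence $D^3G$ bounded) on its support, is where both the decay estimates of Lemma~\ref{lem:phij} and the hypotheses on $f$ are essential.
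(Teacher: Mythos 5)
Your proposal is correct and follows essentially the same route as the paper: second-order Taylor expansion of $\tilde F(w)=F(|w|^2)$ at the base point $\varphi$ (with the cubic remainder controlled via the growth of $D^3\tilde F$ derived from \eqref{prop_croissance}), followed by switching the quadratic form's base point from $\varphi$ to $R_k$ using \eqref{varphi_3d'} and \eqref{prel1} on $\mathrm{supp}\,\phi_k$, and finally the embeddings $H^1\hookrightarrow L^3$, $H^1\hookrightarrow L^{2+\frac4d}$ with $\frac4d\ge 1$ to absorb the higher power. The only differences (integral versus Lagrange form of the remainder, Gagliardo--Nirenberg versus the Sobolev embedding for the $L^{2+\frac4d}$ norm) are cosmetic.
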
   

\begin{proof}
Let us first observe that $\tilde{F}:z\mapsto F(|z|^2)$ is $\mathscr{C}^3$ on $\CC$. Indeed, since $\tilde{f}$ is $\mathscr{C}^2$ on $\CC$, the function $f$ is $\mathscr{C}^2$ on $(0,+\infty)$ and thus, $\tilde{F}$ is $\mathscr{C}^3$ on $\CC\setminus\{0\}$. Moreover, for all $z=(\Rr(z),\Ii(z))=(x,y)\in\CC\setminus\{0\}$, we obtain by differentiation of $\tilde{f}$ and $\tilde{F}$:
\begin{align*}
\partial_{x}\tilde{F}(z)&=2xf(|z|^2)=2x\tilde{f}(z)\\
\partial_{y}\tilde{F}(z)&=2yf(|z|^2)=2y\tilde{f}(z)\\
\partial_{xx}\tilde{F}(z)&=2f(|z|^2)+4x^2f'(|z|^2)=2\tilde{f}(z)+2x\partial_x\tilde{f}(z)\\
\partial_{xy}\tilde{F}(z)&=4xyf'(|z|^2)=2y\partial_x\tilde{f}(z)\\
\partial_{yy}\tilde{F}(z)&=2f(|z|^2)+4y^2f'(|z|^2)=2\tilde{f}(z)+2y\partial_y\tilde{f}(z)\\
\partial_{xxx}\tilde{F}(z)&=4\partial_x\tilde{f}(z)+2x\partial_{xx}\tilde{f}(z)\\
\partial_{xxy}\tilde{F}(z)&=2y\partial_{xx}\tilde{f}(z)\\
\partial_{xyy}\tilde{F}(z)&=2x\partial_{yy}\tilde{f}(z)\\
\partial_{yyy}\tilde{F}(z)&=4\partial_y\tilde{f}(z)+2y\partial_{yy}\tilde{f}(z).
\end{align*}
Since $\tilde{f}$ is $\mathscr{C}^2$, the partial differentials of $\tilde{F}$ up to order 3 admit limits as $(x,y)\to (0,0)$ in $\RR^2$, from which we deduce that $\tilde{F}$ is $\mathscr{C}^3$.
 
Then we have the following Taylor expansion: for $t\ge T_1$ and $x\in\RR^d$,
\begin{multline} 
\left(\tilde{F}(\tilde{z}+\varphi)-\tilde{F}(\varphi)-\Rr(\tilde{z})\partial_x\tilde{F}(\varphi)-\Ii(\tilde{z})\partial_y\tilde{F}(\varphi)\right)(t,x)\\
=\frac{1}{2}\left((\Rr \tilde{z})^2\partial_{xx}\tilde{F}+2\Rr \tilde{z}\Ii \tilde{z}\partial_{xy}\tilde{F}+(\Ii \tilde{z})^2\partial_{yy}\tilde{F}\right)(t,x)+\rho(t,x),
\end{multline}
where 
\begin{align*}|\rho(t,x)|&\le \frac{1}{6}|\tilde{z}|^3\sup_{\omega\in [\varphi(t,x),(\tilde{z}+\varphi)(t,x)]}\|D^3_{\omega}\tilde{F}\|\\
&\le C|\tilde{z}|^3\sup_{\omega\in [\varphi(t,x),(\tilde{z}+\varphi)(t,x)]}\left(\|D_{\omega}\tilde{f}\|+|\tilde{z}|\|D^2_{\omega}\tilde{f}\|\right)\\
&\le C|\tilde{z}|^3\left(1+\tilde{z}|^{\frac{4}{d}-1}\right)
\end{align*}
by assumption \eqref{prop_croissance}. We then note that the preceding Taylor expansion rewrites
\begin{equation}\label{taylor}
F(|\tilde{z}+\varphi|^2)-F(|\varphi|^2)-2\Rr(\tilde{z}\overline{\varphi})f(|\varphi|^2)=|\tilde{z}|^2f(|\varphi|^2)+2\Rr(\tilde{z}\overline{\varphi})^2f'(|\varphi|^2)+\mathrm{O}(|\tilde{z}|^3+|\tilde{z}|^{\frac{4}{d}+2}),
\end{equation}
uniformly with respect to both variables $t$ and $x$. Let us underline that, for $d\ge 2$, one can not claim whether $z(t)$ or $\tilde{z}(t)$ belong to $L^\infty(\RR^d)$ and even less whether $z$ or $\tilde{z}$ belong to $L^\infty\left([T_1,+\infty),L^{\infty}(\RR^d)\right)$, which prevents us from simplifying $\mathrm{O}\left(|\tilde{z}|^3+|\tilde{z}|^{\frac{4}{d}+2}\right)$ by $\mathrm{O}\left(|\tilde{z}|^3\right)$. \\
\noindent
Moreover, we have noticed that
$$|\tilde{z}|^2f(|\varphi|^2)+2\Rr(\tilde{z}\overline{\varphi})^2f'(|\varphi|^2)=\frac{1}{2}D^2_\varphi \tilde{F}(\tilde{z},\tilde{z})$$
so that
$$
\sum_{k=1}^K\int_{\RR^d}\left\{|\tilde{z}|^2f\left(|\varphi|^2\right)+2\Rr(\tilde{z}\overline{\varphi})^2f'\left(|\varphi|^2\right)\right\}\phi_k\;dx\\
=\sum_{k=1}^K\int_{\RR^d}D^2_{\varphi}\tilde{F}(\tilde{z},\tilde{z})\phi_k\;dx.
$$
We now observe that for all $k=1,\dots,K$,
\begin{align*}
\MoveEqLeft[4]
\left|\int_{\RR^d}\left(D^2_{\varphi}\tilde{F}(\tilde{z},\tilde{z})-D^2_{R_k}\tilde{F}(\tilde{z},\tilde{z})\right)\phi_k\;dx\right|\\
&\le \int_{\RR^d}|(\varphi-R_k)\phi_k||\tilde{z}|^2\sup_{\omega\in [R_k,\varphi]}\|D_{\omega}^3\tilde{F}\|\;dx\\
&\le \int_{\RR^d}|(\varphi-R)\phi_k||\tilde{z}|^2\sup_{\omega\in [R_k,\varphi]}\|D_{\omega}^3\tilde{F}\|\;dx+\sum_{j\ne k}\int_{\RR^d}|R_j\phi_k||\tilde{z}|^2\sup_{\omega\in [R_k,\varphi]}\|D_{\omega}^3\tilde{F}\|\;dx.
\end{align*}
We note that $\sup_{\omega\in [R_k,\varphi]}\|D_{\omega}^3\tilde{F}\|$ is bounded by a constant $C$ independent of $t$ and $x$ because $R_k$, $\varphi$ belong to $L^\infty([T_1,+\infty),L^\infty(\RR^d))$ and $\omega\mapsto D_{\omega}^3\tilde{F}$ is continuous on $\CC$.
Then (\ref{varphi_3d}) and (\ref{prel1}) lead to
\begin{multline}\label{aux}
\sum_{k=1}^K\int_{\RR^d}\left\{|\tilde{z}|^2f\left(|\varphi|^2\right)+2\Rr(\tilde{z}\overline{\varphi})^2f'\left(|\varphi|^2\right)\right\}\phi_k\;dx\\
=\sum_{k=1}^K\int_{\RR^d}\left\{|\tilde{z}|^2f\left(|R_k|^2\right)+2\Rr(\tilde{z}\overline{R_k})^2f'\left(|R_k|^2\right)\right\}\phi_k\;dx
+\mathrm{O}\left(e^{-\gamma t}\|\tilde{z}\|_{H^1}^2\right).
\end{multline}

We finally obtain (\ref{comp}) as a direct consequence of (\ref{taylor}), (\ref{aux}), the Sobolev embeddings $H^1(\RR^d)\hookrightarrow L^3(\RR^d)$ (indeed available for $d\le 3$) and $H^1(\RR^d)\hookrightarrow L^{\frac{4}{d}+2}(\RR^d)$, and the fact that $\frac{4}{d}\ge 1$.

\end{proof}

Now, we state and prove the crucial

\begin{Prop}\label{estenergy}
The derivative of $\tilde{H}$ is given by
\begin{equation}\label{estenergy1}
\frac{d}{dt}\tilde{H}(t)= \text{Main}(t)+\mathrm{O}\big(e^{-\gamma t}\|\tilde{z}(t)\|_{H^1}\|z(t)\|_{H^1}+\|\tilde{z}(t)\|_{H^1}\|z(t)\|_{H^1}^2\big),\qquad \text{as } t\rightarrow +\infty.
\end{equation}
where 
\begin{equation}\label{terme_dominant}
\begin{aligned}
\text{Main}(t):=&\;\displaystyle
\sum_{k=1}^K\left(\omega_k+\frac{|v_k|^2}{4}\right)\left(\int_{\RR^d}|\tilde{z}|^2\partial_t\phi_k\;dx+2\int_{\RR^d}\Ii(\partial_{x_1}\tilde{z}\overline{\tilde{z}})\partial_{x_1}\phi_k\;dx\right)\notag\\
&-\displaystyle\sum_{k=1}^Kv_{k,1}\left(
2\int_{\RR^d}|\nabla \tilde{z}|^2\partial_{x_1}\phi_k\;dx-\frac{1}{2}\int_{\RR^d}|\tilde{z}|^2\partial^3_{x_1}\phi_k\;dx\right)\notag\\
&-\displaystyle\sum_{k=1}^Kv_k\cdot\int_{\RR^d}\Ii\left(\nabla \tilde{z}\overline{\tilde{z}}\right)\partial_{t}\phi_k\;dx\notag\\
=&\;\displaystyle\mathrm{O}\left(\frac{1}{t}\|\tilde{z}(t)\|_{H^1}^2\right).
\end{aligned}
\end{equation}
\end{Prop}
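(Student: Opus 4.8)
The plan is to differentiate $\tilde H$ directly under the integral sign and to sort the outcome by where the time derivative lands. Writing the integrand as $\phi_k\,\mathcal E_k$, the product rule splits $\frac{d}{dt}\tilde H$ into a part in which $\partial_t$ hits a cut-off $\phi_k$ and a part in which it hits the dynamical density $\mathcal E_k$. In the second part I would substitute the evolution equations: for $\tilde z$ one differentiates \eqref{def_tilde_z}, using \eqref{derz} for $z$ and $\partial_t R_k=i(\Delta R_k+f(|R_k|^2)R_k)$ for the solitons, so that $\partial_t\tilde z$ equals the linearized flow plus the modulation terms $ia_k'R_k+b_k'\cdot\nabla R_k$ (and lower-order $a_k,b_k$ pieces); the $\varphi$ occurring inside the nonlinear density $F(|\tilde z+\varphi|^2)-F(|\varphi|^2)-2\Rr(\tilde z\overline{\varphi})f(|\varphi|^2)$ is differentiated using that $\varphi$ solves \eqref{NLS} as well. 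The first part, together with the surviving pieces of the second, will be organized into $\text{Main}(t)$.

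The core is a Weinstein-type cancellation. The density $\mathcal E_k$ is exactly the Hessian, at the profile $\varphi\approx R_k$, of the conserved action $E+(\omega_k+\frac{|v_k|^2}{4})M-v_k\cdot P$ (energy, mass, momentum), which is why the weights $\omega_k+\frac{|v_k|^2}{4}$ and $-v_k$ appear; the boosted ground state $R_k$ is a critical point of this action. Consequently, because both $u=\tilde z+\varphi$ (up to modulation) and $\varphi$ solve \eqref{NLS}, all the genuinely $\mathrm{O}(\|\tilde z\|_{L^2}^2)$ contributions — in particular those produced when $\partial_t$ falls on $\varphi$ inside the nonlinear density, or on $\tilde z$ and then is rewritten through the linearized operator — cancel in the bulk by self-adjointness of the linearized operator and the skew-symmetry of multiplication by $i$. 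What does not cancel are the commutators of the (kinetic and first-order boost) part of the operator with $\phi_k$; after integrating by parts the admissible number of times, each such remnant carries at least one derivative of $\phi_k$. Since $\phi_k$ depends only on $(t,x_1)$, only $\partial_{x_1}\phi_k$, $\partial_{x_1}^3\phi_k$ and $\partial_t\phi_k$ occur, and assembling these with the explicit $\partial_t\phi_k$ terms yields precisely $\text{Main}(t)$ as in \eqref{terme_dominant}; I would also use $\sum_k\phi_k\equiv1$ so that the $k$-independent pieces telescope away and only the frequency- and velocity-weighted ones remain.

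It then remains to bound every leftover term by $\mathrm{O}\big(e^{-\gamma t}\|\tilde z\|_{H^1}\|z\|_{H^1}+\|\tilde z\|_{H^1}\|z\|_{H^1}^2\big)$. The exponentially small errors come from three mechanisms: replacing $\varphi$ by $R_k$ in the nonlinearity (controlled by \eqref{varphi_3d} and \eqref{varphi_3d'}), the interactions between distinct solitons $j\neq k$ localized by $\phi_k$ (controlled by \eqref{decay_prop} and the support estimates \eqref{prel1}, \eqref{prel2}, \eqref{prel4}), and the modulation terms: since $iR_k$ and $\partial_{x_i}R_k$ span the kernel of the linearized operator and $\tilde z$ satisfies the orthogonality relations \eqref{rel}, the contributions of $a_k',b_k'$ reduce to pairings that vanish up to the $\varphi-R_k$ discrepancy, and with $|a_k'|+|b_k'|\le C\|z\|_{H^1}$ from \eqref{estdercoef} (and $|a_k|+|b_k|\le C\|z\|_{L^2}$ from \eqref{estcoef}) these feed the $e^{-\gamma t}\|\tilde z\|_{H^1}\|z\|_{H^1}$ term. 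The remaining error is the cubic-and-higher part of the Taylor expansion \eqref{taylor} of the nonlinearity, paired with one factor of $\tilde z$; bounding it through the growth hypothesis \eqref{prop_croissance} together with the embeddings $H^1\hookrightarrow L^3$ and $H^1\hookrightarrow L^{\frac{4}{d}+2}$ (valid for $d\le3$), exactly as in the proof of Proposition \ref{func}, produces $\|\tilde z\|_{H^1}\|z\|_{H^1}^2$, where I freely use $\|\tilde z\|_{H^1}\sim\|z\|_{H^1}$ from \eqref{tild}, \eqref{tild'}.

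Finally, the last line of \eqref{terme_dominant}, namely $\text{Main}(t)=\mathrm{O}\big(t^{-1}\|\tilde z(t)\|_{H^1}^2\big)$, is immediate: every summand of $\text{Main}(t)$ is the integral of a quadratic expression in $\tilde z$ and $\nabla\tilde z$ against $\partial_{x_1}\phi_k$, $\partial_{x_1}^3\phi_k$ or $\partial_t\phi_k$, each of which is $\le C/t$ by \eqref{prel3}. The main obstacle is the second paragraph: making the bulk cancellation explicit in general dimension, where the boost term $v_k\cdot\Ii(\nabla\tilde z\overline{\tilde z})$ and the Laplacian distribute derivatives among $\tilde z$, $\overline{\tilde z}$ and $\phi_k$ in many ways, and doing so while respecting the limited number of integrations by parts permitted by the regularity of $f$ — which is exactly where the $H^{s_0}$ decay and the $L^\infty$ bound on $\varphi$ from the first part of the paper are used for $d\ge2$.
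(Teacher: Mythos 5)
Your proposal is correct and follows essentially the same route as the paper's own proof: differentiate $\tilde H$ term by term using the expression for $\partial_t\tilde z$ (the paper's Lemma \ref{dertildez}), exploit the critical-point identity $\Delta R_k+f(|R_k|^2)R_k=\big(\omega_k+\tfrac{|v_k|^2}{4}\big)R_k+iv_k\cdot\nabla R_k$ together with the localization estimates \eqref{prel1}--\eqref{prel4}, the closeness \eqref{varphi_3d}, the modulation bounds \eqref{estdercoef}, and the Taylor/Sobolev estimates valid for $d\le 3$, so that all bulk quadratic terms cancel or are absorbed into the stated errors and only the $\partial_{x_1}\phi_k$, $\partial^3_{x_1}\phi_k$, $\partial_t\phi_k$ terms survive as $\mathrm{Main}(t)$, bounded via \eqref{prel3}. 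Two harmless imprecisions: the paper's control of the $a_k'$, $b_k'$ contributions uses only the kernel identity plus self-adjointness up to exponentially small commutators and \eqref{prel2}, not the orthogonality relations \eqref{rel}; and \eqref{tild}--\eqref{tild'} give only $\|\tilde z\|_{H^1}\le C\|z\|_{H^1}$ rather than an equivalence, which is, however, the only direction your error estimates actually use.
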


\begin{Rq} \label{rq:mono}
The bound $\mathrm{O}\left(\frac{1}{t}\|\tilde{z}(t)\|_{H^1}^2\right)$ in above is the one which constrains us to prove uniqueness in the class satisfying \eqref{classexpo}. In order to prove unconditionnal uniqueness, one would need to improve this bound to $\mathrm{O}\left(\alpha(t) \|\tilde{z}(t)\|_{H^1}^2\right)$, where $\alpha(t)$ is integrable in time (in the KdV context, \cite{martel} proves it with $\alpha(t) = e^{-\gamma t}$ for some $\gamma>0$).
\end{Rq}

Let us begin with some preliminaries (Lemma \ref{ineqs} and Lemma \ref{dertildez} below), which are needed to obtain  Proposition \ref{estenergy}.

\begin{Lem}\label{ineqs}
There exists $C>0$ such that:
$$\big| f(|z+\varphi|^2)-f(|\varphi|^2)\big|\le C\big(|z|^{\frac{4}{d}}+|z|\big)$$
$$\left|f(|z+\varphi|^2)-f(|\varphi|^2)-2\Rr(z\overline{\varphi})f'(|\varphi|^2)\right|\le C\left(|z|^2+|z|^{\frac{4}{d}}\right).$$
\end{Lem}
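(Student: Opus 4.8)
The plan is to read the two inequalities as, respectively, a mean value bound and a first order Taylor bound for the radial function $\tilde f(\omega):=f(|\omega|^2)$, which is $\mathscr C^2$ on $\CC$ by hypothesis. Recall from \eqref{varphi_3d'} and \eqref{R_bounded} that $\varphi$ is uniformly bounded, say $|\varphi(t,x)|\le M$ for all $t\ge T_1$, $x\in\RR^d$; the estimates are pointwise in $(t,x)$, with $z=z(t,x)$ and $\varphi=\varphi(t,x)$ seen as complex numbers and $\varphi$ ranging in the ball of radius $M$, so $C$ may depend on $M$. Since a direct computation gives $D\tilde f(\varphi)(z)=2\Rr(z\overline\varphi)f'(|\varphi|^2)$, the second inequality is exactly the first order Taylor remainder of $\tilde f$ at $\varphi$. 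Both statements thus reduce to growth bounds for $D\tilde f$ and $D^2\tilde f$ along the segment $[\varphi,\varphi+z]$, on which $|\omega|\le M+|z|$.

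First I would record the growth of the differentials. From \eqref{prop_croissance} together with the continuity of $D^2\tilde f$ on compact sets one obtains
\[ \|D^2\tilde f(\omega)\|\le C(1+|\omega|)^{\frac4d-2}\qquad\text{for all }\omega\in\CC. \]
For the first differential, the explicit formula $D\tilde f(\omega)(h)=2\Rr(\overline\omega h)f'(|\omega|^2)$ yields $\|D\tilde f(\omega)\|=2|\omega|\,|f'(|\omega|^2)|$, so I must control $f'$. Restricting $D^2\tilde f$ to the real axis, where $\partial_{yy}\tilde f(x,0)=2f'(x^2)$, the bound above gives $|f'(r)|\le C(1+r)^{\frac2d-1}$ for all $r\ge0$ (continuity of $f'$ on $[0,\infty)$ handles $r$ near $0$, where the exponent may be negative). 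Combining,
\[ \|D\tilde f(\omega)\|\le C(1+|\omega|)^{\frac4d-1}\qquad\text{for all }\omega\in\CC, \]
and here $\frac4d-1>0$ precisely because $d\le3$.

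With these in hand the conclusions are routine. For the first inequality the mean value theorem gives $|\tilde f(\varphi+z)-\tilde f(\varphi)|\le|z|\sup_{[\varphi,\varphi+z]}\|D\tilde f\|\le C|z|(1+M+|z|)^{\frac4d-1}$, and since $\frac4d-1>0$ one has $(1+M+|z|)^{\frac4d-1}\le C(1+|z|^{\frac4d-1})$, whence the bound $C(|z|+|z|^{\frac4d})$. For the second, Taylor's formula bounds the remainder by $\tfrac12|z|^2\sup_{[\varphi,\varphi+z]}\|D^2\tilde f\|$: when $d\le2$ the exponent $\frac4d-2\ge0$, so this sup is $\le C(1+|z|^{\frac4d-2})$ and the remainder is $\le C(|z|^2+|z|^{\frac4d})$; when $d=3$ the exponent is negative, $\|D^2\tilde f\|$ is globally bounded, and the remainder is $\le C|z|^2\le C(|z|^2+|z|^{\frac4d})$. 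In either case the claim follows.

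The only genuinely delicate point is the first inequality in dimension $d=3$ (more generally whenever $\frac4d-2<0$). Bounding $\|D\tilde f\|$ by integrating the merely bounded $D^2\tilde f$ would give the linear estimate $\|D\tilde f(\omega)\|\le C|\omega|$ and hence a useless $|z|^2$ term for large $|z|$, whereas the target demands $|z|^{\frac4d}$ with $\frac4d<2$. This is exactly why one must exploit the decay of $f'$ at infinity through the identity $\|D\tilde f(\omega)\|=2|\omega|\,|f'(|\omega|^2)|$, rather than crude integration. For $d=1,2$ the relevant exponents are non-negative and the argument is elementary.
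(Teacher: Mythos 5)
Your proof is correct and follows essentially the same route as the paper's: the mean value theorem for the first bound and a second-order Taylor expansion for the second, with the growth hypothesis \eqref{prop_croissance} and the uniform boundedness of $\varphi$ controlling the relevant derivatives; your explicit derivation of $\|D\tilde f(\omega)\|\le C(1+|\omega|)^{\frac4d-1}$ via the radial identity $\|D\tilde f(\omega)\|=2|\omega|\,|f'(|\omega|^2)|$ and the real-axis trace of $D^2\tilde f$ simply makes precise a step the paper leaves implicit. One quibble with your closing remark: it is overstated that one \emph{must} avoid integration when $\frac4d-2<0$ — integrating the decay bound $\|D^2\tilde f(w)\|\le C(1+|w|)^{\frac4d-2}$ along the segment $[\varphi,\varphi+z]$, splitting according to whether $s|z|\le 2M$ or $s|z|\ge 2M$ (so that $|\varphi+sz|\ge s|z|/2$ on the second piece), also yields $C(1+|z|)^{\frac4d-1}$ because $\frac4d-2>-1$ for $d\le 3$; what fails is only integration using boundedness of $D^2\tilde f$ alone.
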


\begin{proof}
By the mean value theorem applied to $\tilde{f}$, 
$$\left|\tilde{f}(z+\varphi)-\tilde{f}(\varphi)\right|(t,x)\le |z(t,x)|\sup_{\omega\in [\varphi(t,x),(z+\varphi)(t,x)]}\|D_{\omega}\tilde{f}\|$$
By \eqref{prop_croissance} and the boundedness of $\varphi$ with respect to $t$ and $x$, it results 
$$\left|\tilde{f}(z+\varphi)-\tilde{f}(\varphi)\right|(t,x)\le C\left(|z(t,x)|+|z(t,x)|^{\frac{4}{d}}\right).$$
Similarly, the second estimate stated in Lemma \ref{ineqs} is obtained by direct application of Taylor formula for $\tilde{f}$ at order 2 and by using 
$$\sup_{\omega\in [\varphi(t,x),(z+\varphi)(t,x)]}\|D^2_{\omega}\tilde{f}\|\le C(1+|\tilde{z}|^{\frac{4}{d}-2}).$$
\end{proof}

\begin{Lem}[Expression of $\partial_t\tilde{z}$]\label{dertildez}
We have
\begin{equation}
\begin{aligned}
\partial_t\tilde{z}=& \;i\Big(\Delta\tilde{z}+f(|z+\varphi|^2)z+\big(f(|z+\varphi|^2)-f(|\varphi|^2)\big)\varphi\Big)\\
&+i\displaystyle\sum_{k=1}^K\big\{ia_kf(|R_k|^2)R_k+b_k\cdot\nabla\big(f(|R_k|^2)R_k)\big)\big\}+\sum_{k=1}^K\big\{ia_k'R_k+b_k'\cdot\nabla R_k\big\}\\
=&\;\displaystyle i\Big(\Delta\tilde{z}+f(|\varphi|^2)\tilde{z}+\big(f(|\tilde{z}+\varphi|^2)-f(|\varphi|^2)\big)\varphi\Big)+\displaystyle\sum_{k=1}^K\big\{ia_k'R_k+b_k'\cdot\nabla R_k\big\}+\epsilon,
\end{aligned}
\end{equation}
where $\epsilon$ is a function of $t$ and $x$ such that 
\begin{multline}\label{est_g}
\displaystyle\int_{\RR^d}|\epsilon|\big(|\tilde{z}|+|\nabla\tilde{z}|\big)\;dx+\left|\int_{\RR^d}\epsilon\Delta\tilde{z}\;dx\right|\\
\le C\big(e^{-\gamma t}\|z\|_{H^1}\|\tilde{z}\|_{H^1}+\|z\|_{H^1}^2\|\tilde{z}\|_{H^1}\big).
\end{multline}
\end{Lem}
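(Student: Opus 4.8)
The plan is to prove the two displayed identities in turn and then to estimate the remainder $\epsilon$ that the second one defines. For the first identity I would simply differentiate the definition \eqref{def_tilde_z} of $\tilde z$ in time. Substituting \eqref{derz} for $\partial_t z$ and using that each $R_k$ solves \eqref{NLS}, so that $\partial_t R_k=i(\Delta R_k+g(R_k))$ with $g(\zeta)=\zeta f(|\zeta|^2)$ and hence $\partial_t\nabla R_k=i\nabla(\Delta R_k+g(R_k))$, all the second-order terms $\Delta z$, $ia_k\Delta R_k$ and $b_k\cdot\nabla\Delta R_k$ collect into $i\Delta\tilde z$ (recall $b_k$ is $x$-independent). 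The $a_k'$ and $b_k'$ contributions come out untouched, and the first-order-in-time pieces of $ia_k\partial_tR_k$ and $b_k\cdot\partial_t\nabla R_k$ are exactly $i\sum_k\{ia_kg(R_k)+b_k\cdot\nabla g(R_k)\}$, which gives the first line verbatim.

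For the second identity I would define $\epsilon$ as the difference between the nonlinear part of the first line and $i\big(f(|\varphi|^2)\tilde z+(f(|\tilde z+\varphi|^2)-f(|\varphi|^2))\varphi\big)$. The key algebraic fact is the modulation identity $ia_kg(R_k)+b_k\cdot\nabla g(R_k)=Dg(R_k)[w_k]$, where $w_k:=ia_kR_k+b_k\cdot\nabla R_k$ and $Dg(\zeta)[h]=f(|\zeta|^2)h+2f'(|\zeta|^2)\Rr(\overline\zeta h)\zeta$; here one uses $\Rr(\overline{R_k}\,iR_k)=0$ for the $a_k$-part and the chain rule $\nabla g(R_k)=Dg(R_k)[\nabla R_k]$ for the $b_k$-part. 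Setting $w=\sum_kw_k$ and noting $u=z+\varphi$ and $\tilde z+\varphi=u+w$, the nonlinear part of the first line equals $g(u)-g(\varphi)+\sum_kDg(R_k)[w_k]$, so that
\[
\epsilon=i\Big(-[g(u+w)-g(u)]+\sum_kDg(R_k)[w_k]+\big(f(|\tilde z+\varphi|^2)-f(|\varphi|^2)\big)\tilde z\Big).
\]
A first-order Taylor expansion $g(u+w)-g(u)=\sum_kDg(u)[w_k]+\mathrm{O}(\|D^2g\|\,|w|^2)$ then converts the terms linear in $w$ into $\sum_k(Dg(R_k)-Dg(u))[w_k]$, which is \emph{small}: on the support of the localized $w_k$ one has $u-R_k=z+(\varphi-R)+\sum_{j\ne k}R_j$, and the last two terms are $\mathrm{O}(e^{-\gamma t})$ by \eqref{varphi_3d'} and \eqref{decay_prop}--\eqref{prel1}.

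It then remains to prove \eqref{est_g}. For the first half, $\int_{\RR^d}|\epsilon|(|\tilde z|+|\nabla\tilde z|)$, I would split $\epsilon$ into its localized part (the $w$-terms and the Taylor remainder) and its genuinely nonlinear part $(f(|\tilde z+\varphi|^2)-f(|\varphi|^2))\tilde z$. The localized part carries a factor $|a_k|+|b_k|\le C\|z\|_{L^2}$ (Lemma \ref{ex}) times either $e^{-\gamma t}$ or $|z|$, so Cauchy--Schwarz gives $e^{-\gamma t}\|z\|_{H^1}\|\tilde z\|_{H^1}+\|z\|_{H^1}^2\|\tilde z\|_{H^1}$; the nonlinear part is $\mathrm{O}(|\tilde z|^2+|\tilde z|^{\frac4d+1})$ by Lemma \ref{ineqs}, and pairing it with $|\tilde z|+|\nabla\tilde z|$ and using the embeddings $H^1\hookrightarrow L^3,L^{\frac4d+2}$ (available for $d\le3$) together with $\|\tilde z\|_{H^1}\le C\|z\|_{H^1}$ yields $\|z\|_{H^1}^2\|\tilde z\|_{H^1}$.

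The hard part will be the second half, $|\int_{\RR^d}\epsilon\,\Delta\tilde z|$, since $\tilde z$ is only $H^1$ and $\Delta\tilde z$ cannot be paired naively. I would integrate by parts to write it as $|\int\nabla\epsilon\cdot\nabla\tilde z|\le\|\nabla\epsilon\|_{L^2}\|\nabla\tilde z\|_{L^2}$. For the smooth and localized part of $\epsilon$ this is harmless, because $\nabla$ falls on the smooth profiles $R_k,\nabla R_k$ and on $\varphi\in H^{s_0}\subset H^2$, so that $\|\nabla\epsilon\|_{L^2}\lesssim e^{-\gamma t}\|z\|_{H^1}+\|z\|_{H^1}^2$. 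For the nonlinear part, however, the bound reduces to controlling integrals of the type $\int|\tilde z|^2|\nabla\tilde z|^2$ by $\|\tilde z\|_{H^1}^4$: this is immediate for $d=1$ through $H^1(\RR)\hookrightarrow L^\infty$, but for $d\ge2$ it is precisely the place where one must have subtracted a \emph{regular} reference soliton — the $H^2$ and $L^\infty$ decay of $\varphi$ (and the growth bound \eqref{prop_croissance}) are what make these borderline terms summable, and this is the mechanism that forces the restriction on the dimension (and the additional $L^\infty$ hypothesis in higher dimension, cf. Proposition \ref{uni:4d}). I expect this $\Delta\tilde z$-pairing to be the only genuinely delicate step, everything else being bookkeeping with Lemmas \ref{ex}, \ref{ineqs} and the decay estimates of Lemma \ref{lem:phij}.
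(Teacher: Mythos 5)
Your derivation of the first identity is correct, and your algebraic formula for $\epsilon$ is also correct: your identity $ia_kf(|R_k|^2)R_k+b_k\cdot\nabla\big(f(|R_k|^2)R_k\big)=Dg(R_k)[w_k]$ is precisely the cancellation that the paper implements through its decompositions \eqref{lem_dertildez_1} and \eqref{lem_dertildez_2} --- the paper Taylor-expands $f$ around $\varphi$ (Lemma \ref{ineqs}) and cancels the terms $b_k\cdot\nabla\big(f(|R_k|^2)\big)$ between the two sides, whereas you expand $g$ around $u$; either way $\epsilon$ splits into localized modulated terms of size $\mathrm{O}\big(e^{-\gamma t}\|z\|_{L^2}\big)$ plus genuinely quadratic terms. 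Your handling of $\int_{\RR^d}|\epsilon|\big(|\tilde{z}|+|\nabla\tilde{z}|\big)\,dx$ uses the same ingredients as the paper (\eqref{estcoef}, Lemma \ref{ineqs}, \eqref{decay_prop}, \eqref{prel1}, \eqref{varphi_3d}, Sobolev embeddings for $d\le 3$) and is fine.

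The genuine gap is in your treatment of $\big|\int_{\RR^d}\epsilon\,\Delta\tilde{z}\,dx\big|$ for $d\ge 2$, and it cannot be repaired along the lines you indicate. First, the inequality you reduce to, $\int_{\RR^d}|\tilde{z}|^2|\nabla\tilde{z}|^2\,dx\le C\|\tilde{z}\|_{H^1}^4$, is false in dimension $d\ge 2$: no embedding gives $\tilde{z}\in L^\infty$ or $\nabla\tilde{z}\in L^4$, and in fact $|\tilde{z}||\nabla\tilde{z}|$ need not even belong to $L^2$ (in $d=2$, take $\tilde{z}=\chi\,(\log (1/|x|))^{\beta}$ with $\tfrac14<\beta<\tfrac12$ and $\chi$ a cutoff: then $\tilde{z}\in H^1(\RR^2)$ while $\int|\tilde{z}|^2|\nabla\tilde{z}|^2\,dx=+\infty$). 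For the same reason your master bound $\big|\int\nabla\epsilon\cdot\nabla\tilde{z}\big|\le\|\nabla\epsilon\|_{L^2}\|\nabla\tilde{z}\|_{L^2}$ is vacuous: the quadratic part of your $\epsilon$, namely $\big(f(|\tilde{z}+\varphi|^2)-f(|\varphi|^2)\big)\tilde{z}$, has a gradient containing $\big(f(|\tilde{z}+\varphi|^2)-f(|\varphi|^2)\big)\nabla\tilde{z}\sim|\tilde{z}||\nabla\tilde{z}|$, so $\|\nabla\epsilon\|_{L^2}$ may be infinite. Second, attributing the rescue of these borderline terms to the $H^2\cap L^\infty$ regularity of $\varphi$ is a non sequitur: the integrals $\int|\tilde{z}||\nabla\tilde{z}|^2$ and $\int|\tilde{z}|^2|\nabla\tilde{z}|^2$ contain no occurrence of $\varphi$, so no smoothness of the constructed multi-soliton can control them. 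What would fix them is $L^\infty$ control on $z=u-\varphi$, i.e.\ on the \emph{unknown} solution, which is exactly what is unavailable for $d\ge2$ in Theorem \ref{th_uni_f} and what Proposition \ref{uni:4d} is forced to assume in high dimension. (The regularity of $\varphi$ serves a different purpose: terms where derivatives land on $\varphi-R$ or on $\varphi$, such as $\Delta(\varphi-R)\overline{\tilde{z}}\big(f(|\tilde{z}+\varphi|^2)-f(|\varphi|^2)\big)$ in the proof of Proposition \ref{estenergy}.)

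Any workable treatment of this pairing must keep its complex structure rather than pass to absolute values, which is exactly what your Cauchy--Schwarz step destroys. The pairing is only ever used in the form $\Rr\int\epsilon\,\Delta\overline{\tilde{z}}\,dx$, and since the quadratic part of $\epsilon$ carries a factor $i$ times a \emph{real} coefficient $f(|\tilde{z}+\varphi|^2)-f(|\varphi|^2)$, one integration by parts produces (up to sign) $\Ii\int\big(f(|\tilde{z}+\varphi|^2)-f(|\varphi|^2)\big)\nabla\tilde{z}\cdot\nabla\overline{\tilde{z}}\,dx+\Ii\int\tilde{z}\,\nabla\big(f(|\tilde{z}+\varphi|^2)-f(|\varphi|^2)\big)\cdot\nabla\overline{\tilde{z}}\,dx$; the first integral vanishes identically because $\nabla\tilde{z}\cdot\nabla\overline{\tilde{z}}=|\nabla\tilde{z}|^2$ is real, and only the second, together with the localized terms (where the derivative can be unloaded onto the exponentially decaying profiles $R_k$, $\nabla R_k$), remains to be estimated. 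Your argument retains none of this cancellation, so the step that you yourself flag as ``the only genuinely delicate one'' is precisely the one that fails; note that the paper's own proof is extremely terse at this same point, so this is where the details must actually be written out, not waved at.
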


\begin{proof}
The first equality concerning $\partial_t\tilde{z}$ is quite immediate. Let us precise how to obtain the second equality. Decomposing
\[
f(|z+\varphi|^2)z=f(|\varphi|^2)z+\big(f(|z+\varphi|^2)-f(|\varphi|^2)\big)z,
\]
and using the expression of $z$ in terms of $\tilde{z}$ and the $a_k$ and $b_k$, $k=1,\dots, K$ given by \eqref{def_tilde_z}, we have that 
\begin{equation}\label{lem_dertildez_1}
\begin{aligned}
\MoveEqLeft[4] f(|z+\varphi|^2)z+\sum_{k=1}^K\left\{ia_kf(|R_k|^2)R_k+b_k\cdot\nabla\left(f(|R_k|^2)R_k)\right)\right\} \\
=&\;f(|\varphi|^2)\tilde{z}+\displaystyle\sum_{k=1}^K\big\{ia_k\big(f(|R_k|^2)-f(|\varphi|^2)\big)R_k+b_k\cdot\nabla R_k\big(f(|R_k|^2)-f(|\varphi|^2)\big)\big\}\\
&+\displaystyle\sum_{k=1}^Kb_k\cdot\nabla(f(|R_k|^2))R_k+\big(f(|z+\varphi|^2)-f(|\varphi|^2)\big)z
\end{aligned}
\end{equation}

\noindent Moreover, using the second estimate obtained in Lemma \ref{ineqs}, 
\begin{equation}\label{lem_dertildez_2}
\begin{aligned}
\MoveEqLeft[4]
f(|z+\varphi|^2)-f(|\varphi|^2)\\
&=f(|\tilde{z}+\varphi|^2)-f(|\varphi|^2)-2\Rr\big((\tilde{z}-z)\overline{\varphi}\big)f'(|\varphi|^2)+\mathrm{O}(|z|^2+|z|^{\frac{4}{d}})\\
&=\displaystyle f(|\tilde{z}+\varphi|^2)-f(|\varphi|^2)-2\sum_{k=1}^K\Rr\big(b_k\cdot\nabla R_k\overline{R_k}\big)f'(|R_k|^2)+h+\mathrm{O}(|z|^2+|z|^{\frac{4}{d}})\\
&=\displaystyle f(|\tilde{z}+\varphi|^2)-f(|\varphi|^2)-\sum_{k=1}^Kb_k\cdot\nabla\big(f(|R_k|^2\big)+\tilde{h},
\end{aligned}
\end{equation}
where $h$ and $\tilde{h}$ satisfy the same property (\ref{est_g}) as $\epsilon$ due to (\ref{varphi_3d}) and (\ref{prel1}). Lemma \ref{dertildez} is now a consequence of \eqref{lem_dertildez_1}, \eqref{lem_dertildez_2}, and the fact that $\frac{4}{d}+1\ge 2$ (for $d\le 3$).
\end{proof}

We are now in a position to prove \eqref{estenergy1}.

\begin{proof}[Proof of Proposition \ref{estenergy}]
The proof decomposes essentially into two parts. We first differentiate successively each term constituting $\tilde{H}$ by means of Lemma \ref{dertildez}. For this, integrations by parts are sometimes necessary in order not to keep terms carrying second spatial derivatives for $z$. Then we put together suitable terms in the expression of $\frac{d}{dt}\tilde{H}$ in order to get better estimates than the a priori control by $\mathrm{O}\left(\|\tilde{z}(t)\|_{H^1}^2\right)$. Besides, we put annotations for the different terms we have to work on for ease of reading; terms associated with the same letter $\mathrm{A}$, $\mathrm{B}$, or $\mathrm{C}$ are to be gathered. \\

\underline{Step 1}: Differentiation of $\tilde{H}$
\begin{itemize}
\item
Using Lemma \ref{dertildez}, 
one computes 
\begin{align*}
\MoveEqLeft[2]
\displaystyle\frac{d}{dt}\int_{\RR^d}|\nabla \tilde{z}|^2\;dx\\
&=\displaystyle \;2\;\Rr\int_{\RR^d}\nabla \tilde{z}_t\cdot\nabla\overline{\tilde{z}}\;dx\\
&=\displaystyle \;2\;\Ii\; \int_{\RR^d}f(|\varphi|^2)\tilde{z}\Delta\overline{\tilde{z}}\;dx+2\displaystyle\;\Ii\; \int_{\RR^d}\big(f(|\tilde{z}+\varphi|^2)-f(|\varphi|^2)\big)\varphi\Delta\overline{\tilde{z}}\;dx\\
&\:\:\:-2\displaystyle\;\Rr\int_{\RR^d}\sum_{k=1}^K\big\{ia_k'R_k+b_k'\cdot\nabla R_k\big\}\Delta\overline{\tilde{z}}\;dx+\mathrm{O}\big((e^{-\gamma t}+\|z\|_{H^1})\|z\|_{H^1}\|\tilde{z}\|_{H^1}\big).
\end{align*}

Similarly one obtains directly
\begin{align*}
\MoveEqLeft[4]
\displaystyle\frac{d}{dt}\int_{\RR^d}\Big\{F(|\tilde{z}+\varphi|^2)-F(|\varphi|^2)-2\Rr(\tilde{z}\overline{\varphi})f(|\varphi|^2)\Big\}\;dx\\
=&\;\displaystyle \;2\;\Rr\int_{\RR^d}\varphi_t\overline{\varphi}\big(f(|\tilde{z}+\varphi|^2)-f(|\varphi|^2)-2\Rr(\tilde{z}\overline{\varphi})f'(|\varphi|^2)\big)\;dx\\
&\displaystyle+2\;\Rr\int_{\RR^d}\tilde{z}_t\overline{\tilde{z}}f(|\tilde{z}+\varphi|^2)\:dx+2\;\Rr\int_{\RR^d}(\varphi_t\overline{\tilde{z}}+\tilde{z}_t\overline{\varphi})\big(f(|\tilde{z}+\varphi|^2)-f(|\varphi|^2)\big)\;dx\\
=&-\displaystyle 2\;\Ii\; \int_{\RR^d}\Delta \tilde{z}\overline{\tilde{z}}f(|\tilde{z}+\varphi|^2)\;dx\\
&-2\displaystyle\;\Ii\; \int_{\RR^d}\Delta \varphi\overline{\varphi}\big(f(|\tilde{z}+\varphi|^2)-f(|\varphi|^2)-2\Rr(\tilde{z}\overline{\varphi})f'(|\varphi|^2)\big)\;dx\\
&-2\displaystyle\;\Ii\int_{\RR^d}\big(f(|\tilde{z}+\varphi|^2)-f(|\varphi|^2)\big)\varphi\overline{\tilde{z}}(f(|\tilde{z}+\varphi|^2)\;dx\\
&-2\displaystyle\;\Ii\int_{\RR^d}\Delta\tilde{z}\overline{\varphi}\big(f(|\tilde{z}+\varphi|^2)-f(|\varphi|^2)\big)\;dx\\
&-2\displaystyle\;\Ii\int_{\RR^d}\big(\Delta\varphi+f(|\varphi|^2)\varphi\big)\overline{\tilde{z}}\big(f(|\tilde{z}+\varphi|^2)-f(|\varphi|^2)\big)\;dx\\
&-2\displaystyle\;\Ii\int_{\RR^d}f(|\varphi|^2)\tilde{z}\overline{\varphi}\big(f(|\tilde{z}+\varphi|^2)-f(|\varphi|^2)\big)\;dx\\
&+2\displaystyle\;\Rr\int_{\RR^d}\sum_{k=1}^K\big(ia_k'R_k+b_k'\cdot \nabla R_k\big)\left(\overline{\tilde{z}}f(|\tilde{z}+\varphi|^2)+\overline{\varphi}\big(f(|\tilde{z}+\varphi|^2)-f(|\varphi|^2)\big)\right)\;dx\\
&+\mathrm{O}\big((e^{-\gamma t}+\|z\|_{H^1})\|z\|_{H^1}\|\tilde{z}\|_{H^1}\big).
\end{align*}

Thus, we have at this point
\begin{align}
\MoveEqLeft[4]
\displaystyle\frac{d}{dt}\int_{\RR^d}\big\{|\nabla \tilde{z}|^2-\big(F(|\tilde{z}+\varphi|^2)-F(|\varphi|^2)-2\Rr(\tilde{z}\overline{\varphi})f(|\varphi|^2)\big)\big\}\;dx \nonumber \\
=&\;\displaystyle 2\;\displaystyle\Ii \int_{\RR^d}\Delta\varphi\overline{\varphi}\big(f(|\tilde{z}+\varphi|^2)-f(|\varphi|^2)-2\Rr(\tilde{z}\overline{\varphi})f'(|\varphi|^2)\big)\;dx \tag{$B_1$} \\
&+2\;\displaystyle\Ii \int_{\RR^d}\big(\Delta\varphi+f(|\varphi|^2)\varphi\big)\overline{\tilde{z}}\big(f(|\tilde{z}+\varphi|^2)-f(|\varphi|^2)\big)\;dx \tag{$A_1$} \\
&-2\;\displaystyle\Rr\int_{\RR^d}\sum_{k=1}^K\big(ia_k'R_k+b_k'\cdot\nabla R_k\big)\big(\Delta\overline{\tilde{z}}+\overline{\tilde{z}}f(|\tilde{z}+\varphi|^2)\big)\;dx \tag{$C_1$} \\
&-2\;\displaystyle\Rr\sum_{k=1}^K\int_{\RR^d}b_k'\cdot\nabla R_k\overline{R_k}\big(f(|\tilde{z}+\varphi|^2)-f(|\varphi|^2)\big)\;dx \tag{$C_2$} \\
&+\;\mathrm{O}\left(\left(e^{-\gamma t}+\|z\|_{H^1}\right)\|z\|_{H^1}\|\tilde{z}\|_{H^1}\right).\notag
\end{align}

\item We differentiate then the next term appearing in the expression of $\tilde{H}$. For all $k=1,\dots,K$,
\begin{align}
\MoveEqLeft[4]\displaystyle\left(\omega_k+\frac{|v_k|^2}{4}\right)\frac{d}{dt}\int_{\RR^d}|\tilde{z}|^2\phi_k\;dx\notag\\
=&\displaystyle\left(\omega_k+\frac{|v_k|^2}{4}\right)\int_{\RR^d}|\tilde{z}|^2\partial_t\phi_k\;dx+2\left(\omega_k+\frac{|v_k|^2}{4}\right)\int_{\RR^d}\Ii\left(\partial_{x_1}\tilde{z}\overline{\tilde{z}}\right)\partial_{x_1}\phi_k\;dx\tag{$Main_{1,k}$}\\
&-2\displaystyle\left(\omega_k+\frac{|v_k|^2}{4}\right)\int_{\RR^d}\Ii(\varphi\overline{\tilde{z}})(f(\tilde{z}+\varphi|^2)-f(|\varphi|^2))\phi_k\;dx\tag{$A_{2,k}$}\\
&+\;2\displaystyle\left(\omega_k+\frac{|v_k|^2}{4}\right)\Rr\int_{\RR^d}\big(ia_k'R_k+b_k'\cdot\nabla R_k\big)\overline{\tilde{z}}\phi_k\;dx\tag{$C_{3,k}$}\\
&+\mathrm{O}\big((e^{-\gamma t}+\|z\|_{H^1})\|z\|_{H^1}\|\tilde{z}\|_{H^1}\big).\notag
\end{align}

\item To finish with, using integrations by parts and \eqref{prel4}, we obtain for all $k=1,\dots,K$,
\begin{align}
\MoveEqLeft[4]\displaystyle\frac{d}{dt}\int_{\RR^d}\Ii(v_k\cdot\nabla \tilde{z}\overline{\tilde{z}})\phi_k\;dx\notag\\
=&-2\;\displaystyle v_k\cdot\Ii\int_{\RR^d}\tilde{z}_t\nabla\overline{\tilde{z}}\phi_k\;dx-v_k\cdot\Ii\int_{\RR^d}\tilde{z}_t\overline{\tilde{z}}\nabla\phi_k\;dx+v_k\cdot\Ii\int_{\RR^d}\nabla{\tilde{z}}\overline{\tilde{z}}\partial_t\phi_k\;dx\notag\\
=&\;\displaystyle 2\;v_{k,1}\int_{\RR^d}\left|\nabla \tilde{z}\right|^2\partial_{x_1}\phi_k\;dx-\frac{v_{k,1}}{2}\int_{\RR^d}|\tilde{z}|^2\partial^3_{x_1}\phi_k\;dx+v_k\cdot\int_{\RR^d}\Ii\left(\nabla \tilde{z}\overline{\tilde{z}}\right)\partial_{t}\phi_k\;dx\tag{$Main_{2,k}$}\\
&+\displaystyle 2\;v_k\cdot\int_{\RR^d}\Rr\left(\nabla\varphi\overline{\tilde{z}})\big(f(|\tilde{z}+\varphi|^2)-f(|\varphi|^2)\right)\phi_k\;dx\tag{$A_{3,k}$}\\
&\displaystyle+\;v_k\cdot\int_{\RR^d}\nabla(f(|\varphi|^2))|\tilde{z}|^2\phi_k\;dx\tag{$B_{2,k}$}\\
&-2\displaystyle\;v_k\cdot\Rr\int_{\RR^d}\nabla\left(\varphi\overline{\tilde{z}}\right)\left(f(|\tilde{z}+\varphi|^2)-f(|\varphi|^2)\right)\phi_k\;dx\tag{$B_{3,k}$}\\
&-2\;v_k\cdot\displaystyle\;\Ii\int_{\RR^d}\big(ia_k'R_k+b_k'\cdot\nabla R_k\big)\nabla\overline{\tilde{z}}\phi_k\;dx\tag{$C_{4,k}$}\\
&+\mathrm{O}\big((e^{-\gamma t}+\|z\|_{H^1})\|z\|_{H^1}\|\tilde{z}\|_{H^1}\big).\notag
\end{align}

\end{itemize}

We now continue the proof by showing how the corresponding terms put together can yield estimation (\ref{estenergy1}). \\

\underline{Step 2}: Estimate concerning $\tilde{H}'$\\

We first deal with the terms $A_1$, $A_{2,k}$, and $A_{3,k}$ ($k=1,\dots,K$). We see that 

\begin{equation}\label{A1'}
\begin{aligned}
\MoveEqLeft[4]
\left|A_1-2\sum_{k=1}^K\Ii\int_{\RR^d}\big(\Delta R_k+f(|R_k|^2)R_k\big)\overline{\tilde{z}}\big(f(|\tilde{z}+\varphi|^2)-f(|\varphi|^2)\big)\;dx\right|\\
\leq&
\displaystyle \;2\;\left|\int_{\RR^d}\Delta(\varphi-R)\overline{\tilde{z}}\left(f(|\tilde{z}+\varphi|^2)-f(|\varphi|^2)\right)\;dx\right|\\
&+\displaystyle\int_{\RR^d}\big|f(|\varphi|^2)-f(|R|^2)\big|\big|\varphi\overline{\tilde{z}}\big|\big|f(|\tilde{z}+\varphi|^2)-f(|\varphi|^2)\big|\;dx\\
&+\displaystyle\int_{\RR^d}\big|f(|R|^2)(\varphi-R)\overline{\tilde{z}}\big|\big|f(|\tilde{z}+\varphi|^2)-f(|\varphi|^2)\big|\;dx\\
&+\displaystyle\sum_{k=1}^K\int_{\RR^d}\left|\left(f(|R|^2)-f(|R_k|^2)\right)R_k\right|\left|\overline{\tilde{z}}\right|\big|f(|\tilde{z}+\varphi|^2)-f(|\varphi|^2)\big|\;dx.
\end{aligned}
\end{equation}
As for the proof of Lemma \ref{ineqs}, by the mean value theorem, we observe also that
\begin{equation}\label{A1''}
\begin{dcases}
\big|f(|\varphi|^2)-f(|R|^2)\big|\le C\left(|\varphi-R|^{\frac{4}{d}}+|\varphi-R|\right)\\
\big|\left(f(|R|^2)-f(|R_k|^2)\right)R_k\big|\le C\sum_{j\neq k}|R_jR_k|.
\end{dcases}
\end{equation}
Moreover, we deduce from Lemma \ref{ineqs} that 

\begin{equation}\label{A1'''}
\begin{aligned}
\left|\int_{\RR^d}\Delta(\varphi-R)\overline{\tilde{z}}\left(f(|\tilde{z}+\varphi|^2)-f(|\varphi|^2)\right)\;dx\right|&\leq Ce^{-\gamma t}\big(\|\tilde{z}\|_{H^1}^{\frac{4}{d}+1}+\|\tilde{z}\|_{H^1}^2\big)\\
&\le Ce^{-\gamma t}\|\tilde{z}\|_{H^1}^2.
\end{aligned}
\end{equation}

Let us establish the preceding inequality in each dimension $d=1,2,3$.

\begin{itemize}

\item For $d=1$, 
we firstly make use of one integration by parts:
\begin{align*}
\MoveEqLeft[4]
\int_{\RR^d}\Delta(\varphi-R)\overline{\tilde{z}}\left(f(|\tilde{z}+\varphi|^2)-f(|\varphi|^2)\right)\;dx\\
&=-\int_{\RR^d}\nabla(\varphi-R)\cdot\nabla\overline{\tilde{z}}\left(f(|\tilde{z}+\varphi|^2)-f(|\varphi|^2)\right)\;dx\\
&\:\:-\int_{\RR^d}\nabla(\varphi-R)\cdot\nabla\left(f(|\tilde{z}+\varphi|^2)-f(|\varphi|^2)\right)\overline{\tilde{z}}\;dx.
\end{align*}
We thus obtain by Lemma \ref{ineqs}:
\begin{multline}
\left|\int_{\RR^d}\Delta(\varphi-R)\overline{\tilde{z}}\left(f(|\tilde{z}+\varphi|^2)-f(|\varphi|^2)\right)\;dx\right|\\
\le C\int_{\RR^d}|\nabla(\varphi-R)||\nabla \tilde{z}|(|z|+|z|^{\frac{4}{d}})+C\int_{\RR^d}|\nabla(\varphi-R)||\nabla \varphi|(|z|+|z|^{\frac{4}{d}})\;dx.
\end{multline}
Then we note that for all $\psi\in H^1(\RR^d)$, (by the embedding $H^1(\RR)\hookrightarrow L^\infty(\RR)$)
\begin{align*}
\int_{\RR^d}|\nabla\psi||\nabla \tilde{z}||\tilde{z}|\;dx & \le \|\nabla \tilde{z}\|_{L^2}\|\nabla \psi\|_{L^2}\|\tilde{z}\|_{L^\infty}\le C\|\tilde{z}\|_{H^1}^2\|\psi\|_{H^1}, \\
\int_{\RR^d}|\nabla\tilde{z}||\nabla\varphi||\tilde{z}|^{\frac{4}{d}}\;dx & \le \|\tilde{z}\|_{H^1}^{\frac{4}{d}}\|\nabla \tilde{z}\|_{L^2}\|\nabla \varphi\|_{L^2}\le C\|\tilde{z}\|_{H^1}^{\frac{4}{d}+1}\|\varphi\|_{H^1},\\
\int_{\RR^d}|\nabla\psi||\nabla\varphi||\tilde{z}|^{\frac{4}{d}}\;dx & \le \|\tilde{z}\|_{H^1}^{\frac{4}{d}}\|\nabla \psi\|_{L^2}\|\nabla \varphi\|_{L^2}\le C\|\tilde{z}\|_{H^1}^{\frac{4}{d}}\|\psi\|_{H^1}\|\varphi\|_{H^1}.
\end{align*}

\item For $d=2$: for all $\psi\in H^2(\RR^d)$, (by the embeddings $H^1(\RR^2)\hookrightarrow L^q(\RR^2)$ for each $q\in[2,+\infty)$)
\begin{align*}
\int_{\RR^d}|\nabla\psi||\nabla \tilde{z}||\tilde{z}|\;dx & \le \|\nabla \tilde{z}\|_{L^2}\|\nabla \psi\|_{L^4}\|\tilde{z}\|_{L^4}\le C\|\tilde{z}\|_{H^1}^2\|\psi\|_{H^2}, \\
\int_{\RR^d}|\nabla\psi||\nabla\tilde{z}||\tilde{z}|^{\frac{4}{d}}\;dx & \le \|\nabla\tilde{z}\|_{L^2}\|\tilde{z}\|_{L^{\frac{16}{d}}}^{\frac{4}{d}}\|\nabla \psi\|_{L^4}\le  C\|\tilde{z}\|_{H^1}^{\frac{4}{d}+1}\|\psi\|_{H^2}\\
\int_{\RR^d}|\nabla\psi||\nabla\varphi||\tilde{z}|^{\frac{4}{d}}\;dx & \le \|\tilde{z}\|_{L^{\frac{8}{d}}}^{\frac{4}{d}}\|\nabla \psi\|_{L^4}\|\nabla \varphi\|_{L^4}\le C\|\tilde{z}\|_{H^1}^{\frac{4}{d}}\|\psi\|_{H^2}\|\varphi\|_{H^2}.
\end{align*}

\item For $d=3$, $|z|+|z|^{\frac{4}{d}}\le |z|+|z|^{\frac{4}{3}}\le 2(|z|+|z|^2)$. We have for all $\psi\in H^2(\RR^d)$, (by the embedding $H^1(\RR^3)\hookrightarrow L^6(\RR^3)$)

\begin{align*}
\int_{\RR^d}|\nabla\psi||\nabla \tilde{z}||\tilde{z}|\;dx & \le \|\nabla \tilde{z}\|_{L^2}\|\nabla \psi\|_{L^4}\|\tilde{z}\|_{L^4}\le C\|\tilde{z}\|_{H^1}^2\|\psi\|_{H^2}, \\
\int_{\RR^d}|\nabla\psi||\nabla\tilde{z}||\tilde{z}|^{2}\;dx & \le \|\nabla\tilde{z}\|_{L^2}\|\tilde{z}\|_{L^6}^{2}\|\nabla \psi\|_{L^6}\le  C\|\tilde{z}\|_{H^1}^{3}\|\psi\|_{H^2}\\
\int_{\RR^d}|\nabla\psi||\nabla\varphi||\tilde{z}|^{2}\;dx & \le \|\nabla\psi\|_{L^2}\|\tilde{z}\|_{L^6}^{3}\|\nabla \varphi\|_{L^6}\le C\|\tilde{z}\|_{H^1}^{2}\|\psi\|_{H^2}\|\varphi\|_{H^2}.
\end{align*}

\end{itemize}

\noindent Hence, gathering \eqref{A1'}, \eqref{A1''}, and\eqref{A1'''}, using \eqref{varphi_3d} and the fact that $\frac{4}{d}+1\ge 2$, it results
\begin{equation}\label{A1}
\left|A_1-2\sum_{k=1}^K\Ii\int_{\RR^d}\big(\Delta R_k+f(|R_k|^2)R_k\big)\overline{\tilde{z}}\big(f(|\tilde{z}+\varphi|^2)-f(|\varphi|^2)\big)\;dx\right|\le Ce^{-\gamma t}\left(\|\tilde{z}\|_{H^1}^2\right).
\end{equation}
In a similar way, for all $k=1,\dots,K$,
\begin{equation}\label{A2}
\begin{aligned}
\MoveEqLeft[6]\left|A_{2,k}+2\left(\omega_k+\frac{|v_k|^2}{4}\right)\int_{\RR^d}\Ii(R_k\overline{\tilde{z}})\big(f(|\tilde{z}+\varphi|^2)-f(|\varphi|^2)\big)\phi_k\;dx\right|\\
\leq &\;\displaystyle 2\left(\omega_k+\frac{|v_k|^2}{4}\right)\int_{\RR^d}\big|(\varphi-R)\overline{\tilde{z}}\big(f(|\tilde{z}+\varphi|^2)-f(|\varphi|^2)\big)\phi_k\big|\;dx\\
&\;+\displaystyle 2\left(\omega_k+\frac{|v_k|^2}{4}\right)\sum_{j\neq k}\int_{\RR^d}\big|R_j\overline{\tilde{z}}\big(f(|\tilde{z}+\varphi|^2)-f(|\varphi|^2)\big)\phi_k\big|\;dx\\
\leq&\;C\displaystyle\int_{\RR^d}\big|\varphi-R\big|\big(|\tilde{z}|^{\frac{4}{d}+1}+|\tilde{z}|^2\big)\;dx+ C\displaystyle\sum_{j\neq k}\int_{\RR^d}\big|R_j\phi_k\big|\big(|\tilde{z}|^{\frac{4}{d}+1}+|\tilde{z}|^2\big)\;dx\\
\leq &\;Ce^{-\gamma t}\|\tilde{z}\|_{H^1}^2.
\end{aligned}
\end{equation}
We have then for all $k=1,\dots,K$,
\begin{equation}\label{A3}
\begin{aligned}
\MoveEqLeft[4]
\left|A_{3,k}-2v_k\cdot\int_{\RR^d}\Rr(\nabla R_k\overline{\tilde{z}})\big(f(|\tilde{z}+\varphi|^2)-f(|\varphi|^2)\big)\phi_k\;dx\right|\\
&\leq 
C\displaystyle\int_{\RR^d}|\nabla(\varphi-R)|\big(|\tilde{z}|^p+|\tilde{z}|^2\big)\phi_k\;dx+ C\displaystyle\sum_{j\neq k}\int_{\RR^d}\big|\nabla R_j\phi_k\big|\big(|\tilde{z}|^{\frac{4}{d}+1}+|\tilde{z}|^2\big)\;dx\\
&\leq Ce^{-\gamma t}\|\tilde{z}\|_{H^1}^2.
\end{aligned}
\end{equation}
Let us gather \eqref{A1}, \eqref{A2}, and \eqref{A3}. Observing that 
\begin{equation}\label{derRk}
\begin{dcases}
\partial_tR_k=-v_k\cdot \nabla R_k+i\left(\omega_k+\frac{|v_k|^2}{4}\right)R_k\\
\partial_tR_k=i\big(\Delta R_k+f(|R_k|^2)R_k\big),\\
\end{dcases}
\end{equation}
we notice that \[\Ii\big((\Delta R_k+f(|R_k|^2)R_k)\overline{\tilde{z}}\big)-\left(\omega_k+\frac{|v_k|^2}{4}\right)\Ii\left(R_k\phi_k\overline{\tilde{z}}\right)-v_k\cdot\Rr\left(\nabla R_k\phi_k\overline{\tilde{z}}\right)=-\Rr\left(\partial_tR_k(1-\phi_k)\overline{\tilde{z}}\right).\] As a consequence of (\ref{prel2}), we obtain a control of 
\begin{align*}
\MoveEqLeft[4]\int_{\RR^d}\bigg\{2\Ii\big((\Delta R_k+f(|R_k|^2)R_k)\overline{\tilde{z}}\big)-2\left(\omega_k+\frac{|v_k|^2}{4}\right)\Ii\left(R_k\phi_k\overline{\tilde{z}}\right)\\
&-2v_k\cdot\Rr\left(\nabla R_k\phi_k\overline{\tilde{z}}\right)\big(f(|\tilde{z}+\varphi|^2)-f(|\varphi|^2)\big)\bigg\}\;dx
\end{align*} by $Ce^{-\gamma t}\|\tilde{z}\|_{H^1}^2.$

\noindent Finally, 
\begin{equation}\label{estA}
\left|A_1+\sum_{k=1}^K\left\{A_{2,k}- A_{3,k}\right\}\right|\le Ce^{-\gamma t}\|\tilde{z}\|_{H^1}^2.
\end{equation}

Let us focus now on the terms identified by the letter $B$. We observe that 
\begin{equation}\label{B2k}
B_{2,k}=-v_k\cdot\int_{\RR^d}f(|\tilde{z}+\varphi|^2)\nabla(|\tilde{z}|^2)\phi_k\;dx+\mathrm{O}\left((e^{-\gamma t}+\|z\|_{H^1})\|z\|_{H^1}\|\tilde{z}\|_{H^1}\right).
\end{equation}

\noindent Then, we obtain 
\begin{equation}\label{B3k}
\begin{aligned}
B_{2,k}+B_{3,k}=&-v_k\cdot\Rr\int_{\RR^d}\nabla\left(|\tilde{z}+\overline{\varphi}|^2\right)f(|\tilde{z}+\overline{\varphi}|^2)\phi_k\;dx +v_k\cdot\Rr\int_{\RR^d}\nabla\left(|\varphi|^2\right)f(|\tilde{z}+\overline{\varphi}|^2)\phi_k\;dx\\
&+2v_k\cdot\Rr\int_{\RR^d}\nabla(\varphi\overline{\tilde{z}})f(|\varphi|^2)\phi_k\;dx +\mathrm{O}\left((e^{-\gamma t}+\|z\|_{H^1})\|z\|_{H^1}\|\tilde{z}\|_{H^1}\right).
\end{aligned}
\end{equation}

Notice next that $$v_k\cdot\Rr\left(\nabla R_k\overline{R_k}\right)=\Ii\left(\Delta R_k\overline{R_k}\right),$$ which allows us to rewrite \[B_1-2\;\sum_{k=1}^Kv_k\cdot\Rr\int_{\RR^d}\nabla R_k\overline{R_k}\big(f(|\tilde{z}+\varphi|^2)-f(|\varphi|^2)-2\Rr(\tilde{z}\overline{\varphi})f'(|\varphi|^2)\big)\;dx\] as a sum of quantities in which the differences between $\varphi$ and $R$ or the products $\nabla R_k\overline{R_j}$ for $j\neq k$ appear. Use moreover 
\begin{enumerate}
\item on the one hand, the second estimate proven in Lemma \ref{ineqs}
\item on the other, the following inequalities:

\begin{itemize}
\item for $d=1$: for all $\psi\in H^{1}(\RR^d)$,
\begin{align*}
\int_{\RR^d}|\nabla\psi||\nabla \tilde{z}||\tilde{z}|\;dx & \le \|\nabla \tilde{z}\|_{L^2}\|\nabla \psi\|_{L^2}\|\tilde{z}\|_{L^\infty}\le C\|\tilde{z}\|_{H^1}^2\|\psi\|_{H^1}, \\
\int_{\RR^d}|\nabla\psi|^2|\tilde{z}|^2\;dx & \le \|\tilde{z}\|_{H^1}^2\|\nabla \psi\|_{L^2}^2 \le C\|\tilde{z}\|_{H^1}^2\|\psi\|_{H^1}^2, \\
\int_{\RR^d}|\nabla\psi||\nabla\varphi||\tilde{z}|^{\frac{4}{d}}\;dx & \le \|\tilde{z}\|_{H^1}^{\frac{4}{d}}\|\nabla \psi\|_{L^2}\|\nabla \varphi\|_{L^2}\le C\|\tilde{z}\|_{H^1}^{\frac{4}{d}}\|\psi\|_{H^1}\|\varphi\|_{H^1}, \\
\end{align*}
\item for $d\in\{2,3\}$: for all $\psi\in H^2(\RR^d)$,
\begin{align*}
\int_{\RR^d}|\nabla\psi||\nabla \tilde{z}||\tilde{z}|\;dx & \le \|\nabla \tilde{z}\|_{L^2}\|\nabla \psi\|_{L^4}\|\tilde{z}\|_{L^4}\le C\|\tilde{z}\|_{H^1}^2\|\psi\|_{H^2}, \\
\int_{\RR^d}|\nabla\psi|^2|\tilde{z}|^2\;dx & \le \|\tilde{z}\|_{L^4}^2\|\nabla \psi\|_{L^4}^2\le C\|\tilde{z}\|_{H^1}^2\|\psi\|_{H^2}^2, \\
\int_{\RR^d}|\nabla\psi||\nabla\varphi||\tilde{z}|^4\;dx & \le \|\tilde{z}\|_{L^6}^{4}\|\nabla \psi\|_{L^6}\|\nabla \varphi\|_{L^6}\le C\|\tilde{z}\|_{H^1}^4\|\psi\|_{H^2}\|\varphi\|_{H^2}.
\end{align*}
\end{itemize}
\end{enumerate}

\begin{Rq}
Regarding the higher dimensions in order to prove Proposition \ref{uni:4d}, one would make use of the following inequality, valid for $d\ge 3$: for all $\psi\in H^{\lfloor\frac{d}{2}\rfloor+1}(\RR^d)$, for all $\tilde{\psi}\in H^{1}(\RR^d)$,
$$
\int_{\RR^d}|\nabla\psi||\nabla \tilde{\psi}||\tilde{z}|\;dx \le \|\tilde{z}\|_{L^{\frac{2d}{d-2}}}\|\nabla \tilde{\psi}\|_{L^2}\|\nabla\psi\|_{L^d}\le C\|\tilde{z}\|_{H^1}\|\tilde{\psi}\|_{H^2}\|\psi\|_{H^{\lfloor\frac{d}{2}\rfloor+1}}. 
$$
\end{Rq}

\noindent Then we conclude that
\begin{equation}\label{B1}
\begin{aligned}
\MoveEqLeft\left|B_1-2\;\sum_{k=1}^Kv_k\cdot\Rr\int_{\RR^d}\nabla R_k\overline{R_k}\big(f(|\tilde{z}+\varphi|^2)-f(|\varphi|^2)-2\Rr(\tilde{z}\overline{\varphi})f'(|\varphi|^2)\big)\;dx\right| \\
& \le Ce^{-\gamma t}\big(\|\tilde{z}\|_{H^1}^{\frac{4}{d}+1}+\|\tilde{z}\|_{H^1}^2\big) \le Ce^{-\gamma t}\|\tilde{z}\|_{H^1}^2.
\end{aligned}
 \end{equation}

\noindent Thus,  from \eqref{B3k} and \eqref{B1}, we deduce that:
\begin{align}\label{estB}
\MoveEqLeft[4]
\left|B_1-\displaystyle\sum_{k=1}^K\big(B_{2,k}+B_{3,k}\big)\right|\\
&\leq \: Ce^{-\gamma t}\|\tilde{z}\|_{H^1}^2+C\left((e^{-\gamma t}+\|z\|_{H^1})\|z\|_{H^1}\|\tilde{z}\|_{H^1}\right) \notag\\
&\:\:\:+\sum_{k=1}^K\left|v_k\cdot\Rr\int_{\RR^d}\nabla\left(|R_k|^2-|\varphi|^2\right)f(|\tilde{z}+\varphi|^2)\phi_k\;dx\right| \notag\\
&\:\:\:+\sum_{k=1}^K\left|-v_k\cdot\int_{\RR^d}\nabla\left(|\varphi|^2\right)f(|\varphi|^2)\;dx+v_k\cdot\Rr\int_{\RR^d}\nabla(|\tilde{z}+\varphi|^2)f(|\tilde{z}+\varphi|^2)\;dx\right|\notag\\
&\leq\displaystyle\; C\left(e^{-\gamma t}+\|z\|_{H^1}\right)\|z\|_{H^1}\|\tilde{z}\|_{H^1}.\notag\\
\end{align}
Notice that we have used \eqref{tild'},
\[ \int_{\RR^d}\nabla\left(|\varphi|^2\right)f(|\varphi|^2)\;dx=0, \quad \text{and} \quad \int_{\RR^d}\nabla\left(|\tilde{z}+\varphi|^2\right)f(|\tilde{z}+\varphi|^2)\;dx=0. \]

To finish with, we have to obtain estimates for the terms with the letter $C$ involving $a_k'$ and $b_k'$. \\
Due to (\ref{estdercoef}), (\ref{prel2}), and (\ref{derRk}), we compute
\begin{equation}\label{C1'}
\begin{aligned}
\MoveEqLeft[4]
-2\;\sum_{k=1}^Ka_k'\int_{\RR^d}\left\{\Rr\left(iR_k\Delta\overline{\tilde{z}}\right)+\Rr\left(iR_k\overline{\tilde{z}}\right)-\left(\omega_k+\frac{|v_k|^2}{4}\right)\Rr\left(iR_k\overline{\tilde{z}}\phi_k\right)-v_k\cdot\Ii\left(iR_k\nabla\overline{\tilde{z}}\phi_k\right)\right\}dx\\
=&\;2\;\displaystyle\sum_{k=1}^Ka_k'\;\Ii\int_{\RR^d}\left\{\overline{\tilde{z}}\left(\Delta R_k+R_kf\left(|R_k|^2\right)-\left(\omega_k+\frac{|v_k|^2}{4}\right)R_k\phi_k-iv_k\cdot\nabla R_k\phi_k\right)\right\}\;dx\\
&+\mathrm{O}\big(e^{-\gamma t}\|\tilde{z}\|_{H^1}\|z\|_{H^1}\big)\\
=&\;\mathrm{O}\left(e^{-\gamma t}\|\tilde{z}\|_{H^1}\|z\|_{H^1}\right).
\end{aligned}
\end{equation}
On the other hand,
\begin{equation}\label{C1''}
\begin{aligned}
\MoveEqLeft[4] -2\;\displaystyle\sum_{k=1}^Kb_k'\cdot\int_{\RR^d}\bigg\{\Rr(\nabla R_k\Delta\overline{\tilde{z}})+\Rr(\nabla R_k\overline{\tilde{z}}f(|\tilde{z}+\varphi|^2))+\Rr\big(\nabla R_k\overline{R_k}\big(f(|\tilde{z}+\varphi|^2)-f(|\varphi|^2)\big)\big)\\
&\displaystyle\qquad\qquad-\left(\omega_k+\frac{|v_k|^2}{4}\right)\Rr\left(\nabla R_k\overline{\tilde{z}}\phi_k\right)-\Ii\left(v_k\cdot R_k\nabla\overline{\tilde{z}}\phi_k\right)\bigg\}\;dx\\
=&\;2\;\displaystyle\sum_{k=1}^Kb_k'\cdot\Rr\int_{\RR^d}\left\{\nabla\overline{\tilde{z}}\left(\Delta R_k+R_kf\left(|R_k|^2\right)-\left(\omega_k+\frac{|v_k|^2}{4}\right)R_k\phi_k-iv_k\cdot\nabla R_k\phi_k\right)\right\}\;dx\\
&\displaystyle+2\sum_{k=1}^Kb_k'\cdot\int_{\RR^d}\left\{\Rr\left(R_k\overline{\tilde{z}}\nabla\left(f\left(|R_k|^2\right)\right)\right)-\Rr\left(\nabla R_k\overline{R_k}\right)\left(f\left(|\tilde{z}+\varphi|^2\right)-f\left(|\varphi|^2\right)\right)\right\}\;dx\\
&+\mathrm{O}\left((e^{-\gamma t}+\|\tilde{z}\|_{H^1})\|\tilde{z}\|_{H^1}\|z\|_{H^1}\right)\\
=&\;\mathrm{O}\left((e^{-\gamma t}+\|\tilde{z}\|_{H^1})\|\tilde{z}\|_{H^1}\|z\|_{H^1}\right),
\end{aligned}
\end{equation}
again due to  (\ref{estdercoef}), (\ref{prel2}), and (\ref{derRk}).
Consequently, \eqref{C1'} and \eqref{C1''} lead to
\begin{equation}\label{estC}
C_1+C_2+\sum_{k=1}^K\{C_{3,k}-C_{4,k}\}=\mathrm{O}\big(\left(e^{-\gamma t}+\|\tilde{z}\|_{H^1}\right)\|\tilde{z}\|_{H^1}\|z\|_{H^1}\big).
\end{equation}
Proposition \ref{estenergy} follows from Step 1, from estimates (\ref{estA}), (\ref{estB}), and (\ref{estC}), and from the observation that $$Main=\sum_{k=1}^K\left(Main_{1,k}-Main_{2,k}\right)=\mathrm{O}\left(\frac{1}{t}\|\tilde{z}(t)\|_{H^1}^2\right)$$ by \eqref{prel3}.
\end{proof}

\subsubsection{Control of the $R_k$ directions}

\noindent We have the following estimate which expresses that the variation in time of the real scalar products $\Rr\int_{\RR^d}\tilde{z}(t)\overline{R_k}(t)\;dx$ (which appear in \eqref{coerc}) is essentially of order two in $z(t)$.

\begin{Lem}\label{derre}
For all $t\ge T_1$,
\begin{equation}\label{derre1}
\left|\frac{d}{dt}\Rr\int_{\RR^d}\tilde{z}(t)\overline{R_k}(t)\;dx\right|\le C\left(e^{-\gamma t}\|z(t)\|_{H^1}+\|z(t)\|_{H^1}^2\right).
\end{equation}
\end{Lem}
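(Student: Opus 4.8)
The plan is to differentiate the scalar product directly, writing
\[
\frac{d}{dt}\Rr\int_{\RR^d}\tilde{z}\,\overline{R_k}\,dx
=\Rr\int_{\RR^d}\partial_t\tilde{z}\,\overline{R_k}\,dx+\Rr\int_{\RR^d}\tilde{z}\,\overline{\partial_t R_k}\,dx,
\]
the differentiability in the sense of distributions being justified exactly as in Lemma \ref{ex}. I would then substitute the second expression for $\partial_t\tilde{z}$ given by Lemma \ref{dertildez} and, from \eqref{derRk}, the identity $\overline{\partial_t R_k}=-i\big(\Delta\overline{R_k}+f(|R_k|^2)\overline{R_k}\big)$. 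Integrating by parts, $\Rr\int i\Delta\tilde{z}\,\overline{R_k}\,dx=\Rr\int i\tilde{z}\,\Delta\overline{R_k}\,dx$, which cancels exactly the Laplacian arising from $\overline{\partial_t R_k}$. After this cancellation the potential terms combine into the single expression $\Rr\int i\big(f(|\varphi|^2)-f(|R_k|^2)\big)\tilde{z}\,\overline{R_k}\,dx$, plus the modulation terms, the error $\epsilon$, and the nonlinear source.

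For the combined potential term I would bound $|f(|\varphi|^2)-f(|R_k|^2)|\,|R_k|$ pointwise by $Ce^{-\gamma t}$ times a localized integrable weight, splitting $f(|\varphi|^2)-f(|R_k|^2)=\big(f(|\varphi|^2)-f(|R|^2)\big)+\big(f(|R|^2)-f(|R_k|^2)\big)$ and invoking the mean value bounds \eqref{A1''} together with \eqref{varphi_3d'} for the first bracket and \eqref{prel1}--\eqref{decay_prop} for the second; Cauchy--Schwarz then gives $Ce^{-\gamma t}\|\tilde{z}\|_{L^2}\le Ce^{-\gamma t}\|z\|_{H^1}$ by \eqref{tild'}. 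The modulation contributions $\Rr\int\big(ia_j'R_j+b_j'\cdot\nabla R_j\big)\overline{R_k}\,dx$ are handled by noting that the diagonal $j=k$ terms vanish, since $\Ii\int|R_k|^2\,dx=0$ and $\int\nabla(|R_k|^2)\,dx=0$, while the off-diagonal $j\neq k$ terms are $\mathrm{O}(e^{-\gamma t})(|a_j'|+|b_j'|)$, hence $\le Ce^{-\gamma t}\|z\|_{H^1}$ by \eqref{estdercoef}. The error term satisfies $\big|\Rr\int\epsilon\,\overline{R_k}\,dx\big|\le\|\epsilon\|_{L^2}\|R_k\|_{L^2}$, which \eqref{est_g} controls by $C\big(e^{-\gamma t}\|z\|_{H^1}+\|z\|_{H^1}^2\big)$.

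The delicate term is the nonlinear source $\Rr\int i\big(f(|\tilde{z}+\varphi|^2)-f(|\varphi|^2)\big)\varphi\,\overline{R_k}\,dx$. Here I would isolate the linear part through the Taylor expansion $f(|\tilde{z}+\varphi|^2)-f(|\varphi|^2)=2\Rr(\tilde{z}\overline{\varphi})f'(|\varphi|^2)+G$, with $|G|\le C|\tilde{z}|^2$; the point is that for $d\ge 2$ one has $\tfrac{4}{d}-2\le 0$ in \eqref{prop_croissance}, so that $\|D^2_z\tilde{f}\|$ is bounded and the remainder is genuinely quadratic, whereas for $d=1$ one gets the equally harmless $|G|\le C(|\tilde{z}|^2+|\tilde{z}|^4)$. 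The leading contribution is $-2\int\Rr(\tilde{z}\overline{\varphi})f'(|\varphi|^2)\,\Ii(\varphi\overline{R_k})\,dx$, and the crucial observation is that $\Ii(\varphi\overline{R_k})=\Ii\big((\varphi-R_k)\overline{R_k}\big)$, which is $\mathrm{O}(e^{-\gamma t})$ on the support of $R_k$ thanks to \eqref{varphi_3d'} and \eqref{prel1}; Cauchy--Schwarz again yields $Ce^{-\gamma t}\|z\|_{H^1}$. The quadratic remainder, integrated against the bounded localized weight $\varphi\overline{R_k}$, is controlled by $C\|\tilde{z}\|_{L^2}^2\le C\|z\|_{H^1}^2$ (for $d=1$ using in addition $H^1(\RR)\hookrightarrow L^4(\RR)$). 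Collecting all contributions gives \eqref{derre1}. I expect the main obstacle to be precisely this cancellation of the linear part of the nonlinear source: everything rests on recognizing that $\Ii(\varphi\overline{R_k})$ vanishes to leading order near the $k$-th soliton, which is what converts an a priori $\mathrm{O}(\|z\|_{H^1})$ contribution into an exponentially small one.
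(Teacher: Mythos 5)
Your main line of attack is sound and coincides with the paper's proof in its two essential points: the cancellation of the Laplacian terms after integration by parts, and the observation that the weight multiplying the nonlinear source satisfies $\Ii(\varphi\overline{R_k})=\Ii\big((\varphi-R_k)\overline{R_k}\big)$, which is exponentially small near the $k$-th soliton and is indeed what converts an a priori $\mathrm{O}(\|z\|_{H^1})$ contribution into $\mathrm{O}(e^{-\gamma t}\|z\|_{H^1})$. Your treatment of the potential term and of the modulation terms is also correct. The organizational difference with the paper is that the paper first writes $\Rr\int_{\RR^d}\tilde z\overline{R_k}\,dx=\Rr\int_{\RR^d} z\overline{R_k}\,dx+\eta(t)$, where the diagonal modulation contributions vanish identically and the off-diagonal ones give $\eta'(t)=\mathrm{O}(e^{-\gamma t}\|z(t)\|_{H^1})$ by \eqref{estcoef}, \eqref{estdercoef} and \eqref{decay_prop}, and then differentiates $\Rr\int_{\RR^d} z\overline{R_k}\,dx$ using the exact equation \eqref{derz}, which contains no error term. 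You instead differentiate $\tilde z$ directly through Lemma \ref{dertildez}, which forces you to dispose of the remainder $\epsilon$.

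That is where your proof has a genuine gap: you bound $\big|\Rr\int_{\RR^d}\epsilon\,\overline{R_k}\,dx\big|\le\|\epsilon\|_{L^2}\|R_k\|_{L^2}$ and claim that \eqref{est_g} controls this, but \eqref{est_g} does not give that. It bounds only the pairings $\int_{\RR^d}|\epsilon|\big(|\tilde z|+|\nabla\tilde z|\big)\,dx$ and $\big|\int_{\RR^d}\epsilon\,\Delta\tilde z\,dx\big|$; it contains no bound on $\|\epsilon\|_{L^2}$, and its right-hand side carries an extra factor $\|\tilde z\|_{H^1}$ precisely because it is an estimate of $\epsilon$ tested against $\tilde z$, not against a fixed profile of size $\mathrm{O}(1)$ such as $\overline{R_k}$. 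As written, this step is unjustified. It can be repaired: returning to the construction of $\epsilon$ in the proof of Lemma \ref{dertildez} (see \eqref{lem_dertildez_1} and \eqref{lem_dertildez_2}), every term of $\epsilon$ is either a modulation coefficient (with $|a_k|,|b_k|\le C\|z\|_{L^2}$ by \eqref{estcoef}) multiplied by an exponentially small, exponentially localized profile, or is at least quadratic in $z$ (for $d\ge2$ the Taylor remainders are genuinely quadratic since $D^2\tilde f$ is bounded by \eqref{prop_croissance}, and for $d=1$ they are quartic, which is harmless since $\|z(t)\|_{H^1}$ is bounded); testing these term by term against $|R_k|$ and using Sobolev embeddings does yield $\int_{\RR^d}|\epsilon|\,|R_k|\,dx\le C\big(e^{-\gamma t}\|z\|_{H^1}+\|z\|_{H^1}^2\big)$. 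Alternatively, and more simply, adopt the paper's reduction to $z$ described above and avoid $\epsilon$ altogether.
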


\begin{proof}

We notice that
\begin{equation}\label{prod_scal_tilde_z}
\begin{aligned}
\displaystyle\Rr\int_{\RR^d}\tilde{z}\overline{R_k}\;dx&=\displaystyle\Rr\int_{\RR^d}z\overline{R_k}\;dx+\sum_{j=1}^K\Rr\int_{\RR^d}ia_jR_j\overline{R_k}\;dx+\sum_{j=1}^K\Rr\int_{\RR^d}b_j\cdot \nabla R_j\overline{R_k}\;dx\\
&=\displaystyle\Rr\int_{\RR^d}z\overline{R_k}\;dx+a_k\Rr\int_{\RR^d}i|R_k|^2\;dx+b_k\cdot\Rr\int_{\RR^d}\nabla R_k\overline{R_k}\;dx+\eta(t)\\
&=\displaystyle\Rr\int_{\RR^d}z\overline{R_k}\;dx+\eta(t),\\
\end{aligned}
\end{equation}
where $\eta$ is a complex-valued function defined on a neighborhood of $+\infty$, differentiable in the sense of distributions, and such that $\eta'(t)=\mathrm{O}\left(e^{-\gamma t}\|z(t)\|_{H^1}\right)$.
Moreover,
\begin{equation}\label{prod_scal_z}
 \begin{aligned}
\displaystyle\frac{d}{dt}\Rr\int_{\RR^d}z\overline{R_k}\;dx=&\;\displaystyle\Rr\int_{\RR^d}i\left(\Delta z+f\left(|z+\varphi|^2\right)z+\left(f\left(|z+\varphi|^2\right)-f\left(|\varphi|^2\right)\right)\varphi\right)\overline{R_k}\;dx\\
&+\Rr\int_{\RR^d}-iz\left(\Delta \overline{R_k}+f\left(|R_k|^2\right)\overline{R_k}\right)\;dx\\
=&\;\Rr\int_{\RR^d}i\left( f \left(|\varphi|^2\right)-f \left(|R_k|^2\right) \right) z\overline{R_k}\;dx\\
&+\Rr\int_{\RR^d}i\left( f\left(|z+\varphi|^2\right)-f \left(|\varphi|^2\right)\right) \varphi\overline{R_k}\;dx+\mathrm{O}\left( \|z(t)\|_{H^1}^2 \right),
\end{aligned}
\end{equation}
where we have used $p\ge 2$ and
$$
\int_{\RR^d}\Delta z\overline{R_k}\;dx=\int_{\RR^d} z\Delta\overline{R_k}\;dx\qquad\text{and}\qquad
\big|f(|z+\varphi|^2)-f(|\varphi|^2)\big|\le C(|z|+|z|^{\frac{4}{d}}).$$

\noindent By means of \[\big|f(|\varphi|^2)-f(|R_k|^2)\big|\le C\big(|\varphi-R|^2+|\varphi-R|+\sum_{j\neq k}|R_jR_k|\big)\] (which is consequence of the application of the mean value theorem, as for the proof of Lemma \ref{ineqs}), and by means of \eqref{decay_prop} and \eqref{varphi_3d}, we see that 
\begin{equation}\label{aux_1}
\left|\Rr\int_{\RR^d}i\left(f(|\varphi|^2)-f(|R_k|^2)\right)z\overline{R_k}\;dx\right|\le Ce^{-\gamma t}\|z(t)\|_{L^2}.
\end{equation}
Similarly by Lemma \ref{ineqs}, and noting in addition that \[\Rr\left(i\varphi\overline{R_k}\right)=\Rr\left(i((\varphi-R)+(R-R_k))\overline{R_k}\right),\] 
we have also
\begin{equation}\label{aux_2}
\left|\Rr\int_{\RR^d}i\left(f(|z+\varphi|^2)-f(|\varphi|^2)\right)\varphi\overline{R_k}\;dx\right|\le Ce^{-\gamma t}\|z(t)\|_{H^1}.
\end{equation}
To put it in a nutshell, Lemma  \ref{derre} is now a direct consequence of \eqref{prod_scal_tilde_z}, \eqref{prod_scal_z}, \eqref{aux_1}, and \eqref{aux_2}.
\end{proof}

As a consequence of the preceding lemma, we state: 

\begin{Cor} For all $t\ge T_1$,
\begin{equation}\label{derre2}
\left|\frac{d}{dt}\left(\Rr\int_{\RR^d}\tilde{z}(t)\overline{R_k}(t)\;dx\right)^2\right|\le C\left(e^{-\gamma t}\|z(t)\|_{H^1}+\|z(t)\|_{H^1}^2\right)\|\tilde{z}(t)\|_{H^1}.
\end{equation}
\end{Cor}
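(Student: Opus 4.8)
The goal is to bound the time derivative of the squared scalar product $\left(\Rr\int_{\RR^d}\tilde{z}(t)\overline{R_k}(t)\;dx\right)^2$. The plan is to apply the product rule for differentiation and then invoke Lemma \ref{derre} together with an elementary size estimate on the scalar product itself.

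\medskip

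First I would write, for each fixed $k\in\{1,\dots,K\}$,
\[
\frac{d}{dt}\left(\Rr\int_{\RR^d}\tilde{z}(t)\overline{R_k}(t)\;dx\right)^2
=2\left(\Rr\int_{\RR^d}\tilde{z}(t)\overline{R_k}(t)\;dx\right)\cdot\frac{d}{dt}\left(\Rr\int_{\RR^d}\tilde{z}(t)\overline{R_k}(t)\;dx\right),
\]
which makes sense since the differentiability in the distributional sense of $t\mapsto\Rr\int_{\RR^d}\tilde{z}\,\overline{R_k}\;dx$ is already part of Lemma \ref{derre}. The second factor is controlled directly by \eqref{derre1}, so the only remaining task is to bound the first factor, namely the scalar product itself, by $\|\tilde z(t)\|_{H^1}$.

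\medskip

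That bound is immediate from Cauchy--Schwarz: since $R_k(t)$ has uniformly (in $t$) bounded $L^2$ norm (its $L^2$ norm equals $\|Q_{\omega_k}\|_{L^2}$, being a translated, modulated ground state), we get
\[
\left|\Rr\int_{\RR^d}\tilde{z}(t)\overline{R_k}(t)\;dx\right|\le\|\tilde z(t)\|_{L^2}\,\|R_k(t)\|_{L^2}\le C\|\tilde z(t)\|_{H^1}.
\]
Multiplying this estimate by the right-hand side of \eqref{derre1} and using the factor $2$ then yields exactly
\[
\left|\frac{d}{dt}\left(\Rr\int_{\RR^d}\tilde{z}(t)\overline{R_k}(t)\;dx\right)^2\right|\le C\left(e^{-\gamma t}\|z(t)\|_{H^1}+\|z(t)\|_{H^1}^2\right)\|\tilde{z}(t)\|_{H^1},
\]
which is \eqref{derre2}.

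\medskip

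I do not anticipate a genuine obstacle here, since the corollary is a purely formal consequence of the preceding lemma and a trivial Cauchy--Schwarz bound; the only point requiring a word of care is the justification that the product of two distributionally differentiable functions of $t$ may be differentiated by the Leibniz rule, but this is harmless because $t\mapsto\Rr\int\tilde z\,\overline{R_k}\,dx$ is in fact (locally) absolutely continuous with the bounded derivative supplied by Lemma \ref{derre}, so the chain rule $\frac{d}{dt}(h^2)=2h\,h'$ applies to it.
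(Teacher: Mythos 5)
Your proof is correct and follows exactly the route the paper intends: the paper states the corollary as an immediate consequence of Lemma \ref{derre}, leaving implicit precisely the chain rule $\frac{d}{dt}(h^2)=2hh'$ together with the Cauchy--Schwarz bound $\left|\Rr\int_{\RR^d}\tilde z\,\overline{R_k}\,dx\right|\le \|\tilde z\|_{L^2}\|Q_{\omega_k}\|_{L^2}\le C\|\tilde z\|_{H^1}$ that you spell out. Your remark on justifying the Leibniz rule via absolute continuity is a sensible addition but not a departure from the paper's argument.
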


\subsubsection{Control of the modulation parameters}

At this point, recalling inequality (\ref{tild}), it remains us to obtain estimates for $|a_k(t)|$ and $|b_k(t)|$.  This is the object of the following result. 

\begin{Lem}\label{inegab} 
For all $t\ge T_1$,
\begin{equation}\label{inegab1}
 |a_k'(t)|+|b_{k}'(t)|\le C\big(e^{-\gamma t}\|z(t)\|_{H^1}+\|z(t)\|_{H^1}^2+\|z\|_{H^1}^{\frac{4}{d}}+\|\tilde{z}(t)\|_{L^2}\big).
\end{equation}
\end{Lem}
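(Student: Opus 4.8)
The plan is to follow the standard modulation strategy: differentiate in time the orthogonality relations~\eqref{rel} and solve the resulting linear system for $a_k'$ and $b_k'$. Differentiating the first relation in~\eqref{rel} (which is licit by the same distributional argument as in the proof of Lemma~\ref{ex}) yields, for each $k$,
\[ 0 = \Ii\int_{\RR^d}\partial_t\tilde{z}\,\overline{R_k}\;dx + \Ii\int_{\RR^d}\tilde{z}\,\partial_t\overline{R_k}\;dx, \]
and the second relation, tested against $\partial_{x_i}\overline{R_k}$, gives $d$ further identities. Substituting the expression of $\partial_t\tilde{z}$ from Lemma~\ref{dertildez}, the contribution of $\sum_j\{ia_j'R_j+b_j'\cdot\nabla R_j\}$ produces, after taking imaginary and real parts, exactly the scalar products $\Rr\int R_j\overline{R_k}$, $\Ii\int\nabla R_j\overline{R_k}$ and $\Rr\int\partial_{x_l}R_j\partial_{x_i}\overline{R_k}$ that form (up to transposition and signs) the matrix $M(t)$ of Lemma~\ref{ex}. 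Since that lemma provides $\det M(t)>c>0$ for $t$ large together with bounded entries, $M(t)^{-1}$ is uniformly bounded, and it remains only to bound the other (``source'') terms by the right-hand side of~\eqref{inegab1}.

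The decisive algebraic point is the cancellation of the Laplacian contributions. Using $\Ii(iW)=\Rr(W)$ and the stationary identity $\partial_t\overline{R_k}=-i(\Delta\overline{R_k}+f(|R_k|^2)\overline{R_k})$ coming from~\eqref{derRk}, the two Laplacian terms combine, after one integration by parts, into $\Rr\int_{\RR^d}(\Delta\tilde{z}\,\overline{R_k}-\tilde{z}\,\Delta\overline{R_k})\;dx=0$. What survives is
\[ \Rr\int_{\RR^d}\big(f(|\varphi|^2)-f(|R_k|^2)\big)\tilde{z}\,\overline{R_k}\;dx + \Rr\int_{\RR^d}\big(f(|\tilde{z}+\varphi|^2)-f(|\varphi|^2)\big)\varphi\,\overline{R_k}\;dx, \]
together with the contribution $\Ii\int_{\RR^d}\epsilon\,\overline{R_k}\;dx$ of the remainder. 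On the exponentially localised support of $R_k$ one has $|\varphi-R_k|\le|\varphi-R|+\sum_{j\ne k}|R_j|=\mathrm{O}(e^{-\gamma t})$, so the mean value theorem (exactly as in Lemma~\ref{ineqs}) bounds the first integral by $Ce^{-\gamma t}\|\tilde{z}\|_{L^2}\le Ce^{-\gamma t}\|z\|_{H^1}$ via~\eqref{tild'}.

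For the genuinely nonlinear term I would invoke the first estimate of Lemma~\ref{ineqs}, namely $|f(|\tilde{z}+\varphi|^2)-f(|\varphi|^2)|\le C(|\tilde{z}|^{4/d}+|\tilde{z}|)$, and estimate $\int_{\RR^d}(|\tilde{z}|^{4/d}+|\tilde{z}|)|\varphi R_k|\;dx$ by Hölder's inequality together with the embeddings $H^1(\RR^d)\hookrightarrow L^{8/d}(\RR^d)$ (available for $d\le 3$); this produces the contributions $C\|\tilde{z}\|_{L^2}$ and $C\|\tilde{z}\|_{H^1}^{4/d}\le C\|z\|_{H^1}^{4/d}$. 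The remainder $\Ii\int_{\RR^d}\epsilon\,\overline{R_k}\;dx$ is controlled from the explicit form of $\epsilon$ exhibited in the proof of Lemma~\ref{dertildez}: its modulation part carries a factor $|a_k|+|b_k|=\mathrm{O}(\|z\|_{L^2})$ multiplied by $f(|R_k|^2)-f(|\varphi|^2)=\mathrm{O}(e^{-\gamma t})$, while its nonlinear part is $\mathrm{O}(|z|^2+|z|^{4/d})$, so that testing against the bounded, rapidly decaying $R_k$ gives $\mathrm{O}(e^{-\gamma t}\|z\|_{H^1}+\|z\|_{H^1}^2+\|z\|_{H^1}^{4/d})$. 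Collecting these bounds shows that the whole source is dominated by the right-hand side of~\eqref{inegab1}, and applying the uniformly bounded $M(t)^{-1}$ concludes. The main obstacle is not the cancellation itself but the careful dimension-by-dimension bookkeeping of the $|z|^{4/d}$ nonlinear contributions and of the remainder $\epsilon$, in the same spirit as the estimates leading to~\eqref{A1'''} for the functional $\tilde{H}$.
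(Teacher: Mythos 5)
Your proposal is correct, and it reaches \eqref{inegab1} by a route that genuinely differs from the paper's at both key steps, even though the starting point (differentiating the orthogonality relations \eqref{rel}) is the same. The paper substitutes $\partial_t z$ directly rather than Lemma \ref{dertildez}, so its nonlinear source appears as $\int\big(f(|z+\varphi|^2)-f(|\varphi|^2)\big)|R_k|^2\,dx$, which is a priori only $\mathrm{O}(\|z\|_{H^1})$; to do better, it expands to second order (second estimate of Lemma \ref{ineqs}) and uses the structural cancellations $\Rr\big(iR_k\overline{R_k}\big)=0$ and $\int\nabla Q_{\omega_k}\,Q_{\omega_k}^3f'(Q_{\omega_k}^2)\,dx=0$ to trade $z$ for $\tilde z$ in the linear term, which is where the $\|\tilde z\|_{L^2}$ on the right-hand side comes from. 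The paper also never inverts the full matrix: it absorbs the $j\ne k$ couplings into $\mathrm{O}(e^{-\gamma t}\|z\|_{H^1})$ using the cruder a priori bound \eqref{estdercoef}, then eliminates $a_k'$ by hand between \eqref{eg1} and \eqref{eg2} (via $\Ii\big(\nabla R_k\overline{R_k}\big)\propto v_k|R_k|^2$), reducing matters to the positive definite matrix $\int\nabla Q_{\omega_k}\,{}^t\nabla Q_{\omega_k}\,dx$ of Proposition \ref{indep}. Your route — substituting $\partial_t\tilde z$ from Lemma \ref{dertildez}, so that the nonlinearity is already $f(|\tilde z+\varphi|^2)-f(|\varphi|^2)$ and the crude first-order bound of Lemma \ref{ineqs} immediately yields $\|\tilde z\|_{L^2}+\|z\|_{H^1}^{4/d}$, then inverting $M(t)$ globally via Lemma \ref{ex} — is more systematic, needs neither the second Taylor cancellation nor \eqref{estdercoef}, and the coefficient matrix is indeed $M(t)$ (your "up to transposition and signs" is in fact an equality, since the blocks $A_{0,0}$, $A_{i,j}$, $B_{1,i}$ are symmetric up to an integration by parts). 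The price, which you rightly identify as the delicate point, is the remainder: the stated control \eqref{est_g} only bounds $\epsilon$ paired against $\tilde z$, $\nabla\tilde z$, $\Delta\tilde z$, so your estimate $\big|\int\epsilon\,\overline{R_k}\,dx\big|=\mathrm{O}\big(e^{-\gamma t}\|z\|_{H^1}+\|z\|_{H^1}^2+\|z\|_{H^1}^{4/d}\big)$ is not a citation but a term-by-term re-run of the proof of Lemma \ref{dertildez}; that re-run does work, with one caveat of wording: $f(|R_k|^2)-f(|\varphi|^2)$ is not pointwise $\mathrm{O}(e^{-\gamma t})$ (it is of size one near the solitons $R_j$, $j\ne k$), and only becomes $\mathrm{O}(e^{-\gamma t})$ after integration against the exponentially localized $R_k$, via \eqref{varphi_3d} and \eqref{decay_prop} — which is how your phrase about testing against the rapidly decaying $R_k$ must be understood.
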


\begin{proof}
Due to \[\begin{dcases}
\Delta\tilde{z}=\displaystyle\Delta z+\sum_{j=1}^K\big(ia_j\Delta R_j+b_j\cdot\nabla(\Delta R_j)\big)\\
\partial_t\tilde{z}=\displaystyle\partial_tz+\sum_{k=1}^K\big(ia_k'R_k+b_k'\cdot\nabla R_k\big)+\sum_{k=1}^K\big(ia_k\partial_tR_k+b_k\cdot\nabla \partial_tR_k\big)\\
\partial_tz=i\big(\Delta z+f(|z+\varphi|^2)z+\big(f(|z+\varphi|^2)-f(|\varphi|^2)\big)\varphi\big),
\end{dcases}\]
differentiation with respect to $t$ of equality $\Ii\int_{\RR^d}\tilde{z}\overline{R_k}\;dx=0$ \eqref{rel} implies:
\begin{align*}
0=&\;\displaystyle\Ii\int_{\RR^d}\partial_t\tilde{z}\overline{R_k}\;dx+\Ii\int_{\RR^d}\tilde{z}\partial_t\overline{R_k}\;dx\\
=&\;\Rr\int_{\RR^d}\big(\Delta z+f(|z+\varphi|^2)z+\big(f(|z+\varphi|^2)-f(|\varphi|^2)\big)\varphi\big)\overline{R_k}\;dx\\
&+\displaystyle\sum_{j=1}^K\Ii\left(ia_j'\int_{\RR^d}R_j\overline{R_k}\;dx+b_j'\cdot\int_{\RR^d}\nabla R_j\overline{R_k}\;dx+ia_j\int_{\RR^d}\partial_tR_j\overline{R_k}\;dx+b_j\cdot\int_{\RR^d}\nabla \partial_tR_j\overline{R_k}\;dx\right)\\
&-\displaystyle\Rr\int_{\RR^d}\left(z+\sum_{j=1}^K\big(ia_jR_j+b_j\cdot\nabla R_j\big)\right)\big(\Delta \overline{R_k}+f(|R_k|^2)\overline{R_k}\big)\;dx,
\end{align*}
or equivalently
\begin{equation}\label{eg1}
\begin{aligned}
0=&\;\displaystyle\mathrm{O}\left(\|z(t)\|_{H^1}^2\right)+\Rr\int_{\RR^d}\big(f(|\varphi|^2)-f(|R_k|^2)\big)z\overline{R_k}\;dx+\Rr\int_{\RR^d}\big(f(|z+\varphi|^2)-f(|\varphi|^2)\big)\varphi\overline{R_k}\;dx\\
&+\displaystyle\sum_{j=1}^K\left(a_j'\Rr\int_{\RR^d}R_j\overline{R_k}\;dx+b_j'\cdot\Ii\int_{\RR^d}\nabla R_j\overline{R_k}\;dx\right)\\
&\displaystyle+\sum_{j=1}^K\left(-a_j\Ii\int_{\RR^d}\big(\Delta R_j+f(|R_j|^2)R_j\big)\overline{R_k}\;dx+b_j\cdot\Rr\int_{\RR^d}\big(\Delta R_j+f(|R_j|^2)R_j\big)\nabla \overline{R_k}\;dx\right)\\
&-\displaystyle\sum_{j=1}^K\left(-a_j\Ii\int_{\RR^d}R_j\left(\Delta\overline{R_k}+f(|R_k|^2)\overline{R_k}\right)\;dx+b_j\cdot\Rr\int_{\RR^d}\nabla R_j\left(\Delta \overline{R_k}+f(|R_k|^2)\overline{R_k}\right)\;dx\right)\\
=&\displaystyle\int_{\RR^d}\big(f(|z+\varphi|^2)-f(|\varphi|^2)\big)|R_k|^2\;dx+a_k'\int_{\RR^d}Q_{\omega_k}^2\;dx+\frac{1}{2}b_k'\cdot v_k\int_{\RR^d}Q_{\omega_k}^2\;dx\\
&+\mathrm{O}\left(\|z(t)\|_{H^1}^2+e^{-\gamma t}\|z(t)\|_{H^1}\right).
\end{aligned}
\end{equation}

\noindent Similarly, exploiting the $d$-dimensional equality $\Rr\int_{\RR^d}\tilde{z}\nabla\overline{R_k}\;dx=0$ \eqref{rel}, we see that

\begin{align*}
0=&\;\displaystyle\Ii\int_{\RR^d}\left(\Delta\tilde{z}-\sum_{j=1}^Kia_j\Delta R_j-\sum_{j=1}^Kb_j\cdot\nabla\Delta R_j\right)\nabla\overline{R_k}\;dx+\mathrm{O}\left(\|z(t)\|_{L^2}^2\right)\\
&-\displaystyle\Ii\int_{\RR^d}f(|\varphi|^2)\left(\tilde{z}-\sum_{j=1}^Kia_j R_j-\sum_{j=1}^Kb_j\cdot\nabla R_j\right)\nabla\overline{R_k}\;dx\\
&-\displaystyle\Ii\int_{\RR^d}\big(f(|z+\varphi|^2)-f(|\varphi|^2)\big)\varphi\nabla\overline{R_k}\;dx\\
&\displaystyle+\sum_{j=1}^K\left[a_j'\Ii\int_{\RR^d}\nabla R_k\overline{R_j}\;dx+b_j'\cdot\Rr\int_{\RR^d}\nabla R_j\nabla\overline{R_k}\right]\\
&-\displaystyle\sum_{j=1}^Ka_j\Rr\int_{\RR^d}\Delta R_j\nabla\overline{R_k}\;dx-\sum_{j=1}^Ka_j\Rr\int_{\RR^d}f(|R_j|^2)R_j\nabla\overline{R_k}\;dx\\
&-\displaystyle\sum_{j=1}^Kb_j\cdot\Ii\int_{\RR^d}\nabla(\Delta R_j)\nabla\overline{R_k}\;dx-\sum_{j=1}^Kb_j\cdot\Ii\int_{\RR^d}\nabla\big(f(|R_j|^2)R_j\big)\nabla\overline{R_k}\;dx\\
&+\displaystyle\Ii\int_{\RR^d}\tilde{z}\big(\nabla(\Delta \overline{R_k})+\nabla\big(f(|R_k|^2)\overline{R_k}\big)\big)\;dx,
\end{align*}
or equivalently, using 
\begin{align*}
\int_{\RR^d}\nabla\big(f(Q_{\omega_k}^2)\big)Q_{\omega_k}^2\;dx&=-\sum_{i=1}^d\int_{\RR^d}f(Q_{\omega_k}^2)\partial_{x_i}(Q_{\omega_k}^2)\;dx\\
&=0,
\end{align*}
\begin{equation}\label{eg2} 
\begin{aligned}
0=&\;\mathrm{O}\left(\|z(t)\|_{H^1}^2+e^{-\gamma t}\|z(t)\|_{H^1}\right)+\displaystyle\int_{\RR^d}\big(f(|z+\varphi|^2)-f(|\varphi|^2)\big)\Ii\big(\nabla R_k\overline{R_k}\big)\;dx\\
&\displaystyle+\Ii\int_{\RR^d}\tilde{z}\nabla(f(|R_k|^2))\overline{R_k}\;dx+\frac{a_k'}{2}v_k\int_{\RR^d}Q_{\omega_k}^2\;dx+\Rr\int_{\RR^d}\big[\nabla \overline{R_k}^t\nabla R_k\big]\times b_k'\;dx\\
=&\;\mathrm{O}\left(\|z(t)\|_{H^1}^2+e^{-\gamma t}\|z(t)\|_{H^1}\right)+\displaystyle\int_{\RR^d}\big(f(|z+\varphi|^2)-f(|\varphi|^2)\big)\Ii\big(\nabla R_k\overline{R_k}\big)\;dx\\
&\displaystyle+\Ii\int_{\RR^d}\tilde{z}\nabla\left(f(|R_k|^2)\right)\overline{R_k}\;dx+\frac{a_k'}{2}v_k\int_{\RR^d}Q_{\omega_k}^2\;dx\\
&+\int_{\RR^d}\big[\nabla Q_{\omega_k}{^t\nabla Q_{\omega_k}}+\frac{1}{4}v_k{^t{v_k}}Q_{\omega_k}^2\big]\times b_k'\;dx,\\
\end{aligned}
\end{equation}

\noindent Using Lemma \ref{ineqs} and \eqref{decay_prop}, we obtain
\begin{align*}
\MoveEqLeft[2]\displaystyle\int_{\RR^d}\big(f(|z+\varphi|)^2-f(|\varphi|^2)\big)\Ii\big(\nabla R_k\overline{R_k}\big)\;dx\\
&=2\displaystyle\int_{\RR^d}\Rr(z\overline{\varphi})f'(|\varphi|^2)\Ii\big(\nabla R_k\overline{R_k}\big)\;dx+\mathrm{O}\left(\|z\|_{H^1}^2+\|z\|_{H^1}^{\frac{4}{d}}\right)\\
&=2\displaystyle\int_{\RR^d}\Rr(z\overline{R_k})f'(|R_k|^2)\Ii\big(\nabla R_k\overline{R_k}\big)\;dx+\mathrm{O}\left(e^{-\gamma t}\|z\|_{H^1}+\|z\|_{H^1}^2+\|z\|_{H^1}^{\frac{4}{d}}\right).\\
\end{align*}
\noindent Recalling the definition of $\tilde{z}$ \eqref{def_tilde_z}, this reads also as follows:
\begin{align*}
\MoveEqLeft[2]\int_{\RR^d}\big(f(|z+\varphi|)^2-f(|\varphi|^2)\big)\Ii\left(\nabla R_k\overline{R_k}\right)\;dx\\
&=2\int_{\RR^d}\Rr\left(\tilde{z}\overline{R_k}\right)f'(|R_k|^2)\Ii\left(\nabla R_k\overline{R_k}\right)\;dx+\mathrm{O}\left(e^{-\gamma t}\|z\|_{H^1}+\|z\|_{H^1}^2\right)\\
&=\mathrm{O}\left(\|\tilde{z}\|_{L^2}+e^{-\gamma t}\|z\|_{H^1}+\|z\|_{H^1}^2+\|z\|_{H^1}^{\frac{4}{d}}\right).
\end{align*}

\noindent Considering that $\Ii\left(\nabla R_k\overline{R_k}\right)=v_k|R_k|^2$, we then deduce from (\ref{eg1}) and (\ref{eg2}) that
$$\int_{\RR^d}\left[\nabla Q_{\omega_k}{^t\nabla Q_{\omega_k}}\right]\;dx\times b_k'=\mathrm{O}\left(\|\tilde{z}\|_{L^2}+e^{-\gamma t}\|z\|_{H^1}+\|z\|_{H^1}^2+\|z\|_{H^1}^{\frac{4}{d}}\right)\frac{v_k}{2}.$$ Since $\int_{\RR^d}\left[\nabla Q_{\omega_k}{^t\nabla Q_{\omega_k}}\right]\;dx$ is a positive definite symmetric matrix (by Proposition \ref{indep}), it is invertible, so that inequality \eqref{inegab1} holds for $|b_k'(t)|$. Then we conclude that the same inequality is true for $|a_k'(t)|$ by \eqref{eg1}.
\end{proof}

\subsection{End of the proof}\label{end}

We now conclude the proof of our uniqueness result, that is Theorem \ref{th_uni_f} by gathering \eqref{coerc} and the different controls obtained in subsection \ref{subsec_estimates}. 

Let us begin with a control of $\|z\|_{H^1}$ in terms of $\|\tilde{z}\|_{H^1}$ which relies on the integrability of $t\mapsto\|z(t)\|_{H^1}+\|z(t)\|_{H^1}^{\frac{4}{d}-1}$ in the neighborhood of $+\infty$ (provided $N$ is chosen sufficiently large); once more, we observe here that the condition $d\le 3$ is important.

\begin{Prop}\label{controlz}
For $t$ large enough,
\begin{equation}
\|z(t)\|_{H^1}\le C\left(\sup_{s\ge t}\|\tilde{z}(s)\|_{H^1}+\int_t^{+\infty}\|\tilde{z}(u)\|_{H^1}\;du\right).
\end{equation}
\end{Prop}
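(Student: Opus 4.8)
The plan is to control the modulation parameters $a_k(t)$ and $b_k(t)$ from \eqref{tild} and to show that their total size is governed by $\sup_{s\ge t}\|\tilde z(s)\|_{H^1}$ and $\int_t^{+\infty}\|\tilde z(s)\|_{H^1}\,ds$, up to an error that can be absorbed. Throughout I write $h(t):=\|z(t)\|_{H^1}$, $\tilde h(t):=\|\tilde z(t)\|_{H^1}$ and $P(t):=\sum_{k=1}^K\big(|a_k(t)|+|b_k(t)|\big)$, and I set $\Phi(t):=\sup_{s\ge t}\tilde h(s)+\int_t^{+\infty}\tilde h(s)\,ds$. Note that $\Phi(t)$ is finite (since $\tilde h\le Ch=\mathrm{O}(t^{-N})$ by \eqref{tild'} and \eqref{est_z_N}, once $N\ge 2$) and non-increasing in $t$; these two facts will both be used.

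First I would observe that $a_k,b_k\to 0$ as $t\to+\infty$: indeed \eqref{estcoef} gives $|a_k(t)|,|b_k(t)|\le C\|z(t)\|_{L^2}\le Ch(t)=\mathrm{O}(t^{-N})$. Since $a_k,b_k$ are differentiable in the sense of distributions (Lemma \ref{ex}) and their derivatives obey \eqref{inegab1}, which is integrable near $+\infty$, I may write $a_k(t)=-\int_t^{+\infty}a_k'(s)\,ds$, and likewise for $b_k$. Integrating \eqref{inegab1} (and bounding $\|\tilde z\|_{L^2}\le\tilde h$) therefore yields
\[
P(t)\le C\int_t^{+\infty}\big(e^{-\gamma s}h(s)+h(s)^2+h(s)^{4/d}+\tilde h(s)\big)\,ds.
\]
The crucial algebraic step is to factor the nonlinear part as $e^{-\gamma s}h+h^2+h^{4/d}=h\cdot\varepsilon$, where $\varepsilon(s):=e^{-\gamma s}+h(s)+h(s)^{\frac4d-1}$. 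Because $d\le 3$ forces $\tfrac4d-1>0$, one has $\varepsilon(s)\to 0$; moreover $\varepsilon$ is integrable near $+\infty$ provided $N$ is large, which is exactly where the integrability of $t\mapsto h+h^{\frac4d-1}$ enters, the binding constraint being $N>\tfrac{d}{4-d}$ (e.g. $N>3$ when $d=3$). Combining the above with \eqref{tild} gives, for $t$ large,
\[
h(t)\le \tilde h(t)+C\int_t^{+\infty}\varepsilon(s)\,h(s)\,ds+C\int_t^{+\infty}\tilde h(s)\,ds.
\]

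To close the estimate I would run a \emph{supremum} absorption argument rather than an integral one. Set $\Theta(t):=\sup_{s\ge t}h(s)$ and $\delta(t):=\int_t^{+\infty}\varepsilon(s)\,ds$; both are finite by the a priori decay, and $\delta(t)\to 0$. Evaluating the displayed inequality at an arbitrary $s\ge t$ and using that $\Phi$ is non-increasing, together with $h(u)\le\Theta(t)$ for $u\ge s\ge t$, gives $h(s)\le (1+C)\,\Phi(t)+C\,\delta(t)\,\Theta(t)$ for every $s\ge t$. Taking the supremum over $s\ge t$ yields $\Theta(t)\le (1+C)\Phi(t)+C\delta(t)\Theta(t)$. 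Choosing $t$ large enough that $C\delta(t)\le\tfrac12$, the last term is absorbed into the left-hand side, whence $h(t)\le\Theta(t)\le 2(1+C)\Phi(t)$, which is the asserted bound.

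I expect the main obstacle to be precisely this final absorption. A direct Gronwall argument fails: the destabilizing term $\int_t^{+\infty}\varepsilon\,h$ on a half-line with decay at infinity produces growing integrating factors $e^{c(s-t)}$ that overwhelm the polynomial decay, and iterating the integral relation only transfers the difficulty to higher antiderivatives (each integration loses a power of $t$). Replacing the integral control of $h$ by the supremum $\Theta(t)$ — legitimate only because \eqref{est_z_N} makes $\Theta(t)$ finite — turns the inequality into a genuinely self-improving one whose coefficient $C\delta(t)$ is small for large $t$. A secondary technical point is justifying $a_k(t)=-\int_t^{+\infty}a_k'$, which follows from the distributional differentiability in Lemma \ref{ex} and the integrability of the right-hand side of \eqref{inegab1}.
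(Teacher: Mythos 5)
Your proof is correct and follows essentially the same route as the paper: starting from \eqref{tild}, integrating the modulation bound \eqref{inegab1} (using $a_k,b_k\to 0$), factoring the nonlinear terms as $\big(e^{-\gamma s}+\|z\|_{H^1}+\|z\|_{H^1}^{\frac4d-1}\big)\|z\|_{H^1}$, and closing via the same supremum-absorption step, which is exactly the paper's passage from \eqref{estz'} to \eqref{estz}. Your explicit justification of $a_k(t)=-\int_t^{+\infty}a_k'(s)\,ds$ and of the constraint on $N$ only makes explicit what the paper leaves implicit.
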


\begin{proof}
Recall that we have already seen \eqref{tild}:
$$
\|z\|_{H^1}\le C\left(\|\tilde{z}\|_{H^1}+\sum_{k=1}^K\big(|a_k|+|b_k|\big)\right)
$$
\noindent Therefore, using the control of the modulation parameters obtained before, that is (\ref{inegab1}), 
\begin{multline}\label{estz0}
\|z(t)\|_{H^1}\le C\|\tilde{z}(t)\|_{H^1}\\
+C\left(\int_t^{+\infty}\|\tilde{z}(s)\|_{H^1}\;ds+\int_t^{+\infty}\left(\|z(s)\|_{H^1}^2+\|z(s)\|_{H^1}^{\frac{4}{d}}\right)\;ds+\int_t^{+\infty}e^{-\gamma s}\|z(s)\|_{H^1}\;ds\right).
\end{multline} 
Since $t\mapsto\|z(t)\|_{H^1}+\|z(t)\|_{H^1}^{\frac{4}{d}-1}$ is integrable in the neighborhood of $+\infty$, we have for $t$ large enough:
\begin{equation}\label{estz1}
\int_t^{+\infty}\left(\|z(s)\|_{H^1}^2+\|z(s)\|_{H^1}^{\frac{4}{d}}\right)\;ds\le \left(\int_t^{+\infty}\left(\|z(s)\|_{H^1}+\|z(s)\|_{H^1}^{\frac{4}{d}-1}\right)\;ds\right)\sup_{s\ge t}\|z(s)\|_{H^1}.
\end{equation}
Similarly we have
\begin{equation}\label{estz2}
\int_t^{+\infty}e^{-\gamma s}\|z(s)\|_{H^1}\;ds\le \frac{e^{-\gamma t}}{\gamma}\sup_{s\ge t}\|z(s)\|_{H^1}.
\end{equation}
It follows from (\ref{estz0}), (\ref{estz1}), and (\ref{estz2}) that for $t$ large enough,
\begin{multline}\label{estz'}
\sup_{s\ge t}\|z(s)\|_{H^1}\le C\left(\sup_{s\ge t}\|\tilde{z}(s)\|_{H^1}+\int_t^{+\infty}\|\tilde{z}(s)\|_{H^1}\;ds\right)\\
+C\left(\int_t^{+\infty}\left(\|z(s)\|_{H^1}+\|z(s)\|_{H^1}^{\frac{4}{d}-1}\right)\;ds+e^{-\gamma t}\right)\sup_{s\ge t}\|z(s)\|_{H^1}.
\end{multline}
Hence, for $t$ large enough,
\begin{equation}\label{estz}
\sup_{s\ge t}\|z(s)\|_{H^1}\le C\left(\sup_{s\ge t}\|\tilde{z}(s)\|_{H^1}+\int_t^{+\infty}\|\tilde{z}(u)\|_{H^1}\;du\right),
\end{equation}
which ends the proof of Proposition \ref{controlz}.
\end{proof}

Now, we deduce the following

\begin{Lem}\label{lem_tilde_z}
There exists $T>0$ such that for all $t\ge T$, $\tilde{z}(t)=0$.
\end{Lem}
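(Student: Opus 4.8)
The plan is to turn the coercivity estimate \eqref{coerc} into a closed integral inequality for $\|\tilde{z}(t)\|_{H^1}$ and then run a bootstrap on the polynomial decay rate that forces $\tilde{z}$ to vanish. First I would write $P_k(t):=\Rr\int_{\RR^d}\tilde{z}(t)\overline{R_k}(t)\,dx$ and introduce the combined functional $\mathcal{F}(t):=H(t)+\frac{1}{C}\sum_{k=1}^K P_k(t)^2$, so that Proposition \ref{prop_coerc} reads simply $C\|\tilde{z}(t)\|_{H^1}^2\le\mathcal{F}(t)$. Since $\|\tilde{z}(t)\|_{H^1}\le C\|z(t)\|_{H^1}\to 0$ by \eqref{tild'} and \eqref{est_z_N}, each of the pieces $\tilde{H}(t)$, $H(t)$ and $P_k(t)$ tends to $0$ as $t\to+\infty$ (they are all $\mathrm{O}(\|\tilde z(t)\|_{H^1}^2)$); hence, the derivatives being integrable at $+\infty$ by the estimates below, one may write $\tilde{H}(t)=-\int_t^{+\infty}\tilde{H}'(s)\,ds$ and $P_k(t)^2=-\int_t^{+\infty}\frac{d}{ds}P_k(s)^2\,ds$.

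Combining Proposition \ref{func} (to replace $H$ by $\tilde{H}$ up to an error $\mathrm{O}(\|\tilde{z}\|_{H^1}^3+e^{-\gamma t}\|\tilde{z}\|_{H^1}^2)$), Proposition \ref{estenergy} (giving $\tilde{H}'=\mathrm{O}(\frac1t\|\tilde{z}\|_{H^1}^2)+\mathrm{O}(e^{-\gamma t}\|\tilde{z}\|_{H^1}\|z\|_{H^1}+\|\tilde{z}\|_{H^1}\|z\|_{H^1}^2)$), and the Corollary \eqref{derre2}, I would obtain for $t\ge T_1$
\begin{multline*}
C\|\tilde{z}(t)\|_{H^1}^2\le \mathcal{F}(t)\le C\int_t^{+\infty}\frac{1}{s}\|\tilde{z}(s)\|_{H^1}^2\,ds \\
+ C\int_t^{+\infty}\left(e^{-\gamma s}\|z(s)\|_{H^1}+\|z(s)\|_{H^1}^2\right)\|\tilde{z}(s)\|_{H^1}\,ds + C\|\tilde{z}(t)\|_{H^1}^3+Ce^{-\gamma t}\|\tilde{z}(t)\|_{H^1}^2.
\end{multline*}
The key point is that the quadratic leading term $\text{Main}(s)$ has been turned into the integrable weight $\frac1s\|\tilde z\|^2_{H^1}$ after integration, while every remaining contribution carries either an extra power of $\|z\|_{H^1}$ or an exponential factor. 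I would then feed in Proposition \ref{controlz}, namely $\|z(s)\|_{H^1}\le C(\mathcal{N}(s)+I(s))$ with $\mathcal{N}(s):=\sup_{\tau\ge s}\|\tilde{z}(\tau)\|_{H^1}$ and $I(s):=\int_s^{+\infty}\|\tilde{z}(u)\|_{H^1}\,du$, so the right-hand side is expressed purely in terms of $\tilde{z}$.

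To conclude, I would fix $N$ large and use the a priori bound $\|\tilde{z}(t)\|_{H^1}\le C_\ast t^{-N}$ coming from \eqref{est_z_N} and \eqref{tild'}. The bootstrap runs as follows: assuming $\|\tilde{z}(s)\|_{H^1}\le C_0\,s^{-N}$ on $[T,+\infty)$, one gets $\mathcal{N}(s)\le C_0 s^{-N}$, $I(s)\le \frac{C_0}{N-1}s^{-(N-1)}$, and via Proposition \ref{controlz} $\sup_{\tau\ge s}\|z(\tau)\|_{H^1}\le \frac{CC_0}{N-1}s^{-(N-1)}$. Substituting into the displayed inequality, the leading term gives $\int_\tau^{+\infty}\frac1s\|\tilde{z}(s)\|_{H^1}^2\,ds\le \frac{C_0^2}{2N}\tau^{-2N}$, whereas all other terms are of strictly higher order in $C_0$ (cubic) and decay faster than $\tau^{-2N}$, hence negligible for $T$ large. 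Thus for $N\ge 2C$ (with $C$ the universal constant above) and $T$ large one obtains $\|\tilde{z}(\tau)\|_{H^1}\le \frac{1}{\sqrt2}\,C_0\,\tau^{-N}$ for all $\tau\ge T$. Taking the infimum over all admissible constants $C_0$ then forces that infimum to satisfy $C_0^\ast\le \frac{1}{\sqrt2}C_0^\ast$, i.e. $C_0^\ast=0$, so that $\tilde{z}\equiv 0$ on $[T,+\infty)$.

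The main obstacle is precisely the borderline term $\frac1t\|\tilde{z}\|_{H^1}^2$ flagged in Remark \ref{rq:mono}: after integration it contributes $\frac{C}{2N}C_0^2\tau^{-2N}$, which is a bona fide fraction of the left-hand side $C\|\tilde z\|^2$ and can be absorbed \emph{only} if $N$ is chosen large relative to the universal constant $C$. This is exactly why the argument works only in the conditional class \eqref{classexpo} rather than for arbitrary multi-solitons in the sense of \eqref{def_multisol}, and the bookkeeping of the cubic and exponential remainders (ensuring they are genuinely higher order under the shrinking constant $C_0$) is the technical part to carry out carefully.
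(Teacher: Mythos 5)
Your proposal is correct and follows the same overall strategy as the paper: both combine the coercivity \eqref{coerc} with the integrated bounds coming from Proposition \ref{func}, Proposition \ref{estenergy}, the corollary \eqref{derre2} and Proposition \ref{controlz}, and both conclude by absorbing the borderline $\frac{1}{t}\|\tilde z\|_{H^1}^2$ contribution thanks to the largeness of $N$. The real divergence is in the finishing step. The paper first \emph{linearizes} the quadratic inequality: starting from \eqref{ineg_tilde_z_1} it factors out $\sup_{s\ge t}\|\tilde z(s)\|_{H^1}$ to obtain \eqref{ineg_tilde_z_2}, then replaces $z$ by $\tilde z$ via Proposition \ref{controlz} and the integrability of $\|z(t)\|_{H^1}$, arriving at the \emph{linear} inequality \eqref{inegtildez} whose constant $\tilde C$ is independent of the solution (Remark \ref{rq:indep_z}); the conclusion is then a one-line contradiction evaluated at a time $\tilde T$ where $t^N\|\tilde z(t)\|_{H^1}$ is at least half of its supremum $A$. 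You instead keep the quadratic inequality and iterate a shrinking constant $C_0\mapsto C_0/\sqrt 2$. This does work, but it is exactly where the ``bookkeeping'' you flag lives: the cubic remainders, e.g. $\int_\tau^{+\infty}\|z\|_{H^1}^2\|\tilde z\|_{H^1}\,ds\lesssim C_0^3\,\tau^{-3N+3}$, are absorbed into $\tfrac14 C_0^2\tau^{-2N}$ only for $\tau\ge T^{\sharp}(C_0)$ with $T^{\sharp}$ increasing in $C_0$, so in your infimum step you must fix $T\ge T^{\sharp}(C_\ast)$ for the a priori constant $C_\ast$ furnished by \eqref{est_z_N} and \eqref{tild'}, and then use that the threshold only improves as the constant shrinks along the iteration. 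The paper's division by the sup removes one power of $\tilde z$ \emph{before} the final argument, so all remaining terms are genuinely linear in $\tilde z$, the constant is manifestly solution-independent (hence $N$ can be fixed once and for all, as required by Theorem \ref{uni}), and no threshold tracking is needed --- that is what its linearization buys over your version.
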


\begin{proof}
By means of (\ref{coerc}), (\ref{estenergy1}), and (\ref{derre2}), we can write for $t$ large enough
\begin{equation}\label{ineg_tilde_z_1}
\begin{aligned}
\displaystyle\|\tilde{z}(t)\|_{H^1}^2&\leq\displaystyle C\int_t^{+\infty}\left(\frac{1}{s}\|\tilde{z}(s)\|_{H^1}^2+e^{-\gamma s}\|z(s)\|_{H^1}\|\tilde{z}(s)\|_{H^1}+\|z(s)\|_{H^1}^2\|\tilde{z}(s)\|_{H^1}\right)\;ds
\\
&\leq\displaystyle C\left[\int_t^{+\infty}\left(\frac{1}{s}\|\tilde{z}(s)\|_{H^1}+e^{-\gamma s}\|z(s)\|_{H^1}+\|z(s)\|_{H^1}^2\right)\;ds\right]\sup_{s\ge t}\|\tilde{z}(s)\|_{H^1}.\\
\end{aligned}
\end{equation}
We deduce from the preceding line that for $t$ large enough
\begin{equation}\label{ineg_tilde_z_2}
\|\tilde{z}(t)\|_{H^1}\le C\int_t^{+\infty}\left(\frac{1}{s}\|\tilde{z}(s)\|_{H^1}+e^{-\gamma s}\|z(s)\|_{H^1}+\|z(s)\|_{H^1}^2\right)\;ds.
\end{equation}
Using (\ref{estz1}), (\ref{estz2}), and \eqref{estz}, this leads to the fact that for $t$ large enough 
\begin{multline*}
\|\tilde{z}(t)\|_{H^1}\le C\int_{t}^{+\infty}\frac{1}{s}\|\tilde{z}(s)\|_{H^1}\;ds\\
+C\left(e^{-\gamma t}+\int_{t}^{+\infty}\|z(s)\|_{H^1}\;ds\right)\left(\sup_{s\ge t}\|\tilde{z}(s)\|_{H^1}+\int_t^{+\infty}\|\tilde{z}(u)\|_{H^1}\;du\right).
\end{multline*}
Thus, for large values of $t$, 
\begin{equation}\label{ineg_tilde_z_3}
\|\tilde{z}(t)\|_{H^1}\le C\left[\int_{t}^{+\infty}\frac{1}{s}\|\tilde{z}(s)\|_{H^1}\;ds+\left(e^{-\gamma t}+\int_{t}^{+\infty}\|z(s)\|_{H^1}\;ds\right)\left(\int_t^{+\infty}\|\tilde{z}(u)\|_{H^1}\;du\right)\right].
\end{equation}
Since by assumption $\|z(t)\|_{H^1}=\mathrm{O}\left(\frac{1}{t^\alpha}\right)$ with $\alpha>2$ and since $e^{-\gamma t}\le \frac{1}{t^{\alpha-1}}$ for $t$ large enough, there exist $\tilde{C}\ge 0$ and $T\ge 1$ such that for all $t\ge T$,
\begin{equation}\label{inegtildez}
\|\tilde{z}(t)\|_{H^1}\le \tilde{C}\left[\int_{t}^{+\infty}\frac{1}{s}\|\tilde{z}(s)\|_{H^1}\;ds+\frac{1}{t^{\alpha-1}}\left(\int_t^{+\infty}\|\tilde{z}(u)\|_{H^1}\;du\right)\right].
\end{equation}

\begin{Rq}\label{rq:indep_z}
Note that in \eqref{inegtildez}, $\tilde{C}$ seems to depend on $z$ (or equivalently on $u$) but in fact it does not (even if it means changing $T$ which does actually depend on $z$). Indeed, $\tilde{C}$ depends only on the constants appearing in \eqref{varphi_3d}, \eqref{tild}, \eqref{tild'}, \eqref{coerc} (linked with the parameters used to define the solitons $R_k$), on the constants appearing in \eqref{comp}, \eqref{estenergy1}, \eqref{derre2}, \eqref{inegab1} (linked with $f$, with the parameters used to define the solitons, and with $\|z(t)\|_{H^1}$ which can be chosen less or equal to 1 provided $t$ is sufficiently large, depending on $z$), and also on universal constants which enable us to pass from \eqref{estz'} to \eqref{estz}, from \eqref{ineg_tilde_z_1} to \eqref{ineg_tilde_z_2}, and from \eqref{ineg_tilde_z_3} to \eqref{inegtildez} (on condition that $t$ is once more sufficiently large, which depends on $z$). Thus one should read the following assertion:
there exists $\tilde{C}>0$ such that for all $z$ satisfying \eqref{derz} and $\|z(t)\|_{H^1}=\mathrm{O}\left(\frac{1}{t^\alpha}\right)$, there exists $T(z)>0$ such that for all $t\ge T(z)$, 
\eqref{inegtildez} holds.
\end{Rq}

Now take $N$ in \eqref{est_z_N} such that $N>4\tilde{C}+1$ (in this way, $N$ does not depend on $z$, as emphasized in Remark \ref{rq:indep_z}). Even if it means taking a larger $T$, we can assume \[\exists\;c\ge 0,\:\forall\;t\ge T,\quad \|\tilde{z}(t)\|_{H^1}\leq\frac{c}{t^N}.\]
\noindent Then $A:=\displaystyle\sup_{t\ge T}\{t^N\|\tilde{z}(t)\|_{H^1}\}$ is well defined. Let us pick up $\tilde{T}\ge T$ such that $\tilde{T}^N\|\tilde{z}(\tilde{T})\|_{H^1}\ge \frac{A}{2}$. \\ \\
Now, replacing $t$ by $\tilde{T}$ in (\ref{inegtildez}), we obtain 
\begin{equation}
\begin{aligned}
\frac{A}{2\tilde{T}^N}&\leq\tilde{C}A\left(\frac{1}{N\tilde{T}^N}+\frac{1}{(N-1)\tilde{T}^{N+\alpha-2}}\right) \leq\frac{2\tilde{C}A}{(N-1)\tilde{T}^N}.
\end{aligned}
\end{equation}

\noindent Supposing $A\neq 0$ would lead to a contradiction because of the choice of $N$. \\
\noindent Consequently \[\forall\; t\ge T,\qquad \|\tilde{z}(t)\|_{H^1}=0. \] 
\end{proof}

We deduce from Proposition \ref{controlz} and Lemma \ref{lem_tilde_z} that \[\forall\; t\ge T,\qquad \|z(t)\|_{H^1}=0.\] The local well-posedness in $H^1(\RR^d)$ of \eqref{NLS} implies then $u=\varphi$. Hence Theorem \ref{th_uni_f} is proved.

\subsection{Uniqueness result for the critical pure-power case}\label{subsec_L2crit}

In this paragraph, let $d\ge 1$ and $f:r\mapsto r^{\frac{2}{d}}$. 
Our proof of uniqueness in the class of multi-solitons $u$ such that $\|u(t)-R(t)\|_{H^1}\underset{t\to+\infty}{=}\mathrm{O}\left(\frac{1}{t^N}\right)$ (for some $N\in\NN^*$ sufficiently large to be determined later) and in the $L^2$-critical case consists in exploiting the same ideas as for the subcritical case. Nevertheless it is this time based on Proposition \ref{coerccritic}, stated below and proved in Appendix. 

\begin{Prop}\label{coerccritic}
Assume that $f(r)=r^{\frac{2}{d}}$ and let $\omega>0$. There exists $\mu>0$ such that for all $w=w_1+iw_2\in H^1(\RR,\CC)$,
\begin{multline} \label{coer_crit} 
H(w) \ge \mu \| w \|_{H^1}^2 - \frac{1}{\mu} \left( \int_{\RR^d} w_1 Q_\omega dx \right)^2 \\
-  \frac{1}{\mu} \left(\sum_{i=1}^d \left( \int_{\RR^d} w_1 \partial_{x_i} Q_\omega\;dx \right)^2 + \left( \int_{\RR^d} w_1\left(x\cdot\nabla Q_\omega\right)^2\;dx \right)^2 + \left( \int_{\RR^d} w_2 Q_\omega\;dx \right)^2 \right).
\end{multline}
\end{Prop}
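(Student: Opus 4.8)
The plan is to use the decomposition $H(w) = \int_{\RR^d} L_{+,\omega}w_1\,w_1\,dx + \int_{\RR^d} L_{-,\omega}w_2\,w_2\,dx$ recorded before the statement, which reduces \eqref{coer_crit} to two independent scalar coercivity estimates, one for each of the self-adjoint operators $L_{+,\omega}$ and $L_{-,\omega}$. The penalization terms split accordingly: $\left(\int_{\RR^d} w_2 Q_\omega\,dx\right)^2$ is attached to $L_{-,\omega}$, while the three $w_1$-terms (the $Q_\omega$ direction, the translation directions $\partial_{x_i}Q_\omega$, and the scaling direction $x\cdot\nabla Q_\omega$) are attached to $L_{+,\omega}$. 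For $L_{-,\omega}$ I would use that in the mass-critical case $f(Q_\omega^2)Q_\omega = Q_\omega^{1+4/d}$, so \eqref{elliptic} reads exactly $L_{-,\omega}Q_\omega = 0$; since $Q_\omega>0$ it is the ground state of $L_{-,\omega}$, whence $L_{-,\omega}\ge 0$, $\ker L_{-,\omega} = \mathrm{Span}\{Q_\omega\}$, and the essential spectrum starts at $\omega>0$. A standard spectral-gap argument then produces $\mu_->0$ with $\int_{\RR^d} L_{-,\omega}w_2\,w_2 \ge \mu_-\|w_2\|_{H^1}^2 - \frac{1}{\mu_-}\left(\int_{\RR^d} w_2 Q_\omega\,dx\right)^2$.

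The core of the proof is $L_{+,\omega}$, where the critical exponent enters. Here $L_{+,\omega}w_1 = -\Delta w_1 + \omega w_1 - (1+\tfrac4d)Q_\omega^{4/d}w_1$ has exactly one negative eigenvalue and, by non-degeneracy of the ground state, $\ker L_{+,\omega} = \mathrm{Span}\{\partial_{x_1}Q_\omega,\dots,\partial_{x_d}Q_\omega\}$. The key new ingredient is the scaling generator $\Lambda Q_\omega := \tfrac d2 Q_\omega + x\cdot\nabla Q_\omega$. Differentiating at $\lambda=1$ the rescaled family $Q^\lambda_\omega(x) = \lambda^{d/2}Q_\omega(\lambda x)$, which solves \eqref{elliptic} at frequency $\lambda^2\omega$, yields the identity $L_{+,\omega}(\Lambda Q_\omega) = -2\omega Q_\omega$; integrating by parts gives $\int_{\RR^d}Q_\omega\,\Lambda Q_\omega\,dx = 0$, hence $\int_{\RR^d} L_{+,\omega}\Lambda Q_\omega\,\Lambda Q_\omega\,dx = -2\omega\int_{\RR^d} Q_\omega\,\Lambda Q_\omega\,dx = 0$. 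In particular $\langle L_{+,\omega}^{-1}Q_\omega, Q_\omega\rangle = -\frac{1}{2\omega}\int_{\RR^d} Q_\omega\,\Lambda Q_\omega\,dx = 0$, which is precisely the borderline $\frac{d}{d\omega}\int_{\RR^d} Q_\omega^2 = 0$ of the critical case and the reason assumption (H3) is unavailable.

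From here I would prove the positivity statement: there is $\mu_+>0$ such that $\int_{\RR^d}L_{+,\omega}v\,v\,dx \ge \mu_+\|v\|_{H^1}^2$ whenever $v$ is $L^2$-orthogonal to $Q_\omega$, to every $\partial_{x_i}Q_\omega$, and to $\Lambda Q_\omega$. The Weinstein-type route is: the vanishing of $\langle L_{+,\omega}^{-1}Q_\omega,Q_\omega\rangle$ gives $L_{+,\omega}\ge 0$ on $\{Q_\omega\}^\perp$, and the null space of the form restricted to $\{Q_\omega\}^\perp$ is exactly $\mathrm{Span}\{\partial_{x_i}Q_\omega,\Lambda Q_\omega\}$; a compactness argument on the constrained infimum $\inf\{\int_{\RR^d} L_{+,\omega}v\,v\,dx : \|v\|_{L^2}=1,\ v\perp Q_\omega,\partial_{x_i}Q_\omega,\Lambda Q_\omega\}$ shows it is strictly positive. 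Indeed, a minimizer would satisfy an Euler--Lagrange relation $L_{+,\omega}v = \nu v + aQ_\omega + \sum_i b_i\partial_{x_i}Q_\omega + c\,\Lambda Q_\omega$ with $\nu \le 0$; testing against $\Lambda Q_\omega$ (using $L_{+,\omega}\Lambda Q_\omega = -2\omega Q_\omega$ together with the constraints $v\perp Q_\omega,\Lambda Q_\omega$) and against the kernel and $Q_\omega$ directions yields a linear system forcing $v$ into the excluded span, contradicting $\|v\|_{L^2}=1$. Upgrading to the penalized form \eqref{coer_crit} is then routine: decompose an arbitrary $w_1$ as its $L^2$-projection onto $\mathrm{Span}\{Q_\omega,\partial_{x_i}Q_\omega,\Lambda Q_\omega\}$ plus a remainder in the orthogonal complement, apply the positivity estimate to the remainder, and note that the finitely many scalar products involved are exactly $\int w_1 Q_\omega$, $\int w_1\partial_{x_i}Q_\omega$, and $\int w_1\,x\cdot\nabla Q_\omega$, i.e. the penalizations of \eqref{coer_crit}.

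Adding the $L_{+,\omega}$ and $L_{-,\omega}$ estimates and taking $\mu = \min\{\mu_+,\mu_-\}$ (after adjusting the constants in front of the squared scalar products) yields \eqref{coer_crit}. I expect the \emph{main obstacle} to be the positivity of $L_{+,\omega}$ modulo the three directions: one must verify that $Q_\omega$, the $\partial_{x_i}Q_\omega$, and $\Lambda Q_\omega$ are exactly the directions needed to neutralize the single negative mode of $L_{+,\omega}$, and this rests delicately on the degenerate identity $\langle L_{+,\omega}^{-1}Q_\omega, Q_\omega\rangle = 0$ --- the very feature whose strict sign in the subcritical case would instead place us under (H3) and permit the simpler localization argument of \cite[appendix B]{tsai}.
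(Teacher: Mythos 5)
Your proposal is correct and follows essentially the same route as the paper's Appendix proof: a constrained minimization for $L_{+,\omega}$ over the orthogonal complement of $Q_\omega$, the $\partial_{x_i}Q_\omega$, and the scaling direction, compactness coming from the decay of $Q_\omega^{4/d}$, and a Lagrange-multiplier elimination hinging on the critical identity $L_{+,\omega}\left(\tfrac{d}{2}Q_\omega + x\cdot\nabla Q_\omega\right)\propto Q_\omega$ together with $\int_{\RR^d}(x\cdot\nabla Q_\omega)^2\,dx - \tfrac{d^2}{4}\int_{\RR^d} Q_\omega^2\,dx = \left\|\tfrac{d}{2}Q_\omega + x\cdot\nabla Q_\omega\right\|_{L^2}^2 > 0$. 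The only differences are organizational: the paper quotes Weinstein's Proposition 2.7 for the nonnegativity of $L_{+,\omega}$ on $\{Q_\omega\}^{\perp}$ where you derive it from the degeneracy identity $\langle L_{+,\omega}^{-1}Q_\omega, Q_\omega\rangle = 0$, and the paper leaves the $L_{-,\omega}$ estimate and the final assembly of \eqref{coer_crit} implicit, which you spell out.
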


\noindent In order not to be too redundant, we only explicit the main modifications of the proof given for the stable case. 

\subsubsection{Change of variable}

We still consider $\varphi$, $u$, and $z$ as defined at the beginning of Section \ref{uniq}. In order to apply Proposition \ref{coerccritic} (which states a coercivity property available in the critical case), one has to take into account a third family of directions indexed by $k=1,\dots,K$. 
More precisely, let us introduce $y_k(t,x):=x-v_kt-x_k^0\in\RR^d$, for all $k=1,\dots,K$, and
\begin{multline*}
\tilde{z}(t,x):=z(t,x)+\sum_{k=1}^K\left\{ia_k(t)R_k(t,x)+b_k(t)\cdot\nabla R_k(t,x)\right\}\\
+\sum_{k=1}^Kc_k(t)\left(\frac{d}{2}R_k+y_k\cdot\nabla R_k-\frac{i}{2}v_k\cdot y_kR_k\right)(t,x),
\end{multline*} where $a_k$, $b_k$, and $c_k$ are well defined on $[T_1,+\infty)$ (even if it means taking a larger $T_1$) with values respectively in $\RR$, $\RR^d$, and $\RR$ such that 
\begin{equation}
\forall\;t\ge T_1,\quad \begin{dcases}
\Ii\int_{\RR^d}\tilde{z}(t)\overline{R_k(t)}\;dx=0\\
\Rr\int_{\RR^d}\tilde{z}(t)\overline{\nabla R_k(t)}\;dx=0\\
\Rr\int_{\RR^d}\tilde{z}(t)\overline{\left(\frac{d}{2}R_k+y_k\cdot \nabla R_k-\frac{i}{2}v_k\cdot y_kR_k\right)(t)}\;dx=0.
\end{dcases}
\end{equation}

As for the stable case, we can prove that $a_k(t)$, $b_k(t)$, and $c_k(t)$, $k=1,\dots,K$, are uniquely determined by the preceding orthogonality conditions. This time, we have to show indeed that the following block matrix is invertible:
$$\tilde{M}(t):= \left[\begin{array}{ccccc}
A_{0,0}(t)&B_{1,1}(t)&\cdots&B_{1,d}(t)&Z_{0}(t)\\
^tB_{1,1}(t)&A_{1,1}(t)&\cdots&A_{1,d}(t)&Z_1(t)\\
\vdots&\vdots&\ddots&\vdots&\vdots\\
^tB_{1,d}(t)&A_{d,1}(t)&\cdots&A_{d,d}(t)&Z_d(t)\\
^tZ_0(t)&^tZ_1(t)&\cdots&^tZ_d(t)&W(t)\\
\end{array}\right],$$
where $A_{0,0}$, $A_{i,j}$, $B_{1,j}$ are defined in paragraph \ref{change_var}, $Z_{i}(t)$ ($i=0,\dots,d$) has entries zero on its diagonal and $\mathrm{O}(e^{-\gamma t})$ elsewhere, and $W(t)$ possesses the coefficients $\int_{\RR^d}\left(x\cdot\nabla Q_{\omega_k}\right)^2\;dx$, $k=1,\dots, K$ on its diagonal and $\mathrm{O}(e^{-\gamma t})$ elsewhere. \\
For the sake of completeness, let us justify how to determine the coefficients of $Z_{i}(t)$ ($i=1,\dots,d$) which are the less obvious ones to compute.
By the orthogonality condition $\Rr\int_{\RR^d}\tilde{z}(t)\nabla\overline{R_k}(t)\;dx=0$, the coefficient $Z_i(k,l)$ of $Z_i$ located at line $k$ and column $l$ is equal to 
$$Z_i(k,l)=\frac{d}{2}\Rr\int_{\RR^d}R_k\partial_{x_i}\overline{R_l}\;dx+\Rr\int_{\RR^d}y_k\cdot\nabla R_k\partial_{x_i}\overline{R_l}\;dx+\frac{1}{2}v_k\cdot\Ii\int_{\RR^d}y_kR_k\partial_{x_i}\overline{R_l}\;dx.$$
Thus for $k\neq l$, we have $Z_i(k,l)=\mathrm{O}(e^{-\gamma t})$  by \eqref{decay_prop}, and for $k=l$, we obtain:
\begin{align*}
Z_i(k,l)&=\Rr\int_{\RR^d}y_k\cdot\left(\nabla R_k-\frac{i}{2}v_k R_k\right)\partial_{x_i}\overline{R_k}\;dx =\int_{\RR^d}x\cdot\nabla Q_{\omega_k}\partial_{x_i}Q_{\omega_k}\\
&=\sum_{j=1}^d\int_{\RR^{d-1}}\left(\int_{\RR}x_j\partial_{x_j}Q_{\omega_k}\partial_{x_i}Q_{\omega_k}\;dx_i\right)dx_1\dots\hat{dx_{i}}\dots dx_d =0,
\end{align*}
since for all $j\in\{1,\dots,d\}$, $x_i\mapsto x_j\partial_{x_j}Q_{\omega_k}(x_1,\dots,x_d)\partial_{x_i}Q_{\omega_k}(x_1,\dots,x_d)$ is an odd integrable function on $\RR$ in view of the fact that $Q_{\omega_k}$ is radial. \\

\noindent Hence, we obtain that
$$\det\tilde{M}(t)=\det M(t)\prod_{k=1}^K\int_{\RR^d}\left(x\cdot\nabla Q_{\omega_k}\right)^2\;dx+\mathrm{O}(e^{-\gamma t})$$ is strictly positive for $t$ large enough (see \eqref{non_zero} in Appendix). \\

Moreover, for all $t\ge T_1$, \begin{equation} |a_k(t)|,|b_k(t)|,|c_k(t)|\leq\|z(t)\|_{L^2},\end{equation} and
\begin{equation}
|a_k'(t)|,|b_k'(t)|,|c_k'(t)|\leq\|z(t)\|_{H^1}.
\end{equation}

\begin{Rq}\label{rq:L2crit}
The consideration of $\tilde{z}$ turns out to be appropriate judging by the properties stated in Lemma \ref{estcrit}.
Besides let us note that the particular non-linearity satisfies the ODE $dxf'(x)=2f(x)$ in the $L^2$-critical case; this will be truly useful to control the third family of directions associated with the coefficients $c_k$.
\end{Rq}

 First of all, let us begin with the useful computation of the derivative of $\tilde{z}$ with respect to the time variable.

\begin{Lem} We have
\begin{align*}
\partial_t\tilde{z}=&\;\displaystyle i\Big(\Delta\tilde{z}+f(|\varphi|^2)\tilde{z}+\big(f(|\tilde{z}+\varphi|^2)-f(|\varphi|^2)\big)\varphi\Big)+\displaystyle\sum_{k=1}^K\big\{ia_k'R_k+b_k'\cdot\nabla R_k\big\}\\
&-2i\displaystyle\sum_{k=1}^Kc_k\omega_kR_k+\sum_{k=1}^Kc_k'\left(\frac{d}{2}R_k+y_k\cdot\nabla R_k-\frac{i}{2}v_k\cdot y_k R_k(t,x)\right)+\epsilon_1,
\end{align*}
where $\epsilon_1$ is a function of $t$ and $x$ such that \[\displaystyle\int_{\RR^d}|\epsilon_1|\big(|\tilde{z}|+|\nabla\tilde{z}|+|\Delta\tilde{z}|\big)\;dx\le C\big(e^{-\gamma t}\|z\|_{H^1}\|\tilde{z}\|_{H^1}+\|z\|_{H^1}^2\|\tilde{z}\|_{H^1}\big).\]
\end{Lem}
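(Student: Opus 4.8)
The plan is to mimic the proof of Lemma~\ref{dertildez} and to isolate the single new effect produced by the scaling family. Setting $\Lambda_k:=\frac{d}{2}R_k+y_k\cdot\nabla R_k-\frac{i}{2}v_k\cdot y_kR_k$, I would start from
\begin{align*}
\partial_t\tilde{z}=\partial_t z&+\sum_{k=1}^K\big\{ia_k'R_k+b_k'\cdot\nabla R_k\big\}+\sum_{k=1}^K\big\{ia_k\partial_tR_k+b_k\cdot\nabla\partial_tR_k\big\}\\
&+\sum_{k=1}^Kc_k'\Lambda_k+\sum_{k=1}^Kc_k\,\partial_t\Lambda_k,
\end{align*}
insert \eqref{derz} for $\partial_t z$ and \eqref{derRk} for $\partial_tR_k$, and observe that the $z$-, $a_k$- and $b_k$-blocks recombine verbatim as in Lemma~\ref{dertildez} into the structural term $i\big(\Delta\tilde z+f(|\varphi|^2)\tilde z+(f(|\tilde z+\varphi|^2)-f(|\varphi|^2))\varphi\big)$, the explicit drift $\sum(ia_k'R_k+b_k'\cdot\nabla R_k)$, and an error of the type \eqref{est_g}. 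Since $\sum c_k'\Lambda_k$ is itself one of the claimed explicit terms, everything reduces to controlling $\sum_k c_k\,\partial_t\Lambda_k$.

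The heart of the argument is the analogue for the scaling direction of the second line of \eqref{derRk}:
\begin{equation*}
\partial_t\Lambda_k=i\Delta\Lambda_k+i\big(f(|R_k|^2)+2|R_k|^2f'(|R_k|^2)\big)\Lambda_k-2i\omega_k R_k.
\end{equation*}
To obtain it, I would first cancel the two Galilean phase contributions and write $\Lambda_k=\big(\tfrac{d}{2}Q_{\omega_k}+y_k\cdot\nabla Q_{\omega_k}\big)e^{i\Theta_k}$, with $\Theta_k=\frac12 v_k\cdot x+(\omega_k-\frac{|v_k|^2}{4})t+\gamma_k$ and $y_k=x-v_kt-x_k^0$. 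A direct computation of $\partial_t\Lambda_k-i\Delta\Lambda_k$, in which the convection and phase terms cancel exactly as in \eqref{derRk}, leaves $i(\omega_k\tilde Q_k-\Delta\tilde Q_k)e^{i\Theta_k}$ with $\tilde Q_k:=\tfrac{d}{2}Q_{\omega_k}+y_k\cdot\nabla Q_{\omega_k}$. The function $\tilde Q_k$ is the generator of $L^2$-scaling applied to the ground state, and combining the commutator identity $[\Delta,x\cdot\nabla]=2\Delta$ with the elliptic equation \eqref{elliptic} gives $L_{+,\omega_k}\tilde Q_k=-2\omega_k Q_{\omega_k}$; this is precisely the place where the $L^2$-critical relation $xf'(x)=\tfrac{2}{d}f(x)$ of Remark~\ref{rq:L2crit} is needed, in order to annihilate the surviving $f(Q_{\omega_k}^2)Q_{\omega_k}$ terms. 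Rewriting $-\Delta\tilde Q_k+\omega_k\tilde Q_k=(f(|R_k|^2)+2|R_k|^2f'(|R_k|^2))\tilde Q_k-2\omega_k Q_{\omega_k}$ then yields the displayed identity.

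Feeding this into $\sum_k c_k\partial_t\Lambda_k$, I would split it into $i\Delta(\sum_k c_k\Lambda_k)$, which simply completes $i\Delta\tilde z$; the clean term $-2i\sum_k c_k\omega_k R_k$, the second claimed explicit term; and the linear contribution $i\sum_k c_k(f(|R_k|^2)+2|R_k|^2f'(|R_k|^2))\Lambda_k$. In the latter, the part $i\sum_k c_kf(|R_k|^2)\Lambda_k$ fits $i f(|\varphi|^2)\tilde z$ up to the factor $f(|\varphi|^2)-f(|R_k|^2)$, which by \eqref{varphi_3d} and \eqref{prel1} is $\mathrm{O}(e^{-\gamma t})$ on the support of $\Lambda_k$ and is harmless. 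The genuinely problematic term is
\begin{equation*}
2i\sum_{k=1}^K c_k|R_k|^2f'(|R_k|^2)\Lambda_k=2i\sum_{k=1}^K c_k\,Q_{\omega_k}^2f'(Q_{\omega_k}^2)\tilde Q_k\,e^{i\Theta_k},
\end{equation*}
which is only linear in $z$ through $c_k$ (recall $|c_k|\le\|z\|_{L^2}$) and hence cannot be absorbed into $\epsilon_1$ on its own. I expect this to be the main obstacle.

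The resolution, and the reason the third direction was chosen in this precise form, is that this term cancels exactly against the $c_k$-component of the quadratic Taylor term, i.e. against the critical analogue of \eqref{lem_dertildez_2}. Indeed $\Lambda_k=\tilde Q_k e^{i\Theta_k}$ is \emph{real in the rotating frame}, so $\Rr(\Lambda_k\overline{R_k})=\tilde Q_k Q_{\omega_k}$, and the $c_k\Lambda_k$ contribution to $-2\Rr((\tilde z-z)\overline{\varphi})f'(|\varphi|^2)$, once multiplied by $\varphi\simeq R_k$, equals $-2i\sum_k c_k\tilde Q_k Q_{\omega_k}^2f'(Q_{\omega_k}^2)e^{i\Theta_k}$, the exact opposite of the problematic term above. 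What survives this cancellation — the substitutions $\varphi\mapsto R_k$, the cross terms $R_j\overline{R_k}$ for $j\neq k$, and the cubic Taylor remainder bounded via \eqref{prop_croissance} — pairs against $\tilde z$, $\nabla\tilde z$ and $\Delta\tilde z$ into quantities of size $\mathrm{O}(e^{-\gamma t}\|z\|_{H^1}\|\tilde z\|_{H^1}+\|z\|_{H^1}^2\|\tilde z\|_{H^1})$, using the decay \eqref{decay_prop}, \eqref{varphi_3d}, \eqref{prel1}--\eqref{prel2} and the bounds $|c_k|\le\|z\|_{L^2}$, $|c_k'|\le\|z\|_{H^1}$ together with their $a_k,b_k$ counterparts. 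Collecting all these contributions defines $\epsilon_1$ and establishes the stated estimate.
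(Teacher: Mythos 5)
Your proof is correct and reaches both the displayed identity and the error bound. Its overall scaffolding (raw decomposition of $\partial_t\tilde z$, insertion of \eqref{derz} and \eqref{derRk}, treatment of the $a_k$- and $b_k$-blocks as in Lemma \ref{dertildez}, relegation of the remainder to $\epsilon_1$) coincides with the paper's, but the key algebraic step is organized through a genuinely different intermediate identity, and the comparison is instructive. The paper never writes your evolution law $\partial_t\Lambda_k=i\Delta\Lambda_k+i\big(f(|R_k|^2)+2|R_k|^2f'(|R_k|^2)\big)\Lambda_k-2i\omega_kR_k$: it differentiates $c_k\Lambda_k$ directly via $\partial_tR_k=i(\Delta R_k+f(|R_k|^2)R_k)$ in \eqref{dertildez1}, rewrites $\partial_tz$ through \eqref{aux1}--\eqref{aux4}, and collects terms; the cancellation you exploit occurs there only partially (the $c_k$-part of the Taylor correction \eqref{aux4} against $iy_k\cdot\nabla\big(f(|R_k|^2)\big)R_k$, whose mismatch is exactly the $\tfrac d2 Q_{\omega_k}$-component), leaving the residue $c_k\big(-2v_k\cdot\nabla R_k+\tfrac i2|v_k|^2R_k-2i\Delta R_k-id|R_k|^2f'(|R_k|^2)R_k\big)$, which is compressed into $-2i\omega_kR_k$ only at the very end by the pointwise critical ODE \eqref{fpart} combined with the two expressions \eqref{derRk} of $\partial_tR_k$. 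In your route the critical relation is consumed once, upstream, to prove $L_{+,\omega_k}\big(\tfrac d2Q_{\omega_k}+y_k\cdot\nabla Q_{\omega_k}\big)=-2\omega_kQ_{\omega_k}$ (your derivation of this, and of $\partial_t\Lambda_k-i\Delta\Lambda_k=i\big(\omega_k\tilde Q_k-\Delta\tilde Q_k\big)e^{i\Theta_k}$, is correct), after which the surviving potential term $2i\sum_kc_k|R_k|^2f'(|R_k|^2)\Lambda_k$ cancels in full against the $c_k$-part of the Taylor term thanks to $\Rr(\Lambda_k\overline{R_k})=\tilde Q_kQ_{\omega_k}$. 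Your packaging buys structure: the term $-2i\omega_kR_k$ is exhibited as $i\,L_{+,\omega_k}\tilde Q_k\,e^{i\Theta_k}$, which explains a priori why the third modulation direction had to be precisely the scaling generator — something Remark \ref{rq:L2crit} only hints at — whereas the paper's packaging keeps the computation formally parallel to the subcritical Lemma \ref{dertildez} at the cost of making the conclusion look like a terminal algebraic coincidence. The error terms you assign to $\epsilon_1$ (substitution $\varphi\mapsto R_k$ via \eqref{varphi_3d'}, soliton cross terms via \eqref{decay_prop} and \eqref{prel1}--\eqref{prel2}, quadratic Taylor remainders via \eqref{prop_croissance}, all weighted by $|c_k|\le\|z\|_{L^2}$ or $|c_k'|\le\|z\|_{H^1}$) are exactly those the paper places in $\epsilon_1$, so the final estimate follows identically in both arguments.
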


\begin{proof}
Note that $\partial_t\tilde{z}$ decomposes like
\begin{equation}\label{dertildez1}
\begin{aligned}
\partial_t\tilde{z}=&\;\displaystyle\partial_tz+\sum_{k=1}^K\left\{a_k'(\cdots)+b_k'(\cdots)+a_k(\cdots)+b_k(\cdots)\right\}\\
&+\sum_{k=1}^Kc_k'\left(\frac{d}{2}R_k+y_k\cdot\nabla R_k-\frac{i}{2}v_k\cdot y_kR_k\right)\\
&\displaystyle+\sum_{k=1}^Kc_k\bigg(i\frac{d}{2}(\Delta R_k+f(|R_k|^2))R_k)-v_k\cdot\nabla R_k+\frac{i}{2}|v_k|^2R_k+iy_k\cdot\nabla\big(\Delta R_k+f(|R_k|^2)R_k\big)\\
&\qquad\qquad\quad\displaystyle+\frac{1}{2}v_k\cdot y_k\big(\Delta R_k+f(|R_k|^2)R_k\big)\bigg).\\
\end{aligned}\end{equation}

\noindent Now, we want to express $\partial_tz$ given by \eqref{derz} in terms of $\tilde{z}$, as already made in Lemma \ref{dertildez}. For this, let us observe that:
\begin{gather}\label{aux1}
\Delta\big(y_k\cdot \nabla R_k\big) =y_k\cdot \nabla (\Delta R_k)+2\Delta R_k, \\
\label{aux2}
\Delta\left(-\frac{i}{2}v_k\cdot y_k R_k\right) =-\frac{i}{2}v_k\cdot y_k\Delta R_k-iv_k\cdot\nabla R_k, \\
\begin{aligned}
\label{aux3}
f(|z+\varphi|^2)z & =\;\big(f(|z+\varphi|^2)-f(|\varphi|^2)\big)z+f(|\varphi|^2)\tilde{z} \\
&\quad -f(|\varphi|^2)\sum_k\left\{a_k(\cdots)+b_k(\cdots)+c_k\left(\frac{d}{2}R_k+y_k\cdot\nabla R_k-\frac{i}{2}v_k\cdot y_kR_k(t,x)\right)\right\},
\end{aligned}
\end{gather} and
\begin{equation}\label{aux4}
\begin{aligned}
\big(f(|z+\varphi|^2)-f(|\varphi|^2)\big)\varphi &= \displaystyle\big(f(|\tilde{z}+\varphi|^2)-f(|\varphi|^2)\big)\varphi+2\Rr\big((z-\tilde{z})\overline{\varphi}\big)f'(|\varphi|^2)\varphi+\epsilon_1\\
& =\big(f(|\tilde{z}+\varphi|^2)-f(|\varphi|^2)\big)\varphi \\
&\:\:\:- 2\sum_{k=1}^K\Rr\left[\left(b_k(\cdots)+c_k\left(\frac{d}{2}R_k+y_k\cdot\nabla R_k\right)\right)\overline{R_k}\right]f'(|R_k|^2)R_k+\epsilon_1.
\end{aligned}
\end{equation}
Inserting each equality (\ref{aux1}), (\ref{aux2}), (\ref{aux3}), and (\ref{aux4}) in (\ref{dertildez1}) leads to 
\begin{align*}
\partial_t\tilde{z}=&\;\displaystyle i\left(\Delta\tilde{z}+f(|\varphi|^2)\tilde{z}+\big(f(|\tilde{z}+\varphi|^2)-f(|\varphi|^2)\big)\varphi\right)+\displaystyle\sum_{k=1}^K\big\{ia_k'R_k+b_k'\cdot\nabla R_k\big\}\\
&\displaystyle+\sum_{k=1}^Kc_k'\left(\frac{d}{2}R_k+y_k\cdot\nabla R_k-\frac{i}{2}v_k\cdot y_k R_k(t,x)\right)\\
&+\displaystyle\sum_{k=1}^Kc_k\left(-2v_k\cdot\nabla R_k+\frac{i}{2}|v_k|^2R_k-2i\Delta R_k-id|R_k|^2f'(|R_k|^2)R_k\right)+\epsilon_1.\\
\end{align*}
Finally, we conclude by means of 
\begin{equation}\label{fpart}
d\;|R_k|^2f'(|R_k|^2)=2f(|R_k|^2)
\end{equation}
(which indeed holds in the $L^2$-critical case as mentioned in Remark \ref{rq:L2crit}) and the two possibilities given in \eqref{derRk} to write $\partial_tR_k$.
\end{proof}

\subsubsection{Control of $\tilde{H}$ and of the modulation parameters}

\noindent Take again $\tilde{H}$ as defined at the end of paragraph \ref{wein} and consider still $Main(t)$ as in Proposition \ref{estenergy}. Then we can state

\begin{Lem}[Control of the derivative of the Weinstein functional]\label{estcrit}
The following assertion holds true: 
\begin{align*}
\frac{d\tilde{H}}{dt}(t) & =Main(t)-4\sum_{k=1}^Kc'_k(t)\left(\omega_k-\frac{|v_k|^2}{4}\right)\Rr\int_{\RR^d}\tilde{z}\overline{R_k}(t)\;dx \\
& \qquad +\mathrm{O}\left(\left(e^{-\gamma t}+\|z(t)\|_{H^1}\right)\|\tilde{z}(t)\|_{H^1}\|z(t)\|_{H^1}\right).
\end{align*}
\end{Lem}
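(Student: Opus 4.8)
The plan is to repeat, essentially verbatim, the scheme of the proof of Proposition~\ref{estenergy}: differentiate the four groups of terms constituting $\tilde H$ and substitute the expression for $\partial_t\tilde z$ furnished by the lemma just proved. The key observation is that this expression differs from the one in Lemma~\ref{dertildez} (the stable case) only by the two extra packets $-2i\sum_k c_k\omega_k R_k$ and $\sum_k c_k'\Lambda_k$, where I abbreviate $\Lambda_k:=\frac d2 R_k+y_k\cdot\nabla R_k-\frac i2 v_k\cdot y_k R_k$, and by the replacement of the remainder $\epsilon$ with $\epsilon_1$. Every contribution already present in the stable computation is therefore estimated word for word and produces $Main(t)$ together with the admissible error $\mathrm{O}((e^{-\gamma t}+\|z\|_{H^1})\|\tilde z\|_{H^1}\|z\|_{H^1})$; the new remainder $\epsilon_1$ is absorbed into this error because its defining bound also controls $\int_{\RR^d}|\epsilon_1||\Delta\tilde z|\,dx$, which is exactly what is needed once $\int|\nabla\tilde z|^2$ has been differentiated and integrated by parts once. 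Thus only the two new packets have to be analysed.

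The packet $-2i\sum_k c_k\omega_k R_k$ is, up to the scalar factor $-2\omega_k c_k$, the phase direction $iR_k$, i.e.\ precisely the direction carried by $a_k'$ in the stable case. Collecting its contributions to the four derivatives exactly as for $C_1,C_2,C_{3,k},C_{4,k}$ and using the soliton identities \eqref{derRk} together with the localisation estimates \eqref{prel1}--\eqref{prel2}, the bracket multiplying $\overline{\tilde z}$ becomes $\big(\Delta R_k+f(|R_k|^2)R_k-(\omega_k+\tfrac{|v_k|^2}4)R_k-iv_k\cdot\nabla R_k\big)$ times a cut-off equal to $1$ near the $k$-th soliton; by \eqref{derRk} this bracket vanishes on the support of $R_k$ up to $\mathrm{O}(e^{-\gamma t})$. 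Since $|c_k|\le C\|z(t)\|_{L^2}$, this packet contributes only $\mathrm{O}(e^{-\gamma t}\|z\|_{H^1}\|\tilde z\|_{H^1})$, which is admissible.

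The heart of the matter is the packet $\sum_k c_k'\Lambda_k$, carrying the scaling direction. I first record that $\Lambda_k=(\Lambda Q_{\omega_k})(y_k)\,e^{i(\frac12 v_k\cdot x+(\omega_k-\frac{|v_k|^2}4)t+\gamma_k)}$ with $\Lambda Q:=\frac d2 Q+y\cdot\nabla Q$, the velocity terms in the definition of $\Lambda_k$ cancelling the phase gradient of $\nabla R_k$. Collecting the four $c_k'$-contributions in parallel with $C_1,\dots,C_{4,k}$, integrating by parts so that all spatial derivatives fall on the smooth, exponentially decaying factor $\Lambda_k$, replacing $f(|\tilde z+\varphi|^2)$ by $f(|R_k|^2)$ on the support of $\phi_k$ (the error being $\mathrm{O}(e^{-\gamma t}\cdots)$ by \eqref{varphi_3d} and \eqref{prel1}), and using the self-adjointness of the linearised operator around $R_k$ to move it from $\tilde z$ onto $\Lambda_k$, the whole packet reduces to $-2\sum_k c_k'\,\Rr\int_{\RR^d}\overline{\tilde z}\,\mathcal B(\Lambda_k)\,dx+\mathrm{O}(\cdots)$, where $\mathcal B(W):=\Delta W+Dg(R_k)[W]-(\omega_k+\tfrac{|v_k|^2}4)W-iv_k\cdot\nabla W$, $Dg(R_k)$ denoting the linearisation of $g(w)=wf(|w|^2)$ at $R_k$. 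The kernel of $\mathcal B$ contains the phase and translation directions $iR_k$ and $\nabla R_k$, which is exactly why the $a_k',b_k'$ and $c_k$ packets cancel. The decisive feature is that $\Lambda_k$ is \emph{not} in this kernel: using the mass-critical algebraic identity \eqref{fpart}, equivalently $L_{+,\omega_k}(\Lambda Q_{\omega_k})=-2\omega_k Q_{\omega_k}$ coming from the $L^2$-preserving scaling $Q\mapsto\lambda^{d/2}Q(\lambda\,\cdot)$, a direct computation gives $\mathcal B(\Lambda_k)$ equal to a constant multiple of $R_k$. Feeding this back yields precisely the surviving term $-4\sum_k c_k'(\omega_k-\tfrac{|v_k|^2}4)\Rr\int_{\RR^d}\tilde z\,\overline{R_k}\,dx$, everything else being absorbed into the error, which proves the claimed identity.

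The main obstacle is this last computation: one must track every Galilean phase and transport contribution in evaluating $\mathcal B(\Lambda_k)$, since it is their exact interplay with the coefficient $\omega_k+\tfrac{|v_k|^2}4$ of $\tilde H$ and with the critical identity \eqref{fpart} that pins down the precise constant in front of $\Rr\int\tilde z\,\overline{R_k}$. A secondary difficulty, already present in Proposition~\ref{estenergy} but slightly more delicate here, is to justify the integrations by parts and the passage from $Dg(\varphi)$ to $Dg(R_k)$ at the mere $H^1$ regularity of $\tilde z$: this is where one uses that $\varphi\in L^\infty\cap H^{s_0}$ by \eqref{varphi_3d} and \eqref{varphi_3d'} (so that $\Delta\tilde z$ may be tested against the smooth factors) and the bound $\|\tilde z\|_{H^1}\le C\|z\|_{H^1}$ of \eqref{tild'} to place the quadratic remainders in the admissible error class.
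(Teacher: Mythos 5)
Your strategy is essentially the paper's own: you rerun the differentiation scheme of Proposition \ref{estenergy}, dispose of the $c_k$-packet by moving the linearized operator onto the phase direction $iR_k$, where it vanishes (the paper's $\mathcal{L}_k(R_k)=0$ combined with the approximate symmetry \eqref{prop_bilinearite}), and reduce the $c_k'$-packet to the action of a linearized operator on the scaling direction $\Lambda_k$, evaluated through the mass-critical scaling identity. Your only structural deviation is cosmetic: you bundle the full $\RR$-linear linearization $Dg(R_k)$ into a single operator $\mathcal{B}$ and invoke $L_{+,\omega_k}\Lambda Q_{\omega_k}=-2\omega_k Q_{\omega_k}$ in one stroke, whereas the paper keeps the $\CC$-linear $\mathcal{L}_k$ (which is $L_{-,\omega_k}$ in the moving frame, giving $\mathcal{I}_{1,k}$) and recovers the missing $f'$-part by Taylor-expanding $\mathcal{I}_{2,k}$ and using \eqref{fpart}; since $L_{+,\omega}=L_{-,\omega}-2Q_\omega^2 f'(Q_\omega^2)$, the two routes are algebraically identical.

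The gap sits exactly at the step you yourself call the heart of the matter, and it is not merely left unexecuted: the conclusion you assert does not follow from your own reduction. Writing $R_k=Q_{\omega_k}(y_k)e^{i\Theta_k}$ with $\Theta_k=\frac12 v_k\cdot x+(\omega_k-\frac{|v_k|^2}{4})t+\gamma_k$, conjugation by the Galilean phase gives $\mathcal{B}\big(w(y_k)e^{i\Theta_k}\big)=-\big(L_{+,\omega_k}w\big)(y_k)e^{i\Theta_k}$ for every \emph{real} profile $w$: the $\Delta$, $-iv_k\cdot\nabla$ and $-(\omega_k+\frac{|v_k|^2}{4})$ pieces produce velocity contributions $-\frac{|v_k|^2}{4}+\frac{|v_k|^2}{2}-\frac{|v_k|^2}{4}=0$ and cancelling terms $\pm iv_k\cdot\nabla w$, so \emph{no} $v_k$-dependence survives (this is the same cancellation that makes $\mathcal{L}_k(R_k)=0$, and is the very reason $\tilde H$ carries the coefficient $\omega_k+\frac{|v_k|^2}{4}$ and the momentum term). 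Since, as you correctly observe, $\Lambda_k=\big(\tfrac d2 Q_{\omega_k}+y\cdot\nabla Q_{\omega_k}\big)(y_k)e^{i\Theta_k}$, your scheme yields $\mathcal{B}(\Lambda_k)=2\omega_k R_k$ and hence a surviving term $-4\sum_k c_k'\,\omega_k\,\Rr\int_{\RR^d}\tilde z\,\overline{R_k}\,dx$; it cannot produce the factor $\omega_k-\frac{|v_k|^2}{4}$, so the claim that feeding back "yields precisely" the stated coefficient is unsubstantiated. (The paper's own evaluation is not clean on this point either: the sign of the $iv_k\cdot\nabla R_k$ term in \eqref{terme_ck'1} is inconsistent with the two displayed computations of $\Rr\int\overline{\tilde z}\,\mathcal{L}_k(y_k\cdot\nabla R_k)\,dx$ and $\Rr\int\overline{\tilde z}\,\mathcal{L}_k(\tfrac i2 v_k\cdot y_k R_k)\,dx$ that precede it. The discrepancy is harmless downstream, because only boundedness of the coefficient of $c_k'\Rr\int\tilde z\overline{R_k}\,dx$ enters \eqref{control_term_funct}; but that does not make your argument a proof of the lemma as stated.) To close your proof you must actually carry out the evaluation of $\mathcal{B}(\Lambda_k)$, state the constant it produces, and reconcile it with the statement, rather than asserting agreement.
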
 

\begin{proof}
Take again the proof of Proposition \ref{estenergy}. Concerning the expression of the derivative of $\tilde{H}$, observe that everything is kept unchanged in the present context  except that we have to take care of the additional terms involving the parameters $c_k(t)$ and $c_k'(t)$, for all $k=1,\dots,K$. \\

\noindent Let us define the $\CC$-linear endomorphism $\mathcal{L}_k$ of $H^1(\RR^d)$ by 
\[\mathcal{L}_k(v):=-\Delta v-f(|R_k|^2)v+\left(\omega_k+\frac{|v_k|^2}{4}\right)v+iv_k\cdot\nabla v.\]

\noindent Observe that $\mathcal{L}_k(R_k)=0$ and for all $v,w\in H^1(\RR^d)$,
\begin{equation}\label{prop_bilinearite}
\Rr\int_{ \RR^d}v\overline{\mathcal{L}_k(w)}\phi_k\;dx=\Rr\int_{ \RR^d}\overline{w}\mathcal{L}_k(v)\phi_k\;dx-2\Rr\int_{ \RR^d}\overline{w}\nabla v\cdot\nabla\phi_k\;dx+\Rr\; i\int_{ \RR^d}v\overline{w}v_k\cdot\nabla\phi_k\;dx,
\end{equation}

Using  \eqref{prel2}, \eqref{prel4}, and \eqref{prop_bilinearite}, we deduce that for all $v\in H^1(\RR^d)$:
\begin{equation}\label{prop_bilinearite_Rk}
\begin{aligned}
\Rr\int_{ \RR^d}R_k\overline{\mathcal{L}_k(v)}\;dx&=\Rr\int_{ \RR^d}R_k\overline{\mathcal{L}_k(v)}\phi_k\;dx+\mathrm{O}\left(e^{-\gamma t}\|v\|_{H^1}\right)\\
&=\Rr\int_{ \RR^d}v\overline{\mathcal{L}_k(R_k)}\phi_k\;dx+\mathrm{O}\left(e^{-\gamma t}\|v\|_{H^1}\right) =\mathrm{O}\left(e^{-\gamma t}\|v\|_{H^1}\right).
\end{aligned}
\end{equation}

\noindent The term associated with $c_k(t)$ in the expression of $\frac{d}{dt}\tilde{H}$ writes $$2\;c_k(t)\Rr\int_{\RR^d}(-2i\omega_kR_k)\overline{\mathcal{L}_k(\tilde{z})}\;dx.$$ By \eqref{prop_bilinearite_Rk}, it is thus bounded by $Ce^{-\gamma t}\|\tilde{z}\|_{H^1}\|z\|_{H^1}$.\\

It remains us to obtain the term associated with $c_k'$ in $\frac{d}{dt}\tilde{H}$. This term corresponds to $\mathcal{I}_{1,k}-\mathcal{I}_{2,k}$, where
$$\mathcal{I}_{1,k}=2\;\Rr\int_{\RR^d}\left(\frac{d}{2}R_k+y_k\cdot\nabla R_k-\frac{i}{2}v_k\cdot y_kR_k(t,x)\right)\overline{\mathcal{L}_k(\tilde{z})}\phi_k\;dx$$
and $$\mathcal{I}_{2,k}=2\;\Rr\int_{\RR^d}\left(\frac{d}{2}R_k+y_k\cdot\nabla R_k\right)\overline{\varphi}\big(f(|\tilde{z}+\varphi|^2)-f(|\varphi|^2)\big)\;dx.$$

\noindent Let us concentrate first on $\mathcal{I}_{1,k}$. By \eqref{prop_bilinearite}, 
\[\mathcal{I}_{1,k}=2\;\Rr\int_{\RR^d}\mathcal{L}_k\left(\frac{d}{2}R_k+y_k\cdot\nabla R_k-\frac{i}{2}v_k\cdot y_kR_k(t,x)\right)\overline{\tilde{z}}\;dx+\mathrm{O}\left(e^{-\gamma t}\|\tilde{z}\|_{H^1}\right).\] 
Moreover,
\begin{align*}
\MoveEqLeft[4]\Rr\int_{\RR^d}\overline{\tilde{z}}\mathcal{L}_k\left(\frac{d}{2}R_k\right)\;dx=0; \\
\MoveEqLeft[4]\Rr\int_{\RR^d}\overline{\tilde{z}}\mathcal{L}_k(y_k\cdot\nabla R_k)\;dx\\
=&\;\displaystyle\Rr\int_{\RR^d}\overline{\tilde{z}}\big(-y_k\cdot\nabla(\Delta R_k)-2\Delta R_k\big)\;dx-\Rr\int_{\RR^d}\overline{\tilde{z}}f(|R_k|^2)y_k\cdot\nabla R_k\;dx\\
&\displaystyle+\Rr\int_{\RR^d}i\overline{\tilde{z}}v_k\cdot\nabla(y_k\cdot\nabla R_k)\;dx+\Rr\int_{\RR^d}\overline{\tilde{z}}\left(\omega_k+\frac{|v_k|^2}{4}\right)y_k\cdot\nabla R_k\;dx\\
=&\;\displaystyle\Rr\int_{\RR^d}\left(\nabla\overline{\tilde{z}}\cdot y_k+d\overline{\tilde{z}}\right)\Delta R_k\;dx-2\;\Rr\int_{\RR^d}\overline{v}\Delta R_k\;dx\\
&\displaystyle+\Rr\int_{\RR^d}\left(\nabla\overline{\tilde{z}}\cdot y_k+d\overline{\tilde{z}}\right)f(|R_k|^2)R_k\;dx+\Rr\int_{\RR^d}\overline{\tilde{z}}y_k\cdot\nabla(f(|R_k|^2))R_k\;dx\\
&\displaystyle-\Rr\int_{\RR^d}i\left(\nabla\overline{\tilde{z}}\cdot y_k\right)\big(v_k\cdot\nabla R_k\big)\;dx-(d-1)\Rr\int_{\RR^d}i\overline{\tilde{z}}v_k\cdot\nabla R_k\;dx\\
&-\Rr\int_{\RR^d}\left(\nabla\overline{\tilde{z}}\cdot  y_k+d\overline{\tilde{z}}\right)\left(\omega_k+\frac{|v_k|^2}{4}\right)R_k\;dx+\mathrm{O}\left(e^{-\gamma t}\|\tilde{z}\|_{L^2}\right)\\
=&\displaystyle-2\;\Rr\int_{\RR^d}\overline{\tilde{z}}\Delta R_k\;dx+\Rr\int_{\RR^d}i\overline{\tilde{z}}v_k\cdot\nabla R_k\;dx\\
&\displaystyle+2\;\Rr\int_{\RR^d}\overline{\tilde{z}}f'(|R_k|^2)y_k\cdot\Rr\left(\overline{R_k}\nabla R_k\right)R_k\;dx+\mathrm{O}(e^{-\gamma t}\|\tilde{z}\|_{L^2}); \\
\MoveEqLeft[4]\displaystyle\Rr\int_{\RR^d}\overline{\tilde{z}}\mathcal{L}_k\left(\frac{i}{2}v_k\cdot y_k R_k\right)\;dx\\
=&\;\displaystyle\Rr\int_{\RR^d}\overline{\tilde{z}}\left(-\frac{i}{2}(v_k\cdot y_k)\Delta R_k-iv_k\cdot\nabla R_k\right)\;dx+\Rr\int_{\RR^d}\overline{\tilde{z}}\left(\omega_k+\frac{|v_k|^2}{4}\right)\frac{i}{2}v_k\cdot y_kR_k\;dx\\
&\;\displaystyle-\frac{1}{2}\Rr\int_{\RR^d}\overline{\tilde{z}}v_k\cdot\nabla\left(v_k\cdot y_kR_k\right)\;dx-\Rr\int_{\RR^d}\frac{i}{2}\overline{\tilde{z}}f(|R_k|^2)v_k\cdot y_kR_k\;dx+\mathrm{O}\left(e^{-\gamma t}\|\tilde{z}\|_{L^2}\right)\\
=&\displaystyle-\Rr\int_{\RR^d}i\overline{\tilde{z}}v_k\cdot\nabla R_k\;dx-\frac{|v_k|^2}{2}\Rr\int_{\RR^d}\overline{\tilde{z}}R_k\;dx+\mathrm{O}\left(e^{-\gamma t}\|\tilde{z}\|_{L^2}\right).
\end{align*}
Note that to establish the three preceding equalities, we have used once again $\mathcal{L}_k(R_k)=0$. Thus, gathering the preceding calculations, we infer 
\begin{equation}\label{terme_ck'1}
\begin{aligned}
c_k'\mathcal{I}_{1,k}=&\;\displaystyle 2\;c_k'\Rr\int_{\RR^d}\overline{\tilde{z}}R_ky_k\cdot\nabla\big(f(|R_k|^2)\big)\;dx_k \displaystyle -4\;c_k'\Rr\int_{\RR^d}\overline{\tilde{z}}\left(\Delta R_k+iv_k\cdot\nabla R_k-\frac{|v_k|^2}{4}R_k\right)\;dx\\
&+\mathrm{O}\left(e^{-\gamma t}\|\tilde{z}\|_{L^2}\|z\|_{H^1}\right).
\end{aligned}
\end{equation}
Now, let us focus on the second integral $\mathcal{I}_{2,k}$. On the one hand, by means of a Taylor expansion, by \eqref{decay_prop}, \eqref{varphi_3d'}, and \eqref{fpart}, we obtain 
\begin{equation}\label{terme_ck'2}
\begin{aligned}
\MoveEqLeft[4]
\displaystyle d\int_{\RR^d}|R_k|^2\big(f(|\tilde{z}+\varphi|^2)-f(|\varphi|^2)\big)\;dx\\
&=\displaystyle d\int_{\RR^d}|R_k|^2\times 2\Rr(\tilde{z}R_k)f'(|R_k|^2)\;dx+\mathrm{O}\big(e^{-\gamma t}\|\tilde{z}\|_{L^2}+\|\tilde{z}\|_{L^2}^2\big)\\
&=\displaystyle 4\int_{\RR^d}f(|R_k|^2)\Rr(\tilde{z}R_k)\;dx+\mathrm{O}\big(e^{-\gamma t}\|\tilde{z}\|_{L^2}+\|\tilde{z}\|_{L^2}^2\big).
\end{aligned}
\end{equation}
On the other, we observe that
\begin{equation}\label{terme_ck'3}
\begin{aligned}
\MoveEqLeft[4]2\;\Rr\int_{\RR^d}y_k\cdot\nabla R_k\overline{R_k}\big(f(|\tilde{z}+\varphi|^2)-f(|\varphi|^2)\big)\;dx\\
&=4\;\Rr\int_{\RR^d}y_k\cdot\nabla R_k\overline{R_k}\Rr\left(\tilde{z}\overline{R_k}\right)f'(|R_k|^2)\;dx+\mathrm{O}\big(e^{-\gamma t}\|\tilde{z}\|_{L^2}+\|\tilde{z}\|_{L^2}^2\big).
\end{aligned}
\end{equation}
Thus, we deduce from \eqref{terme_ck'2}, \eqref{terme_ck'3}, and \eqref{defvarphi} that
\begin{equation}\label{terme_I_2,k}
\begin{aligned}
c_k'\mathcal{I}_{2,k}=&\;c_k'\Rr\int_{\RR^d}\overline{\tilde{z}}\left(\Delta R_k+iv_k\cdot\nabla R_k-\frac{|v_k|^2}{4}R_k\right)\;dx\\
&+2\;c_k'\Rr\int_{\RR^d}\overline{\tilde{z}}R_ky_k\cdot\nabla\left(f(|R_k|^2)\right)\;dx+\mathrm{O}\big(e^{-\gamma t}\|\tilde{z}\|_{L^2}+\|\tilde{z}\|_{L^2}^2\big).
\end{aligned}
\end{equation}

From \eqref{terme_ck'1} and \eqref{terme_I_2,k}, we conclude that the term associated with $c_k'$ in $\frac{d}{dt}\tilde{H}$ is equal to 
$$-4\;c_k'\Rr\int_{\RR^d}\overline{\tilde{z}}\left(\Delta R_k+iv_k\cdot\nabla R_k+f(|R_k|^2)R_k-\frac{|v_k|^2}{4}R_k\right)\;dx+\mathrm{O}(e^{-\gamma t}\|\tilde{z}\|_{L^2}\|z\|_{H^1}),$$
that is to
$$4\;c_k'\left(\omega_k-\frac{|v_k|^2}{4}\right)\Rr\int_{\RR^d}\overline{\tilde{z}}R_k\;dx+\mathrm{O}(e^{-\gamma t}\|\tilde{z}\|_{L^2}\|z\|_{H^1}).$$
This finishes the proof of Lemma \ref{estcrit}. 
\end{proof}

\begin{Lem}[Control of the modulation parameters]\label{lem_modpar}
We have for all $k=1,\dots,K$:
$$\forall\;t\ge T_1,\quad \displaystyle|a_k'(t)|+|b_k'(t)|+|c_k'(t)|\le C\big(e^{-\gamma t}\|z\|_{H^1}+\|z\|_{H^1}^2+\|\tilde{z}\|_{L^2}\big).$$
\end{Lem}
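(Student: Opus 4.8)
The plan is to follow the proof of Lemma \ref{inegab} line by line, now differentiating the three families of orthogonality relations
$$\Ii\int_{\RR^d}\tilde{z}\,\overline{R_k}\;dx=0,\qquad \Rr\int_{\RR^d}\tilde{z}\,\overline{\nabla R_k}\;dx=0,\qquad \Rr\int_{\RR^d}\tilde{z}\,\overline{\Lambda_k}\;dx=0,$$
where I abbreviate $\Lambda_k:=\frac{d}{2}R_k+y_k\cdot\nabla R_k-\frac{i}{2}v_k\cdot y_kR_k$. Into the time derivative of each relation I would substitute the expression for $\partial_t\tilde{z}$ established just above (the $L^2$-critical analogue of Lemma \ref{dertildez}), moving every Laplacian off $\tilde z$ by one integration by parts so that only first-order derivatives of $\tilde z$ (hence of $z$) survive, and writing $\partial_tR_k$ through \eqref{derRk} whenever it arises from differentiating $\overline{R_k}$, $\overline{\nabla R_k}$ or $\overline{\Lambda_k}$.

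This produces a linear system $\mathcal{M}(t)\,(a_k',b_k',c_k')^\top=\mathcal{Y}(t)$ for the $(d+2)K$ derivatives of the modulation parameters. Its leading-order matrix reproduces, up to the real/imaginary pairings already used to build $\tilde M(t)$, the block structure of $\tilde M(t)$: off-diagonal soliton overlaps are $\mathrm{O}(e^{-\gamma t})$ by \eqref{decay_prop}, and the inter-family diagonal overlaps vanish by the same parity arguments that gave $Z_i(k,k)=0$ for all $i=0,\dots,d$ (radiality of $Q_{\omega_k}$). One is thus left with three positive (definite) diagonal blocks — $\int_{\RR^d}Q_{\omega_k}^2\;dx$ for $a_k'$, the matrix $\int_{\RR^d}\nabla Q_{\omega_k}\,{}^t\nabla Q_{\omega_k}\;dx$ for $b_k'$ after the elimination of $a_k'$ carried out in Lemma \ref{inegab}, and the scaling block for $c_k'$ — so that $\mathcal M(t)$ is invertible for $t$ large, exactly as $\det\tilde M(t)>0$ was obtained (see \eqref{non_zero}).

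The right-hand side $\mathcal Y(t)$ is estimated with Lemma \ref{ineqs}: expanding $f(|z+\varphi|^2)-f(|\varphi|^2)=2\Rr(z\overline\varphi)f'(|\varphi|^2)+\mathrm{O}(|z|^2+|z|^{\frac4d})$ and replacing $z$ by $\tilde z$ minus the modulation terms, the modulation pieces paired with the $R_k$-weighted integrands yield either $\mathrm{O}(e^{-\gamma t}\|z\|_{H^1})$ contributions — through \eqref{decay_prop}, \eqref{varphi_3d} and \eqref{varphi_3d'} — or, via $\Ii(\nabla R_k\overline{R_k})=v_k|R_k|^2$, a genuine $\|\tilde z\|_{L^2}$ term, precisely as in Lemma \ref{inegab}. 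Since the critical uniqueness statement only concerns $d\le 2$, we have $\frac4d\ge 2$ and $\|z(t)\|_{H^1}\le 1$ for $t$ large, so the remainder $\|z\|_{H^1}^{4/d}$ is absorbed into $\|z\|_{H^1}^2$; inverting $\mathcal M(t)$ then gives the announced bound.

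The main obstacle is the new scaling row, i.e. the differentiated relation $\Rr\int\tilde z\,\overline{\Lambda_k}=0$. Extracting its $c_k'$-coefficient and rewriting its nonlinear part in terms of $\tilde z$ repeats, with the time derivative in place of the Weinstein differentiation, the delicate cascade of integrations by parts performed in the proof of Lemma \ref{estcrit}, where $\mathcal L_k(R_k)=0$ and the $L^2$-critical identity \eqref{fpart} are what collapse the numerous gradient and weight terms $y_k\cdot\nabla(\cdots)$. Keeping the error terms of size $\mathrm{O}\big(e^{-\gamma t}\|\tilde z\|_{L^2}\|z\|_{H^1}\big)$ under control through these manipulations, and confirming that the $c_k'$ block remains positive (via \eqref{non_zero}), is the most technical point.
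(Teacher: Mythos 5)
Your proposal is correct and follows essentially the same route as the paper: the paper likewise differentiates the three families of orthogonality relations, substitutes the expansions of $\partial_t\tilde z$ and $\Delta\tilde z$, reuses \eqref{eg1}--\eqref{eg2} (adjusted for the third direction, whose diagonal couplings vanish by the same parity/radiality arguments as for $\tilde M(t)$), and isolates $c_k'$ from the differentiated scaling relation, whose coefficient $\int_{\RR^d}\big(x\cdot\nabla Q_{\omega_k}\big)^2\;dx-\frac{d^2}{4}\int_{\RR^d}Q_{\omega_k}^2\;dx$ is positive by \eqref{non_zero} --- exactly your near-diagonal linear-system argument. Your explicit absorption of the $\|z\|_{H^1}^{4/d}$ remainder into $\|z\|_{H^1}^2$ (legitimate since the critical case of the uniqueness theorems concerns $d\le 2$ and $\|z(t)\|_{H^1}\le 1$ for $t$ large) addresses a point the paper leaves implicit, and is the right justification for why that term is absent from the stated bound.
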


\begin{proof}
This lemma follows from the preliminary computations
\begin{align*}
\Delta \tilde{z}=&\Delta z+\sum_{j=1}^K\left\{ia_j\Delta R_k+b_j\cdot\nabla(\Delta R_j)+c_j\left(\left(\frac{d}{2}+2\right)\Delta R_j+y_j\cdot\nabla(\Delta R_j)\right)\right\}\\
&-\sum_{j=1}^K\left\{\frac{i}{2}v_j\cdot y_j\Delta R_j+iv_j\cdot\nabla R_j\right\}\\
\partial_t\tilde{z}=&\partial_t{z}+\sum_{j=1}^K\left\{ia_j'R_j+b_j'\cdot\nabla R_j+c_j'\left(\frac{d}{2}R_j+y_j\cdot\nabla R_j-\frac{i}{2}v_j\cdot y_jR_j\right)\right\}\\
&+\sum_{j=1}^K\left\{ia_j\partial_tR_j+b_j\cdot\nabla(\partial_tR_j)\right\}\\
&+\sum_{j=1}^Kc_j\left(\frac{d}{2}\partial_tR_j+y_j\cdot\nabla(\partial_tR_j)-\frac{i}{2}v_j\cdot y_j\partial_tR_j-v_j\cdot\nabla R_j+\frac{i}{2}|v_j|^2R_j\right),
\end{align*}
from (\ref{eg1}), (\ref{eg2}) (which still take the same form in the present context with the consideration of a third direction), and from the derivation with respect to $t$ of the third family of orthogonal conditions
$$\Rr\int_{\RR^d}\tilde{z}\left(\overline{\frac{d}{2}R_k+y_k\cdot\nabla R_k-\frac{i}{2}v_k\cdot y_kR_k}\right)\;dx=0,$$ which yields 
\begin{equation}
0=c_k'\left(\int_{\RR^d}\big(x\cdot\nabla Q_{\omega_k}\big)^2\;dx-\frac{d^2}{4}\int_{\RR^d}Q_{\omega_k}^2\;dx\right)+\mathrm{O}\big(\|z\|_{H^1}^2+e^{-\gamma t}\|z\|_{H^1}+\|\tilde{z}\|_{H^1}\big).
\end{equation}
We finish the proof of Lemma \ref{lem_modpar} with \eqref{non_zero} in Appendix.
\end{proof}

Now, to exploit Lemma \ref{estcrit} in order to perform an estimate of $\tilde{H}(t)$, we have to control the scalar products $\Rr\int_{\RR^d}\tilde{z}\overline{R_k}\;dx$, $k=1,\dots,K$. In fact, we state and prove next the analogue of Lemma \ref{derre}.

\begin{Lem}[Control of the $R_k$ directions]\label{derre_crit} We have
$$\left|\frac{d}{dt}\Rr\int_{\RR^d}\tilde{z}\overline{R_k}\;dx\right|\leq C\left(e^{-\gamma t}\|z\|_{H^1}+\|z\|_{H^1}^2\right).$$
\end{Lem}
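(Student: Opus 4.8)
The plan is to transcribe the proof of Lemma~\ref{derre} almost verbatim; the only genuinely new point is to check that the third family of directions, the one weighted by the $c_k(t)$, does not contribute to the scalar product $\Rr\int_{\RR^d}\tilde z\,\overline{R_k}\;dx$ along the diagonal.

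First I would expand $\Rr\int_{\RR^d}\tilde z\,\overline{R_k}\;dx$ from the definition of $\tilde z$ in the critical case, following \eqref{prod_scal_tilde_z}. The terms carrying $ia_jR_j$ and $b_j\cdot\nabla R_j$ are treated exactly as before: for $j=k$ one has $\Rr\int_{\RR^d}i|R_k|^2\;dx=0$ and $\Rr\int_{\RR^d}b_k\cdot\nabla R_k\,\overline{R_k}\;dx=\tfrac12\int_{\RR^d}b_k\cdot\nabla(|R_k|^2)\;dx=0$, whereas the cross terms $j\neq k$ are $\mathrm{O}(e^{-\gamma t})$ by \eqref{decay_prop} and carry a coefficient bounded by $\|z\|_{L^2}$. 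For the new family, the diagonal contribution is
\[
c_k\,\Rr\int_{\RR^d}\Big(\tfrac{d}{2}R_k+y_k\cdot\nabla R_k-\tfrac{i}{2}v_k\cdot y_k R_k\Big)\overline{R_k}\;dx .
\]
Here the last summand is purely imaginary against $\overline{R_k}$, hence vanishes after taking $\Rr$, and the essential computation is the integration by parts
\[
\Rr\int_{\RR^d}y_k\cdot\nabla R_k\,\overline{R_k}\;dx=\tfrac12\int_{\RR^d}y\cdot\nabla\big(Q_{\omega_k}^2\big)\;dy=-\tfrac{d}{2}\int_{\RR^d}Q_{\omega_k}^2\;dy,
\]
which cancels precisely the $\tfrac{d}{2}R_k$ contribution. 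Consequently, as in \eqref{prod_scal_tilde_z}, I get $\Rr\int_{\RR^d}\tilde z\,\overline{R_k}\;dx=\Rr\int_{\RR^d}z\,\overline{R_k}\;dx+\eta(t)$ with $\eta'(t)=\mathrm{O}\big(e^{-\gamma t}\|z(t)\|_{H^1}+\|z(t)\|_{H^1}^2\big)$, where the modulation control of Lemma~\ref{lem_modpar} together with $\|\tilde z\|_{L^2}\le C\|z\|_{H^1}$ (from \eqref{tild'}) is used to bound the time derivatives of the surviving off-diagonal terms.

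It then remains to differentiate $\Rr\int_{\RR^d}z\,\overline{R_k}\;dx$ using \eqref{derz} and the second line of \eqref{derRk}, exactly as in \eqref{prod_scal_z}. After integrating by parts in the Laplacian terms (so that $\int_{\RR^d}\Delta z\,\overline{R_k}\;dx=\int_{\RR^d}z\,\Delta\overline{R_k}\;dx$), these cancel and one is left with
\[
\frac{d}{dt}\Rr\int_{\RR^d}z\,\overline{R_k}\;dx=\Rr\int_{\RR^d}i\big(f(|\varphi|^2)-f(|R_k|^2)\big)z\,\overline{R_k}\;dx+\Rr\int_{\RR^d}i\big(f(|z+\varphi|^2)-f(|\varphi|^2)\big)\varphi\,\overline{R_k}\;dx+\mathrm{O}\big(\|z\|_{H^1}^2\big),
\]
the quadratic remainder coming from the piece $f(|z+\varphi|^2)-f(|\varphi|^2)$ paired with $z\overline{R_k}$ and estimated via the critical analogue of Lemma~\ref{ineqs}. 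The two remaining integrals are bounded exactly as in \eqref{aux_1} and \eqref{aux_2}: the mean value theorem yields $|f(|\varphi|^2)-f(|R_k|^2)|\le C\big(|\varphi-R|+|\varphi-R|^2+\sum_{j\neq k}|R_jR_k|\big)$, so \eqref{decay_prop} and \eqref{varphi_3d} give the decay $Ce^{-\gamma t}\|z\|_{H^1}$.

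The only real obstacle is the diagonal cancellation for the $c_k$-family, which relies on the specific weights $\tfrac{d}{2}$, $y_k\cdot\nabla$ and $-\tfrac{i}{2}v_k\cdot y_k$ chosen precisely so that this computation closes; once it is in place the argument is a line-by-line copy of Lemma~\ref{derre}, the critical exponent $\tfrac4d$ entering the Sobolev--Hölder estimates exactly where $\tfrac4d+1\ge 2$ was used in Lemma~\ref{ineqs}.
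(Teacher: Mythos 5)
Your proposal is correct and follows essentially the same route as the paper: the paper's proof likewise reduces to checking that the diagonal contribution of the $c_k$-family vanishes, namely $\frac{d}{2}\int_{\RR^d}|R_k|^2\;dx+\Rr\int_{\RR^d}y_k\cdot\nabla R_k\,\overline{R_k}\;dx=0$ (with the $-\frac{i}{2}v_k\cdot y_k R_k$ term dropping out under $\Rr$), after which it invokes the proof of Lemma \ref{derre} verbatim. Your slightly weaker bound $\eta'(t)=\mathrm{O}\big(e^{-\gamma t}\|z\|_{H^1}+\|z\|_{H^1}^2\big)$ is still sufficient for the stated conclusion, so nothing is lost.
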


\begin{proof}
Note that \eqref{prod_scal_tilde_z} is still guaranteed here by observing moreover that
\begin{align*}
\sum_{j=1}^Kc_j\Rr\int_{\RR^d}\left(\frac{d}{2}R_j+y_j\cdot\nabla R_j-\frac{i}{2}v_jR_j\right)\overline{R_k}\;dx&=\int_{\RR^d}\frac{d}{2}|R_k|^2\;dx+\Rr\int_{\RR^d}y_k\cdot\nabla R_k\overline{R_k}\;dx+\epsilon_2(t)\\
&=\epsilon_2(t),
\end{align*}
where $\epsilon_2'(t)=\mathrm{O}\left(e^{-\gamma t}\|z(t)\|_{H^1}\right)$.
Now, the rest of the proof of Lemma \ref{derre} is kept unchanged and thus we obtain Lemma \ref{derre_crit}.
\end{proof}

We deduce from Lemma \ref{lem_modpar} and from Lemma \ref{derre_crit} that 
 \begin{equation}\label{control_term_funct}
\left|c_k'\Rr\int_{\RR^d}\tilde{z}\overline{R_k}\;dx\right|\leq C\left(e^{-\gamma t}\|z\|_{H^1}+\|z\|_{H^1}^2+\|\tilde{z}\|_{H^1}\right)\int_t^{+\infty}\left(e^{-\gamma s}\|z\|_{H^1}+\|z\|_{H^1}^2\right)\;ds.
\end{equation}

\noindent Next, it follows from \eqref{control_term_funct} and Lemma \ref{estcrit} that 
\begin{equation}\label{est_H_crit}
\begin{aligned}
\MoveEqLeft[1]
\tilde{H}(t)\\
\le &\;C\int_t^{+\infty}\left\{\frac{1}{s}\|\tilde{z}(s)\|_{H^1}^2+\left(e^{-\gamma s}+\|z(s)\|_{H^1}\right)\|\tilde{z}(s)\|_{H^1}\|z(s)\|_{H^1}\right\}\;ds\\
&+C\int_t^{+\infty}\left(e^{-\gamma s}\|z(s)\|_{H^1}+\|z(s)\|_{H^1}^2+\|\tilde{z}(s)\|_{H^1}\right)\int_s^{+\infty}\left(e^{-\gamma u}\|z(u)\|_{H^1}+\|z(u)\|_{H^1}^2\right)\;du\;ds.
\end{aligned}
\end{equation}

\subsubsection{Conclusion of the proof of uniqueness in the critical case}

Proposition \ref{coerccritic} allows us to obtain the coercivity estimate in Proposition \ref{prop_coerc}) so that by \eqref{derre2} (which follows also from Lemma \ref{derre_crit}) and \eqref{est_H_crit}, we obtain
\begin{align*}
\MoveEqLeft[1]
\|\tilde{z}(t)\|_{H^1}^2\\
&\le \;C\int_t^{+\infty}\left\{\frac{1}{s}\|\tilde{z}(s)\|_{H^1}^2+\left(e^{-\gamma s}+\|z(s)\|_{H^1}\right)\|\tilde{z}(s)\|_{H^1}\|z(s)\|_{H^1}\right\}\;ds\\
&\:\:+C\int_t^{+\infty}\left(e^{-\gamma s}\|z(s)\|_{H^1}+\|z(s)\|_{H^1}^2+\|\tilde{z}(s)\|_{H^1}\right)\int_s^{+\infty}\left(e^{-\gamma u}\|z(u)\|_{H^1}+\|z(u)\|_{H^1}^2\right)\;du\;ds.
\end{align*}
On the other hand, Proposition \ref{controlz} and in fact \eqref{estz1}, \eqref{estz2}, and \eqref{estz} are still available here in the critical case; this is guaranteed by Lemma \ref{lem_modpar}. \\
Thus, adapting the proof of Lemma \ref{lem_tilde_z}, we deduce the following estimate in which $\tilde{z}$ is the only variable that appears: for $t$ large enough, and for some $\alpha>4$,
\begin{equation}
\|\tilde{z}(t)\|_{H^1}\le C\left[\int_t^{+\infty}\frac{1}{s}\|\tilde{z}(s)\|_{H^1}\;ds+\frac{1}{t^{\alpha-2}}\int_t^{+\infty}\int_s^{+\infty}\|\tilde{z}(u)\|_{H^1}\;du\;ds\right]
\end{equation}
(with $C$ independent of $z$ for the same reasons as those mentioned in Remark \ref{rq:indep_z}).

It results then $\tilde{z}(t)=0$ in the neighborhood of $+\infty$. Note that one requires here $\|z(t)\|_{H^1}=\mathrm{O}\left(\frac{1}{t^\alpha}\right)$, with $\alpha>4$, to hold.
Consequently, uniqueness of a multi-soliton associated with the $R_k$, $k=1,\dots,K$ in the sense of \eqref{classexpo} is proved also in the $L^2$-critical case.

\section*{Appendix}

\subsection*{Linear independence of $\left(\partial_{x_i}Q_\omega\right)_{i\in\{1,\dots,d\}}$}

\begin{Prop}\label{indep}
Let $u\in H^1(\RR^d)$ be such that there exists $n\in \RR^d\setminus\{0\}$ such that $$\forall\;x\in\RR^d,\qquad n\cdot\nabla u(x)=0.$$ Then $u=0$.
\end{Prop}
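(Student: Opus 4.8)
The plan is to pass to the Fourier transform, where the differential constraint $n\cdot\nabla u=0$ becomes a purely algebraic one that is immediate to exploit. Since $u\in H^1(\RR^d)\subset L^2(\RR^d)$, its Fourier transform $\hat u$ is a well-defined element of $L^2(\RR^d)$, and each weak derivative $\partial_{x_j}u\in L^2(\RR^d)$ satisfies $\widehat{\partial_{x_j}u}(\xi)=i\xi_j\hat u(\xi)$. Summing against the components of $n$, I would first record the identity
\[
\widehat{n\cdot\nabla u}(\xi)=i\,(n\cdot\xi)\,\hat u(\xi),\qquad \text{a.e. }\xi\in\RR^d.
\]

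With this in hand, the hypothesis $n\cdot\nabla u=0$ (valid in $L^2$, hence in particular in the sense of distributions) translates into $(n\cdot\xi)\hat u(\xi)=0$ for almost every $\xi$. The second step is then the measure-theoretic observation that, because $n\neq 0$, the set $H:=\{\xi\in\RR^d\;|\;n\cdot\xi=0\}$ is a hyperplane through the origin and therefore has Lebesgue measure zero in $\RR^d$. For every $\xi\notin H$ we have $n\cdot\xi\neq 0$, so the relation above forces $\hat u(\xi)=0$; thus $\hat u$ vanishes almost everywhere on $\RR^d\setminus H$, and since $H$ is negligible, $\hat u=0$ almost everywhere on $\RR^d$.

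Finally, by Plancherel's theorem (equivalently, the injectivity of the Fourier transform on $L^2(\RR^d)$), $\hat u=0$ yields $u=0$, which is the desired conclusion. I expect no genuine obstacle here: the only points requiring a word of justification are the routine transformation rule for the weak gradient of an $H^1$ function and the elementary fact that a hyperplane is Lebesgue-null, both of which are standard. (This is exactly what the paper needs, since a nontrivial linear dependence $\sum_i c_i\,\partial_{x_i}Q_\omega=0$ is precisely the relation $n\cdot\nabla Q_\omega=0$ with $n=(c_1,\dots,c_d)\neq 0$, which would force $Q_\omega=0$, contradicting $Q_\omega>0$.)
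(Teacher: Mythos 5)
Your proof is correct, but it follows a genuinely different route from the paper's. You work on the frequency side: the relation $\widehat{n\cdot\nabla u}(\xi)=i\,(n\cdot\xi)\,\hat u(\xi)$ turns the differential constraint into the algebraic identity $(n\cdot\xi)\hat u(\xi)=0$ a.e., and since the hyperplane $\{n\cdot\xi=0\}$ is Lebesgue-null, $\hat u$ vanishes a.e., whence $u=0$ by Plancherel. The paper instead argues entirely in physical space: after an orthogonal change of coordinates it assumes $n=e_d$, so that $u$ is constant along every line parallel to the $x_d$-axis, and then invokes Fubini together with $u\in L^2(\RR^d)$ to conclude that each such constant must be zero (a nonzero constant on a line has infinite $L^2(\RR)$ norm). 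Both arguments use only $u\in L^2$ and the distributional vanishing of $n\cdot\nabla u$, so neither is more general in substance. Your version buys brevity and avoids the coordinate rotation, at the price of invoking the $L^2$ Fourier theory; the paper's version is more elementary (measure theory and Fubini only) and self-contained. One small point of rigor in your write-up: the hypothesis is stated pointwise, but since $\nabla u$ is only defined a.e.\ for $u\in H^1$, you should (as you implicitly do) read it as an identity in $L^2$ or in the sense of distributions — this is exactly what the Fourier transform step needs, and is how the paper reads it as well.
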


\begin{proof}
Even if it means completing $\left\{\frac{n}{|n|}\right\}$ in an orthonormal basis of $\RR^d$ and considering the passage matrix between the canonical basis and this new basis, we can always assume that $n$ is the last vector of the canonical basis of $\RR^d$. In that case, our assumption in Proposition \ref{indep} reads:
$$\forall\;(x_1,\dots,x_d)\in\RR^d,\qquad \partial_{x_d}u(x_1,\dots,x_d)=0,$$ or, in other words, for all $x_1,\dots,x_{d-1}\in\RR$, the application $x_d\mapsto u(x_1,\dots,x_{d-1},x_d)$ is constant, equal to $u(x_1,\dots,x_{d-1},0)$. \\
Since $u\in L^2(\RR^d)$, for all $x_1,\dots,x_{d-1}\in\RR$, one must have that $$\int_{\RR}|u(x_1,\dots,x_{d-1},x_d)|^2\;dx_{d}=\int_{\RR}|u(x_1,\dots,x_{d-1},0)|^2\;dx_{d}$$ is a finite quantity by Fubini theorem. This is the case if and only if $u(x_1,\dots,x_{d-1},0)=0$. Thus $u=0$.
\end{proof}

\subsection*{Proof of Lemma \ref{lem:phij}}

Assume that $1\leq k<j\le K$. \\
If $x_1<\xi_{j-1}+(\sigma_{j-1}-A_0)t$, we have $\phi_j(t,x)=0$ . \\
If  $x_1\ge \xi_{j-1}+(\sigma_{j-1}-A_0)t$, then $x_1>v_{k,1}t$ for large values of $t$ and thus, by \eqref{exp_decay_Q}, we have
\begin{align*}
|R_k(t,x)|&\le Ce^{-\frac{\sqrt{\omega_k}}{4}(x_1-v_{k,1}t)}e^{-\frac{\sqrt{\omega_k}}{4}|x-v_kt|} \le Ce^{-\frac{\sqrt{\omega_k}}{4}(\sigma_{j-1}-A_0-v_{k,1})t}e^{-\frac{\sqrt{\omega_k}}{4}|x-v_kt|}\\
&\le Ce^{-\frac{\sqrt{\omega_k}}{4}(\sigma_{j-1}-A_0-v_{j-1,1})t}e^{-\frac{\sqrt{\omega_k}}{4}|x-v_kt|} \le Ce^{-\frac{\sqrt{\omega_k}}{4}\left(\frac{v_{j,1}-v_{j-1,1}}{2}-A_0\right)t}e^{-\frac{\sqrt{\omega_k}}{4}|x-v_kt|}\\
&\le Ce^{-\gamma t}e^{-\frac{\sqrt{\omega_k}}{4}|x-v_kt|}.
\end{align*}
Assume now that $1\leq j<k\le K$. \\
If $x_1>\xi_{j}+(\sigma_{j}+A_0)t$, we have $\phi_j(t,x)=0$ . \\
If  $x_1\le \xi_{j}+(\sigma_{j}+A_0)t$, then $x_1<v_{k,1}t$ for large values of $t$ and thus we have as before
\begin{align*}
|R_k(t,x)|&\le Ce^{-\frac{\sqrt{\omega_k}}{4}(v_{k,1}t-x_1)}e^{-\frac{\sqrt{\omega_k}}{4}|x-v_kt|} \le Ce^{-\frac{\sqrt{\omega_k}}{4}(v_{k,1}-\sigma_j-A_0)t}e^{-\frac{\sqrt{\omega_k}}{4}|x-v_kt|}\\
&\le Ce^{-\frac{\sqrt{\omega_k}}{4}(v_{j+1,1}-\sigma_j-A_0)t}e^{-\frac{\sqrt{\omega_k}}{4}|x-v_kt|} \le Ce^{-\gamma t}e^{-\frac{\sqrt{\omega_k}}{4}|x-v_kt|}.
\end{align*}
Thus for all $k\neq j$,
\[ |R_k(t,x)\phi_j(t,x)|\le Ce^{-\gamma t}e^{-\frac{\sqrt{\omega_k}}{4}|x-v_kt|} \]
 and of course the same estimate is valid for $|\partial_{x_1}R_k(t,x)\phi_j(t,x)|$. This proves \eqref{prel1}.

In a similar way, one proves \eqref{prel2}. Now, let us show how to obtain \eqref{prel3}. First, notice that it is sufficient to prove \eqref{prel3} with $\psi_k$ instead of $\phi_k$. Then,
\[ \partial_{x_1}\psi_k(t,x)=\frac{1}{t}\psi '\left(\frac{x_1-\xi_k-\sigma_kt}{t}\right), \quad\partial_{x_1}^3\psi_k(t,x)=\frac{1}{t^3}\psi^{(3)}\left(\frac{x_1-\xi_k-\sigma_kt}{t}\right), \]
and 
\[ \partial_{t}\psi_k(t,x)=-\frac{x_1-\xi_k-\sigma_k t}{t^2}\psi '\left(\frac{x_1-\xi_k-\sigma_kt}{t}\right)-\frac{\sigma_k}{t}\psi '\left(\frac{x_1-\xi_k-\sigma_kt}{t}\right). \]
Hence,
\[ |\partial_{x_1}\psi_k(t,x)|\le\frac{1}{t}\|\psi '\|_{L^\infty}, \quad|\partial_{x_1}^{(3)}\psi_k(t,x)|\le\frac{1}{t^3}\|\psi^{(3)}\|_{L^\infty}, \]
and 
\[ |\partial_t\psi_k(t,x)|\le\frac{1}{t}\|x\psi '\|_{L^\infty}+\frac{\sigma_k}{t}\|\psi '\|_{L^\infty}, \]
which leads to \eqref{prel3}.

To finish with, let us observe that $\partial_{x_1}\phi_j(t,x)=0$ if $\xi_{j-1}+(\sigma_{j-1}+A_0)t\le x_1\le \xi_{j}+(\sigma_{j}-A_0)t$ (and in fact also if $x_1\le \xi_{j-1}+(\sigma_{j-1}-A_0)t$ or $x_1 \ge \xi_j+(\sigma_j+A_0)t$). Thus, for $k\neq j$, the proof of \eqref{prel4} is just a copy of that of \eqref{prel1}. \\
Moreover, if $x_1\le \xi_{j-1}+(\sigma_{j-1}+A_0)t$, then $x_1<v_{j,1}t$ for $t$ large and thus, as before, we obtain
\begin{align*}
|R_j(t,x)|&\le Ce^{-\frac{\sqrt{\omega_j}}{4}\left(v_jt-x_1\right)}e^{-\frac{\sqrt{\omega_k}}{4}|x-v_jt|} \le Ce^{-\frac{\sqrt{\omega_j}}{4}\left(v_jt-\sigma_{j-1}-A_0\right)t}e^{-\frac{\sqrt{\omega_k}}{4}|x-v_jt|}\\
&\le Ce^{-\gamma t}e^{-\frac{\sqrt{\omega_k}}{4}|x-v_jt|}.
\end{align*}
If $x_1\ge \xi_{j}+(\sigma_{j}-A_0)t$, then $x_1>v_{j,1}t$ and one obtains again 
$$|R_j(t,x)|\le Ce^{-\gamma t}e^{-\frac{\sqrt{\omega_k}}{4}|x-v_jt|}.$$
Hence, using in addition \eqref{prel3}, we deduce from what precedes that 
$$|R_k(t,x)\partial_{x_1}\phi_j(t,x)|\le C\frac{e^{-\gamma t}}{t}e^{-\frac{\sqrt{\omega_k}}{4}|x-v_jt|}.$$ In this manner, we obtain \eqref{prel4}.

\subsection*{Proof of Proposition \ref{coerccritic}: coercivity property in the $L^2$-critical case}

In the $L^2$-critical (pure power) case, we consider $f:r\mapsto r^{\frac{2}{d}}$ so that the linearized operators around $Q_\omega$ rewrite $L_{+,\omega}(v)=-\Delta v+\omega v-\left(1+\frac{4}{d}\right)Q_\omega^{\frac{4}{d}}v$ and $L_{-,\omega}(v)=-\Delta v+\omega v-Q_\omega^{\frac{4}{d}}v$. \\
Let us prove Proposition \ref{coerccritic}, following the results and ideas of Weinstein \cite{weinsteinmod}. \\ 

Due to Weinstein \cite[Proposition 2.7]{weinsteinmod},  $\inf_{v\in H^1(\RR^d), \langle Q_\omega,v\rangle=0}{ \langle L_{+,\omega}v,v\rangle}=0$ so that we can set $\tau:=\inf_{v\in S}{ \langle L_{+,\omega},v\rangle}$, where $S$ is the set of all $v\in H^1(\RR^d)$ such that $\langle Q_\omega,v\rangle=0$, for all $i={1,\dots,d}$, $\langle \partial_{x_i}Q_\omega,v\rangle=0$, $\langle x\cdot \nabla Q_\omega,v\rangle=0$, and $\|v\|_{H^1}=1$. We have obviously $\tau\ge 0$; we aim to show that $\tau>0$.\\

Assume by contradiction that $\tau=0$. Then for all $n\in\NN$, there exists $v_n\in S$ such that $\langle L_{+,\omega}v_n,v_n\rangle\le \frac{1}{n+1}$. \\
This implies
\begin{equation}\label{ineq_v}
0<\min\{\omega,1\}\|v_n\|_{H^1}^2 \le \int_{\RR^d}|\nabla v_n|^2\;dx+\omega\int_{\RR^d}|v_n|^2\;dx\le \left(1+\frac{4}{d}\right)\int_{\RR^d}Q_\omega^{\frac{4}{d}}v_n^2\;dx+\frac{1}{n+1}.
\end{equation}
In addition, $\left(\|v_n\|_{H^1}\right)_n$ is uniformly bounded so that, up to extraction, $(v_n)$ converges in $H^1(\RR^d)$ for the weak topology, say to $v^*\in H^1(\RR^d)$. And so, we have
\begin{equation}\label{ortho_v}
\langle Q_\omega,v^*\rangle=0,\quad \forall\; i\in\{1,\dots,d\},\:\langle \partial_{x_i}Q_\omega,v^*\rangle=0,\quad \langle x\cdot \nabla Q_\omega,v^*\rangle=0.
\end{equation}
By means of Hölder inequality, interpolation, and exponential decay of $Q_\omega$, (\ref{ortho_v}) leads to
\begin{equation}\label{convergence}
\int_{\RR^d}Q_\omega^{\frac{4}{d}}v_n^2\;dx\underset{n\to+\infty}{\longrightarrow} \int_{\RR^d}Q_\omega^{\frac{4}{d}}{v^*}^2\;dx.
\end{equation}
By passing to the limit as $n$ tends to $+\infty$, it results from $(\ref{ineq_v})$ and (\ref{convergence}) that $$0<\min\{\omega,1\}\leq \left(1+\frac{4}{d}\right)\int_{\RR^d}Q_\omega^{\frac{4}{d}}{v^*}^2\;dx.$$
In particular, $v^*\neq 0$. \\
Moreover, by weak convergence, we have 
\begin{equation}\label{liminf}
\|v^*\|_{L^2}\leq\liminf_{n\to+\infty}\|v_n\|_{L^2}\qquad \text{and}\qquad \|\nabla v^*\|_{L^2}\leq\liminf_{n\to+\infty}\|\nabla v_n\|_{L^2}.
\end{equation}
It follows from (\ref{convergence}) and (\ref{liminf}) that $$0\le \langle L_{+,\omega}v^*,v^*\rangle\leq \liminf_{n\to+\infty} \langle L_{+,\omega}v_n,v_n\rangle=0;$$ in other words, $\frac{v^*}{\|v^*\|_{H^1}}$ is an element of $S$ which minimizes $v\mapsto  \langle L_{+,\omega}v,v\rangle$.
Even if it means considering $\frac{v^*}{\|v^*\|_{H^1}}$, we will assume moreover $\|v^*\|_{H^1}=1$. We are thus led to the following Lagrange multiplier condition:
\begin{equation}\label{lagrange}
L_{+,\omega}v^*=\alpha v^*+\beta Q_\omega+\sum_{i=1}^d\gamma_i\partial_{x_i}Q_\omega+\delta x\cdot\nabla Q_\omega,
\end{equation} for some reals $\alpha,\beta,\gamma_i$, and $\delta$.
Since $0= \langle L_{+,\omega}v^*,v^*\rangle$, (\ref{ortho_v}) implies that $\alpha=0$. 
Then, for all $j\in\{1,\dots,d\}$, 
$$0=\langle L_{+,\omega}\partial_{x_j}Q_{\omega},v^*\rangle=\langle L_{+,\omega}v^*,\partial_{x_j}Q_\omega\rangle=\sum_{i=1}^d\gamma_i\langle\partial_{x_i}Q_\omega,\partial_{x_j}Q_\omega\rangle.$$ Given that $\partial_{x_1}Q_\omega,\dots,\partial_{x_d}Q_\omega$ are linearly independent in $L^2(\RR^d)$, the $d\times d$-matrix with entries $\langle \partial_{x_i}Q_\omega,\partial_{x_j}Q_\omega\rangle$ is invertible. Consequently, for all $i\in\{1,\dots,d\}$, $\gamma_i=0$. \\
Now, using $$0=\langle -2Q_\omega,v^*\rangle,\qquad L_{+,\omega}\left(\frac{d}{2}Q_\omega+x\cdot\nabla Q_\omega\right)=-2Q_\omega$$ (which is specific to the critical case), and the symmetry of the bilinear form $\langle L_{+,\omega}\cdot,\cdot\rangle$, we deduce that
\begin{align*}
0&=\left\langle L_{+,\omega}v^*,\frac{d}{2}Q_\omega+x\cdot \nabla Q_\omega\right\rangle =\frac{\beta d}{2}\|Q_\omega\|_{L^2}^2-\frac{\delta d^2}{4}-\frac{\beta d}{2}\|Q_\omega\|_{L^2}^2+\delta \int_{\RR^d}\left(x\cdot \nabla Q_\omega\right)^2\;dx\\
&=\delta\left(\int_{\RR^d}\left(x\cdot\nabla Q_\omega\right)^2\;dx-\frac{d^2}{4}\int_{\RR^d}Q_\omega^2\;dx\right).
\end{align*}
But we have
\begin{equation}\label{non_zero}
\int_{\RR^d}\big(x\cdot\nabla Q_\omega\big)^2\;dx-\frac{d^2}{4}\int_{\RR^d}Q_\omega^2\;dx>0,
\end{equation}
considering that this quantity is nothing but the square of the $L^2$ norm of $\frac{d}{2}Q_\omega+x\cdot \nabla Q_\omega$ (and $\frac{d}{2}Q_\omega+x\cdot \nabla Q_\omega$ is obviously not zero). \\
\noindent Hence $\delta=0$, and finally (\ref{lagrange}) reduces to $L_{+,\omega}v^*=  \beta Q_\omega$. \\
We claim now that $\beta\neq 0$: otherwise (using the well-known non-degeneracy condition (\ref{nondeg}) of $L_{+,\omega}$ in the present case) $v^*$ would be a linear combination of the $\partial_{x_i}Q_\omega$, $i=1,\dots,d$, and then it would result $v^*=0$ (since for all $i$, $\langle v^*,\partial_{x_i}Q_\omega\rangle=0$), which is not the case. \\
Thus $v^*=-\frac{2}{\beta}\left(\frac{d}{2}Q_\omega+x\cdot\nabla Q_\omega\right)$ and 
\begin{align*}
0&=\langle v^*,x\cdot\nabla Q_\omega\rangle =-\frac{2}{\beta}\left\langle \frac{d}{2}Q_\omega+x\cdot\nabla Q_\omega,x\cdot\nabla Q_\omega\right\rangle\\
&=-\frac{2}{\beta}\left(\int_{\RR^d}\left(x\cdot\nabla Q_\omega\right)^2\;dx-\frac{d^2}{4}\int_{\RR^d}Q_\omega^2\;dx\right), 
\end{align*} which contradicts (\ref{non_zero}).
So we come to the conclusion that $\tau$ is positive; hence Proposition \ref{coerccritic} is established.

\small
\bibliographystyle{plain}
\bibliography{references_NLS}

\end{document}